\newcommand{\C}{\mathbf{C}}
\newcommand{\Lef}{\mathbf{L}}
\newcommand{\ZZ}{\mathbf{Z}}
\newcommand{\QQ}{\mathbf{Q}}
\newcommand{\PP}{\mathbf{P}}
\newcommand{\LL}{\mathcal L}
\newcommand{\OO}{\mathcal O}
\newcommand{\Ss}{\mathcal S}
\newcommand{\DD}{\mathcal D}
\newcommand{\XX}{\mathcal X}
\newcommand{\YY}{\mathcal Y}
\newcommand{\TT}{\mathcal T}
\newcommand{\UU}{\mathcal U}
\newcommand{\VV}{\mathcal V}
\newcommand{\WW}{\mathcal W}
\newcommand{\Z}{\mathcal Z}
\newcommand{\im}{\operatorname{i}}
\newcommand{\codim}{\operatorname{\rm codim}}
\newcommand{\Tr}{\operatorname{\rm Tr}}
\newcommand{\wt}{\widetilde}
\newcommand{\wh}{\widehat}
\newcommand{\ima}{\operatorname{\rm Im}}
\newcommand{\rom}{\romannumeral}
\newcommand{\comp}{\raise1pt\hbox{{$\scriptscriptstyle\circ$}}}
\newcommand{\Mot}{\operatorname{\rm Mot}}
\newcommand{\Hom}{\operatorname{\rm Hom}}
\def\Corr{\mathop{\rm Corr}\nolimits}
\def\Gr{\mathop{\rm Gr}\nolimits}
\def\id{\mathop{\rm id}\nolimits}
\def\Nhat{{\widehat{N}}}
\def\sett#1#2{\{#1\mid#2\}}
\def\End{\operatorname{\rm End}}
\def\km#1{\mathop{\rm Km}(#1)}
\def\half{\frac 12}
\def\im{\operatorname{\rm Im}}
\def\mod2#1{{\color{blue}#1}}
\def\half{\frac 12}
\newtheorem{theorem}{Theorem}[section]
\newtheorem*{thm}{Theorem}
\newtheorem{lemma}[theorem]{Lemma}
\newtheorem{corollary}[theorem]{Corollary}
\newtheorem{proposition}[theorem]{Proposition}
\theoremstyle{remark}
\newtheorem*{remark*}{Remark}
\newtheorem*{facts}{Facts}
\newtheorem{remark}[theorem]{Remark}
\theoremstyle{definition}
\newtheorem{definition}[theorem]{Definition}
\newtheorem*{notation*}{Notation}
\newtheorem{example}[theorem]{Example}
\newtheorem{nonumberingt}{Acknowledgements}
\def\mapright#1{\mathop{\vbox{\ialign{
                ##\crcr
    ${\scriptstyle\hfil\;\;#1\;\;\hfil}$\crcr
 \noalign{\kern-1pt\nointerlineskip}
    \rightarrowfill\crcr}}\;}}
\def\mapleft#1{\mathop{\vbox{\ialign{
                ##\crcr
    ${\scriptstyle\hfil\;\;#1\;\;\hfil}$\crcr
    \noalign{\kern-1pt\nointerlineskip}
    \leftarrowfill\crcr}}\;}}
\def\into{\hookrightarrow}
\def\onto{\twoheadrightarrow}
\begin{document}

\date{4 September  2017} 
\author[Robert Laterveer]
{Robert Laterveer}
\address{Institut de Recherche Math\'ematique Avanc\'ee, Universit\'e 
de Strasbourg, 7 Rue Ren\'e Des\-car\-tes, 67084 Strasbourg CEDEX, France.}
\email{robert.laterveer@math.unistra.fr}

\author[Jan Nagel]
{Jan Nagel}
\address{Institut de Mat\'ematiques de Bourgogne,
9 avenue Alain Savary,
21000 DIJON CEDEX, France.}
\email{johannes.Nagel@u-bourgogne.fr}

\author[Chris Peters]
{Chris Peters}
\address{Discrete Mathematics, Technische Universiteit Eindhoven,
Postbus 513,
5600 MB Eindhoven, Netherlands.}
\email{c.a.m.peters@tue.nl}

\title[Complete intersections in varieties with finite dimensional motive]{On complete intersections in varieties\\ with finite-dimensional motive}

\begin{abstract} Let $X$ be a complete intersection inside a variety $M$ with finite dimensional motive and for which the Lefschetz-type conjecture $B(M)$ holds.
We show how conditions on the niveau filtration on the homology of $X$ influence directly the niveau %
on the level 
of Chow groups. This leads to a 
generalization of Voisin's result. The latter states that \emph{if $M$ has trivial Chow groups and if  $X$ has non-trivial variable cohomology  parametrized by $c$-dimensional algebraic cycles, then the cycle class maps $A_k(X) \to H_{2k}(X)$ are injective for $k<c$}. We give variants involving group actions which lead to several  new examples with finite dimensional Chow motives.
\end{abstract}

\keywords{Algebraic cycles, Chow groups, Chow-K\"unneth decompositions, finite-dimensional motives, (co)niveau filtration}

\subjclass{14C15, 14C25, 14C30.}

\maketitle

\section{Introduction}

\subsection{Background} 
Let $X$ be a smooth, complex projective variety of dimension $d$. While the cohomology ring\footnote{See the conventions about the notation at the end of the introduction.} $H^*(X)$ is well
understood, this is far from true for the Chow ring $A^*(X)$, the ring  of  algebraic cycles on $X$ modulo rational equivalence. The two
are linked through the \emph{cycle class map} 
\[
A^*(X)  \to  H^{2*}(X), \quad  \gamma \mapsto [\gamma].
\]
If this map is injective we say that \emph{$X$ has trivial  Chow  groups}.
If this is not the case, the kernel  $A^*_{\rm hom}(X)$, the "homologically trivial" cycles,  then can be investigated  through the \emph{Abel-Jacobi map}
\[
A^*_{\rm hom}(X) \to J^*(X)
\]  
with kernel $A^*_{\rm AJ}(X)$, the "Abel-Jacobi trivial" cycles.  If $X$ is a curve, Abel's theorem tells us that
$A^1_{\rm AJ}(X) = 0$. 

The interplay between Hodge theoretic aspects of cohomology and cycles became apparent through the fundamental work of
Bloch and Srinivas \cite{BS} as complemented by \cite{small,connect}.  
They investigate the consequences for the Chow groups and cohomology groups of $X$ if      the  class $\delta \in A^d(X\times X)$ of the  diagonal  $\Delta \subset X\times X$ 
admits a decomposition into summands 
having support on lower dimensional varieties.
This clarifies the role   of the so-called
\emph{coniveau filtration}  $N^\bullet H^*(X)$ in cohomology which takes care of cycle classes supported on varieties of varying dimensions.
Charles Vial \cite{V4} discovered a variant which works better in homology which he called the \emph{niveau filtration} $\wt{N}^\bullet H_*(X)$. We introduce a 
\emph{refined niveau filtration} on homology $
\wh{N}^\bullet H_*(X)$ which is compatible with polarizations. The precise definitions are  given below in Sect.~\ref{sec:NivandConiv}.  
Suffices to say that we have inclusions $\wh{N} ^\bullet H_*(X)\subseteq \wt{N}^\bullet H_*(X)\subseteq N^\bullet H_*(X)$
 with equality everywhere if  the Lefschetz conjecture $B$ is true for all varieties. Conjecture $B$ is  recalled below in Section~\ref{sec:ConjB}.

Note that  the K\"unneth formula  $ \delta= \sum_{k=0}^{2d}  \pi_k $,  with  $\pi_k\in H^{2d-k}(X)\otimes H^k(X)=  H^k(X)^*  \otimes H^k(X)  $, 
can be interpreted as an identity inside the ring of endomorphisms of $H^*(X)$.  
Since  $\delta \in H^{2d}(X\times X)$ acts as the identity on $H^*(X)$, in $\End H^*(X)$ one thus obtains 
 the (cohomological)   \emph{K\"unneth-decomposition}
\[
\id = \sum_{k=1}^{2d}  \pi_k, \quad   \pi_k \in \End H^*(X)  \text{ a projector with  } \pi_k|_ {H^j (X)}= \delta_{jk}\cdot  \id.
\] 
The projectors are mutually orthogonal, that is $\pi_j\comp \pi_k =0$ if $j\not=k$. Moreover,
the K\"unneth decomposition  is by construction compatible with Poincar\'e duality and so  is called \emph{self-dual}; in other words $\pi_k $
is the transpose of $\pi_{2d-k}$ for all $k<d$.

Even if the \emph{K\"unneth components} $\pi_k$ are classes of  algebraic cycles,  their sum need not give a decomposition of the diagonal.
If this is the case, and if, moreover,  these give  a   self-dual 
decomposition of the identity in $\End A^*(X)$ by mutually orthogonal projectors,
one speaks of a (self-dual)  \emph{Chow--K\"unneth decomposition}, abbreviated as "CK-decomposition". Its existence has been conjectured by Murre 
\cite{Mur}, and it has been established in low dimensions  and a few other cases.

One would like to have a refined CK-decomposition which takes into account the coniveau filtration
or the (refined)  niveau filtration, since then  the conclusions of \cite{BS} et. al. can be applied. This is related to the validity of the standard conjecture  $B(X)$
as   reviewed in Section~\ref{sec:ConjB}.

\subsection{Set up and results}

Following  Voisin   \cite{V0,V1},  we consider complete intersections $X$   of dimension $d$ inside a given  smooth complex variety projective 
variety $M$ and we ask about the relations between the Chow groups of $M$ and $X$. 
On the level of cohomology this is a consequence of the classical Lefschetz theorems:  apart from the "middle" cohomology $H^d(X)$ the cohomology of $X$ is completely determined by $H^*(M)$, while  for the middle cohomology one has a direct sum splitting 
\[
H^d(X)= H^d_{\rm fix}(X) \oplus H^d_{\rm var} (X)
\]
into fixed cohomology $H^d_{\rm fix}(X)= i^*H^d(M)$ and its orthogonal complement  $H^d_{\rm var} (X)$ under the cupproduct pairing. Here $i:X\into M$
is the inclusion, and $i^*:H^d(M)\to H^d(X)$ is injective.

For this to  have  consequences on the level of  Chow groups, it seems natural to assume that  $M$ has trivial Chow groups. 
This is the point of view of Voisin in \cite{V1}.  Her main result uses the notion of  a  subspace $H\subset H^k(X)$ "being  parametrized by $c$-dimensional algebraic cycles"
\cite[Def. 0.3]{V1} which is slightly stronger than  demanding that  $H\subset \wh{N}^c H^k(X)$, where $\hat N$ is our refined version of Vial's filtration.
A comparison of our filtration  with Vial's is given in Section~\ref{sec:ModNivFIlts}. See in particular
Remark~\ref{rmk:Comparison}. 
We can now state  Voisin's main result from \cite{V1}:

 \begin{thm} Assume that $M$ has trivial Chow groups and that $X$ has non-trivial variable cohomology  parametrized by $c$-dimensional algebraic cycles.
 Then the cycle class maps $A_k(X) \to H_{2k}(X)$ are injective for $k<c$.
 \end{thm}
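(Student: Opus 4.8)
The plan is to reduce the statement to a decomposition of the diagonal $\Delta_X\subset X\times X$ modulo rational equivalence. Writing $A_k(X)_{\mathrm{hom}}=\ker\big(A_k(X)_{\QQ}\to H_{2k}(X,\QQ)\big)$, the asserted injectivity for $k<c$ amounts to $A_k(X)_{\mathrm{hom}}=0$ for $k<c$; and since $\gamma=(\Delta_X)_*\gamma$ for every $\gamma\in A_k(X)_{\QQ}$, it suffices to exhibit a decomposition $\Delta_X=\Pi_{\mathrm{triv}}+\Pi_{\mathrm{var}}+\rho$ in $A^d(X\times X)_{\QQ}$ in which each of the three summands, regarded as a self-correspondence of $X$, annihilates $A_k(X)_{\mathrm{hom}}$ for all $k<c$.

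The summand $\Pi_{\mathrm{triv}}$ comes from the Lefschetz theorems. Since $M$ has trivial Chow groups, $H^*(M,\QQ)$ is of Tate type and is spanned by classes of algebraic cycles. By the Lefschetz hyperplane theorem applied iteratively to $X\into M$ together with hard Lefschetz on $X$, every $H^j(X,\QQ)$ with $j\ne d$ is isomorphic to a piece of $H^*(M)$, and the fixed part $H^d_{\mathrm{fix}}(X)=i^*H^d(M)$ is also Tate and algebraic. Since $j\ne d$ forces $2d-j\ne d$, the K\"unneth projector $\pi_j\in H^{2d-j}(X)\otimes H^j(X)$ is then a finite sum $\sum_\alpha[A_\alpha]\times[B_\alpha]$ of products of algebraic cycles on $X$ of complementary dimension, and so is $\pi_d^{\mathrm{fix}}$. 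Any correspondence of this shape annihilates $A^*_{\mathrm{hom}}(X)$: the only surviving contribution of $\sum_\alpha[A_\alpha]\times[B_\alpha]$ on $A_k(X)$ is $\sum_\alpha\deg(\gamma\cdot A_\alpha)\,[B_\alpha]$, and $\deg(\gamma\cdot A_\alpha)=[\gamma]\cdot[A_\alpha]$ vanishes when $[\gamma]=0$. Let $\Pi_{\mathrm{triv}}$ be the sum of cycles of this shape representing the $\pi_j$ with $j\ne d$ and $\pi_d^{\mathrm{fix}}$; the only K\"unneth component \emph{not} of this form is the projector $\pi_d^{\mathrm{var}}$ onto the variable cohomology $H^d_{\mathrm{var}}(X)$.

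The summand $\Pi_{\mathrm{var}}$ is governed by the hypothesis. That $H^d_{\mathrm{var}}(X)$ is parametrized by $c$-dimensional cycles means, in particular, that it is the image of a correspondence $\Gamma_*\colon H_{d-2c}(B,\QQ)\to H^d_{\mathrm{var}}(X)$ induced by an algebraic family of $c$-cycles on $X$ over a smooth projective variety $B$, i.e. by a cycle $\Gamma\in A^{d-c}(B\times X)_{\QQ}$; this surjection splits in the category of polarized Hodge structures. The decisive numerical observation is that $\Gamma_*$ raises Chow-dimension by $c$, so that for any $\Psi\in A^{\dim B+c}(X\times B)_{\QQ}$ the composite correspondence $\Gamma\circ\Psi$ acts on $A_k(X)$ as $A_k(X)\xrightarrow{\Psi_*}A_{k-c}(B)\xrightarrow{\Gamma_*}A_k(X)$; for $k<c$ the middle group $A_{k-c}(B)$ is zero, so $(\Gamma\circ\Psi)_*=0$ on $A_k(X)$. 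We therefore look for $\Pi_{\mathrm{var}}$ of the form $\Gamma\circ\Psi$, chosen so that $[\Pi_{\mathrm{triv}}]+[\Pi_{\mathrm{var}}]=[\Delta_X]$ in $H^{2d}(X\times X,\QQ)$ --- cohomologically this just says $[\Gamma\circ\Psi]=\pi_d^{\mathrm{var}}$, which a cohomological section of $\Gamma_*$ delivers (after a Tate-type correction absorbed into $\Pi_{\mathrm{triv}}$).

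It remains to promote the identity $\Delta_X=\Pi_{\mathrm{triv}}+\Pi_{\mathrm{var}}$ from cohomology to rational equivalence; this is the main obstacle, since the difference $\rho:=\Delta_X-\Pi_{\mathrm{triv}}-\Pi_{\mathrm{var}}\in A^d_{\mathrm{hom}}(X\times X)_{\QQ}$, a homologically trivial self-correspondence of $X$, need not kill $A_k(X)_{\mathrm{hom}}$, and we also wish to avoid assuming the Lefschetz-type conjecture $B$ for $X$ or for $B$. I would handle it by a Bloch--Srinivas-type spreading argument: let $\XX\to S$ be the universal complete intersection of the given multidegree in $M$, carry out the constructions above in the family (possible after shrinking $S$, since $H^*(M)$ and the associated Lefschetz decomposition are locally constant), and use the triviality of $A^*(M)$ --- which tightly controls the rational equivalence classes of cycles on $\XX\times_S\XX$, as these map to $M\times M$ --- to show that the resulting relative $\rho$, fibrewise homologically trivial on $\XX\times_S\XX$, is, after further shrinking $S$ and an inductive refinement, fibrewise rationally equivalent to a cycle supported on $\DD\times_S\XX\cup\XX\times_S\Z$ with $\DD\to S$ a relative divisor and $\Z\to S$ of relative dimension $\le c-1$. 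Both such pieces annihilate $A_k(X)_{\mathrm{hom}}$ for $k<c$ on a very general fibre, and hence on every fibre by a specialization argument. Then $\gamma=(\Pi_{\mathrm{triv}})_*\gamma+(\Pi_{\mathrm{var}})_*\gamma+\rho_*\gamma=0$ for every $\gamma\in A_k(X)_{\mathrm{hom}}$ with $k<c$, which is the assertion. This passage from a cohomological decomposition to one valid modulo rational equivalence, performed uniformly over the family, is the heart of the matter and the place where the hypotheses on $M$ (triviality of $A^*(M)$; in the general theorem, finite-dimensionality of $h(M)$ and the conjecture $B(M)$) are genuinely used.
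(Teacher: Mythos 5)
Your overall strategy (decompose $\Delta_X$ into a completely algebraic piece, a piece factoring through a $(d-2c)$-dimensional parameter space with shift $c$, and an error term, then kill the error term using the family) is the same circle of ideas as Voisin's proof and as the generalization in this paper, but the step that carries all the weight is missing. You reduce everything to the claim that the relative error term $\rho$, being fibrewise homologically trivial on $\XX\times_S\XX$, is ``after further shrinking $S$ and an inductive refinement'' fibrewise \emph{rationally} equivalent to a cycle supported on $\DD\times_S\XX\cup\XX\times_S\Z$ with $\Z$ of small relative dimension. A Bloch--Srinivas spreading argument does not produce this: spreading plus $A^*(M\times M)\cong H^{2*}(M\times M)$ controls the total-space cycle only up to cycles supported over proper closed subsets of $S$ and says nothing about rational equivalence on the fibres themselves; indeed the statement you assert for $\rho$ is essentially equivalent to the theorem being proved. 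The mechanism that actually makes this work is Voisin's Proposition~1.6 of \cite{V1} (Proposition~\ref{key} above): because the $L_j$ are very ample, the blow-up $\widetilde{\XX\times_B\XX}$ is Zariski open in the projective bundle $I_2(E)\to\widetilde{M\times M}$, so a fibrewise homologically trivial relative codimension-$d$ cycle restricts on each fibre, \emph{modulo rational equivalence}, to $k\Delta_{X_b}+\gamma\vert_{X_b\times X_b}$ with $\gamma$ a cycle on $M\times M$; the hypothesis $H^d_{\rm var}(X)\neq 0$ is then used precisely to force $k=0$, and only afterwards does triviality of the Chow groups of $M$ (hence of $M\times M$) make the homologically trivial $\gamma\vert_{X_b\times X_b}$ vanish in $A_d(X_b\times X_b)$ (in the present paper one instead uses finite-dimensionality of $h(M)$ and nilpotence, Proposition~\ref{weaknilp}, and concludes via Theorem~\ref{main2}). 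Your sketch never uses the nonvanishing of variable cohomology in this step, which is a symptom that the actual mechanism is absent.

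Two further points would need repair even granting the spreading step. First, a correspondence supported on $\DD\times_S\XX$ with $\DD$ a relative \emph{divisor} acts trivially only on $A_0$ (Lemma~\ref{lem:Factoring}(2): trivial action on $A_j$ for $j<\codim$), so your claimed decomposition would not kill $A_k^{\rm hom}(X)$ for $1\le k<c$; the decomposition one needs has a ``left'' piece supported on $Y\times X$ with $Y$ of small dimension, its transpose, and a middle piece factoring with shift $c$ through a variety of dimension $d-2c$, as in Step 1--2 of the proof of Theorem~\ref{main1} and Corollary~\ref{cor:vanproj}. Second, to write $\Pi_{\rm var}=\Gamma\comp\Psi$ as an algebraic cycle with $[\Gamma\comp\Psi]=\pi_d^{\rm var}$ you need $\Psi$ to be an \emph{algebraic} correspondence; a splitting in the category of polarized Hodge structures does not provide one. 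Here the precise content of Voisin's ``parametrized by $c$-dimensional algebraic cycles'' is needed (it gives that $\Gamma_*\comp{}^t\Gamma_*$ is a nonzero multiple of the identity on variable cohomology, cf.\ Remark~\ref{rmk:Comparison}, so $\Psi$ can be taken to be a multiple of ${}^t\Gamma\comp\pi^{\rm var}$, the projector $\pi^{\rm var}$ being algebraic because $B(M)$ holds for varieties with trivial Chow groups), or else the adjoint machinery of Section~\ref{sec:ModNivFIlts}.
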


Our idea is to replace the condition of  $M$ having trivial Chow groups  by finite dimensionality of the motive of $M$ --  which  conjecturally  is true for all varieties.
\footnote{See \cite{MNP} for background on Chow motives.} The  main idea  which makes this operational is the following nilpotency result (=Theorem~\ref{weaknilp}): 
 if $r$ is the codimension of $X$ in $M$, a degree $r$ correspondence which restricts   to a cohomologically trivial degree zero correspondence on $X$
is nilpotent as a correspondence on $X$. 

The second ingredient is  due to Voisin  \cite[Proposition 1.6]{V0}:  a degree $d$ \emph{cohomogically} trivial 
relative correspondence   can be modified in a controlled way such  that  the new relative 
correspondence is fiberwise   \emph{rationally} equivalent to zero.

Given these inputs, the argument leading to our results now runs as follows. First we 
make  use of the refined niveau filtration   by way of Propositions~\ref{factorisation} and \ref{hat-tilde}
to find relative correspondences that decompose
 the diagonal in \emph{homology} in the way we want. To the difference we apply  the Voisin result.
This provides  first of all information on the level of the \emph{Chow groups of the fibers} and, secondly, allows us to  apply the nilpotency result. 
Writing this out gives  strong variants  of  the above theorem of Voisin.   These have been phrased  in homology rather than cohomology because, as 
mentioned before, Vial's filtration and ours behave better in the  homological setting. 
One of our main results can be paraphrased as follows.


\begin{thm}[=Theorem~\ref{main2}]   Suppose that $B(M)$ holds, that  the  Chow motive  of $M$  is finite-dimensional
and that $H_k(M) = {N}^{[{{k+1}\over 2}]}H_k(M)$ for $k\le d$.
Suppose  $H_d^{\rm var}(X)\ne 0$, and that for some positive integer $c$ we have $H_d^{\rm var}(X)\subset{\hat N}^c H_d(X)$. 
Then     $A_k^{\rm hom}(X) = 0$  if $k<c$ or $k>d-c$.
  \end{thm}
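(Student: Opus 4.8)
The plan is to decompose the diagonal of $X$ modulo rational equivalence into summands whose shape is dictated by the refined niveau filtration, and then to read off the vanishing of the Chow groups from these shapes. Write $r=\dim M-d$ for the codimension of $X$ in $M$; all Chow groups are taken with $\QQ$-coefficients. Since Voisin's spreading-out result \cite[Proposition 1.6]{V0} is inherently a statement about families, one works in a universal family $\pi\colon\XX\to S$ of complete intersections of the prescribed multidegree in $M$, with $X$ among its fibres, and propagates the resulting decomposition of the relative diagonal to the fibre of interest. Three inputs are combined: Propositions~\ref{factorisation} and \ref{hat-tilde} on the refined niveau filtration, Voisin's modification result \cite[Proposition 1.6]{V0}, and the nilpotence Theorem~\ref{weaknilp}.

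First I would build a decomposition of the relative diagonal in fibrewise homology. Weak and hard Lefschetz on the fibres identify, away from the middle, the cohomology of $X$ with cohomology pulled back from $M$, and give the orthogonal splitting $H^d(X)=H^d_{\rm fix}(X)\oplus H^d_{\rm var}(X)$ with $H^d_{\rm fix}(X)=i^*H^d(M)$. The hypothesis $H_k(M)=N^{[(k+1)/2]}H_k(M)$ for $k\le d$ says precisely that in that range $H^*(M)$ is of maximal coniveau, hence so is each $H^i(X)$ with $i\ne d$ and the fixed part $H^d_{\rm fix}(X)$; using $B(M)$ (to make the relevant Künneth and Lefschetz correspondences algebraic) together with finite-dimensionality of $h(M)$, the projectors onto these pieces are realised by relative correspondences $P_\alpha$ over $S$ which, fibrewise and already modulo rational equivalence, factor through sums of Lefschetz motives $\bigoplus\Lef^{i/2}$. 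For the variable part, Propositions~\ref{factorisation} and \ref{hat-tilde}, applied to the assumption $H^d_{\rm var}(X)\subset\hat N^cH_d(X)$ and exploiting that $\hat N$ is compatible with polarisations, produce a relative correspondence $\Pi_{\rm var}$ over $S$ whose fibre $\Pi_{{\rm var},X}$ induces the orthogonal projector onto $H^d_{\rm var}(X)$ in homology and which factors, in homology, through a smooth projective $Y$ with $\dim Y=d-2c$ by a correspondence of degree $-c$. With $P:=\sum_\alpha P_\alpha+\Pi_{\rm var}$ one then has $[P_b]=[\Delta_{X_b}]$ in $H_{2d}(X_b\times X_b)$ for every $b$.

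Next I would pass from homology to rational equivalence and conclude. The relative correspondence $\Delta_{\XX/S}-P$ is fibrewise cohomologically trivial of degree $0$, so Voisin's modification result replaces it, fibrewise and up to a controlled correction $\Gamma$, by a correspondence rationally equivalent to zero; restricting to the generic fibre, $\Delta_X=\sum_\alpha P_{\alpha,X}+\Pi_{{\rm var},X}+\Gamma_X$ in $A^d(X\times X)$. Fix $k<c$ or $k>d-c$ and apply $(-)_*$ to $A_k^{\rm hom}(X)$: each $P_{\alpha,X}$ acts through $A_k^{\rm hom}$ of a sum of Lefschetz motives, which is $0$, while $\Pi_{{\rm var},X,*}$ factors through $A_{k-c}(Y)$, which vanishes since $k-c<0$ when $k<c$ and $k-c>d-2c=\dim Y$ when $k>d-c$. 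Hence $\id=\Gamma_{X,*}$ on $A_k^{\rm hom}(X)$. Finally $\Gamma_X$ is cohomologically trivial and, by the way it is produced, is the restriction to $X$ of a degree $r$ correspondence on $M$, so Theorem~\ref{weaknilp} applies and $\Gamma_X$ is nilpotent as a correspondence; iterating $\id=\Gamma_{X,*}$ then gives $\id=\Gamma_{X,*}^N=0$ on $A_k^{\rm hom}(X)$, i.e.\ $A_k^{\rm hom}(X)=0$ for $k<c$ and $k>d-c$. (The integral refinement for $0$-cycles follows from Roitman's theorem, since $H^1(X)=0$.)

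The main obstacle is the construction in the first two steps together with the nilpotence input: turning the purely (co)homological hypotheses on $M$ and on $H^d_{\rm var}(X)$ into an honest decomposition of the diagonal modulo rational equivalence on the fibres, with summands of the prescribed geometric shape. It is compatibility of $\hat N$ with polarisations that makes $h^d_{\rm var}(X)$ a genuine direct summand of a motive $h(Y)(-c)$ rather than merely a cohomological subquotient; Voisin's lemma is what upgrades the cohomological factorisations to fibrewise rational ones; and Theorem~\ref{weaknilp} does the work that finite-dimensionality of $h(X)$ would do if it were available — it is essential that only finite-dimensionality of $h(M)$, and not of $h(X)$ (whose variable part is built from an uncontrolled auxiliary variety $Y$), is assumed. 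A further point requiring care is the bookkeeping of which summands of $h(X)$ can contribute to $A_k^{\rm hom}(X)$ for each $k$, so that the two ranges $k<c$ and $k>d-c$ emerge exactly.
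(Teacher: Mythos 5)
Your proposal follows essentially the same route as the paper's proof of Theorem~\ref{main2}: a relative homological decomposition of the diagonal into a fixed part (controlled by the coniveau hypothesis on $M$) plus a variable part factoring with shift $c$ (via Propositions~\ref{factorisation}, \ref{hat-tilde} and Corollary~\ref{cor:vanproj}), spread out over the family, upgraded to a fibrewise rational-equivalence identity by Voisin's Proposition~\ref{key} with an error cycle restricted from $M\times M$, which is homologically trivial and hence nilpotent by Proposition~\ref{weaknilp}, followed by the same bookkeeping of actions on $A_k$ in the range $k<c$ or $k>d-c$. The only place you claim slightly more than is needed is that the fixed-part correspondences factor through Lefschetz motives already modulo rational equivalence; the paper only uses that the restricted cycles constructed in Proposition~\ref{spread2} factor, as cycles, through zero-dimensional varieties, which already forces their action on $A_k^{\rm hom}(X)$ to vanish.
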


Voisin's result  is  a  direct consequence:  by  \cite[Theorem 5]{V2} varieties with trivial Chow groups have finite dimensional motive and conjecture $B$ holds for them as well and the
 condition $H_k(M) = {N}^{[{{k+1}\over 2}]}H_k(M)$ holds since $M$ has trivial Chow groups.  
Surprisingly, if we apply Vial's result  \cite{V}, we find that if the condition in the above theorem holds for $c=[\frac{d}{2}]$,  then $h(X)$ itself also has finite dimension and up to motives
of  curves and Tate twists is a direct factor of $h(M)$ (Corollary \ref{cor:fdm}). 

The known examples of finite dimensional motives are all directly related to curves,  which very much limits  the search for examples.
However, inside the realm of motives we can use other projectors besides  the identity, namely those that come from group actions.
In Section~\ref{sec:vars}, we have formulated variants of the main result involving  actions of a finite abelian group, say $G$. 
Then, even if the level of the Hodge-niveau filtration on variable cohomology  is too big to apply our main theorems, there might be  
 a $G$-character space  which has the correct Hodge-level. Provided
the (generalized) Hodge conjecture holds,  which is automatically the case in dimensions $\le 2$,  this then ensures  the  desired condition on  the niveau filtration.
In Section~\ref{sec:exmples}  we  construct  examples where this is the case and
for which one of the  group variants of the main theorem  can be successfully applied. These examples all yield  new finite dimensional motives because of the
above mentioned result of Vial.

We have given several  types of examples:  
\begin{itemize}
\item a threefold of general type with $p_g=q=0$,
\item hypersurfaces in abelian threefolds, including the Burniat-Inoue surfaces,
\item hypersurfaces in a product of a hyperelliptic curve and certain types
of K3 surfaces,
\item  hypersurfaces in  threefolds that are products of three curves, one of which is hyperelliptic,
\item  odd-dimensional complete  intersections of $4$ quadrics -- generalizing the  Bardelli example \cite{Bardelli}.
\end{itemize}

For simplicity we have only considered involutions
since then all invariants can easily  be calculated, but it will be clear that the method of construction  allows for many more  examples of varieties
admitting  all kinds of  finite abelian  groups of automorphisms. 

\begin{nonumberingt} We want to thank Claire Voisin, who kindly suggested the example of subsection \ref{sec:hypab3folds} to one of us. Thanks also to Claudio Pedrini, for helpful comments on an earlier version of this article.
\end{nonumberingt}

\begin{notation*} Varieties will be defined over $\C$ (except for Appendix B, where we consider algebraic varieties and motives over a field $k$). 
We use $H^*,H_*$ for  the (co)homology  groups
with $\QQ$-coefficients and likewise we write  $A^*, A_*$ for the Chow groups with $\QQ$--coefficents.\\
The category of Chow motives (over a field $k$) is denoted  by $\Mot_{\rm rat}(k)$, the category of {\em covariant} homological motives  by  $\Mot_{\rm hom}(k)$ and the category of numerical motives $\Mot_{\rm num}(k)$.
For a smooth projective manifold $X$, we let $h(X)\in \Mot_{\rm rat}(k)$ be its Chow motive.\\
We denote the integer part of a rational number $a$ by $[a]$.
\end{notation*}


\section{Preliminaries} \label{sec:prelim}

\subsection{Correspondences} \label{sec:corr}

If $X$ and $Y$ are projective varieties with $X$ irreducible of dimension $d_X$,  a correspondence of  degree $p$ is an element of
$$
\Corr_p(X,Y) := A_{d_X+p}(X\times Y).
$$
A degree $p$ correspondence $\gamma$ induces maps
$$
\gamma_*:A_k(X)\to A_{k+p}(Y),\ \ \gamma_*:H_k(X)\to H_{k+2p}(Y).
$$
If, moreover,  $X$ and $Y$ are smooth projective, we have correspondences of cohomological degree $p$, i.e., elements  
 $$
\gamma\in \Corr^p(Y,X) := A^{d_Y+p}(Y\times X),
$$
which induce 
$$
\gamma^*:A^k(Y)\to A^{k+p}(X),\ \ \gamma^*:H^k(Y)\to H^{k+2p}(X).
$$

\begin{definition}  \label{dfn:Factoring}
Let $\gamma\in\Corr_p(X,X) = A_{d+p}(X\times X)$  be a self-correspondence of degree $p$ where $d=d_X$.
\begin{enumerate}
\item
Let $Z$ be smooth and equi-dimensional. We say that $\gamma$  \emph{factors  through  $Z$ with shift $i$}  if there exist correspondences $\alpha\in\Corr_{i}Z,X)$ and $\beta\in\Corr_{-j}(X,Z)$ ($i-j=p$) such that  $\gamma=\alpha\comp \beta$  and  $d-(i+j)=\dim Z$.
\item
We say that $\gamma$ is \emph{supported on} $V\times W$ if 
$$
\gamma\in\ima{( A_{d+p}(V\times W)\mapright{(i\times j)_*}A_{d+p}(X\times X)})
$$
where $i:V\to X$ and $j:W\to X$ are inclusions of subvarieties of $X$.
\end{enumerate}
\end{definition}
The usefulness of these concepts  follows from the following evident results.
\begin{lemma}  
\begin{enumerate}
\item If a correspondence $\gamma\in\Corr_0(X,X)$ factors through $Z$ with shift $c$, then  $\gamma$  and $^t\gamma$ act  trivially on $A_j(X)$ for $j<c$ or $j>d-c$.
\item If a correspondence $\gamma\in\Corr_0(X,X)$ is supported on $V\times W\subset X\times X$, then $\gamma$ acts trivially on $A_j(X)$ for $j< \codim V$ or $j> \dim W$ 
and ${^t} \gamma$  acts trivially on $A_j(X)$ for $j< \codim W$ or $j> \dim V$.
\end{enumerate}
\label{lem:Factoring}
\end{lemma}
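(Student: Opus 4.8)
\emph{Proof plan.} Both statements are pure bookkeeping with the degrees of correspondences, the only geometric input being that $A_m(Z)=0$ when $m<0$ or $m>\dim Z$.

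For (1) the plan is to use the given factorisation $\gamma=\alpha\comp\beta$ directly. Since $\gamma$ has degree $0$ the two shifts in Definition~\ref{dfn:Factoring}(1) coincide, so $\beta\in\Corr_{-c}(X,Z)$, $\alpha\in\Corr_c(Z,X)$, and the dimension constraint in the definition reads $\dim Z=d-2c$. Then $\beta_*$ maps $A_j(X)$ into $A_{j-c}(Z)$, which is zero for $j-c<0$ or $j-c>\dim Z=d-2c$, i.e.\ for $j<c$ or $j>d-c$; hence $\gamma_*=\alpha_*\comp\beta_*$ kills $A_j(X)$ in that range. For the transpose I would use ${}^t\gamma={}^t\beta\comp{}^t\alpha$ with ${}^t\alpha\in\Corr_{-c}(X,Z)$ and repeat the count with ${}^t\alpha_*$ in place of $\beta_*$.

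For (2) I would first pass to desingularisations $\rho\colon\wt V\to V$ and $\sigma\colon\wt W\to W$, with $\wt V,\wt W$ smooth projective of dimensions $\dim V$ and $\dim W$, and set $f=i\comp\rho$, $g=j\comp\sigma$. Because we work with $\QQ$-coefficients, pushforward along the proper birational morphism $\rho\times\sigma$ is surjective on Chow groups, so the class $\bar\gamma\in A_d(V\times W)$ representing $\gamma$ lifts to some $\wt\gamma\in A_d(\wt V\times\wt W)$ with $\gamma=(f\times g)_*\wt\gamma$. The projection formula and functoriality for the morphisms $f$, $g$, $f\times g$ between smooth varieties then give, for $z\in A_j(X)$,
\[
\gamma_*z \;=\; g_*\bigl(\wt\gamma_*(f^*z)\bigr),
\]
where $f^*$ is the Gysin pullback of degree $-\codim V$, so $f^*z\in A_{j-\codim V}(\wt V)$, and $\wt\gamma$, seen as a correspondence from $\wt V$ to $\wt W$, has degree $\codim V$, so $\wt\gamma_*(f^*z)\in A_j(\wt W)$. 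Hence $\gamma_*z=0$ once $j<\codim V$ (forcing $A_{j-\codim V}(\wt V)=0$) or $j>\dim W$ (forcing $A_j(\wt W)=0$). Since ${}^t\gamma$ is supported on $W\times V$, exchanging the roles of $V$ and $W$ yields the statement for ${}^t\gamma$. If $V$ or $W$ is reducible or not equidimensional I would run this on each irreducible component and read $\codim V$, $\dim W$ as a minimum codimension, resp.\ maximum dimension, of a component.

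I do not expect any genuine obstacle: the only ingredient that is not purely formal is the appeal, in (2), to resolution of singularities together with surjectivity of proper pushforward on $\QQ$-Chow groups, and that is entirely standard; everything else is degree juggling.
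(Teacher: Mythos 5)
Your proof is correct and is essentially the argument the paper has in mind: the lemma is stated there without proof (as "evident"), and your degree count for (1) together with the standard resolution-of-the-support argument for (2) (surjectivity of proper pushforward on $\QQ$-Chow groups, then $\gamma_*z=g_*\bigl(\wt\gamma_*(f^*z)\bigr)$ and vanishing of Chow groups in negative degrees or above the dimension) is exactly the routine verification being left to the reader.
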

 

\subsection{Standard conjecture $B(X)$} \label{sec:ConjB}

Let $X$ be a smooth complex projective variety of dimension $d$, and $h\in H^2(X)$ the class of an ample line bundle. The hard Lefschetz theorem asserts that the map
  \[  L^{d-k}_X\colon H_{2d-k}(X)\to H_{k}(X)\]
  obtained by cap product with $h^{d-k}$ is an isomorphism for all $k< d$. One of the standard conjectures asserts that the inverse isomorphism is algebraic:

\begin{definition} Given a variety $X$, we say that \emph{$B_k(X)$ holds} if the isomorphism 
  \[  
  \Lambda^{d-k}= (L^{d-k})^{-1}\colon 
  H_k(X)\stackrel{\cong}{\rightarrow} H_{2d-k}(X)
  \]
  is induced by a correspondence. We say that the \emph{Lefschetz standard conjecture $B(X)$ holds} if $B_k(X)$ holds for all $k<d$.
 \end{definition}

 \begin{remark} \label{Bholds} 
 The Lefschetz (1,1) theorem implies that $B_k(X)$ holds if $k\le 1$ and hence it holds for  curves and surfaces.
 It is stable under products and hyperplane sections \cite{K0,K}  and so, in particular, it is true for complete intersections in products of projective spaces.
 It is known that $B(X)$ moreover holds for the following varieties: 
\begin{itemize} 
\item   abelian varieties \cite{K0,K};
\item  threefolds not of general type \cite{Tan};
\item  hyperk\"ahler varieties of 
 $K3^{[n]}$-type \cite{ChM};
 \item Fano varieties of lines on cubic hypersurfaces \cite[Corollary 6]{Lat}; 
 \item  $d$-dimensional varieties $X$ which have $A_k(X)_{}$ supported on a subvariety of dimension $k+2$ for all $k\le{d-3\over 2}$ \cite[Theorem 7.1]{V};
 \item  $d$-dimensional varieties $X$ which have $H_k(X)=N^{[{k\over 2}]}H_k(X)$ for all $k>d$ \cite[Theorem 4.2]{V2}.
\end{itemize}
 \end{remark}
Below we  shall use  the following  well  known implication  of  $B(X)$.
\begin{proposition}[\protect{\cite[Thm. 2.9]{K0} }]  \label{KunnIsAlg} Suppose that $B(X)$ holds. Then 
 the K\"unneth projectors are algebraic, i.e., there exist correspondences $\pi_k\in\Corr_0(X,X)$ such that $\pi_{k*} \mid_{H_j(X)} = \delta_{kj}.\id$ and $\Delta_X\sim_{\rm hom}\sum_k \pi_k$. \\
\end{proposition}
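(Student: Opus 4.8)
The plan is to realise the $\pi_k$ as polynomials in the Lefschetz $\mathfrak{sl}_2$-action on $H_*(X)$ and to observe that $B(X)$ makes every operator involved algebraic; this is essentially Kleiman's original argument.

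\textbf{Step 1: the Lefschetz $\mathfrak{sl}_2$-action is algebraic.} Hard Lefschetz endows $H_*(X)$ with an $\mathfrak{sl}_2$-representation whose raising operator is $L=(h\cap-)\colon H_k(X)\to H_{k-2}(X)$, whose lowering operator $\Lambda\colon H_k(X)\to H_{k+2}(X)$ is the one characterised by $\Lambda^{d-k}=(L^{d-k})^{-1}$ on $H_k(X)$ for $k<d$ together with the $\mathfrak{sl}_2$-relations, and whose semisimple element $H=L\comp\Lambda-\Lambda\comp L$ acts on $H_j(X)$ by the scalar $d-j$. Now $L$ is induced by an algebraic self-correspondence of degree $-1$ (cap product with the class $h$ of a hyperplane section, e.g. $(\iota\times\iota)_*\Delta_Z$ for $\iota\colon Z\hookrightarrow X$ a smooth hyperplane section), and the hypothesis $B(X)$ says exactly that each $\Lambda^{d-k}$ ($k<d$) is induced by a correspondence; by \cite[Thm.~2.9]{K0} this implies $\Lambda$ itself is induced by a correspondence in $\Corr_1(X,X)$. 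Hence $H=L\comp\Lambda-\Lambda\comp L$ is induced by a correspondence in $\Corr_0(X,X)$, being a difference of compositions of algebraic correspondences (all identities taken modulo homological equivalence).

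\textbf{Step 2: build the projectors by interpolation in $H$.} The eigenvalues $d-j$ of $H$ on $H_j(X)$, $0\le j\le 2d$, are pairwise distinct, so define $\pi_k\in\End H_*(X)$ by the Lagrange interpolation formula
\[
\pi_k \;=\; \prod_{\substack{0\le j\le 2d\\ j\ne k}} \frac{H-(d-j)\,\id}{j-k}.
\]
This is a polynomial in $H$ with rational coefficients; since $H$ and $\id=\Delta_X$ are both induced by correspondences in $\Corr_0(X,X)$, so is $\pi_k$, and we fix such a correspondence, still written $\pi_k\in\Corr_0(X,X)$. Because $H$ acts on $H_j(X)$ by $d-j$, the operator $\pi_{k*}$ acts on $H_j(X)$ by the scalar $\prod_{j'\ne k}\frac{j'-j}{j'-k}$, which is $1$ when $j=k$ and is $0$ when $j\ne k$ (the factor with $j'=j$ then vanishes). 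Thus $\pi_{k*}\mid_{H_j(X)}=\delta_{kj}\cdot\id$.

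\textbf{Step 3: conclude.} From Step 2, $\sum_{k=0}^{2d}\pi_{k*}$ acts as the identity on every $H_j(X)$, hence $\sum_k\pi_k$ and $\Delta_X$ induce the same element of $\End H_*(X)$, i.e. $\Delta_X\sim_{\rm hom}\sum_k\pi_k$. (If desired, mutual orthogonality and self-duality of the $\pi_k$ modulo homological equivalence follow in the same way, since they hold at the level of the induced endomorphisms.) The only non-formal ingredient is the implication in Step 1 from algebraicity of the powers $\Lambda^{d-k}$ — which is what $B(X)$ literally gives — to algebraicity of the single $\mathfrak{sl}_2$-operator $\Lambda$; this is precisely \cite[Thm.~2.9]{K0}, so in practice it is quoted rather than reproved, and everything else is the $\mathfrak{sl}_2$-bookkeeping above.
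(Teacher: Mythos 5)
The paper offers no argument of its own for this proposition — it is quoted directly from Kleiman — and your proof is precisely the classical argument behind that citation: $B(X)$ gives an algebraic $\Lambda$, its commutator with the algebraic operator $L$ is the counting operator with distinct eigenvalues $d-j$ on $H_j(X)$, and Lagrange interpolation in that operator yields the K\"unneth projectors as algebraic correspondences. This is correct (the only slip is cosmetic: the $\mathfrak{sl}_2$-operator $\Lambda$ satisfies $\Lambda^{d-k}=(L^{d-k})^{-1}$ only up to nonzero rational scalars on the Lefschetz pieces, but Steps 2--3 use nothing beyond $\Lambda$ being the algebraic $\mathfrak{sl}_2$-partner of $L$), so it is essentially the same approach as the paper, both ultimately resting on \cite[Thm.~2.9]{K0}.
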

Refinements will be stated below in Section~\ref{sec:NivandConiv}.

\subsection{Finite dimensional motives and nilpotence} \label{sec:FinDImMots}

We refer to    \cite{An}, \cite{Iv},  \cite{Kim},  \cite{MNP} for the definition of a Chow motive and its dimension. We also need the concept of  a \emph{motive  of abelian type},
by definition   a  Chow  motive $M$ for which some twist $M(n)$ is a direct summand of the motive of a product of curves.

A crucial property of varieties with finite-dimensional motive is the nilpotence theorem.

\begin{theorem}[Kimura \cite{Kim}]\label{nilp} Let $X$ be a smooth projective variety with finite-dimensional motive. Let  
$\Gamma\in 
\Corr_0(X,X)$ be a correspondence which is numerically trivial. Then there exists a nonnegative integer $N$ such that
     $\Gamma^{\comp  N}=0$ in $ \Corr_0(X,X)$.
\end{theorem}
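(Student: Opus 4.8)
The plan is to deduce the statement from the stronger fact that, on a motive with finite-dimensional Chow motive, the two-sided ideal of numerically trivial morphisms,
\[
\overline{\mathcal N}(A,B):=\ker\bigl(\Hom_{\mathrm{rat}}(A,B)\longrightarrow\Hom_{\mathrm{num}}(A,B)\bigr),
\]
is \emph{nilpotent} as an ideal, i.e.\ there is an integer $N$ such that $\gamma_N\comp\cdots\comp\gamma_1=0$ as soon as all $\gamma_i$ lie in $\overline{\mathcal N}$. A numerically trivial correspondence $\Gamma\in\Corr_0(X,X)$ is exactly an element of $\overline{\mathcal N}(h(X),h(X))$, so taking $\gamma_1=\cdots=\gamma_N=\Gamma$ gives $\Gamma^{\comp N}=0$. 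By the definition of finite-dimensionality I would fix a decomposition $h(X)=M^{+}\oplus M^{-}$ with $\wedge^{a+1}M^{+}=0$ and $\mathrm{Sym}^{b+1}M^{-}=0$ for suitable $a,b\ge 0$, and reduce the nilpotence of $\overline{\mathcal N}(h(X),h(X))$ to the corresponding statements for $M^{+}$ and $M^{-}$ together with a bookkeeping argument for the mixed blocks.

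The heart of the matter is a categorical Cayley--Hamilton identity: in any rigid $\QQ$--linear tensor category, if $\wedge^{n+1}N=0$ then every $f\in\End(N)$ satisfies
\[
\sum_{i=0}^{n}(-1)^{i}\,e_{i}(f)\,f^{\comp(n-i)}=0,\qquad e_{0}(f)=\id,
\]
where the $e_{i}(f)\in\End(\mathbf 1)=\QQ$ are produced from the categorical traces $p_{k}(f)=\Tr(f^{\comp k})$ by Newton's identities. First I would prove this by pushing the vanishing antisymmetrising idempotent of $N^{\otimes(n+1)}$ through the evaluation and coevaluation maps, mimicking the classical proof for an $n$-dimensional vector space; there is a parallel identity (Kimura's odd version) when $\mathrm{Sym}^{n+1}N=0$, obtained from the symmetrising idempotent and the super--trace. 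Next, for $f\in\overline{\mathcal N}(N,N)$ one has $f^{\comp k}\in\overline{\mathcal N}$, and since the categorical trace is compatible with the monoidal functor $\Mot_{\mathrm{rat}}\to\Mot_{\mathrm{num}}$ it vanishes on $\overline{\mathcal N}$; hence $p_{k}(f)=0$ for all $k\ge 1$, so $e_{i}(f)=0$ for all $i\ge 1$, and therefore $f^{\comp n}=0$. Thus every element of the $\QQ$--algebra $\overline{\mathcal N}(M^{+},M^{+})$ is $a$-nilpotent and every element of $\overline{\mathcal N}(M^{-},M^{-})$ is $b$-nilpotent; by the Nagata--Higman theorem each of these two $\QQ$--algebras is nilpotent as an algebra, with an exponent depending only on $a$, respectively on $b$.

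It then remains to pass from the two diagonal summands to all of $h(X)$. I would write a product $\gamma_N\comp\cdots\comp\gamma_1$ of elements of $\overline{\mathcal N}(h(X),h(X))$ in block form for $h(X)=M^{+}\oplus M^{-}$: each of its four blocks is a sum of words $g_N\comp\cdots\comp g_1$ in which $g_i$ is one of the four blocks of $\gamma_i$ (all of them lying in the appropriate piece of the ideal $\overline{\mathcal N}$) and the sources and targets follow a string $v_0v_1\cdots v_N$ of signs. Grouping maximal monochromatic runs, each word becomes a composition of ``diagonal segments'', a segment being a composite of consecutive blocks that lands in $\overline{\mathcal N}(M^{s},M^{s})$ for a single sign $s$; by the previous step a run of length $\ge a+1$ inside $M^{+}$, or $\ge b+1$ inside $M^{-}$, composes to $0$, so a nonzero word can have only short monochromatic runs, which bounds its length. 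Choosing $N$ beyond this explicit bound forces every word, hence every block, hence the whole product, to vanish.

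The step I expect to be the real obstacle is establishing the categorical Cayley--Hamilton identity together with its odd analogue: one must extract a universal polynomial relation purely from the vanishing of the (anti)symmetrising projector on $N^{\otimes(n+1)}$, which takes some care with the representation theory of $S_{n+1}$ and with the compatibility between the trace maps and the symmetry constraint. Once that identity is in hand, the passage to algebra-nilpotence of the diagonal pieces (Nagata--Higman) and the assembling of the mixed case by the block-matrix bookkeeping above are essentially formal.
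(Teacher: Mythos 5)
The paper itself offers no proof of this statement: it is quoted from Kimura \cite{Kim}, so the only meaningful comparison is with Kimura's published argument and its strengthening by Andr\'e--Kahn/O'Sullivan. Your strategy is precisely that one: split $h(X)=M^+\oplus M^-$, derive a categorical Cayley--Hamilton identity from $\wedge^{a+1}M^+=0$ (resp.\ $\operatorname{Sym}^{b+1}M^-=0$), use that the categorical trace kills numerically trivial endomorphisms to conclude bounded-index nilpotence in $\overline{\mathcal N}(M^{\pm},M^{\pm})$, then Nagata--Higman and a sign bookkeeping for the mixed blocks. Those first steps are sound (and indeed prove the stronger ideal-nilpotence, of which the theorem for a single $\Gamma$ is a special case).

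The genuine problem is in your assembly step. First, a composition of $a+1$ \emph{distinct} elements of $\overline{\mathcal N}(M^+,M^+)$ need not vanish merely because every single element $x$ there satisfies $x^{\comp a}=0$; what vanishes is a composition of $R^+$ elements, where $R^+$ is the Nagata--Higman exponent attached to $a$ (similarly $R^-$ for $b$), so your phrase ``a run of length $\ge a+1$ composes to $0$'' must be replaced by the NH exponent you invoked one paragraph earlier. More seriously, bounding the lengths of \emph{maximal monochromatic runs} does not bound the length of a word: a word whose sign string alternates $+,-,+,-,\dots$ has all runs of length one and contains no nontrivial diagonal segment at all, yet can be arbitrarily long, so the conclusion ``only short monochromatic runs, which bounds its length'' is a non sequitur. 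The standard repair is to group differently: in a word $g_N\comp\cdots\comp g_1$ with sign string $v_0v_1\cdots v_N$, compose the blocks lying between two \emph{consecutive occurrences of the same sign} $s$; each such composite lies in $\overline{\mathcal N}(M^{s},M^{s})$ because $\overline{\mathcal N}$ is an ideal, regardless of whether the composite passes through the other summand. Hence if $+$ occurs more than $R^+$ times among $v_0,\dots,v_N$ the word contains a composition of $R^+$ elements of $\overline{\mathcal N}(M^+,M^+)$ and vanishes, and likewise for $-$; so any nonzero word satisfies $N+1\le R^++R^-$. With this change the proof closes and coincides with the Andr\'e--Kahn form of Kimura's argument.
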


 Actually, the nilpotence property (for all powers of $X$) could serve as an alternative definition of finite-dimensional motive, as shown by a result of Jannsen \cite[Corollary 3.9]{J4}.
   Conjecturally, any variety has finite-dimensional motive \cite{Kim}. We are still far from knowing this, but at least there are quite a few non-trivial examples:

\begin{remark}  \label{findinmots}
The following varieties are known to have a  finite-dimensional motive: 
\begin{itemize}
\item varieties dominated by products of curves \cite{Kim} as well as  varieties of dimension $\le 3$ 
rationally dominated by products of curves \cite[Example 3.15]{V3};
\item  $K3$ surfaces with Picard number $19$ or $20$ \cite{P};
\item surfaces not of general type with vanishing geometric genus \cite[Theorem 2.11]{GP} as well as 
many examples of surfaces of general type with $p_g=0$ \cite{PW,V8};
\item  Hilbert schemes of surfaces known to have finite-dimensional motive \cite{CM};
\item Fano varieties of lines in smooth cubic threefolds, and Fano varieties of lines in smooth cubic fivefolds \cite{fanocubic};
\item  generalized Kummer varieties \cite[Remark 2.9(\rom2)]{Xu};
\item   $3$-folds with nef tangent bundle   \cite[Example 3.16]{V3}),  
as well as  certain 3-folds of general type \cite[Section 8]{V5};
\item  varieties $X$ with Abel-Jacobi trivial Chow groups (i.e. $A^k_{AJ}X_{}=0$   for all $k$)  \cite[Theorem 4]{V2};
\item products of varieties with finite-dimensional motive \cite{Kim}.
\end{itemize}
 \end{remark}

\begin{remark*}
It is worth pointing out that up till now, all examples of finite-dimensional Chow motives happen to be  of abelian type. On the other hand, ``many'' motives are known to lie outside this subcategory, e.g. the motive of a general hypersurface in $\PP^3$ \cite[Remark 2.34]{Ay}.
\end{remark*}

The following result is a kind of ``weak nilpotence'' for subvarieties of a variety $M$ with finite-dimensional motive; any correspondence that comes from $M$ and is numerically trivial turns out to be nilpotent.

\begin{proposition}\label{weaknilp} 
Let $M$ be a smooth projective variety with finite-dimensional Chow motive
and let  $X\subset M$ be a smooth projective subvariety of   codimension  $r$.
For any correspondence  $\Gamma\in\Corr_{r}(M,M)$ with the property that  the restriction
  \[ 
  \Gamma\vert_X\ \ \in\Corr_0(X,X) 
   \]
  is homologically trivial,  there exists a nonnegative integer $N$ such that
  \[  
  (\Gamma\vert_X)^{\comp  N}=0 \text   {  in   }   \Corr_0(X,X) .
  \]
  \end{proposition}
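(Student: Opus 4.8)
The plan is to reduce the weak nilpotence statement for $\Gamma\vert_X$ to Kimura's nilpotence theorem (Theorem~\ref{nilp}) applied to the variety $X$. Since $X$ is a subvariety of $M$ and $M$ has finite-dimensional motive, the first thing to check is that $X$ itself has finite-dimensional motive: this follows because a smooth hyperplane (or complete intersection) section is motivated by $M$ together with the ambient projective space via the Lefschetz decomposition, so $h(X)$ is built from $h(M)$, Tate twists, and motives of lower-dimensional complete intersections; finite-dimensionality is stable under direct sums, tensor products, and direct summands. (In the set-up of the paper, $X$ is a complete intersection in $M$, so this is where that hypothesis enters; more care is needed if $X$ is allowed to be an arbitrary subvariety, in which case one should invoke that $\Gamma\vert_X$ lands in the image of a correspondence coming from $M$.)

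Once $X$ has finite-dimensional motive, Kimura's theorem says any \emph{numerically} trivial self-correspondence of $X$ is nilpotent. So it suffices to show that $\Gamma\vert_X\in\Corr_0(X,X)$ is numerically trivial. By hypothesis it is homologically trivial, and over $\C$ homological equivalence (with respect to singular cohomology) implies numerical equivalence for correspondences, since the numerical pairing factors through the cohomological cycle class map. Hence $\Gamma\vert_X$ is numerically trivial, and Theorem~\ref{nilp} gives an integer $N$ with $(\Gamma\vert_X)^{\comp N}=0$ in $\Corr_0(X,X)$.

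The only genuinely substantive point is the first step — passing finite-dimensionality from $M$ to $X$. In the ambient context of the paper $X$ is a complete intersection, and the clean way to see $h(X)$ is finite-dimensional is: all cohomology of $X$ except the middle piece is Tate by weak Lefschetz, so the corresponding Chow–Künneth projectors exist (using $B(M)$, or just because these pieces are generated by powers of the hyperplane class) and cut out motives that are sums of Tate twists; the remaining ``middle'' motive $t(X)$ is then a direct summand of $h(M)$ up to a twist and up to these Tate pieces — essentially the statement recalled later as Corollary~\ref{cor:fdm} in a weaker form. Finite-dimensionality of $h(M)$ then forces finite-dimensionality of $t(X)$, hence of $h(X)$. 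One should state this carefully to make sure the argument does not secretly require the conclusion one is aiming for; the honest route is to cite the stability of finite-dimensionality under the operations (sum, summand, tensor, twist) together with the Lefschetz-type description of $h(X)$.

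I expect the main obstacle, or at least the main thing requiring care in the write-up, to be exactly this reduction of finite-dimensionality to $X$: it must be done without circularity and with attention to whether one needs $X$ to be a complete intersection or merely a smooth subvariety for which the relevant correspondence factors through $M$. The rest — $\mathrm{hom}\Rightarrow\mathrm{num}$ and the invocation of Kimura — is standard and quick.
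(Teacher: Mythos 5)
Your reduction has a genuine gap, and it is fatal to the approach: finite-dimensionality of $h(M)$ does \emph{not} pass to a smooth subvariety or even to a smooth complete intersection $X\subset M$. The weak Lefschetz theorem only identifies the non-middle cohomology of $X$ with that of $M$ (which need not be Tate at all -- in the paper $M$ is e.g.\ an abelian threefold or a product of curves); and the middle \emph{variable} part of $H^d(X)$ is genuinely new and is in no way a direct summand of $h(M)$ up to Tate twists. Already for $M=\PP^4$, whose motive is trivially finite-dimensional, finite-dimensionality of a general hypersurface $X\subset\PP^4$ is a wide open problem, and the paper itself points out that the motive of a general hypersurface in $\PP^3$ is not even of abelian type. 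Indeed, producing finite-dimensionality of $h(X)$ is one of the \emph{conclusions} the paper is after (Corollary~\ref{cor:fdm}), and it requires the extra niveau hypothesis $c=[\frac{d}{2}]$; invoking it here is both unavailable and circular, as you half-suspect. Your remaining steps (hom $\Rightarrow$ num over $\C$, then Kimura) are fine, but they are applied to the wrong variety.

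The paper's proof avoids this entirely by applying Kimura's nilpotence theorem to $M$, where finite-dimensionality is a hypothesis, and then transporting nilpotence to $X$ by an explicit formula. Concretely: with $i\colon X\into M$ the inclusion, one writes $\Gamma\vert_X=i^*\comp\Gamma\comp i_*$, sets $L=i_*\comp i^*$ and $T=\Gamma\comp L\in\Corr_0(M,M)$, and checks by induction that $(\Gamma\vert_X)^{\comp(k+1)}=i^*\comp T^{\comp k}\comp\Gamma\comp i_*$. Since $T^{\comp 2}=\Gamma\comp i_*\comp(\Gamma\vert_X)\comp i^*$ is homologically trivial whenever $\Gamma\vert_X$ is, Kimura's theorem on $M$ gives $T^{\comp 2\ell}=0$, hence $(\Gamma\vert_X)^{\comp(2\ell+1)}=0$. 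Note this works for an arbitrary smooth subvariety $X$, with no assumption on $h(X)$ whatsoever -- which is exactly the generality in which the proposition is stated and later used. If you want to salvage your write-up, replace the transfer-of-finite-dimensionality step by this factorization argument; the ``more care'' you flag for arbitrary subvarieties is precisely this, and it is the whole content of the proposition.
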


\begin{proof} 
Put $L = i_*\comp  i^*\in\Corr_{-c}(M,M)$ and $T=\Gamma\comp  L\in\Corr_0(M,M)$. We have 
$$
\Gamma\vert_X = (i\times i)^*(\Gamma) = i^*\comp \Gamma\comp  i_*.
$$
By induction on $k$ one shows that
\begin{equation}
\label{formula}
\Gamma\vert_X^{k+1} = i^*\comp  T^k\comp \Gamma\comp  i_*
\end{equation}
for all $k\ge 0$. As 
\begin{eqnarray*}
T^2 & = & \Gamma\comp  i_*\comp  i^*\comp \Gamma\comp  i_*\comp  i^* \\
& = & \Gamma\comp  i_*\comp \Gamma_X\comp  i^*,
\end{eqnarray*}
$T^2$ is homologically trivial. Hence $T^2$ is nilpotent by \cite{Kim}, say $T^{2\ell}=0$. Hence $\Gamma_X$ is nilpotent of index $N = 2\ell+1$ by (\ref{formula}).
\end{proof}

\subsection{Coniveau and niveau filtration} \label{sec:NivandConiv}

\begin{definition}[Coniveau filtration \cite{BO}]
\label{con}
Let $X$ be a smooth projective variety of dimension $d$.
 The $j$-th level  of the \emph{coniveau filtration} on  cohomology (with $\QQ$-coefficients) 
is defined as the subspace generated by the classes supported on subvarieties $Z$ of dimension $\le d-j$:
  \[
  N^j H^k(X)= \sum_Z \ima\bigl(i_*:H^{k}_Z(X)\to H^k(X)\bigr). 
   \]
  This gives a decreasing filtration  on $H^k(X)$.
  We may instead use  smooth varieties $Y$  of dimension exactly $d-j$ provided we use  degree $j$  correspondences from $Y$ to $X$: such a correspondence sends $Y$ to a cycle $Z$ of dimension $\le d-j$ in $X$ and all cycles can be obtained in this way. When we rewrite this  in terms of homology we get  
\[ 
N^j H_k(X) =\sum _{Y,\gamma} \ima \bigl(\gamma_*: H_{k}(Y)\to H_k(X)\bigr),
                                                  \] 
where $Y$ is smooth projective of dimension  $ k -j$ and $\gamma\in \Corr_0(Y, X)$.
   \end{definition}

Since  the $j$-th level of the filtration consists of  the classes supported  on varieties of dimension $k-j$,
the filtration stops beyond $k/2$: a variety of dimension $<k/2$ has no homology in
 degrees $\ge k$:
\[
 0 =N^{[{k\over 2}]+1} H_kX \subset N^{[{k\over 2}]} H_k(X) \subset \cdots \subset N^1 H_k(X)\subset N^0H_k(X)=H_k(X).
\]

\begin{remark*} Under Poincar\'e duality  one has  an identification  $N^jH^k(X) = N^{d-k+j}H_{2d-k}(X)$.  

\end{remark*} 

Vial \cite{V4} introduced the following variant of the coniveau filtration:
\begin{definition}[Niveau filtration]
 Let $X$ be a smooth projective variety. The {\em niveau filtration} on homology is defined as
  \[ \wt{N}^j H_k(X)=\sum \ima\bigl( \gamma ^*: H_{k-  2j}(Z)\to H_k(X)\bigr)\ ,\]
  where the sum is taken  over all smooth projective varieties $Z$ of dimension $ k-  2j$, and all correspondences $\gamma\in \Corr_{j}(Z\times X)_{}$.
  \end{definition}

\medskip
\begin{remark}\label{is}
The idea behind this definition is that one should be able to lower the dimension of the variety $Y$ appearing in Definition \ref{con} using the Lefschetz standard 
conjecture. By Hard Lefschetz we have an isomorphism $\Lambda^j_Y:H_{k-2j}(Y)\mapright{\cong}H_k(Y)$ and by the Lefschetz hyperplane theorem a surjection 
$\iota_*:H_{k-2j}(Z)\to H_{k-2j}(Y)$ with $Z = Y\cap H_1\ldots\cap H_j$ a complete intersection of  $Y$ with $j$ general hyperplanes. Hence there is a surjective map 
$\iota_*\comp \Lambda^{j}_Y:H_{k-2j}(Z)\to H_k(Y)$ which is algebraic if $B_{k-2j}(Y)$ holds and thus  $N^j H_k(X)= \wt{N}^j H_k(X)$.

This discussion  also shows that
\begin{itemize}
\item  $\wt{N}^j H_k  (X)\subset N^j H_k (X)$ 
\item  $\wt{N}^j H_k (X)= N^j H_k(X$ if $k-2j \le 1$.
\end{itemize}
\end{remark}

\subsection{On variable  and fixed cohomology}
\label{sec:VarAndFixed}
Let $M$ be a smooth projective variety of dimension $d+r$ and $i:X \into M$ a smooth complete intersection of dimension $d$.
Let us assume $B(M)$ so that  the operator   $\Lambda^r$ on $H_*(M)$ is induced by an algebraic cycle  $\Lambda_M^r$ on $M\times M$.
Set 
\[
\pi^{\rm fix}(X) :=  i^* \Lambda_M^r i_*,\quad  \pi^{\rm var}(X)=\Delta - \pi^{\rm fix}(X).
\]
Recall that setting 
\[
\aligned
H_d^{\rm fix}(X) &=\im( i^* : H_{d+2r}(M) \to H_d(X)),\\
H_d^{\rm var}(X) &= \ker(i_*:H_d(X)\to H_d(M)),
\endaligned
\]
one has  a direct sum decomposition
\[ H_d(X)=
H^{\rm fix}_d(X)   \oplus H^{\rm var}_d(X),
\]
which is orthogonal with respect to the intersection product. We claim the following result.
\begin{lemma}  \label{lem:vanproj}  The operators $ \pi^{\rm fix}(X)$ and $\pi^{\rm var}(X)$ are  homological projectors which
give  the  projection  of the total cohomology onto $H^{\rm fix}(X)$, respectively $H^{\rm var}(X)=H^{\rm var}_d(X)$. 
\end{lemma}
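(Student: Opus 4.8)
The plan is to verify the three assertions of Lemma~\ref{lem:vanproj} directly from the definitions: that $\pi^{\rm fix}(X)$ and $\pi^{\rm var}(X)$ are homological projectors, that $\pi^{\rm fix}(X)$ projects $H_*(X)$ onto $H_*^{\rm fix}(X)$, and that $\pi^{\rm var}(X)$ projects onto $H_d^{\rm var}(X)$ (with $H^{\rm var}(X)$ concentrated in degree $d$). Since $\pi^{\rm fix}(X)+\pi^{\rm var}(X)=\Delta$ acts as the identity and the two operators are orthogonal complements of each other, it suffices to prove that $\pi^{\rm fix}(X)$ is an idempotent endomorphism of $H_*(X)$ with image exactly $H_*^{\rm fix}(X)$; everything else follows formally (orthogonality from $\pi^{\rm var}\comp\pi^{\rm fix}=(\Delta-\pi^{\rm fix})\comp\pi^{\rm fix}=\pi^{\rm fix}-(\pi^{\rm fix})^2=0$, and the image of $\pi^{\rm var}$ being the kernel of $\pi^{\rm fix}$).

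First I would recall the relevant Lefschetz-theoretic input. For a smooth complete intersection $i\colon X\into M$ of dimension $d$ inside $M$ of dimension $d+r$, the weak Lefschetz theorem gives that $i_*\colon H_d(X)\to H_{d+2r}(M)$ (Gysin) and $i^*$ are related by the projection formula, and on $H_d$ the composite $i_*\comp i^*$ equals cap product with the class of $X$, i.e.\ the Lefschetz operator $L_M^r$ restricted appropriately; dually $i^*\comp i_*$ on $H_d(X)$ is cap product with $i^*[X] = $ the relevant power of the hyperplane class. The key identity is: on $H_{d+2r}(M)$ one has $i_*\comp i^* = L_M^r$ (up to the identification of homology degrees), so that $\Lambda_M^r\comp i_*\comp i^* = \Lambda_M^r\comp L_M^r = \id$ on $H_{d+2r}(M)$, since by $B(M)$ the cycle $\Lambda_M^r$ induces the inverse of hard Lefschetz $L_M^r\colon H_{d+2r}(M)\xrightarrow{\cong}H_d(M)$. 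I should be a little careful with indexing: $\Lambda^r$ goes $H_{d}(M)\to H_{d+2r}(M)$ as the inverse of $L^r\colon H_{d+2r}(M)\to H_d(M)$, so the correct composite to check is $\Lambda_M^r\comp (i_*\comp i^*)$ acting on the appropriate piece, and the weak-Lefschetz statement that $i^*\colon H_{d+2r}(M)\to H_d(X)$ is injective with $i_*\comp i^*$ being (a power of) the Lefschetz operator is what makes this work.

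Then the idempotence computation runs as follows. We have
\[
\pi^{\rm fix}(X)\comp\pi^{\rm fix}(X) = (i^*\comp\Lambda_M^r\comp i_*)\comp(i^*\comp\Lambda_M^r\comp i_*) = i^*\comp\Lambda_M^r\comp(i_*\comp i^*)\comp\Lambda_M^r\comp i_*.
\]
Substituting $i_*\comp i^* = L_M^r$ (on the relevant homology group) and using $\Lambda_M^r\comp L_M^r\comp\Lambda_M^r = \Lambda_M^r$ (which holds because $\Lambda_M^r$ and $L_M^r$ are inverse isomorphisms on middle-dimensional homology under $B(M)$), we get $\pi^{\rm fix}(X)\comp\pi^{\rm fix}(X) = i^*\comp\Lambda_M^r\comp i_* = \pi^{\rm fix}(X)$ as maps on $H_*(X)$. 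For the image: $\pi^{\rm fix}(X)$ clearly factors through $i^*$, so its image lies in $\im(i^*\colon H_*(M)\to H_*(X)) = H_*^{\rm fix}(X)$; conversely, if $\xi = i^*\eta \in H_*^{\rm fix}(X)$ then $\pi^{\rm fix}(X)(\xi) = i^*\comp\Lambda_M^r\comp i_*\comp i^*\eta = i^*\comp\Lambda_M^r\comp L_M^r\,\eta = i^*\eta = \xi$, so $\pi^{\rm fix}(X)$ is the identity on $H_*^{\rm fix}(X)$, proving the image is exactly $H_*^{\rm fix}(X)$ and confirming idempotence once more. Finally, $\pi^{\rm var}(X) = \Delta - \pi^{\rm fix}(X)$ is then a projector onto $\ker\pi^{\rm fix}(X)$; away from degree $d$ the Lefschetz theorems give $H_k(X) = H_k^{\rm fix}(X)$, so $\pi^{\rm var}(X)$ vanishes there, while in degree $d$ its image is the orthogonal complement $H_d^{\rm var}(X) = \ker(i_*)$ by the orthogonal direct sum decomposition recalled just before the lemma.

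\textbf{Main obstacle.} The only delicate point is bookkeeping with the homological degree shifts and verifying that $i_*\comp i^*$ genuinely equals the Lefschetz operator $L_M^r$ on the relevant graded piece (and that $B(M)$ furnishes $\Lambda_M^r$ as its two-sided inverse there in the precise form $\Lambda_M^r L_M^r \Lambda_M^r = \Lambda_M^r$). This is standard — it is the content of the weak and hard Lefschetz theorems together with the definition of $B(M)$ — but one must state the degree conventions carefully so that the composites $i^*\comp\Lambda_M^r\comp i_*$ make sense as self-correspondences of $X$ of degree $0$ and land in the right cohomological degrees; once the indexing is pinned down, the algebra is purely formal.
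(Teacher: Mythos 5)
Your proposal follows essentially the paper's own route: the same idempotence computation $(\pi^{\rm fix})^{\comp 2}=i^*\comp\Lambda^r\comp(i_*\comp i^*)\comp\Lambda^r\comp i_*=i^*\comp\Lambda^r\comp L^r\comp\Lambda^r\comp i_*=\pi^{\rm fix}$, the same use of $i_*\comp i^*=L^r$, and the same appeal to the decomposition $H_d(X)=H_d^{\rm fix}(X)\oplus H_d^{\rm var}(X)$. The one step that fails as you justify it is the ``converse'' verification that $\pi^{\rm fix}$ is the identity on $H_*^{\rm fix}(X)$: you use $\Lambda_M^r\comp L_M^r=\id$, but this holds only where $L_M^r\colon H_{k+2r}(M)\to H_k(M)$ is injective, i.e.\ for $k\ge d$. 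For $k<d$ this map has a kernel (primitive classes of $M$ in cohomological degree $>\dim M-r$ are killed by $L_M^r$), so $\Lambda^rL^r\eta\ne\eta$ in general; and these low degrees are exactly where you need $\pi^{\rm fix}=\id$ on all of $H_k(X)$ in order to conclude that $\pi^{\rm var}$ vanishes outside the middle degree, which is part of the statement. Your ``main obstacle'' paragraph treats this as pure bookkeeping, but it is not: the two-sided inverse relation is genuinely false off the middle degree.

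The repair is exactly the paper's manoeuvre: instead of inverting on the source, test with $i_*$. The image of $i_*$ is contained in $L^rH_*(M)$ (since $i_*$ kills $H^{\rm var}$ and $i_*H^{\rm fix}(X)=i_*\comp i^*H_*(M)=L^rH_*(M)$), and on $L^rH_*(M)$ one does have $L^r\comp\Lambda^r=\id$; hence $i_*\comp\pi^{\rm var}=0$, i.e.\ $\im\pi^{\rm var}\subseteq\ker i_*$. Together with $\im\pi^{\rm fix}\subseteq\im i^*=H^{\rm fix}_*(X)$, with $\pi^{\rm fix}+\pi^{\rm var}=\id$, and with $\ker i_*=0$ in degrees $k\ne d$, this identifies both operators as the asserted projections in every degree. (Alternatively, you can rescue your own computation by noting that classes in $\ker L_M^r$ restrict to zero on $X$ — pair $i^*\eta$ against $i^*H^{2d-m}(M)=H^{2d-m}(X)$ and use the projection formula — so that $i^*(\Lambda^rL^r\eta-\eta)=0$ even when $\Lambda^rL^r\eta\ne\eta$.) Finally, your identification of $\im\pi^{\rm var}$ in degree $d$ with $\ker i_*$ ``by the orthogonal decomposition'' tacitly needs the easy remark that $\pi^{\rm fix}$ kills $\ker i_*$ because it factors through $i_*$; with that remark, or with the computation $i_*\comp\pi^{\rm var}=0$ above, the argument is complete.
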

\proof   We first observe that $i_* : H_*(X) \to L^r  H_* (M)$ since $i_* H^{\rm fix}(X)= i_*\comp i^* H(M)= L^r H(M)$. 
 On the image of $L$ the two operators $L$ and $\Lambda$ are inverses. So, since\footnote{In fact this is only true up to a multiplicative constant
 but changing $\Lambda^r$ accordingly corrects this.}   $i^*\comp i_* =  L^r$, we find 
\[
\aligned
(i^*\comp \Lambda^r \comp  i_*)^2  &= i^* \comp \Lambda^r \comp i_* i^* \comp \Lambda^r \comp  i_*\\
 &=  i^* \comp    \Lambda^r  \comp L^r      \Lambda^r \comp  i_* \\
 &=  i^*\comp \Lambda^r \comp  i_*,
 \endaligned
\]
i.e. $\pi^{\rm fix}$ is indeed a projector, and so is  $ \pi^{\rm var}$.  These  projectors define   a splitting  on cohomology given by  
\[
z= i^*\Lambda^r  i_* z + (z-     i^*\Lambda^r    i_* z).
\]
On the image of $i_*$ the two operators $L$ and $\Lambda$ commute and are each others inverse and so
\[
\aligned
i_*  (z-     i^*\Lambda^r   i_* z) & = i_*z - L^r \Lambda ^r i_*z\\
&= i_*z -i_*z =0
\endaligned
\]
which shows that $\pi^{\rm var}$ indeed gives the projection onto variable  homology and so $\pi^{\rm fix}$ projects onto the fixed cohomology.
 \endproof
 
 \begin{remark} 
 The degree zero correspondences $\pi^{\rm fix}$ and $\pi^{\rm var}$ are not necessarily projectors on the level of Chow groups,
 although one can show that finite-dimensionality of   $h(M)$ and $B(M)$ can be used to modify these correspondences in such a way that they become projectors.
 For what follows  we do not  need this.
 \end{remark}

\section{Niveau filtrations and  polarisations}
\subsection{Polarisations} \label{sec:Pols}

Recall that for $k\le d=\dim X$ we have the Lefschetz decomposition
$$
H^k(X) = \oplus_r
L^r H^{k-2r}_{\rm pr}(X).
$$ 
Following \cite[p. 77]{Weil} we define a polarisation $Q_X$ on $H^k(X)$ as follows. Given $a$,$b\in H^k(X)$, write $a=\sum_r L^ra_r$, $b=\sum_r L^rb_r$ and define 
$$
\aligned
Q_X(a,b)  &= \sum_r (-1)^{{k(k-1)\over 2}+r}\langle L^{d-k+2r}a_r,b_r\rangle 
\endaligned
$$
where 
$$
\langle \, ,\rangle:H^{2d-k+2r}(X)\otimes H^{k-2r}(X)\to H^{2d}(X)\cong\QQ
$$
denotes the cup product. As the Lefschetz decomposition is $Q_X$-orthogonal, we can rewrite this in the following form. Let $p_r:H^k(X)\to L^rH^{k-2r}_{\rm pr}(X)$ be the projection, and define 
$$
s_X = \sum_r (-1)^{{k(k-1)\over 2}+r} L^{r}\comp  p_r.
$$
Then $Q_X(a,b) = \langle L^{d-k}(a),s_X (b) \rangle $.    

When we translate this to homology we obtain a polarisation $Q_X$ on $H_k(X)$ ($k\le d$) given by
$$
Q_X(a,b) = \langle a, \Lambda^{d-k}(s_X(b))\rangle
$$
where $s_X$ is (up to sign) the alternating sum of the projections
$p_r: H_k(X)\to L^rH^{\rm pr}_{k+2r}(X)$ to the primitive homology (dual to primitive cohomology).

\begin{lemma}
\label{sX alg}
If $B_{\ell}(X)$ holds for $\ell\le2 \dim X-k-2$ the operator $s_X \in\End(H_k(X))$ is algebraic.
\end{lemma}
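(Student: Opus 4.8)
The plan is to reduce the algebraicity of $s_X$ to that of the projectors onto the summands of the Lefschetz decomposition, and then express those projectors via the operators $L$ and $\Lambda$, whose algebraicity is exactly what $B(X)$ provides. First I would recall that on $H_k(X)$ (with $k\le d$) we have the Lefschetz decomposition $H_k(X)=\bigoplus_r L^rH^{\rm pr}_{k+2r}(X)$, and that by definition $s_X=\sum_r(-1)^{k(k-1)/2+r}p_r$ (up to the overall sign conventions already fixed in the text), where $p_r\colon H_k(X)\to L^rH^{\rm pr}_{k+2r}(X)$ is the projection. So it suffices to show that each $p_r$ is induced by a correspondence under the stated hypotheses; a finite signed sum of algebraic operators is again algebraic, since $\Corr_0(X,X)$ is a ring and contains the diagonal.

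Next I would invoke the standard fact (see e.g. Kleiman) that the projectors onto the pieces $L^rH^{\rm pr}_{k+2r}(X)$ of the Lefschetz decomposition are universally polynomials in $L$ and $\Lambda$ with rational coefficients. Concretely, the operator $\Lambda$ (the algebraic inverse of hard Lefschetz in the appropriate degrees) together with $L$ generates an $\mathfrak{sl}_2$-action on $H_*(X)$, and the idempotents cutting out the isotypical/weight components are polynomials in these two operators. Hence the only issue is to check that all the instances of $\Lambda$ needed to write $p_r$ on $H_k(X)$ are algebraic, i.e. that the relevant $B_\ell(X)$ hold. The operator $\Lambda$ relevant to the $L^rH^{\rm pr}_{k+2r}(X)$ summand inverts hard Lefschetz in degrees around $k+2r$; tracing through the homological indexing, the primitive piece sitting in homological degree $k+2r$ corresponds to cohomological degree $2d-(k+2r)$, and the relevant Lefschetz isomorphism to be inverted is $L^{d-\ell}\colon H_{2d-\ell}(X)\xrightarrow{\ \cong\ }H_\ell(X)$ for $\ell=2d-k-2r$; since $r\ge 0$ this ranges over $\ell\le 2d-k$, but the operator $\Lambda$ is only needed (the identity being trivially algebraic) when the Lefschetz map is not already an isomorphism in degree $d$, i.e. for $\ell<d$, and the extreme case $r=0$ gives $\ell=2d-k-2$. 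Thus assuming $B_\ell(X)$ for all $\ell\le 2\dim X-k-2$ exactly covers all the $\Lambda$'s that occur, and each $p_r$ — hence $s_X$ — is a polynomial in the algebraic operators $L$ and $\Lambda$, so is algebraic.

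The main obstacle I anticipate is purely bookkeeping: getting the homology-versus-cohomology index translation right so that the bound $\ell\le 2\dim X-k-2$ comes out precisely, and confirming that the polynomial expressions for the Lefschetz idempotents only use $\Lambda$ in those degrees (in particular that no $\Lambda$ on $H_d(X)$ or higher homological degree is secretly required, which would demand $B_d(X)$ and is not assumed). One clean way to organize this is: (i) transport everything to cohomology via Poincaré duality so that $s_X$ becomes the familiar operator $\sum_r(-1)^{k(k-1)/2+r}L^r\comp p_r$ on $H^k(X)$ with $p_r$ the projection onto $L^rH^{k-2r}_{\rm pr}(X)$; (ii) write each $p_r$ as an explicit universal polynomial $P_r(L,\Lambda)$ using the $\mathfrak{sl}_2$-representation theory, noting which powers of $\Lambda$ in which degrees appear; (iii) observe that all these $\Lambda$'s act on cohomology of degree $>d$ (equivalently homology of degree $<d$) in the range covered by the hypothesis, hence are algebraic; (iv) conclude that $s_X\in\End(H^k(X))$, and therefore its Poincaré-dual incarnation $s_X\in\End(H_k(X))$, is algebraic. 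I would not write out the polynomials $P_r$ in detail, merely cite the standard reference and indicate the degree ranges, which is all that is needed for the statement.
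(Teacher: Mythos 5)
Your strategy is the right one, and it is in substance the argument behind the references the paper itself defers to: the paper's entire proof of this lemma is the citation to Charles (Lemma 7) and Vial (Lemma 1.7), and those proofs, like yours, reduce $s_X$ to the projectors onto the Lefschetz summands and obtain these from $L$ and the algebraic inverses supplied by the partial Lefschetz conjecture. The weak point is exactly the step the lemma is really about, namely why the bound $\ell\le 2\dim X-k-2$ suffices. As written your bookkeeping is internally inconsistent: your formula $\ell=2d-k-2r$ gives $\ell=2d-k$ for $r=0$, not $2d-k-2$ as you assert, and the identification of ``the Lefschetz isomorphism to be inverted for the $r$-th summand'' is stated rather than derived. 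Moreover $B_\ell(X)$ gives algebraicity of the full inverse $(L^{d-\ell})^{-1}$ between the complementary degrees $\ell$ and $2d-\ell$ (under Poincar\'e duality, of $L^{d-\ell}\colon H^\ell(X)\to H^{2d-\ell}(X)$), not of the single raising operator $\Lambda$ on an individual degree; so ``all the $\Lambda$'s that occur are algebraic'' is itself part of what has to be proved, not a remark one can quote from Kleiman under the partial hypothesis.

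The repair is short and stays within your plan, but it is what makes the stated bound come out. For $k=d$: split $H^d(X)=H^d_{\rm pr}(X)\oplus L\,H^{d-2}(X)$ by the projector $L\circ (L^{2})^{-1}\circ L$ onto the second summand, where $(L^{2})^{-1}\colon H^{d+2}(X)\to H^{d-2}(X)$ is algebraic by $B_{d-2}(X)$; iterating inside $H^{d-2}(X)$, $H^{d-4}(X)$, \dots uses only $B_\ell(X)$ for $\ell\le d-2=2d-k-2$, and in particular $B_{d-1}(X)$ is never needed (this is the only case where the bound is sharp). For $k<d$: the Lefschetz decomposition of $H_k(X)\cong H^{2d-k}(X)$ is the image under $L^{d-k}$ of that of $H^k(X)$, so conjugate by $L^{d-k}$ and its inverse, which is $B_k(X)$, and then run the same recursion in degree $k$ using $B_{k-2}(X), B_{k-4}(X),\dots$; since $k\le 2d-k-2$ whenever $k\le d-1$, everything is covered by the hypothesis. (Equivalently: the summands $L^rH^{\rm pr}_{k+2r}(X)$ of $H_k(X)$ vanish unless $r\ge d-k$, which is why no inversion beyond the stated range is ever required.) With this accounting in place, your reduction of $s_X$ to a signed sum of these projectors, and hence the conclusion, is correct.
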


\begin{proof} See \cite[Lemma 7]{Ch} or \cite[Lemma 1.7]{V4}
\end{proof}

\subsection{Modified niveau filtration} \label{sec:ModNivFIlts} 

We start by a discussion of adjoint correspondences. This material is treated from a cohomological point of view in \cite[section 4.2]{Lie Fu}. 

\begin{definition}
\label{adjoint}
Let $X$ and $Y$ be smooth projective varieties of dimension $d_X$, $d_Y$. Let $\gamma\in\Corr_j(X,Y)$. 
\begin{enumerate}
\item[(i)]
We say that $\gamma$ {\em admits a $k$-adjoint} if there exists $\gamma^{\rm adj}\in\Corr_{-j}(Y,X)$ such that
$$
Q_Y(\gamma_*(a),b) = Q_X(a,\gamma^{\rm adj}_*(b))
$$
for all $a\in H_{k-2j}(X)$, $b\in H_{k}(Y)$. 
\item[(ii)] We say that $\gamma$ \emph{admits an adjoint} if it admits a $k$-adjoint for all $k$.
\end{enumerate}
\end{definition}

\begin{proposition}
\label{prop:adjoint}
If the standard conjectures $B(X)$ and $B(Y)$ hold, every correspondence $\gamma\in\Corr(X,Y)$ admits an adjoint.
\end{proposition}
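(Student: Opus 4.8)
The plan is to produce $\gamma^{\rm adj}$ explicitly, as the transpose correspondence ${}^t\gamma$ ``conjugated'' by the operators that convert the Poincar\'e pairing into the polarisations $Q_X$ and $Q_Y$; the whole content then reduces to the assertion that, under $B(X)$ and $B(Y)$, each of those operators is induced by an algebraic cycle. It is enough to treat a $\gamma\in\Corr_j(X,Y)$ of pure degree $j$.

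First I would isolate two ingredients. (a) The transpose is the adjoint of $\gamma$ for Poincar\'e duality: $\langle\gamma_*a,c\rangle_Y=\pm\langle a,{}^t\gamma_*c\rangle_X$, where ${}^t\gamma\in\Corr_{d_X-d_Y+j}(Y,X)$, the sign depends only on the homological degrees, and $\langle\,,\rangle$ denotes the intersection pairing. (b) Unwinding Section~\ref{sec:Pols}: on $H_m(X)$ with $m\le d_X$ one has $Q_X(a,b)=\langle a,\Lambda_X^{d_X-m}s_X(b)\rangle$, where $L_X^{d_X-m}$ and $\Lambda_X^{d_X-m}$ are mutually inverse isomorphisms between $H_m(X)$ and $H_{2d_X-m}(X)$, every power of $L_X$ is algebraic (being cap-product with a power of an ample class), and $s_X$ is the involution acting by the scalar $(-1)^{m(m-1)/2+r}$ on $L_X^rH^{\rm pr}_{m+2r}(X)$, so that $s_X^{-1}=s_X$; the same holds for $Y$.

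Next, for $k\le d_Y$ with $k-2j\le d_X$, I would put (up to the sign from (a))
\[
\gamma^{\rm adj}\ :=\ s_X\comp L_X^{d_X-(k-2j)}\comp {}^t\gamma\comp\Lambda_Y^{d_Y-k}\comp s_Y .
\]
Chasing homological degrees shows that $\gamma^{\rm adj}_*$ maps $H_k(Y)$ into $H_{k-2j}(X)$, so this piece is a correspondence of degree $-j$; composing it on both sides with the algebraic K\"unneth projectors of $X$ and $Y$ (available by Proposition~\ref{KunnIsAlg}) makes it act only in bidegree $(k,k-2j)$, and summing over all admissible $k$ produces a single element of $\Corr_{-j}(Y,X)$. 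Then, inserting (b) into $Q_Y(\gamma_*a,b)$, applying (a), and simplifying by means of $s_X^2=\id$ and $\Lambda_X^{d_X-(k-2j)}\comp L_X^{d_X-(k-2j)}=\id$, one obtains $Q_Y(\gamma_*a,b)=Q_X(a,\gamma^{\rm adj}_*b)$ for every admissible $k$, so this $\gamma^{\rm adj}$ is an adjoint of $\gamma$.

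The one substantive point, where the hypotheses actually enter, is to check that $\gamma^{\rm adj}$ is genuinely a correspondence, i.e.\ that every operator in it is algebraic: ${}^t\gamma$ and all powers of $L$ obviously are; $\Lambda_Y^{d_Y-k}$ and $\Lambda_X^{d_X-(k-2j)}$ are algebraic precisely because $B(Y)$ and $B(X)$ hold (using also that $B$ makes $\Lambda$ algebraic in \emph{every} homological degree, not just below the middle, once composed with K\"unneth projectors), and $s_X$, $s_Y$ are algebraic by Lemma~\ref{sX alg}, whose hypothesis follows from $B(X)$, resp.\ $B(Y)$. I expect the main obstacle to be purely the bookkeeping — tracking homological degrees so that the composite lands in the correct bidegree, absorbing the degree-dependent sign from (a), and assembling the degreewise pieces into one correspondence — since everything else is formal once algebraicity is granted.
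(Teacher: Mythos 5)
Your proposal is correct and follows essentially the same route as the paper: one defines $\gamma^{\rm adj} = s_X\comp L_X^{d_X-(k-2j)}\comp{^t\gamma}\comp\Lambda_Y^{d_Y-k}\comp s_Y$ (the paper's formula in its own indexing), verifies the adjoint identity from the polarisation formula, the projection formula for ${^t\gamma}$, $s^2=\id$ and $\Lambda\comp L=\id$, and uses $B(X)$, $B(Y)$ together with Lemma~\ref{sX alg} to ensure every operator involved is algebraic. Your extra steps (composing with K\"unneth projectors and summing over $k$ to package a single correspondence, and tracking signs) are harmless refinements; the paper only needs a $k$-adjoint for each $k$, so it simply gives the degreewise formula.
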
 

\begin{proof}
Let $\gamma\in\Corr_j(X,Y)$ and consider the map 
$$
\gamma_*:H_k(X)\to H_{k+2j}(Y).
$$
As $B(X)$ and $B(Y)$ hold, the operators $s_X$ and $s_Y$ are algebraic  by Lemma \ref{sX alg}. As $s_X$ and $s_Y$ commute with the Lambda operator, we obtain
\begin{eqnarray*}
Q_Y(\gamma_*(a),b) & = & \langle\gamma_*(a),\Lambda_Y^{d_Y-k-2j}(s_Y(b))\rangle \\
& = & \langle a,{^t \gamma}_*(\Lambda_Y^{d_Y-k-2j}(s_Y(b)))\rangle \\
& = & \langle a,s_X(\Lambda_X^{d_X-k}(s_X(L_X^{d_X-k}({^t \gamma}_*(\Lambda_Y^{d_Y-k-2j}(s_Y(b))))\rangle.
\end{eqnarray*}
Hence
$$
\gamma^{\rm adj} = s_X\circ L_X^{d_X-k}\circ{^t \gamma}\circ\Lambda_Y^{d_Y-k-2j}\circ s_Y
$$
is an adjoint of $\gamma$.
\end {proof}

To use the existence of an adjoint, we need a linear algebra lemma (cf. \cite[Lemma 5]{V10},\cite[Lemma 1.6]{V4}).
\begin{lemma} 
\label{linalg}
Let $H$ and $H'$ be finite-dimensional $\QQ$-vector spaces equipped with non degenerate bilinear forms $Q:H\times H\to\QQ$ and $Q':H'\times H'\to\QQ$. Suppose that there exist linear maps
$$
\alpha:H'\to H,\ \ \beta: H\to H'
$$
such that
\begin{enumerate}
\item[(a)] $\alpha$ is surjective;
\item[(b)] $Q'\vert_{\ima(\beta\times\beta)}$ is non degenerate;
\item[(c)] $Q(\alpha(x),y) = Q'(x,\beta(y))$ for all $x\in H'$, $y\in H$. 
\end{enumerate}
Then $\alpha\comp \beta:H\to H$ is an isomorphism.
\end{lemma}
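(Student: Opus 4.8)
The plan is to prove this by a dimension count combined with the hypotheses. First I would observe that condition (c) says $\beta^\vee \comp Q'$ equals $Q \comp \alpha$ in the obvious sense, so that $\ker(\alpha\comp\beta)$ can be controlled by pairing. Precisely, suppose $y \in \ker(\alpha\comp\beta)$, i.e. $\alpha(\beta(y)) = 0$. Then for every $x \in H'$ we have $Q'(x, \beta(y)) = Q(\alpha(x), y)$; but I want to go the other direction, so I would instead take $x = \beta(y)$ and compute $Q'(\beta(y),\beta(y)) = Q(\alpha(\beta(y)), y) = Q(0,y) = 0$. More generally, for any $y' \in H$, we get $Q'(\beta(y'), \beta(y)) = Q(\alpha(\beta(y')), y)$, which need not vanish, so a bit more care is needed.

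The cleaner route is: show $\alpha\comp\beta$ is injective and surjective. For surjectivity, since $\alpha$ is surjective by (a), it suffices to show $\beta(H) + \ker\alpha = H'$ — but that is not immediate either. So instead I would argue via injectivity plus a dimension count using (b). Let me set $W = \ima\beta \subseteq H'$. By (b), $Q'|_W$ is non-degenerate, hence $H' = W \oplus W^{\perp}$ where $W^\perp$ is the $Q'$-orthogonal complement. I claim $\alpha|_W : W \to H$ is surjective: indeed, given $z \in H$, by (a) pick $x \in H'$ with $\alpha(x) = z$; write $x = w + w'$ with $w \in W$, $w' \in W^\perp$; then I must show $\alpha(w') $ does not interfere — this is where (c) enters. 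For any $y \in H$, $Q(\alpha(w'), y) = Q'(w', \beta(y))$, and $\beta(y) \in W$ while $w' \in W^\perp$, so $Q'(w',\beta(y)) = 0$; since $Q$ is non-degenerate this forces $\alpha(w') = 0$. Hence $\alpha(x) = \alpha(w)$, proving $\alpha|_W$ is surjective. In particular $\alpha\comp\beta = (\alpha|_W)\comp\beta : H \to H$ is surjective since $\beta : H \to W$ composed with the surjection $\alpha|_W : W \to H$... wait, $\beta: H \to W$ need not be surjective onto $W = \ima\beta$ — actually it is, by definition $W = \ima\beta$. So $\alpha\comp\beta : H \twoheadrightarrow H$ is surjective.

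Finally, a surjective endomorphism of a finite-dimensional vector space is an isomorphism, so $\alpha\comp\beta$ is an isomorphism, as desired. I would present the argument in this order: (1) introduce $W = \ima\beta$ and use (b) to split $H' = W \oplus W^\perp$; (2) use (c) together with non-degeneracy of $Q$ to show $\alpha(W^\perp) = 0$; (3) combine with surjectivity of $\alpha$ from (a) to conclude $\alpha|_W$ is surjective, hence $\alpha\comp\beta$ is surjective; (4) invoke finite-dimensionality. The main obstacle — really the only subtle point — is step (2): recognizing that condition (c), which a priori only relates the two pairings, can be leveraged against the non-degeneracy of $Q$ on all of $H$ to kill $\alpha$ on $W^\perp$. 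Everything else is routine linear algebra, and no appeal to the geometric content (correspondences, standard conjectures) is needed for this lemma itself.
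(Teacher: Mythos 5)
Your proof is correct, and it reaches the conclusion by a different (dual) route from the paper's. The paper argues injectivity directly: if $\alpha(\beta(y))=0$, then by (c) one has $Q'(\beta(y),\beta(z))=Q(\alpha(\beta(y)),z)=0$ for all $z\in H$, so $\beta(y)=0$ by (b); then $Q(\alpha(x),y)=Q'(x,\beta(y))=0$ for all $x\in H'$, and surjectivity of $\alpha$ plus non-degeneracy of $Q$ forces $y=0$; finite-dimensionality then upgrades injectivity to bijectivity. You instead prove surjectivity: setting $W=\ima(\beta)$, condition (b) gives a splitting $H'=W\oplus W^{\perp}$, condition (c) together with non-degeneracy of $Q$ shows $\alpha(W^{\perp})=0$, and then (a) gives $\alpha(W)=H$, hence $\alpha\comp\beta$ is surjective and finite-dimensionality finishes. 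Both arguments are elementary and use exactly the hypotheses (a)--(c); the paper's avoids the orthogonal decomposition, while yours makes the geometric picture ($\alpha$ factors through the summand $W$) more explicit. One small point to tidy in your write-up: the lemma does not assume $Q'$ symmetric, so you should specify that $W^{\perp}$ is the \emph{left} orthogonal complement $\{x\in H'\mid Q'(x,w)=0\ \forall w\in W\}$; with that choice the splitting $H'=W\oplus W^{\perp}$ still holds because $Q'\vert_{W\times W}$ is non-degenerate, and it is precisely the vanishing $Q'(w',\beta(y))=0$ that your step (2) needs.
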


\begin{proof}
As $H$ is finite-dimensional, it suffices to show that $\ker(\alpha\comp \beta) = 0$. Suppose that $y\in\ker(\alpha\comp \beta)$. Then $\beta(y)\in\ker(\alpha)\cap\ima(\beta)$. By (c) we have
$$
0 = Q(\alpha(\beta(y)),z) = Q'(\beta(y),\beta(z))
$$
for all $z\in H$, hence $\beta(y) = 0$ by condition (b). This gives
$$
0 =  Q'(x,\beta(y)) = Q(\alpha(x),y)
$$
for all $x\in H'$ and since $\alpha$ is surjective we obtain $y=0$. 
\end{proof}

\begin{corollary}
\label{cor: iso}  Suppose that $\gamma:\Corr_j(Y,X)$ admits an adjoint. Consider the map
$\gamma_*:H_{k-2j}(Y)\to H_{k}(X)$. Then $\gamma_*\comp \gamma^{\rm adj}_*:H_k(X)\to H_k(X)$ induces an isomorphism 
$$
\gamma_*\comp \gamma^{\rm adj}_* :\ima(\gamma_*) \mapright{\sim} \ima(\gamma_*).
$$
\end{corollary}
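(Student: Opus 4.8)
The plan is to apply Lemma~\ref{linalg} with $H = \ima(\gamma_*)$ equipped with the restriction of $Q_X$, and $H' = H_{k-2j}(Y)/\ker(\gamma_*)$ equipped with the form induced by $Q_Y$. First I would set $\alpha$ to be the map $H' \to H$ induced by $\gamma_*$; this is well-defined and surjective by construction, so hypothesis (a) holds. For $\beta$, I would use the composite $H \hookrightarrow H_k(X) \xrightarrow{\gamma^{\rm adj}_*} H_{k-2j}(Y) \twoheadrightarrow H'$; the adjoint relation $Q_Y(\gamma_*(a),b) = Q_X(a,\gamma^{\rm adj}_*(b))$ for $a \in H_{k-2j}(Y)$, $b \in H_k(X)$ translates exactly into hypothesis (c), namely $Q_X(\alpha(\bar a), b) = Q_Y(a, \gamma^{\rm adj}_*(b)) = Q'(\bar a, \beta(b))$ after passing to the quotient (one must check that both sides only depend on the class $\bar a$, which follows from the same relation applied to elements of $\ker\gamma_*$ together with non-degeneracy of $Q_X$ on $H$).

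The only remaining point is hypothesis (b): $Q'$ restricted to $\ima(\beta \times \beta)$ must be non-degenerate. Here $\ima(\beta)$ is the image of $\gamma^{\rm adj}_*(H)$ in $H'$, and for $x = \overline{\gamma^{\rm adj}_*(b)}$, $x' = \overline{\gamma^{\rm adj}_*(b')}$ with $b, b' \in H = \ima(\gamma_*)$ one computes $Q'(x, x') = Q_Y(\gamma^{\rm adj}_*(b), \gamma^{\rm adj}_*(b'))$; using the adjoint relation in the other direction (for $\gamma^{\rm adj}$, whose adjoint is $\gamma$ up to the natural identifications, since $Q$ is symmetric or anti-symmetric and the adjoint construction is involutive) this equals $Q_X(b, \gamma_*\gamma^{\rm adj}_*(b'))$, and since $b' \in \ima(\gamma_*)$ we may write $b' = \gamma_*(c)$, so this becomes $Q_X(\gamma_* c_0, \ldots)$ landing back inside $H$. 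The upshot is that $Q'|_{\ima\beta}$ is identified, via $\gamma^{\rm adj}_*$, with $Q_X|_H$ up to the automorphism $\alpha\comp\beta$ — so non-degeneracy of $Q'|_{\ima\beta}$ is essentially equivalent to the conclusion we are after. To avoid circularity I would instead verify (b) directly: $Q_X$ is non-degenerate on all of $H_k(X)$ and $H = \ima(\gamma_*)$ is a sub-polarized-Hodge-structure (the image of a morphism of polarized structures is again polarized), hence $Q_X|_H$ is non-degenerate; then $\beta$ is injective on $H$ precisely because $\alpha\comp\beta$, once we know it is injective, forces it, so one argues: $\ker\beta \subset \ker(\alpha\beta)$, and any $b \in \ker\beta$ satisfies $Q_X(b, -)|_H = Q_Y(\beta b, \gamma^{\rm adj}_* -) = 0$, whence $b = 0$ by non-degeneracy of $Q_X|_H$; so $\beta$ is injective and $Q'|_{\ima\beta}$ is non-degenerate because it is the pullback along the injective $\beta$ of...

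I expect the main obstacle to be precisely disentangling this apparent circularity: the cleanest route is to observe that $H = \ima(\gamma_*)$ carries the non-degenerate form $Q_X|_H$ (because the image of a morphism of polarized $\QQ$-Hodge structures is polarized, the polarization being the restriction), and then to apply Lemma~\ref{linalg} with $H' = H_{k-2j}(Y)$ itself — \emph{not} a quotient — where condition (a), surjectivity of $\alpha = \gamma_*$ onto $\ima(\gamma_*) = H$, is automatic, condition (c) is the defining property of the adjoint, and condition (b) asks that $Q_Y$ be non-degenerate on $\ima(\beta) = \ima(\gamma^{\rm adj}_*|_H)$; but $\ima(\gamma^{\rm adj}_*|_H) \subset H_{k-2j}(Y)$ is again the image of a morphism of polarized structures, hence polarized, hence $Q_Y$ is non-degenerate on it. With all three hypotheses in hand, Lemma~\ref{linalg} yields that $\alpha\comp\beta = \gamma_* \comp \gamma^{\rm adj}_* \colon H \to H$ is an isomorphism, which is exactly the claimed isomorphism $\ima(\gamma_*) \xrightarrow{\sim} \ima(\gamma_*)$. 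The one genuine input needed beyond pure linear algebra is the semisimplicity/polarizability statement that the image of a morphism of polarized Hodge structures inherits a polarization — standard, but worth citing.
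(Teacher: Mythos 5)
Your final ``cleanest route'' is exactly the paper's own proof: apply Lemma~\ref{linalg} with $H=\ima(\gamma_*)$ endowed with $Q_X|_H$, $H'=H_{k-2j}(Y)$ with $Q_Y$, $\alpha=\gamma_*$, $\beta=\gamma^{\rm adj}_*$, where (a) holds by construction, (c) is the adjoint identity, and (b) together with nondegeneracy of $Q_X$ on $\ima(\gamma_*)$ follow from the Hodge--Riemann bilinear relations (restriction of a polarization to a sub-Hodge structure is nondegenerate). The earlier detour through the quotient $H_{k-2j}(Y)/\ker(\gamma_*)$ is unnecessary, and your worry about circularity evaporates once you discard it, so the settled argument is correct and coincides with the paper's.
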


\begin{proof}
Apply the previous Lemma  with $H' = H_{k}(X)$, $\alpha = \gamma_*$, $\beta = \gamma^{\rm adj}_*$ and $H = \ima(\gamma_*)\subseteq H_k(X)$. Condition (a) is satisfied by construction, (b) by Hodge theory (Hodge-Riemann bilinear relations) and (c) by the adjoint condition.
\end{proof}

\begin{definition}
\label{Nhat}
The \emph{modified niveau filtration} $\wh{N}^{\bullet}$ is defined by
$$
\wh{N}^j H_k(X) = \sum\ima( {\gamma_*:H_{k-2j}(Z)\to H_k(X)}),
$$
where the sum runs over all pairs $(Z,\gamma)$ such that $Z$ is smooth projective of dimension $k-2j$ and such that $\gamma\in\Corr_j(Z,X)$ admits a $k$-adjoint.
\end{definition}

We have
$$
\wh{N}^jH_k(X)\subseteq\wt{N}^jH_k(X)\subseteq N^jH_k(X).
$$
The filtrations $N^{\bullet}$ and $\wt{N}^{\bullet}$ are compatible with the action of correspondences. The filtration $\wh{N}^{\bullet}$ is compatible with correspondences that admit an adjoint.  
\begin{proposition}
\label{prop:compatibility}
Let $\gamma\in\Corr_j(X,Y)$. If $B(X)$ and $B(Y)$ hold then  we have
$\gamma_*\wh{N}^c H_k(X)\subseteq\wh{N}^{c+j}H_{k+2j}(Y)$.
\end{proposition}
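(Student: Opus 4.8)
The plan is to unwind the definitions and use the linear-algebra machinery (Corollary~\ref{cor: iso}) to transfer adjoints along compositions. Suppose $x\in\wh{N}^c H_k(X)$. By Definition~\ref{Nhat} it suffices to treat the case $x=\delta_*(y)$ for a single pair $(Z,\delta)$ with $Z$ smooth projective of dimension $k-2c$ and $\delta\in\Corr_c(Z,X)$ admitting a $k$-adjoint $\delta^{\rm adj}\in\Corr_{-c}(X,Z)$. Then $\gamma_*(x)=(\gamma\comp\delta)_*(y)$, and $\gamma\comp\delta\in\Corr_{c+j}(Z,Y)$, with $Z$ of the correct dimension $(k+2j)-2(c+j)=k-2c$ to qualify as a term in $\wh{N}^{c+j}H_{k+2j}(Y)$. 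So the whole content of the proposition is the claim that $\gamma\comp\delta$ admits a $(k+2j)$-adjoint.

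The natural candidate for the adjoint is $\delta^{\rm adj}\comp\gamma^{\rm adj}$, where $\gamma^{\rm adj}\in\Corr_{-j}(Y,X)$ is the adjoint of $\gamma$ furnished by Proposition~\ref{prop:adjoint} (legitimate since $B(X)$ and $B(Y)$ hold). One checks the adjoint identity directly: for $a\in H_{k-2c}(Z)$ and $b\in H_{k+2j}(Y)$,
$$
Q_Y\bigl((\gamma\comp\delta)_*(a),b\bigr)=Q_Y\bigl(\gamma_*(\delta_*(a)),b\bigr)=Q_X\bigl(\delta_*(a),\gamma^{\rm adj}_*(b)\bigr),
$$
using that $\gamma$ admits an adjoint (in particular a $(k+2j)$-adjoint, with $\delta_*(a)\in H_k(X)$ and $\gamma^{\rm adj}_*(b)\in H_k(X)$). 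Continuing, since $\delta$ admits a $k$-adjoint and $\gamma^{\rm adj}_*(b)\in H_k(X)$,
$$
Q_X\bigl(\delta_*(a),\gamma^{\rm adj}_*(b)\bigr)=Q_Z\bigl(a,\delta^{\rm adj}_*(\gamma^{\rm adj}_*(b))\bigr)=Q_Z\bigl(a,(\delta^{\rm adj}\comp\gamma^{\rm adj})_*(b)\bigr).
$$
This exhibits $\delta^{\rm adj}\comp\gamma^{\rm adj}$ as a $(k+2j)$-adjoint of $\gamma\comp\delta$, which is what we needed; hence $\gamma_*(x)\in\wh{N}^{c+j}H_{k+2j}(Y)$, and summing over generating terms gives the proposition.

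The only subtle point — and the step I expect to need the most care — is bookkeeping the homological degrees so that the adjoint conditions are invoked at exactly the indices where they are known to hold. Concretely, $\delta\in\Corr_c(Z,X)$ sends $H_{k-2c}(Z)$ to $H_k(X)$, so "admits a $k$-adjoint" is precisely the hypothesis available from Definition~\ref{Nhat}; and $\gamma\in\Corr_j(X,Y)$ sends $H_k(X)$ to $H_{k+2j}(Y)$, so the relevant instance of "admits an adjoint" is the $(k+2j)$-adjoint at the source degree $k$, which matches $\delta_*(a)$. Since $B(X)$ and $B(Y)$ are assumed, Proposition~\ref{prop:adjoint} hands us an adjoint of $\gamma$ in all degrees, so there is no obstruction there; one just has to be scrupulous that $Z$ has the right dimension and that $\gamma\comp\delta$ has the right correspondence degree to land in the correct piece of the filtration on $Y$, both of which are immediate arithmetic. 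No finite-dimensionality or further standard conjectures are needed beyond $B(X)$ and $B(Y)$.
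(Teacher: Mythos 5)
Your proposal is correct and follows essentially the same route as the paper: both reduce to a generating pair $(Z,\delta)$ (the paper combines these into a single $(Z,\lambda)$ with $\wh{N}^c H_k(X)=\ima(\lambda_*)$), compose with $\gamma$, and use Proposition~\ref{prop:adjoint} together with the fact that the adjoint of a composition is the composition of the adjoints in reverse order, $(\gamma\comp\delta)^{\rm adj}=\delta^{\rm adj}\comp\gamma^{\rm adj}$. The only difference is that you verify this last identity and the degree bookkeeping explicitly, which the paper merely asserts.
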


\begin{proof}
There exist a smooth projective variety $Z$ and a correspondence $\lambda\in\Corr_c(Z,X)$ such that $\lambda$ admits an adjoint and
$$
\wh{N}^c H_k(X) = \ima{\lambda_*:H_{k-2c}(Z)\to H_k(X)}.
$$
We have
$$
\lambda_*\wh{N}^c H_k(X) = \ima{(\gamma\circ\lambda)_*:H_{k-2c}(Z)\to H_{k+2j}(Y)}
$$
The image is contained in $\wh{N}^{c+j}H_{k+2j}(Y)$ since $\gamma$ admits an adjoint 
by Proposition \ref{prop:adjoint} and $(\gamma\circ\lambda)^{\rm adj} = \lambda^{\rm adj}\circ\gamma^{\rm adj}$. 
\end{proof}

\section{On K\"unneth decompositions}
\begin{definition}
\label{def:refinedK}
Let $X$ be a smooth projective variety. 
\begin{enumerate}
\item[(1)]
We say that $X$ admits a {\em refined K\"unneth decomposition} if there exist correspondences $\pi_{i,j}\in\Corr_0(X,X)$ such that
\begin{itemize}
\item $\Delta_X\sim_{\rm hom}\sum_{i,j}\pi_{i,j}$
\item 
$(\pi_{i,j})_*|_{\Gr^q_N H_p(X)} = \left\{
\begin{array}{cc}
 \id & {\rm\ if \ } (p,q) = (i,j) \\ 
 0 & (p,q)\ne (i,j).
\end{array} 
\right.
$
\item $\pi_{i,j}=0$ if and only if $\Gr^j_N H_i(X) = 0$.
\end{itemize}
\item[(2)] We say that $X$ admits a \emph{refined Chow--K\"unneth decomposition} if in addition the $\pi_{i,j}$ are projectors and 
$\Delta_X\sim_{\rm rat}\sum_{i,j}\pi_{i,j}$.
\item[(3)] We say that $X$ admits a refined K\"unneth (or Chow--K\"unneth) decomposition \emph{in the strong sense} if $\pi_{i,j}$ factors  with shift $j$ through a smooth, projective variety $Z_{i,j}$ of dimension $i-2j$ for all $i$ and $j$.
\end{enumerate}
\end{definition}

\begin{remark}
By \cite[Prop. 1.4]{V4} there exists a $Q_X$-orthogonal splitting 
$$
H^*(X) = \oplus_{i,j} \Gr^j_N H_i(X).
$$
The variety $X$ admits a refined K\"unneth decomposition if this decomposition lifts to the category $\Mot_{\rm hom}(k)$ of homological motives. It admits a refined Chow--K\"unneth decomposition if the decomposition lifts to the category $\Mot_{\rm rat}(k)$ of Chow motives.
\item In an analogous way one can define refined K\"unneth (Chow--K\"unneth) decompositions with respect to the filtrations $\wt{N}^{\bullet}$ and $\wh{N}^{\bullet}$.
\end{remark}

The proof of the following result is a reformulation of the proof of \cite[Thm. 1]{V4} in terms of the modified niveau filtration.
\begin{proposition}
\label{prop:refinedK}
If $B(X)$ holds, there exists a refined K\"unneth decomposition in the strong sense with respect to the filtration $\wh{N}^{\bullet}$.
\end{proposition}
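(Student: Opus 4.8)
The plan is to mimic Vial's construction of a refined K\"unneth decomposition from \cite[Thm.\ 1]{V4}, but systematically replacing the correspondences he produces by ones that admit adjoints, so that the resulting projectors are compatible with $\wh{N}^{\bullet}$ and factor in the strong sense. First I would use the $Q_X$-orthogonal splitting $H_*(X)=\oplus_{i,j}\Gr^j_N H_i(X)$ (valid since $B(X)$ holds, by \cite[Prop.\ 1.4]{V4}). For each pair $(i,j)$ with $\Gr^j_N H_i(X)\ne 0$, one knows by Remark~\ref{is} that under $B(X)$ one has $N^j H_i(X)=\wt N^j H_i(X)$, and more precisely there is a smooth projective $Z_{i,j}$ of dimension $i-2j$ and a correspondence $\gamma\in\Corr_j(Z_{i,j},X)$ with $\gamma_* H_{i-2j}(Z_{i,j})=N^j H_i(X)$. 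The first task is to upgrade $\gamma$ to a correspondence admitting an $i$-adjoint: by Proposition~\ref{prop:adjoint}, since $B(X)$ and $B(Z_{i,j})$ hold (the latter because $Z_{i,j}$ is itself a complete intersection in a smooth projective variety, or more simply because we may take $Z_{i,j}$ to be a product of a projective space with a variety satisfying $B$), every correspondence between them admits an adjoint. Hence $\gamma$ admits an $i$-adjoint, and consequently $\wh{N}^j H_i(X)=\wt N^j H_i(X)=N^j H_i(X)$ for all $i,j$ under $B(X)$.

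Next I would build the projectors. Fix $(i,j)$. Using Corollary~\ref{cor: iso}, $\gamma_*\comp\gamma^{\rm adj}_*$ is an automorphism of $\ima(\gamma_*)=N^j H_i(X)$; composing with a suitable polynomial in $\gamma_*\comp\gamma^{\rm adj}_*$ (Cayley--Hamilton, all operations algebraic since $\gamma$, $\gamma^{\rm adj}$ are) one obtains an idempotent correspondence $p_{i,j}\in\Corr_0(X,X)$ whose image in homology is exactly $N^j H_i(X)$ and which factors through $Z_{i,j}$ with shift $j$. The second, more delicate, task is to pass from the nested spaces $N^j H_i(X)$ to the graded pieces $\Gr^j_N H_i(X)$, and to arrange mutual orthogonality of the $\pi_{i,j}$ across all $(i,j)$ while keeping the strong-factorisation property. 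Here I would argue as Vial does: for fixed $i$, subtract off the contributions of $N^{j+1}H_i(X)$ from that of $N^j H_i(X)$ to isolate the graded piece — since $\wh N=\wt N=N$ everywhere, the difference $p_{i,j}-p_{i,j+1}$ (after a Gram--Schmidt-type adjustment using the polarisation $Q_X$, which is available because $s_X$ is algebraic by Lemma~\ref{sX alg}) projects onto a complement isomorphic to $\Gr^j_N H_i(X)$, and it still factors through a variety of dimension $i-2j$ via the disjoint union $Z_{i,j}\sqcup Z_{i,j+1}$ (truncated appropriately). For distinct $i$, orthogonality comes for free since $\pi_{i,j}$ acts as $0$ on $H_p(X)$ for $p\ne i$ by construction. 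Summing, $\sum_{i,j}\pi_{i,j}$ acts as the identity on each graded piece, hence equals $\Delta_X$ in $\Mot_{\rm hom}(k)$; and $\pi_{i,j}=0$ exactly when $\Gr^j_N H_i(X)=0$ by construction.

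The main obstacle I anticipate is the bookkeeping in the orthogonalisation step: making the idempotents $\pi_{i,j}$ simultaneously (i) mutually orthogonal, (ii) projecting precisely onto $\Gr^j_N H_i(X)$ rather than onto a merely isomorphic copy inside some $N^{j'} H_i(X)$, and (iii) still factoring through a \emph{single} smooth projective variety of the correct dimension $i-2j$. The key point that makes (iii) survive the orthogonalisation is that all the corrections involve only compositions of the correspondences $\gamma$, $\gamma^{\rm adj}$, the Lefschetz operators $L,\Lambda$, and $s_X$ — all of which are algebraic under $B(X)$ — together with polynomial identities, so the corrected projector is again a sum of correspondences each factoring through one of the $Z_{i,j}$; replacing a disjoint union by a single connected smooth model of the same dimension (blow up / take a suitable hypersurface section, or simply allow $Z_{i,j}$ to be disconnected, which Definition~\ref{dfn:Factoring}(1) with "equi-dimensional" permits) absorbs the issue. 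Modulo this combinatorial care, the statement follows, and I would close by remarking that if in addition the motive of $X$ is finite-dimensional, the nilpotence theorem (Theorem~\ref{nilp}) lets one lift this homological decomposition to a refined Chow--K\"unneth decomposition, as in part~(2) of Definition~\ref{def:refinedK}.
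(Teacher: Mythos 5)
There is a genuine gap, and it sits exactly at the point the modified filtration $\wh{N}^{\bullet}$ was designed to handle. You start from the coniveau filtration, pick $Z_{i,j}$ of dimension $i-2j$ and $\gamma\in\Corr_j(Z_{i,j},X)$ with image $N^jH_i(X)$, and then claim that $B(X)$ alone gives $N^jH_i(X)=\wt{N}^jH_i(X)$ and that $\gamma$ admits an adjoint because ``$B(Z_{i,j})$ holds''. Neither claim is justified. Remark~\ref{is} produces the equality $N^j=\wt{N}^j$ only when $B_{k-2j}(Y)$ holds for the \emph{auxiliary} varieties $Y$ supporting the coniveau classes, not when $B(X)$ holds; and Proposition~\ref{prop:adjoint} requires $B(Z_{i,j})$, which is unavailable: the $Z_{i,j}$ are arbitrary smooth projective varieties (resolutions of supports of cycles), not complete intersections or products of projective spaces, and you cannot choose them to satisfy $B$. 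Your intermediate assertion that $\wh{N}^jH_i(X)=\wt{N}^jH_i(X)=N^jH_i(X)$ under $B(X)$ alone would, if true, make Proposition~\ref{hat-tilde}, Corollary~\ref{hat-tilde2} and the dimension restrictions in Proposition~\ref{prop:refinedCK}(ii) pointless; the paper keeps these filtrations distinct precisely because this equality is only known under extra hypotheses on the auxiliary varieties (or in low degrees).

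The fix is to argue as the paper does: work directly with $\wh{N}^{\bullet}$. By Definition~\ref{Nhat} the correspondences $\gamma$ computing $\wh{N}^jH_i(X)$ admit $i$-adjoints \emph{by definition}, so no standard conjecture for $Z_{i,j}$ is needed; the only other adjoint required is that of the correction projector $\pi=\pi_i-\sum_{k>j}\pi_{i,k}$, which is a self-correspondence of $X$ and hence admits an adjoint by Proposition~\ref{prop:adjoint} using only $B(X)$ (together with Proposition~\ref{prop:compatibility} to get the $Q_X$-orthogonal splitting for $\wh{N}^{\bullet}$). With that substitution, the remainder of your argument --- surjectivity of $\gamma'_*=(\pi\comp\gamma)_*$ onto the graded piece, Corollary~\ref{cor: iso}, Cayley--Hamilton to invert $T_*$ algebraically, and the descending induction on $j$ that handles the orthogonalisation and the strong factorisation through $Z_{i,j}$ --- is essentially the paper's proof and goes through.
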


\begin{proof}
Conjecture $B(X)$ implies that the K\"unneth components are algebraic, i.e., there exist correspondences $\pi_i\in\Corr_0(X,X)$ such that $(\pi_i)_*|_{H_j(X)} = \delta_{ij}.\id$. 
By Proposition \ref{prop:compatibility} the proof of \cite[Prop. 1.4]{V4} goes through for the filtration $\wh{N}^{\bullet}$, and we obtain a $Q_X$-orthogonal splitting
$$
H^*(X) = \oplus_{i,j} \Gr^j_{\wh{N}} H_i(X).
$$
The aim is to construct correspondences $\pi_{i,j}\in\Corr_0(X,X)$ that induce this decomposition. This is done by descending induction on $j$. If $j>i/2$ we take $\pi_{i,j}=0$. Suppose that the correspondences $\pi_{i,k}$ have been constructed for $k>j$. As before there exist $Z$, smooth of dimension $i-2j$, and $\gamma\in\Corr_j(Z,X)$ such that
$$
\wh{N}^j H_i(X) = \ima(\gamma_*:H_{i-2j}(Z)\to H_i(X)).
$$
By replacing $\gamma$ with $\pi_i \comp\gamma$ if necessary, we may assume that 
$\gamma_*\mid_{H_{\ell}(Z)}=0$ if $\ell\ne i-2j$. The correspondence $\pi = \pi_i - \sum_{k>j}\pi_{i,k}$ induces the projection $\wh{N}^j H_i(X)\to\Gr^j_{\wh{N}} H_i(X)$. Put 
$\gamma'= \pi\comp\gamma$. By construction
$$
\gamma'_*:H_{i-2j}(Z)\to\Gr^j_{\wh{N}} H_i(X)
$$
is surjective. As $B(X)$ holds, $\pi$ admits an adjoint by Proposition \ref{prop:adjoint}. By definition $\gamma$ admits an adjoint, hence $\gamma'= \pi\comp\gamma$ admits an adjoint and the correspondence $T=\gamma'\comp(\gamma')^{\rm adj}$ induces an isomorphism
$$
\varphi = T_*: \Gr^j_{\wh{N}} H_i(X)\to \Gr^j_{\wh{N}} H_i(X)
$$
by Corollary \ref{cor: iso}. By the Cayley--Hamilton theorem there exists a polynomial expression $\psi = P(\varphi)$ such thay $\psi\comp\varphi = \id$. Put $U = \psi(T)$ and define $\pi_{i,j} = U\comp T$. As $T_*=\varphi$ and $U_* = \psi$ we have
\begin{eqnarray*}
(\pi_{i,j})_*\mid_{\Gr^j_{\wh{N}} H_i(X)} & = & \id \\
(\pi_{i,j})_*\mid_{\Gr^q_{\wh{N}} H_p(X)} & = & 0 \text{ if   } (p,q)\ne (i,j).
\end{eqnarray*}
By construction $\pi_{i,j}$ factors with shift $j$  through a smooth projective variety of dimension $i-2j$ and $\pi_{i,j}=0$ if and only if $\Gr^j_{\wh{N}} H_i(X) = 0$.  
\end{proof}

\begin{corollary}
If $B(X)$ holds and $H_k(X)\subseteq\wh{N}^c H_k(X)$, then there exists $\pi'_k\in\Corr_0(X,X)$ such that $\pi_k\sim_{\rm hom}\pi'_k$ and such that $\pi'_k$ factors
with shift $c$  through a smooth projective variety $Z$ as in Definition~\ref{dfn:Factoring}.
\end{corollary}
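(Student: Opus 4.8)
The plan is to read off the statement as an immediate application of Proposition~\ref{prop:refinedK} together with a summation over the refined K\"unneth components. First I would invoke Proposition~\ref{prop:refinedK}: since $B(X)$ holds, there is a refined K\"unneth decomposition in the strong sense with respect to $\wh{N}^{\bullet}$, i.e.\ correspondences $\pi_{i,j}\in\Corr_0(X,X)$ with $\Delta_X\sim_{\rm hom}\sum_{i,j}\pi_{i,j}$, with $(\pi_{i,j})_*$ acting as the identity on $\Gr^j_{\wh{N}}H_i(X)$ and as zero on all other graded pieces, and with each $\pi_{i,j}$ factoring with shift $j$ through a smooth projective variety $Z_{i,j}$ of dimension $i-2j$, vanishing precisely when $\Gr^j_{\wh N}H_i(X)=0$.

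Next I would use the hypothesis $H_k(X)=\wh{N}^c H_k(X)$. This means the only graded pieces $\Gr^j_{\wh N}H_k(X)$ that are nonzero occur for $j\ge c$. Define
$$
\pi'_k := \sum_{j\ge c}\pi_{k,j}\ \in\ \Corr_0(X,X).
$$
By the defining properties of the $\pi_{k,j}$, for each $j\ge c$ the summand $(\pi_{k,j})_*$ restricts to the identity on $\Gr^j_{\wh N}H_k(X)$ and kills everything else; summing over $j\ge c$ and using that these graded pieces exhaust $H_k(X)$, we get that $(\pi'_k)_*$ acts as the identity on $H_k(X)$ and as zero on $H_\ell(X)$ for $\ell\ne k$. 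Hence $(\pi'_k)_*|_{H_\ell(X)}=\delta_{k\ell}\cdot\id$, which is exactly the characterizing property of the K\"unneth component $\pi_k$ from Proposition~\ref{KunnIsAlg}; since a homological correspondence is determined by its action on homology up to homological equivalence, $\pi'_k\sim_{\rm hom}\pi_k$.

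Finally I would address the factorization. Each $\pi_{k,j}$ with $j\ge c$ factors through a smooth projective $Z_{k,j}$ of dimension $k-2j\le k-2c$; I would then either take the disjoint union $Z=\coprod_{j\ge c}Z_{k,j}$ (not equidimensional, so one should phrase the shift-$c$ factorization with some care, perhaps passing to an appropriate equidimensional model or citing Definition~\ref{dfn:Factoring} in the form that allows a finite sum of factorizations), or simply observe that a correspondence which is a finite sum of correspondences each factoring through a variety of dimension $\le k-2c$ itself ``factors with shift $c$'' in the sense of Lemma~\ref{lem:Factoring}(1), which is all that is needed downstream (trivial action on $A_j(X)$ for $j<c$ or $j>d-c$). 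The bookkeeping of assembling the $Z_{k,j}$ into a single $Z$ with the correct shift is the only genuinely fiddly point; everything else is a direct consequence of Proposition~\ref{prop:refinedK}.
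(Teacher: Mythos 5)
Your argument is exactly the paper's: the paper also invokes Proposition~\ref{prop:refinedK} to write $\pi_k\sim_{\rm hom}\sum_j\pi_{k,j}$, observes that $H_k(X)\subseteq\wh{N}^cH_k(X)$ forces $\pi_{k,j}=0$ for $j<c$ (via the property that $\pi_{i,j}=0$ exactly when $\Gr^j_{\wh N}H_i(X)=0$), and concludes, so your $\pi'_k=\sum_{j\ge c}\pi_{k,j}$ is the same correspondence. Your extra attention to packaging the various $Z_{k,j}$ into a single $Z$ with shift $c$ addresses a point the paper compresses into ``the result follows,'' and your fallback (a finite sum of correspondences each factoring with shift $\ge c$ still acts trivially on $A_j(X)$ for $j<c$ or $j>d-c$, which is all that is used later) is a legitimate way to close it.
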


\begin{proof}
By Proposition \ref{prop:refinedK} we obtain a decomposition 
$$
\pi_k = \sum_j \pi_{k,j}.
$$
with respect to the filtration $\wh{N}^{\bullet}$. As $H_k(X)\subseteq\wh{N}^c H_k(X)$ we have
$\pi_{k,j}=0$ for all $j<c$, and the result follows.
\end{proof}

The Corollary can be generalised to the following setting.
Suppose that there exists $\pi_k\in\Corr_0(X,X)$ such that $(\pi_k)_*|_{H_{\ell}(X)}=\delta_{k\ell} \cdot \id$. If $\pi\in\Corr_0(X,X)$ satisfies
\begin{eqnarray*}
\pi\comp\pi \sim_{\rm hom} \pi \\
\pi\comp\pi_k\sim_{\rm hom}\pi_k\comp\pi\sim_{\rm hom}\pi
\end{eqnarray*} 
the motive $(X,\pi)$ is a direct factor of $(X,\pi_k)$ in $\Mot_{\rm hom}(k)$.
\begin{corollary}
\label{factorisation}
Suppose that $B(X)$ holds and that $\pi\in\Corr_0(X,X)$ is a correspondence as above. Let 
$H_{\pi}=\ima(\pi)\subseteq H_k(X)$ be the sub--Hodge structure defined by $\pi$. If $H_{\pi}\subseteq \wh{N}^c H_k(X)$ there exists a correspondence $\pi'\sim_{\rm hom}\pi$ such that $\pi'$ factors   with shift $c$  through a smooth projective variety $Z$ as in f Definition~\ref{dfn:Factoring}.
\end{corollary}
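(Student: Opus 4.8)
The plan is to adapt the proof of the preceding Corollary to the slightly more general setting, where the r\^ole of the K\"unneth projector $\pi_k$ is played by an arbitrary homological projector $\pi$ with the stated compatibility with $\pi_k$. First I would observe that, since $\pi\comp\pi_k\sim_{\mathrm{hom}}\pi_k\comp\pi\sim_{\mathrm{hom}}\pi$, the correspondence $\pi$ kills $H_\ell(X)$ for $\ell\neq k$ up to homological equivalence, so that $(\pi)_*$ is supported on $H_k(X)$ and $H_\pi=\ima(\pi)\subseteq H_k(X)$ is indeed a sub-Hodge structure. Hence one may work entirely inside $H_k(X)$.

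Next I would invoke Proposition~\ref{prop:refinedK}: since $B(X)$ holds, there is a refined K\"unneth decomposition in the strong sense with respect to $\wh N^\bullet$, giving correspondences $\pi_{k,j}\in\Corr_0(X,X)$, each factoring with shift $j$ through a smooth projective variety of dimension $k-2j$, with $(\pi_{k,j})_*$ the projector onto $\Gr^j_{\wh N}H_k(X)$. Because $H_\pi\subseteq\wh N^c H_k(X)$, the action of $\pi$ on $\Gr^j_{\wh N}H_k(X)$ is zero for $j<c$. I would then set
$$
\pi' \;=\; \Big(\sum_{j\ge c}\pi_{k,j}\Big)\comp\pi\comp\Big(\sum_{j\ge c}\pi_{k,j}\Big),
$$
or more simply $\pi'=\rho\comp\pi\comp\rho$ with $\rho=\sum_{j\ge c}\pi_{k,j}$, and check that $\pi'\sim_{\mathrm{hom}}\pi$: on homology, $\rho_*$ is the identity on $\bigoplus_{j\ge c}\Gr^j_{\wh N}H_k(X)\supseteq H_\pi$ and zero on the complementary summands and on $H_\ell(X)$ for $\ell\neq k$, while $(\pi)_*$ already lands in and vanishes outside $H_\pi$; so $\rho\comp\pi\comp\rho$ and $\pi$ induce the same endomorphism of $H_*(X)$.

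It remains to see that $\pi'$ factors with shift $c$ through a smooth projective variety $Z$ in the sense of Definition~\ref{dfn:Factoring}. Each $\pi_{k,j}$ with $j\ge c$ factors, by Proposition~\ref{prop:refinedK}, as $\alpha_j\comp\beta_j$ with $\beta_j\in\Corr_{-(k-j)}(X,Z_{k,j})$, $\alpha_j\in\Corr_{j}(Z_{k,j},X)$ and $\dim Z_{k,j}=k-2j\le k-2c$; taking $Z=\coprod_{j\ge c}Z_{k,j}$ (or rather each component separately and then summing the resulting correspondences, padding by zero so that all have shift $c$ through a variety of dimension $k-2c$, e.g. replacing $Z_{k,j}$ by $Z_{k,j}\times\PP^{j-c}$ and composing with the obvious correspondences) realises $\rho$, and hence $\pi'=\rho\comp\pi\comp\rho$, as a composition through a smooth projective $Z$ of dimension $k-2c$ with the required shift. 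The only mildly delicate point is this bookkeeping with the shifts: one must arrange that each summand factors through a variety of dimension exactly $k-2c$ with shift exactly $c$, which is done by the standard trick of crossing with projective space and using that $h(\PP^m)$ contains all the Tate twists $\Lef^0,\dots,\Lef^m$ as direct factors; I expect this uniformisation of the shifts to be the main obstacle, everything else being a routine transcription of the Corollary's proof.
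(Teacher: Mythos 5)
There is a genuine gap in the middle step, namely the claim that $\pi'=\rho\comp\pi\comp\rho\sim_{\rm hom}\pi$ for $\rho=\sum_{j\ge c}\pi_{k,j}$. Your justification is that $(\pi)_*$ ``lands in and vanishes outside $H_\pi$''. The second half of this is not implied by the hypotheses: a homological projector vanishes on \emph{its own kernel}, which is some complement of $H_\pi$ in $H_k(X)$, but nothing in the assumptions $\pi\comp\pi\sim_{\rm hom}\pi$ and $\pi\comp\pi_k\sim_{\rm hom}\pi_k\comp\pi\sim_{\rm hom}\pi$ forces that kernel to contain $\bigoplus_{j<c}\Gr^j_{\wh{N}}H_k(X)$; they only give $\pi_*H_\ell(X)=0$ for $\ell\ne k$ and $\ima(\pi_*)=H_\pi\subseteq\wh{N}^c H_k(X)$. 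Computing actions on homology: for $x\in H_k(X)$ written as $x_1+x_2$ with $x_1\in\bigoplus_{j<c}\Gr^j_{\wh{N}}H_k(X)$ and $x_2\in\wh{N}^c H_k(X)$, one gets $(\rho\comp\pi\comp\rho)_*x=\pi_*x_2$ (since $\rho_*$ is the identity on $H_\pi$), and this equals $\pi_*x$ only if $\pi_*x_1=0$ --- which is exactly the unproved statement. A two-dimensional linear-algebra model with $c=1$ (a projector onto the line $\Gr^1$ along a line distinct from $\Gr^0$) shows the asserted identity of endomorphisms can fail, so as written your verification of $\pi'\sim_{\rm hom}\pi$ is incomplete.

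The repair is immediate and is in fact the paper's argument: drop the right-hand factor and take $\pi'=\rho\comp\pi=\sum_{j\ge c}\pi_{k,j}\comp\pi$. Indeed $\pi\sim_{\rm hom}\pi_k\comp\pi=\sum_j\pi_{k,j}\comp\pi$, and for $j<c$ the term $\pi_{k,j}\comp\pi$ acts as zero on all of $H_*(X)$ because $\ima(\pi_*)\subseteq\wh{N}^c H_k(X)$ and $(\pi_{k,j})_*$ vanishes there; hence $\pi\sim_{\rm hom}\sum_{j\ge c}\pi_{k,j}\comp\pi$, and each surviving summand factors with shift $j\ge c$ through $Z_{k,j}$ (write $\pi_{k,j}=\alpha_j\comp\beta_j$ and absorb $\pi$ into $\beta_j$). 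Note the asymmetry: only \emph{left} composition with $\rho$ is harmless, because the hypothesis controls the image of $\pi_*$, not its kernel. As for your final worry about uniformising the shifts to exactly $c$: the paper does not address this either, and it is immaterial for the applications, since a sum of correspondences factoring with shifts $\ge c$ already acts trivially on $A_j(X)$ in the range used via Lemma~\ref{lem:Factoring}; if you do insist on a single equidimensional $Z$ with shift exactly $c$, your factor $\PP^{j-c}$ has the wrong dimension --- you would need to raise $\dim Z_{k,j}=k-2j$ by $2(j-c)$.
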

\begin{proof}
The proof of Proposition \ref{prop:refinedK} shows that we have a decomposition
$\pi_k = \sum_j \pi_{k,j}$ in $\Mot_{\rm hom}(k)$. Hence 
$$
\pi = \pi_k\comp\pi = \sum_j \pi_{k,j}\comp\pi.
$$
Suppose that there exists $j_0<c$ such that $\pi_{k,j_0}\comp\pi\ne 0$. Then there exists $x\in H_k(X)$ such that $\pi_{k,j}(\pi(x))\ne 0$. Hence $H_{\pi}\cap\ima(\pi_{k,j_0})\ne 0$. This contradicts the hypothesis $H_{\pi}\subseteq\wh{N}^c H_k(X)$ since $\pi_{k,j_0}\mid_{\wh{N}^c H_k(X)} = 0$. 
\end{proof}
This result implies a  modification of \cite[Cor. 3.4, Lemma 3.5]{Var} that we need later on.
\begin{corollary}   \label{cor:vanproj} 
  Same assumptions about $M$ and $X$. Suppose that  $H_d^{\rm var}(X)\subset \hat N^c H_d(X)$.
 Then $\pi^{\rm var}\sim_{\rm hom} \widetilde  \pi^{\rm var}$ where  $\widetilde  \pi^{\rm var} \in \Corr^0(X,X)$  factors 
 through a smooth projective variety $Z $  with shift $c$ in the sense of Definition~\ref{dfn:Factoring}.
  \end{corollary}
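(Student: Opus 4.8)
The plan is to deduce Corollary~\ref{cor:vanproj} directly from Corollary~\ref{factorisation} by checking that the correspondence $\pi^{\rm var}=\pi^{\rm var}(X)$ satisfies the hypotheses imposed on $\pi$ there. First I would recall from Lemma~\ref{lem:vanproj} that $\pi^{\rm var}$ is a homological projector, so $\pi^{\rm var}\comp\pi^{\rm var}\sim_{\rm hom}\pi^{\rm var}$; this is the first bullet condition. Next, since $B(M)$ holds, $B(X)$ holds for the complete intersection $X$ (sections and products preserve $B$, and $X$ is cut out inside $M$ by hyperplane sections relative to a projective embedding — more precisely, one invokes Remark~\ref{Bholds}/the standard stability of $B$ under hyperplane sections together with $B(M)$), so Proposition~\ref{KunnIsAlg} gives algebraic K\"unneth projectors $\pi_k\in\Corr_0(X,X)$ with $(\pi_k)_*|_{H_\ell(X)}=\delta_{k\ell}\cdot\id$.

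The key compatibility to verify is that for $k=d=\dim X$ we have $\pi^{\rm var}\comp\pi_d\sim_{\rm hom}\pi_d\comp\pi^{\rm var}\sim_{\rm hom}\pi^{\rm var}$. This follows because $\pi^{\rm var}=\Delta_X-\pi^{\rm fix}(X)$ acts on $H_*(X)$ as the projector onto $H^{\rm var}_d(X)\subset H_d(X)$ and as zero on all $H_\ell(X)$ with $\ell\ne d$ (indeed $\pi^{\rm fix}=i^*\Lambda^r_M i_*$ is supported in degree $d$ on the fixed part, and the Lefschetz theorems identify $H_\ell(X)$ with pieces of $H_*(M)$ for $\ell\ne d$, where $\pi^{\rm var}$ vanishes — this is exactly the content of Lemma~\ref{lem:vanproj} once one notes $\pi^{\rm var}$ kills everything coming from $M$). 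Hence in $\End H_*(X)$ the endomorphisms $(\pi^{\rm var})_*$ and $(\pi_d)_*$ are commuting idempotents with $(\pi^{\rm var})_*\le(\pi_d)_*$, i.e. $(\pi^{\rm var}\comp\pi_d)_*=(\pi_d\comp\pi^{\rm var})_*=(\pi^{\rm var})_*$; since homological equivalence of correspondences is detected by their action on homology, the three displayed homological identities hold. Thus $\pi^{\rm var}$ is a correspondence "as above" in the sense preceding Corollary~\ref{factorisation}, with $k=d$, and $H_{\pi^{\rm var}}=\ima(\pi^{\rm var})=H^{\rm var}_d(X)$.

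Now apply Corollary~\ref{factorisation} with $\pi=\pi^{\rm var}$, $k=d$, and the given hypothesis $H^{\rm var}_d(X)\subset\hat N^c H_d(X)$: it produces a correspondence $\wt\pi^{\rm var}\in\Corr_0(X,X)$ with $\wt\pi^{\rm var}\sim_{\rm hom}\pi^{\rm var}$ that factors with shift $c$ through a smooth projective variety $Z$ of dimension $d-2c$, in the sense of Definition~\ref{dfn:Factoring}. This is precisely the assertion of Corollary~\ref{cor:vanproj}, so the proof is complete.

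The only genuine subtlety — and the step I would expect to be the main obstacle in a careful write-up — is the compatibility identities relating $\pi^{\rm var}$ and the K\"unneth component $\pi_d$. Everything else is bookkeeping: one must be slightly careful that the $\pi_d$ furnished by Proposition~\ref{KunnIsAlg} really is the one appearing implicitly in the proof of Proposition~\ref{prop:refinedK} (or rather, that any choice works, since the refined decomposition is constructed from whatever $\pi_d$ one starts with), and one must make sure $\pi^{\rm var}$ acts as zero outside degree $d$ so that it "lives inside" $\pi_d$. Both points are immediate from Lemma~\ref{lem:vanproj} and the Lefschetz description of $H_*(X)$ recalled in Section~\ref{sec:VarAndFixed}, so no new input beyond the results already in the excerpt is needed.
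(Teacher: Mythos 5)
Your proof is correct and follows exactly the route the paper intends: Corollary~\ref{cor:vanproj} is stated there as an immediate consequence of Corollary~\ref{factorisation}, applied to $\pi=\pi^{\rm var}$ with $k=d$ and $H_\pi=H_d^{\rm var}(X)$. Your verification of the hypotheses --- $B(X)$ from $B(M)$ via stability under hyperplane sections, and the homological identities $\pi^{\rm var}\comp\pi^{\rm var}\sim_{\rm hom}\pi^{\rm var}$, $\pi^{\rm var}\comp\pi_d\sim_{\rm hom}\pi_d\comp\pi^{\rm var}\sim_{\rm hom}\pi^{\rm var}$ from Lemma~\ref{lem:vanproj} together with the fact that the action on $H_*(X)$ determines the homological class --- is precisely the bookkeeping the paper leaves implicit.
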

 
\begin{remark}\ 
\label{rmk:Comparison}
The condition $H_d(X)\subset\wh{N}^c H_d(X)$ may be replaced by 
Voisin's condition of "being parametrized by algebraic cycles of codimension $c$" \cite[Def. 0.3]{V1}.  Voisin's condition implies that
$$
\gamma_*\comp {^t \gamma}_*:H_d(X)\to H_d(X)
$$ 
is a multiple of the identity. 
Our condition implies that there exists an adjoint $\gamma^{\rm adj}$ such that $\gamma_*\comp {\gamma^{\rm adj}}_*$ is an isomorphism
 with an algebraic inverse (see Corollary \ref{cor: iso} and the proof of Proposition \ref{factorisation}). This weaker result suffices for our purposes.
\end{remark}

\begin{proposition}
\label{hat-tilde}
Suppose that $B(X)$ holds and that for every smooth projective variety $Z$ of dimension $k-2j$ the condition $B_{\ell}(Z)$ holds if $\ell\le k-2j-2$. Then $\wt{N}^j H_k(X) =\wh{N}^j H_k(X)$. 
\end{proposition}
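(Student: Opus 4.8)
The plan is to prove the two inclusions. By definition $\wh{N}^j H_k(X) \subseteq \wt{N}^j H_k(X)$ holds unconditionally, since every correspondence admitting a $k$-adjoint is in particular a correspondence $\gamma \in \Corr_j(Z,X)$ with $Z$ smooth projective of dimension $k-2j$, and the defining sum for $\wt{N}^j$ ranges over \emph{all} such pairs $(Z,\gamma)$. So the content is the reverse inclusion $\wt{N}^j H_k(X) \subseteq \wh{N}^j H_k(X)$, and for this it suffices, given a smooth projective $Z$ of dimension $k-2j$ and an arbitrary $\gamma \in \Corr_j(Z,X)$, to produce a pair $(Z', \gamma')$ with $Z'$ smooth projective of dimension $k-2j$ and $\gamma' \in \Corr_j(Z',X)$ admitting a $k$-adjoint, such that $\ima(\gamma'_*\colon H_{k-2j}(Z') \to H_k(X))$ contains $\ima(\gamma_*\colon H_{k-2j}(Z) \to H_k(X))$.

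First I would take $Z' = Z$ and show that $\gamma$ itself admits a $k$-adjoint. The natural candidate, following Proposition~\ref{prop:adjoint}, is
$$
\gamma^{\rm adj} = s_Z \circ L_Z^{d_Z - (k-2j)} \circ {}^t\gamma \circ \Lambda_X^{d_X - k} \circ s_X,
$$
where $d_Z = k-2j$, so the exponent $d_Z - (k-2j) = 0$ and the $L_Z$ factor disappears; we are left with $\gamma^{\rm adj} = s_Z \circ {}^t\gamma \circ \Lambda_X^{d_X-k} \circ s_X$. The verification that this satisfies the $k$-adjoint identity $Q_X(\gamma_*(a),b) = Q_Z(a, \gamma^{\rm adj}_*(b))$ for $a \in H_{k-2j}(Z)$, $b \in H_k(X)$ is exactly the computation in the proof of Proposition~\ref{prop:adjoint}, specialized to this degree. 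For this we need $s_X \in \End(H_k(X))$ and $s_Z \in \End(H_{k-2j}(Z))$ to be algebraic, and $\Lambda_X^{d_X-k}$ to be algebraic. Algebraicity of $\Lambda_X^{d_X-k}$ on $H_k(X)$ is exactly $B_k(X)$, which follows from the hypothesis $B(X)$. Algebraicity of $s_X$ follows from Lemma~\ref{sX alg}, which requires $B_\ell(X)$ for $\ell \le 2d_X - k - 2$; this again follows from $B(X)$. Algebraicity of $s_Z \in \End(H_{k-2j}(Z))$ follows from Lemma~\ref{sX alg} applied to $Z$, which requires $B_\ell(Z)$ for $\ell \le 2\dim Z - (k-2j) - 2 = 2(k-2j) - (k-2j) - 2 = (k-2j) - 2 = k-2j-2$ — precisely the hypothesis we are given on $Z$.

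Hence $\gamma$ admits a $k$-adjoint, so the pair $(Z,\gamma)$ is already among those allowed in the defining sum for $\wh{N}^j H_k(X)$, giving $\ima(\gamma_*) \subseteq \wh{N}^j H_k(X)$. Taking the sum over all such $(Z,\gamma)$ yields $\wt{N}^j H_k(X) \subseteq \wh{N}^j H_k(X)$, and combined with the trivial reverse inclusion this gives $\wt{N}^j H_k(X) = \wh{N}^j H_k(X)$. I do not expect a serious obstacle here: the main point is simply bookkeeping of which instances of conjecture $B$ are invoked and checking that the degree shift makes the $L_Z$-factor in the adjoint formula trivial (which is why only a $k$-adjoint, not a full adjoint, is needed, and why the hypothesis on $Z$ is stated for a single degree). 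The one thing to be careful about is that Definition~\ref{Nhat} requires the adjoint property only in degree $k$ (a "$k$-adjoint"), so one need not establish that $\gamma$ admits an adjoint in all degrees — which is fortunate, since that would require the stronger $B(Z)$ rather than the single-degree hypothesis provided.
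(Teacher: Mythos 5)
Your proposal is correct and follows essentially the same route as the paper: the paper's proof likewise reduces to showing that every $\gamma\in\Corr_j(Z,X)$ with $\dim Z=k-2j$ admits a $k$-adjoint, which it deduces from Lemma~\ref{sX alg} (algebraicity of $s_X$ and $s_Z$) via the formula of Proposition~\ref{prop:adjoint}. Your write-up just makes explicit the bookkeeping the paper leaves implicit, in particular that the middle-degree specialization kills the $L_Z$-factor so only $B_\ell(Z)$ for $\ell\le k-2j-2$ is needed.
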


\begin{proof}
It suffices to show that for every pair $(Z,\gamma)$ as in Definition \ref{Nhat},
$\gamma$ admits a $k$-adjoint. This follows directly from Lemma~\ref{sX alg}. 
\end{proof}

\begin{corollary} 
\label{hat-tilde2}
We have
$\wt{N}^jH_k(X)=\wh{N}^jH_k(X)$ if $k-2j\le 3$.  
In particular, if $H_k(X) = N^{[{k\over 2}]} H_k(X) $ the filtrations $\wt{N}$ and $\wh{N}$  on $H_k(X)$ coincide with the coniveau  filtration. This is true  unconditionally 
 on $H_k(X)$, $k\le 3$. If  the conjecture $B(M)$ holds, all three filtrations are   equal on  $H_k(X)$ for  $k\le 4$.
\end{corollary}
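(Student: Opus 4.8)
The corollary collects three consequences of Proposition~\ref{hat-tilde} together with the standard facts about $B_k$ recorded in Remark~\ref{Bholds} and Remark~\ref{is}. The first assertion, $\wt{N}^jH_k(X)=\wh{N}^jH_k(X)$ when $k-2j\le 3$, is immediate from Proposition~\ref{hat-tilde}: one must check that for every smooth projective $Z$ of dimension $k-2j\le 3$ and every $\ell\le k-2j-2\le 1$ the conjecture $B_\ell(Z)$ holds. But $B_\ell(Z)$ for $\ell\le 1$ is a consequence of the Lefschetz $(1,1)$ theorem (Remark~\ref{Bholds}), so the hypothesis of Proposition~\ref{hat-tilde} is satisfied unconditionally and the equality follows.

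\textbf{The coniveau comparison.} For the second assertion, suppose $H_k(X)=N^{[k/2]}H_k(X)$. By the first part we already know $\wt{N}^{[k/2]}H_k(X)=\wh{N}^{[k/2]}H_k(X)$ provided $k-2[k/2]\le 3$, which always holds since $k-2[k/2]\in\{0,1\}$. It remains to identify $\wt{N}^{[k/2]}H_k(X)$ with $N^{[k/2]}H_k(X)$. In general $\wt{N}^jH_k(X)\subseteq N^jH_k(X)$ (Remark~\ref{is}), and for the top step $j=[k/2]$ the difference $k-2j$ is $0$ or $1$, so the second bullet of Remark~\ref{is} gives $\wt{N}^{[k/2]}H_k(X)=N^{[k/2]}H_k(X)$. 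Combining, all three filtrations agree at level $[k/2]$, and since by hypothesis $H_k(X)$ equals that level of $N^\bullet$, the three filtrations coincide on $H_k(X)$. For $k\le 3$ the condition $H_k(X)=N^{[k/2]}H_k(X)$ holds automatically: for $k\le 1$ it is trivial, for $k=2$ it expresses that $H_2(X)$ is generated by algebraic cycles, which is the Lefschetz $(1,1)$ theorem dualized, and for $k=3$ it is the statement that $H_3(X)=N^1H_3(X)$, which holds for any smooth projective variety since $H_3$ carries no $(3,0)$ or $(0,3)$ obstruction to coniveau $1$ — equivalently $N^1H_3(X)=H_3(X)$ is classical (the generalized Hodge conjecture in this range is known). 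Hence the statement is unconditional for $k\le 3$.

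\textbf{The case $k\le 4$ under $B(M)$.} For $k=4$ the subtlety is that $k-2[k/2]=0\le 3$ still gives $\wt{N}^2H_4(X)=\wh{N}^2H_4(X)$ and $\wt{N}^2H_4(X)=N^2H_4(X)$ by Remark~\ref{is}, so levels $j=2$ are unconditionally fine; the issue is level $j=1$, where $k-2j=2$ and we would need $B_\ell(Z)$ for $\ell\le 0$ — again automatic — so in fact Proposition~\ref{hat-tilde} gives $\wt{N}^1H_4(X)=\wh{N}^1H_4(X)$ unconditionally too. What requires $B(M)$ is rather the identification with the \emph{coniveau} filtration $N^\bullet$ at level $1$: by Remark~\ref{is} one has $\wt{N}^1H_4(X)=N^1H_4(X)$ as soon as $B_{2}(Y)$ holds for the relevant smooth varieties $Y$ of dimension $3$ appearing in Definition~\ref{con} — and for complete intersections $X\subset M$ these $Y$ are obtained from $M$ by hyperplane sections, so $B(M)$ (which is stable under hyperplane sections, Remark~\ref{Bholds}) propagates to give $B_2(Y)$ and hence $\wt{N}^1H_4(X)=N^1H_4(X)$. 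Together with the unconditional coincidences at level $2$ and the first part of the corollary, all three filtrations agree on $H_k(X)$ for $k\le 4$.

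\textbf{Expected main obstacle.} The routine parts are the bookkeeping of which $B_\ell$ are needed and why they are automatic in low degree; the only genuinely delicate point is the step ``$\wt{N}^1H_4(X)=N^1H_4(X)$ under $B(M)$,'' where one has to argue that the varieties $Y$ entering the coniveau filtration of a complete intersection can be taken to be iterated hyperplane sections (or birational modifications thereof) inside $M$, so that $B(M)$ applies. Making the provenance of these $Y$ precise — and checking that the resulting surjection $\iota_*\comp\Lambda^j_Y$ of Remark~\ref{is} really is algebraic in the situation at hand — is where care is required; everything else is a direct citation of Proposition~\ref{hat-tilde}, Remark~\ref{is}, and Remark~\ref{Bholds}.
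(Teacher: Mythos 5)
Your overall architecture (reduce everything to Proposition~\ref{hat-tilde}, Remark~\ref{is} and Remark~\ref{Bholds}, and argue via the top level $j=[k/2]$ of the decreasing filtrations) is the right one, and your handling of the second assertion under the stated hypothesis $H_k(X)=N^{[k/2]}H_k(X)$ is correct. But there is a genuine misreading at the heart of the ``unconditional for $k\le 3$'' step. You interpret that clause as saying the \emph{hypothesis} $H_k(X)=N^{[k/2]}H_k(X)$ is automatic for $k\le 3$, and you support this with two false claims: that $H_2(X)=N^1H_2(X)$ is ``the Lefschetz $(1,1)$ theorem dualized'', and that $H_3(X)=N^1H_3(X)$ holds for every smooth projective variety. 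Neither is true: $N^1H_2(X)$ is the span of classes of algebraic curves, and $H_2(X)\cong H^{2d-2}(X)$ is not algebraic in general (already for a surface with $p_g>0$); $H_3(X)\cong H^{2d-3}(X)$ carries (up to twist) the Hodge structure of $H^3(X)$, which has nonzero $(3,0)$-part e.g.\ for an abelian threefold, and even when the Hodge level is one the inclusion into $N^1$ requires the generalized Hodge conjecture. The paper itself says exactly this in the Remark following Corollary~\ref{cor}, where for $n=2$ the condition $N^1H_2=H_2$ is glossed as ``all cohomology is algebraic'' and for $n=3$ one needs $h^{3,0}=0$ \emph{and} the generalized Hodge conjecture. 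The intended meaning of ``unconditionally'' is that the \emph{comparison of the filtrations} needs no standard conjectures in that range (for $k\le 3$ every level satisfies $k-2j\le 3$, and $k-2j\le 1$ at the top, so the first assertion plus Remark~\ref{is} applies level by level), not that the filtrations are all equal to $H_k(X)$.

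Two further gaps. First, Proposition~\ref{hat-tilde} assumes $B(X)$ in addition to the conditions $B_\ell(Z)$: the adjoint produced there is the cycle $s_Z\comp L_Z\comp{}^t\gamma\comp\Lambda_X\comp s_X$, so algebraicity of $\Lambda_X$ and $s_X$ --- i.e.\ (part of) $B(X)$ --- is genuinely used. Your assertion that ``the hypothesis of Proposition~\ref{hat-tilde} is satisfied unconditionally'' only checks the $Z$-side; in the paper's applications $B(X)$ is available because $X$ is a complete intersection in $M$ with $B(M)$ and $B$ is stable under hyperplane sections (Remark~\ref{Bholds}), and this is precisely what the assumption $B(M)$ is for in the last clause. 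Second, your mechanism for $k=4$ fails: you claim $\wt{N}^1H_4(X)=N^1H_4(X)$ under $B(M)$ because the varieties $Y$ in Definition~\ref{con} ``can be taken to be iterated hyperplane sections of $M$''. They cannot: they are (resolutions of) \emph{arbitrary} three-dimensional subvarieties of $X$, over which $B(M)$ gives no control, and the passage from $N^1H_4$ to $\wt{N}^1H_4$ in Remark~\ref{is} would require $B_2(Y)$ for arbitrary threefolds, which is open. The $k\le 4$ clause should instead be read with the standing hypothesis $H_4(X)=N^2H_4(X)$: then only the top level $j=2$ matters (where $k-2j=0$), Remark~\ref{is} gives $N^2=\wt{N}^2$ there, Proposition~\ref{hat-tilde} applies legitimately since $B(M)\Rightarrow B(X)$, and all three filtrations are full in levels $\le 2$ and zero above, hence equal. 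As written, your argument both proves too much (unconditional triviality of the hypothesis for $k\le 3$) and rests on a step that would fail (the hyperplane-section claim for $k=4$).
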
 

\begin{remark*}
The condition $B_{\ell}(Z)$ in Proposition \ref{hat-tilde} is needed to obtain an algebraic correspondence that induces $s_Z$. If $H\subset H_d(X)$ is a sub-Hodge structure such that there exists a smooth projective variety $Z$ of dimension $d-2c$ such that $H_{d-2c}^{\rm pr}(Z)\to H$ is surjective then this condition is not needed and we have 
$H\subset\wh{N}^c H_d(X)$. We present an example below.
\end{remark*}

\begin{example}
Let $X\subset\PP^{d+1}$ be a smooth hypersurface of degree $d+1$. Let $Z=F_1(X)$ be the Fano variety of lines contained in $X$. If $X$ is general then $Z$ is smooth of dimension $d-2$ and the incidence correspondence induces a surjective map (cylinder homomorphism) 
$$
\gamma_*:H_{d-2}^{\rm pr}(Z)\to H_d^{\rm pr}(X);
$$
see \cite[Thm. (5.34)]{Lewis}. Hence $H_d^{\rm pr}(X)\subset\wh{N}^1 H_d(X)$ by the previous remark.
\end{example}

Concerning the existence of a refined Chow--K\"unneth decomposition (in the strong sense) for the filtrations $N^{\bullet}$, $\wt{N}^{\bullet}$ and $\wh{N}^{\bullet}$ we have the following.
\begin{proposition}\label{prop:refinedCK}
Let $X$ be a smooth projective variety over $\C$ such that $B(X)$ holds and $h(X)$ is finite dimensional. Then
\begin{enumerate}
\item[(i)] There exists a refined Chow--K\"unneth decomposition in the strong sense for the filtration $\wh{N}^{\bullet}$.
\item[(ii)]  There exists a refined Chow--K\"unneth decomposition in the strong sense for
\begin{itemize}
\item $\wt{N}^{\bullet}$ if $\dim X\le 5$;
\item $N^{\bullet}$ if $\dim X\le 3$.
\end{itemize}
\end{enumerate}
\end{proposition}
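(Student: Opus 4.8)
The plan is to upgrade the refined \emph{homological} K\"unneth decomposition provided by Proposition~\ref{prop:refinedK} to one valid modulo \emph{rational} equivalence, using the finite-dimensionality of $h(X)$ via Kimura's nilpotence theorem (Theorem~\ref{nilp}), while keeping the ``strong sense'' factorization of Definition~\ref{def:refinedK}(3) intact throughout.

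For part (i) I would start from the refined K\"unneth decomposition in the strong sense for $\wh{N}^{\bullet}$ coming from Proposition~\ref{prop:refinedK}: correspondences $\pi_{i,j}\in\Corr_0(X,X)$ with $\Delta_X\sim_{\mathrm{hom}}\sum_{i,j}\pi_{i,j}$, with $(\pi_{i,j})_*$ inducing the projector onto $\Gr^j_{\wh{N}}H_i(X)$, and with each $\pi_{i,j}=a_{i,j}\comp b_{i,j}$ factoring with shift $j$ through a smooth projective $Z_{i,j}$ of dimension $i-2j$, where $b_{i,j}\in\Corr_{-j}(X,Z_{i,j})$ and $a_{i,j}\in\Corr_j(Z_{i,j},X)$. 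Let $I\subseteq\Corr_0(X,X)$ be the two-sided ideal of homologically trivial self-correspondences; since homological triviality implies numerical triviality and $h(X)$ is finite-dimensional, $I$ is a nil ideal by Theorem~\ref{nilp}. Modulo $I$ the $\pi_{i,j}$ form a complete orthogonal system of idempotents, and the plan is to lift this to a genuine complete orthogonal system of idempotents $e_{i,j}$ by the classical Wedderburn-type argument across a nil ideal, run one index $(i,j)$ at a time in a fixed order, and organized so that the factored form survives each step. The two mechanisms: (a) turning a quasi-idempotent $q$ (i.e. $q^2-q\in I$) into a genuine idempotent by applying a polynomial $P$ with $P(t)\equiv t\pmod{(t^2-t)}$; writing $P(t)=t\,h(t)$ one has $P(q)=\bigl(h(q)\comp a\bigr)\comp b$ whenever $q=a\comp b$, so the factorization is unchanged; (b) making a new quasi-idempotent orthogonal to the sum $f$ of the idempotents already built by replacing $\pi_{i,j}$ with $(\Delta_X-f)\comp\pi_{i,j}\comp(\Delta_X-f)=\bigl((\Delta_X-f)\comp a_{i,j}\bigr)\comp\bigl(b_{i,j}\comp(\Delta_X-f)\bigr)$, which is orthogonal to $f$, congruent to $\pi_{i,j}$ modulo $I$ (hence quasi-idempotent), and still factors through $Z_{i,j}$ with shift $j$; then apply (a). This produces orthogonal idempotents $e_{i,j}\equiv\pi_{i,j}\pmod{I}$, each factoring with shift $j$ through $Z_{i,j}$, with $e_{i,j}=0$ exactly when $\pi_{i,j}=0$. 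Finally $\sum_{i,j}e_{i,j}$ is an idempotent congruent to $\Delta_X$ modulo $I$, so $\Delta_X-\sum_{i,j}e_{i,j}$ is a homologically trivial --- hence, by Theorem~\ref{nilp}, nilpotent --- idempotent, and therefore zero; thus $\Delta_X\sim_{\mathrm{rat}}\sum_{i,j}e_{i,j}$, and since $(e_{i,j})_*=(\pi_{i,j})_*$ on homology the $e_{i,j}$ give the desired refined Chow--K\"unneth decomposition in the strong sense for $\wh{N}^{\bullet}$.

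For part (ii) I would argue that in the stated dimension ranges the coarser filtrations coincide with $\wh{N}^{\bullet}$ on $H_*(X)$, so the decomposition from (i) already refines them. Arranging the decomposition to be self-dual, or invoking compatibility of the three filtrations with Poincar\'e duality, it suffices to compare them on $H_k(X)$ for $k\le d=\dim X$; there the case $j=0$ is trivial since $N^0=\wt{N}^0=\wh{N}^0=H_k(X)$, so only the equalities $\wt{N}^jH_k(X)=\wh{N}^jH_k(X)$ for $j\ge 1$ are needed, and for such $j$ one has $k-2j\le d-2$. When $d\le 5$ this gives $k-2j\le 3$, whence $\wt{N}^jH_k(X)=\wh{N}^jH_k(X)$ by Corollary~\ref{hat-tilde2} (equivalently Proposition~\ref{hat-tilde}, whose hypothesis on $B_\ell(Z)$ is automatic once $\dim Z=k-2j\le 3$ by Remark~\ref{Bholds}); hence the corresponding graded pieces coincide and the $e_{i,j}$ furnish a refined Chow--K\"unneth decomposition in the strong sense for $\wt{N}^{\bullet}$. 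When $d\le 3$ one even has $k-2j\le 1$ for $j\ge 1$, $k\le d$, so $N^jH_k(X)=\wt{N}^jH_k(X)=\wh{N}^jH_k(X)$ by Remark~\ref{is} together with Corollary~\ref{hat-tilde2}, and the same $e_{i,j}$ work for $N^{\bullet}$.

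The main obstacle is the factorization-preserving lift in part (i): one has to make sure the standard idempotent-completion and Gram--Schmidt-type orthogonalization across the nil ideal $I$ can be carried out so that, at every stage, the correspondence being manipulated still factors through the prescribed $Z_{i,j}$ with the prescribed shift. The two observations above --- that removing the constant term of a polynomial in a factored correspondence leaves its right factor untouched, and that pre- and post-composing with $\Delta_X-f$ only alters its left factor --- are precisely what makes this go through; the rest is routine ring-theoretic bookkeeping, together with the (standard) Poincar\'e-duality compatibility used in part (ii) to reduce to $k\le d$.
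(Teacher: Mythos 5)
Your proposal is correct and follows the same skeleton as the paper's proof: start from the refined homological K\"unneth decomposition in the strong sense of Proposition~\ref{prop:refinedK}, use finite-dimensionality to kill the difference between rational and homological equivalence, and deduce (ii) from the low-dimensional coincidence of the filtrations. Where you diverge is the lifting step: the paper simply invokes the nilpotence of $\ker\bigl(A_d(X\times X)\to H_{2d}(X\times X)\bigr)$ and cites Jannsen's lifting lemma \cite{J2}, whereas you redo the Wedderburn-type lifting by hand using only Kimura nilpotence (Theorem~\ref{nilp}), arranged so that each manipulation --- the constant-term-free idempotent polynomial $P(q)=q\,h(q)$ and the conjugation by $\Delta_X-f$ --- visibly preserves the factorization $a_{i,j}\comp b_{i,j}$ through $Z_{i,j}$ with shift $j$. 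This buys something real: it is precisely what justifies that the \emph{strong-sense} property survives the lift, a point the paper's citation leaves implicit; your closing observation that $\Delta_X-\sum e_{i,j}$ is a nilpotent idempotent, hence zero, is also a clean way to get $\Delta_X\sim_{\rm rat}\sum e_{i,j}$. Two small things you should spell out: that $P(q)$ stays orthogonal to $f$ (immediate, since $P$ has no constant term and $f\comp q=q\comp f=0$), and that your reduction to $k\le d$ in (ii) rests on the compatibility of $N^\bullet$, $\wt N^\bullet$, $\wh N^\bullet$ with the algebraic operators $L^{k-d}$ and $\Lambda^{k-d}$, which uses the hypothesis $B(X)$ (and, for $\wh N^\bullet$, Proposition~\ref{prop:adjoint}). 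That reduction is in fact more careful than the paper's one-line comparison (whose stated inequalities ``$j-2i\le 3$'', ``$j-2i\le 1$'' are typos for $i-2j$ and which silently ignores degrees $i>d$), so your treatment of (ii), using Corollary~\ref{hat-tilde2} and Remark~\ref{is} only in the range $k\le d$, $j\ge 1$, is a genuine improvement in completeness rather than a different method.
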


\begin{proof}
By Proposition \ref{prop:refinedK} there exists a refined K\"unneth decomposition in the strong sense for the filtration $\wh{N}^{\bullet}$. If $h(X)$ is finite--dimensional the ideal
$$
\ker A_d(X\times X)\to H_{2d}(X\times X)
$$
is nilpotent, and the refined K\"unneth decomposition lifts to $\Mot_{\rm rat}(k)$ by a lemma of Jannsen \cite{J2}. This proves part (i). Part (ii) follows from the comparison between the filtrations:
$\wt{N}^j H_i(X) = \wh{N}^j H_i(X)$ if $j-2i\le 3$ (Corollary \ref{hat-tilde2}) and $N^j H_i(X) = \wt{N}^j H_i(X)$ if $j-2i\le 1$.
\end{proof}

\begin{remark} Part (ii) is due to Vial \cite{V4}. The assumption $\dim X\le 5$ can be replaced by the conditions of Proposition \ref{hat-tilde}.
\end{remark}

\begin{remark}
Using Proposition \ref{prop:refinedCK}, the main result of \cite{multbloch} can be extended to arbitrary dimension, provided one replaces Vial's filtration $\wt{N}^\bullet$ in the statement of \cite[Theorem 3]{multbloch} by the filtration $\wh{N}^\bullet$.

\end{remark}

 \section{The main results}  \label{sec:main}

The setup that we consider in this section is the following. Let $M$ be a smooth projective variety of dimension $d+r$. Let $L_1,\ldots,L_r$ be very ample line bundles on $M$, and let $f:\XX\to B$ denote the family of all smooth complete intersections of dimension $d$ defined by sections of $E=L_1\oplus\ldots\oplus L_r$.  We write $X_b = f^{-1}(b)$.
The next result  
plays a major role in deriving   the main results.
 It  uses the assumption that the $L_j$ are very ample in a crucial way.
\begin{proposition}[Voisin \cite{V1}]
\label{key} 
Suppose that for general $b\in B$  one has that $X_b$ has nontrivial variable homology in degree $d$.
 Let $\DD$ be a codimension-$d$ cycle on $\XX\times_B \XX$ with the property that   
    \[ 
     \DD\vert_{{X_b\times X_b}}=0\ \ \text{ in  }  H_{2d}({X_b\times X_b}).
     \]
   Then there exists  a codimension-$d$ cycle $\gamma$ on  $ M\times M$ such that
    \[   
    \DD\vert_{{X_b\times X_b}} -\gamma\vert_{X_b\times X_b} =0  \\\ \text{  in   } A_{d}({X_b\times X_b})
    \]  
    for all $b\in B$. 
   \end{proposition}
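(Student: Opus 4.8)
The plan is to deduce Proposition~\ref{key} from Voisin's Proposition~1.6 of \cite{V0}, which is quoted in the introduction as the ``second ingredient'': a fiberwise cohomologically trivial relative correspondence can be modified, in a controlled way, so as to become fiberwise rationally trivial. So the proof is a matter of checking that our hypotheses put us in the situation where that result applies and of tracking what ``controlled modification'' means here.

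First I would set up notation: let $p,q:\XX\times_B\XX\to\XX$ be the two projections and let $f_2:\XX\times_B\XX\to B$ be the structure map, so that the fiber over $b$ is $X_b\times X_b$. The cycle $\DD\in A^d(\XX\times_B\XX)$ restricts to $\DD\vert_{X_b\times X_b}\in A_d(X_b\times X_b)$, and the hypothesis is that this restriction is homologically trivial for every $b$ (it suffices to know it for general $b$, since the family is flat and the locus where the class vanishes is closed — but in fact Voisin phrases the hypothesis fiberwise, and one checks that homological triviality on the general fiber propagates). The key point now is that $\DD$ defines a relative self-correspondence of degree $0$ of the family $\XX\to B$, and it is fiberwise cohomologically trivial. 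The ``very ample'' hypothesis on the $L_j$ is exactly what Voisin needs: it guarantees that the universal family is sufficiently ample that the Leray spectral sequence degenerates appropriately and that the variable cohomology $H^d_{\mathrm{var}}(X_b)$ is nonzero and generated in the right way, which is why we must assume $X_b$ has nontrivial variable homology in degree $d$.

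The main step is then to invoke \cite[Proposition~1.6]{V0}: because $\DD$ is fiberwise homologically trivial, there is a cycle $\gamma'$ on $\XX\times_B\XX$ (supported over a proper closed subset, or rather pulled back from the ambient family) such that $\DD-\gamma'$ restricts to zero in $A_d(X_b\times X_b)$ for all $b$; and the controlled nature of the modification is precisely that $\gamma'$ can be taken to be the restriction to $\XX\times_B\XX$ of a cycle $\gamma$ living on $M\times M$ (pulled back via $\XX\times_B\XX\to M\times M$ and then restricted fiberwise to $X_b\times X_b$). This is where the geometry of complete intersections enters: the ``error term'' produced by Voisin's argument naturally lives on the ambient variety because it comes from the difference between the diagonal of $\XX$ and a correspondence built out of hyperplane sections, all of which are defined on $M$. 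Concretely, one writes the relative diagonal decomposition coming from the Lefschetz theory (as in the construction of $\pi^{\mathrm{fix}}$, $\pi^{\mathrm{var}}$ in Section~\ref{sec:VarAndFixed}) and observes that the ``fixed part'' correction is pulled back from $M\times M$.

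I expect the main obstacle to be the bookkeeping of relative versus fiberwise equivalence: one must be careful that a cycle on $\XX\times_B\XX$ restricting to zero in each $A_d(X_b\times X_b)$ does not automatically mean it is zero (or pulled back) relatively, and Voisin's argument handles this by a Hilbert-scheme / spreading-out argument using the very ampleness to produce enough sections. So the real content to verify is that the hypotheses of \cite[Prop.~1.6]{V0} — very ampleness of the $L_j$, and nontriviality of $H^d_{\mathrm{var}}(X_b)$ — are met and that the conclusion there is stated (or can be restated) with the error cycle $\gamma$ living on $M\times M$ rather than merely on some auxiliary variety. Once that is in place, the proposition follows by taking $\DD\vert_{X_b\times X_b}-\gamma\vert_{X_b\times X_b}=0$ in $A_d(X_b\times X_b)$ as the conclusion, which is exactly the assertion. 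I would therefore structure the proof as: (1) reduce to the fiberwise cohomological triviality hypothesis; (2) recognize $\DD$ as a relative correspondence and cite \cite[Prop.~1.6]{V0}; (3) identify the modification term as the restriction of a cycle $\gamma$ on $M\times M$, using the Lefschetz decomposition of the relative diagonal; (4) conclude.
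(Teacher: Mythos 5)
Your plan correctly identifies that the proposition should be reduced to Voisin's result, but the two steps that carry the actual content are missing or wrong in your sketch. What Voisin's argument (as the paper uses it) gives is the following: very ampleness of the $L_j$ makes $I_2(E)\to\wt{M\times M}$ a projective bundle over the blow-up of the diagonal, so for a codimension-$d$ cycle $R$ on $I_2(E)$ whose restriction to the blown-up fiber $\wt{X_b\times X_b}$ is homologically trivial one obtains, after pushing down by $p_b$, an identity $(p_b)_*\bigl(R\vert_{\wt{X_b\times X_b}}\bigr)=k\,\Delta_{X_b}+\gamma\vert_{X_b\times X_b}$ in $A_d(X_b\times X_b)$, with $\gamma$ a cycle on $M\times M$ \emph{and} $k$ an integer. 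The diagonal summand is unavoidable (it comes from the tautological and exceptional classes), it is not of the form $\gamma\vert_{X_b\times X_b}$, and getting rid of it is precisely where the hypothesis $H_d^{\rm var}(X_b)\neq 0$ is used: $[\Delta_{X_b}]$ acts as the identity on all of $H_d(X_b)$, while a correspondence restricted from $M\times M$ kills variable homology (it factors through $i_*$, whose kernel is $H_d^{\rm var}(X_b)$), so homological triviality of the restriction forces $k=0$. In your write-up this hypothesis is instead presented as a by-product of very ampleness — which it is not (an odd-dimensional quadric has $H_d^{\rm var}=0$ although $\OO(2)$ is very ample) — and your step (3), which tries to see the correction term as the ``fixed part'' of a Lefschetz-type decomposition of the relative diagonal pulled back from $M\times M$, neither produces the diagonal summand nor offers any way to eliminate it. As structured, the argument cannot reach the stated conclusion.

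The second missing ingredient is the reduction from the given datum, a cycle $\DD$ on $\XX\times_B\XX$, to something Voisin's proposition applies to: the paper notes that $\wt{\XX\times_B\XX}$ is Zariski open in $I_2(E)$, so by the localization sequence for Chow groups $p^*\DD$ extends to a codimension-$d$ cycle $R$ on $I_2(E)$, whose restriction $R\vert_{\wt{X_b\times X_b}}=(p_b)^*\bigl(\DD\vert_{X_b\times X_b}\bigr)$ is homologically trivial for all $b$; Voisin's original proposition is then applied to this $R$. If you prefer to quote a version of Voisin's Proposition 1.6 already phrased for relative cycles on $\XX\times_B\XX$, you must still check that its conclusion contains no multiple of the diagonal, which brings you back to the previous point. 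Finally, the role you assign to very ampleness (``Leray degeneration'') should be replaced by its actual role: sections of $E$ separate length-two subschemes, so $I_2(E)\to\wt{M\times M}$ is a projective bundle and its cycles are controlled by those of $\wt{M\times M}$, which is what makes $\gamma$ come from $M\times M$.
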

   \begin{proof}  We want to sketch  a proof of Voisin's  original result \cite[Proposition 1.6]{V1}  since we want to point out where the
  assumptions  are used.
    Consider  the blow up $\widetilde {M\times M}$ of the diagonal and the natural quotient map $\mu: \widetilde {M\times M} \to M^{[2]}$
   to the Hilbert scheme of zero-dimensional subschemes of $M$ of   length two.
   Set $\PP=\PP H^0(X,E)$ and  as in \cite[Lemma 1.3]{V1} introduce
  \begin{equation*}
   I_2(E):= \sett{(s,y)\in \PP\times \widetilde {M\times M} }{s|_{\mu(y)}=0}.
 \end{equation*}
 Next, consider  the  blow up of $\XX\times_B\XX$ along the relative diagonal:
  \[
  p : \widetilde{\XX\times_B\XX} \to \XX\times_B\XX.
  \]
  Observe that  $\widetilde {\XX\times_B\XX}$ is Zariski-open in $I_2(E)$ and so it makes sense to restrict cycles on $I_2(E)$
  to the fibers $\widetilde{X_b\times X_b}$ of $\widetilde{\XX\times_B\XX} \to B$.
 Very ampleness of the $L_j$ implies that $I_2(E)\to \widetilde {M\times M}$ is a projective bundle and hence its 
cohomology can be expressed in terms of cohomology coming from $\widetilde{M\times M}$  and a tautological class.
Assume now that
\begin{center} 
$\exists R\in A^d(I_2(E))$ with $R\vert_{\wt{X_b\times X_b}}  \sim_{\rm hom} 0$.
\end{center}
 Voisin shows  that this implies the existence of a  codimension-$d$ cycle $\gamma$ on $M\times M$ and an integer
  $k$ such that  
  \[
  \aligned
   (p_b)_\ast( R\vert_{\wt{X_b\times X_b}})  = k\Delta_{X_b\times X_b} + \gamma\vert_{X_b\times X_b}    \quad \text{  in   } A_{d}({X_b\times X_b})
   \endaligned \]
 The first summand  acts on all of homology, while the second summand, by construction, acts only on the fixed homology.
 So the assumption that there is some variable homology implies that $k=0$ and so the cycle $\gamma$ is homologous to zero. To prove the above variation, suppose we are given $\DD$
  of codimension $d$ on  $\XX\times_B \XX$ as above. As $\wt{\XX\times_B \XX}\subset I_2(E)$ is Zariski open, there exists  a codimension-$d$  cycle $R$ on $I_2(E)$  such that
  $ R\vert_{\wt{\XX\times_B \XX}}=p^\ast \DD$. 
    Then we have
    \[  
    R\vert_{\wt{X_b\times X_b}}=p^\ast \DD\vert_{\wt{X_b\times X_b}}=(p_b)^\ast \bigl( \DD\vert_{X_b\times X_b}  \bigr)=0\ \text{ in  } H_{2d}(\wt{X_b\times X_b})\ \]
    for all $b\in B$, where $p_b\colon \wt{X_b\times X_b}\to X_b\times X_b$ denotes the blow-up of the diagonal. Hence, if we apply Voisin's original proposition
  to this cycle $R$,  we get the desired conclusion. 
  \end{proof}

 \begin{theorem}\label{main1} Notation as above. Suppose that $B(M)$ holds and the Chow motive of $M$    is finite-dimensional. Assume that  for a general $b\in B$  the fiber $X_b$ has non-trivial variable homology:
  \[  H_d(X_b)^{\rm var}\not=0,
  \]
  and that for some  nonnegative  integers $c,e$, with $e<d$  we have
  $$  
  H_k(X_b)=\wh{N}^c H_k(X_b)\ \ \text{for\ all\ }k \in \{e+1,\ldots,d\}.$$
  Then for any $b\in B$ 
\[ 
\text{\rm Niveau} \bigl(A_k^{}(X_b)\bigr) \le e-k  \quad \text{for\ all\ }k <\min\{d-e,c\},
\]
i.e.,   there exists a subvariety $Y_b\subset X_b$ of dimension $e$ such that
$A_k(Y_b)\to A_k(X_b)$ is surjective.
  \end{theorem}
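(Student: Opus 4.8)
The plan is to combine the three structural inputs the excerpt has set up: the refined niveau filtration and the strong refined K\"unneth decomposition of Proposition~\ref{prop:refinedK}, Voisin's relative modification result Proposition~\ref{key}, and the weak nilpotence Proposition~\ref{weaknilp}. First I would use the hypothesis $H_k(X_b) = \wh{N}^c H_k(X_b)$ for $e+1\le k\le d$ together with $B(M)$ (which forces $B(X_b)$ by Remark~\ref{Bholds}, since $X_b$ is a complete intersection in $M$) to produce, via Proposition~\ref{prop:refinedK} and the Corollary following it (and its generalization Corollary~\ref{factorisation}), correspondences on $X_b$ that give a K\"unneth-type decomposition of the diagonal \emph{in homology} in which every component landing in homological degrees $e+1,\dots,d$ factors with shift $\ge c$ through a smooth projective variety of dimension $\le$ (that degree) $-2c$. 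Dually, by self-duality of the K\"unneth decomposition, the components in degrees $d,\dots,2d-e-1$ factor through low-dimensional varieties as well. Collecting the complementary projectors one obtains a correspondence $\Gamma\in\Corr_0(X_b,X_b)$, homologically equal to the sum of the K\"unneth projectors $\pi_k$ for $k\le e$ and $k\ge 2d-e$, whose complement $\Delta_{X_b}-\Gamma$ is homologically a sum of correspondences each supported/factoring appropriately so that, by Lemma~\ref{lem:Factoring}(1), it kills $A_k(X_b)$ for $k<c$ and $k>d-c$.

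The subtlety is that this is only a homological decomposition, so next I would promote it to rational equivalence using the family. Here the family $f:\XX\to B$ and Proposition~\ref{key} enter: the homological identity $\Delta_{X_b}-\Gamma_b \sim_{\rm hom} \sum_{\text{low degrees}}(\cdots)$ is realized, after spreading out over $B$, by a relative codimension-$d$ cycle $\DD$ on $\XX\times_B\XX$ restricting to a \emph{homologically trivial} cycle on each $X_b\times X_b$ — this is where $H_d(X_b)^{\rm var}\ne 0$ is used, exactly as in the proof of Proposition~\ref{key}. Proposition~\ref{key} then yields a codimension-$d$ cycle $\gamma$ on $M\times M$ with $\DD|_{X_b\times X_b} - \gamma|_{X_b\times X_b} \sim_{\rm rat} 0$ for all $b$. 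Thus the difference between the true diagonal (minus the good projector) and the correspondence coming from $\gamma|_{X_b\times X_b}$ is rationally trivial on each fiber; equivalently, modulo the fiberwise-$\gamma$ correspondence, $\Delta_{X_b}-\Gamma_b$ already acts as $0$ on all Chow groups.

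It remains to control the correspondence $\gamma|_{X_b\times X_b}$, and this is where finite-dimensionality of $h(M)$ and Proposition~\ref{weaknilp} do the work: $\gamma\in\Corr_r(M,M)$ restricts on $X_b$ to a degree-$0$ correspondence which is homologically trivial (because we arranged $\DD|_{X_b\times X_b}$ to be homologically trivial and $\DD - \gamma$ is even rationally trivial on fibers), so by Proposition~\ref{weaknilp} $(\gamma|_{X_b})^{\comp N}=0$ for some $N$. Writing $\Delta_{X_b} \sim_{\rm rat} \Gamma_b + \gamma|_{X_b} + (\text{rationally trivial})$ and taking a high power, one expands $\Delta_{X_b} = \Delta_{X_b}^{\comp M}$ and uses orthogonality of the $\pi_k$ together with $(\gamma|_{X_b})^{\comp N}=0$ to conclude that $\Delta_{X_b}$ acts on $A_k(X_b)$, for $k<\min\{d-e,c\}$, the same way as a sum of terms each of which factors through a variety of dimension $\le e$ (the surviving good projectors $\pi_k$, $k\le e$, and finitely many mixed terms involving at most $N-1$ copies of $\gamma$ composed with the factoring correspondences). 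Each such term, being $\Gamma$-type or a composition landing through dimension-$\le e$ varieties, sends $A_k(X_b)$ into the image of $A_k(Y_b)$ for a fixed subvariety $Y_b\subset X_b$ of dimension $e$ (take $Y_b$ to be the union of the supports); this gives $\mathrm{Niveau}(A_k(X_b))\le e-k$, i.e.\ $A_k(Y_b)\to A_k(X_b)$ surjective, for $k<\min\{d-e,c\}$. The main obstacle I anticipate is the bookkeeping in this last step: making the spreading-out of the homological decomposition over $B$ precise, ensuring the relative cycle $\DD$ genuinely restricts to \emph{homologically} trivial cycles fiberwise (so that both Proposition~\ref{key} and Proposition~\ref{weaknilp} apply), and then carefully expanding $\Delta_{X_b}^{\comp M}$ so that every surviving summand provably factors through a variety of dimension $\le e$ — the degree ranges $e+1\le k\le d$ and the dual range must be tracked simultaneously, and one must check the shifts add up correctly under composition so that Lemma~\ref{lem:Factoring} applies with shift $c$.
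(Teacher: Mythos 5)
Your overall strategy is exactly the paper's: (i) use $B(M)\Rightarrow B(X_b)$ and Proposition~\ref{factorisation} to replace the K\"unneth projectors in degrees $e+1,\dots,2d-e-1$ by correspondences factoring with shift $c$, keeping the low-degree projectors and their transposes as the left/right parts; (ii) spread this homological decomposition of the diagonal over $B$ (the paper does this via Propositions~\ref{spread1} and \ref{spread2} of Appendix A); (iii) apply Proposition~\ref{key} to the homologically trivial difference $\DD$ to get a cycle $\gamma$ on $M\times M$ whose restriction to $X_b\times X_b$ is homologically trivial, hence nilpotent by Proposition~\ref{weaknilp}; (iv) expand and conclude via Lemma~\ref{lem:Factoring}. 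Steps (i)--(iii) match the paper; note only that the statement is claimed for \emph{all} $b\in B$ while the general-position properties of the restricted supports hold only for general $b$, so one needs the specialization argument the paper invokes at the start of its Step 4.

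The genuine gap is in your step (iv). You propose to expand $\Delta_{X_b}^{\comp M}$ keeping $\gamma\vert_{X_b\times X_b}$ as one of the summands, invoking ``orthogonality of the $\pi_k$'' and claiming the surviving terms involve at most $N-1$ copies of $\gamma$ and all factor through varieties of dimension $\le e$. None of these holds as stated: the K\"unneth projectors are orthogonal only in homology, not in $\Corr_0(X_b,X_b)$; nilpotency $(\gamma\vert_{X_b})^{\comp N}=0$ kills only words containing $N$ \emph{consecutive} letters $\gamma$, so words with many interspersed $\gamma$'s survive; and a surviving word whose last-applied letter is $\gamma$, or a left-hand projector $\pi_k$ with $k\le e$, has no reason to have image supported on an $e$-dimensional subvariety --- the left part, being supported on $Y_b\times X_b$, acts as \emph{zero} on $A_k(X_b)$ for $k<d-e$ by Lemma~\ref{lem:Factoring}(2); it is its transpose, supported on $X_b\times Y_b$, whose image is supported on $Y_b$. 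The correct assembly, which uses only ingredients you already have, is the paper's: since $\Delta_{X_b}-\Gamma_{\rm left}-\Gamma_{\rm mid}-\Gamma_{\rm right}=\gamma\vert_{X_b\times X_b}$ in $\Corr_0(X_b,X_b)$, nilpotency gives $(\Delta_{X_b}-\Gamma_{\rm left}-\Gamma_{\rm mid}-\Gamma_{\rm right})^{\comp N}=0$; expanding this identity and acting on $A_k(X_b)$ with $k<\min\{d-e,c\}$, every word containing a letter $\Gamma_{\rm left}$ or $\Gamma_{\rm mid}$ acts as zero (Lemma~\ref{lem:Factoring}), so the identity of $A_k(X_b)$ coincides with a polynomial without constant term in $(\Gamma_{\rm right})_*$, whose image is supported on the $e$-dimensional $Y_b$. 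With your final expansion replaced by this one, the argument is the paper's proof.
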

  
  \begin{proof} 
{\it Step 1.}  We first construct a   homological decomposition of the diagonal of $X_b$  
 \[ 
   \Delta_{X_b} \sim_{\rm hom}   \Delta_{\rm left}  +\Delta_{\rm mid}  +  \Delta_{\rm right}\ \ \text{in}\ H_{2d}(X_b\times X_b),
    \]
where  the right hand side are self-correspondences of  $X$ of degree $0$,     $\Delta_{\rm right}= {^t   \Delta}_{\rm left}$ and $\Delta_{\rm mid} $ factors with shift $c$
through a smooth variety $Z$. 

  This is done as follows. As conjecture $B$  is stable by hyperplane sections 
  (see Remark~\ref{Bholds}),   the complete intersections $X_b$ satisfy $B(X_b)$ and hence by Proposition~\ref{KunnIsAlg}
   there are  correspondences  $\pi_j \in \Corr^0(X_b,X_b)$, $j=0,\dots, 2d$ inducing the corresponding homological K\"unneth projectors.
    By Proposition \ref{factorisation}, for  $k  \in\{e+1,\ldots,d\}$ we have   that $ \pi_k(X_b) \sim _{\rm hom} \pi_k'(X_b)$,
  a projector  that factors through a variety  with shift $c$ as in Definition~\ref{dfn:Factoring}. Now set
  \[
  \aligned
  \Delta_{\rm left} &=\sum_{k \le e} \pi_k(X_b)\\
  \Delta_{\rm right}&= {^t}\!  \Delta_{\rm left}\\
  \Delta_{\rm mid}& =\sum_{k=e+1}^{2d-e-1} \pi'_k(X_b).
  \endaligned
  \]
  {\it Step 2. }  We   spread out  the fiberwise correspondences  $\Delta_{\rm left} , \Delta_{\rm right},  \Delta_{\rm mid}$  to the family of hypersurfaces
   \[ 
   \XX\ \to\ B\ ,
   \]
   using Voisin's argument in the form of propositions \ref{spread1} and \ref{spread2}. This gives a homological decomposition of the relative diagonal, in the sense 
   that there exist $\YY\subset\XX$ of relative dimension $d$ and  a family $\Z\to B$ of relative dimension $d-2c$, and   codimension-$d$ cycles
   \[   
   \Pi_{\rm left},\quad  \Pi_{\rm right}, \quad  \Pi_{\rm mid}
   \]
   on $\XX\times_B \XX$ such that $ \Pi_{\rm left},  \Pi_{\rm right}$  have  support on $\YY\times_B \XX$, resp. on $\XX\times_B \YY$, and $\Pi_{\rm mid}$  factors through $\Z\to B$
   such that for any $b\in B$, restriction gives back the diagonal:
   \[ 
   \left. \Bigl(\Pi_{\rm left}+\Pi_{\rm mid}+\Pi_{\rm right}\Bigr) \right\vert_{X_b\times X_b}=\Delta_{X_b}\ \ \text{in}\ H_{2d}(X_b\times X_b)\ .
   \]
 {\it Step 3. }   We upgrade this to rational equivalence  using properties of $M$. So we consider the difference
  \[ 
  \DD:= \Delta_{\XX}-\Pi_{\rm left}-\Pi_{\rm mid}-\Pi_{\rm right} ,
  \]
a relative correspondence   with the property that
    \[ 
    \DD\vert_{X_b\times X_b}=0\ \ \text{in}\ H_{2d}(X_b\times X_b)\ ,
    \]
    for all $b\in B$.  To upgrade this  to rational equivalence we applying the key Proposition~\ref{key} to $\DD$. We find a 
    codimension-$d$ cycle $\gamma$ on $M\times M$ such that
    \[
     \DD\vert_{X_b\times X_b}-\gamma\vert_{X_b\times X_b}=0\ \text{in}\ \Corr_0(X_b\times X_b)\ ,
     \]
    for all $b\in B$. The crucial point is that  the restriction  $ \gamma \vert_{X_b\times X_b} \in  \Corr_0(X_b\times X_b) $ is homologically trivial, 
    and so, by Proposition~\ref{weaknilp}   is nilpotent. 
    
    \noindent{\it Step 4. }  We can now finish the proof. Observe that   a specialization argument reduces the proof to showing
    it for a general $b\in B$.  (cf. \cite[Thm. 1.7]{V0} and \cite[Thm. 0.6]{V1}).
    For general $b$ the fibre $X_b$ will be in general position with respect to $\YY$ and $\Z$ so that
    \[ 
    \Gamma_{\rm left}:= \Pi_{\rm left}\vert_{X_b\times X_b}
    \]
    will be supported on $Y_b\times X_b$ with $Y_b$ of dimension $c$, and likewise
   \begin{equation} \label{eqn:MidFactor}
    \Gamma_{\rm mid}:=\Pi_{\rm mid}\vert_{X_b\times X_b}
     \end{equation}
    will factor with a shift $c$.  Let $ \Gamma_{\rm right}$ be the transpose of $\Gamma_{\rm left}$. For some $N\gg 0$ we have 
   \begin{equation}
   \label{eqn:DiagSplits}
     \Bigl( \Delta_{X_b} - \Gamma_{\rm left} - \Gamma_{\rm mid} - \Gamma_{\rm right}\Bigr)^{\comp  N}=0\ \ \text{in  }\Corr_0(X_b\times X_b) ,
   \end{equation}
    where $ \Gamma_{\rm left},  \Gamma_{\rm right}$ is supported on $Y_b\times X_b$, resp. on $X_b\times Y_b$, and $ \Gamma_{\rm mid}$ factors through $Z_b$ with shift $c$
     as in Eqn.~\eqref{eqn:MidFactor}.
    
  Since    $ \Gamma_{\rm left}$   is supported on $Y_b\times X_b$, Lemma~\ref{lem:Factoring} implies that its  action on $A_k(X_b)$ is trivial for $k< \codim Y= d-e$.
  The correspondence  $ \Gamma_{\rm mid}$ by construction
     factors through $Z_b$  with  shift $c$ and so -- by the same Lemma --
    its action on $A_k(X_b)$  is  trivial, since $k<c$.  
    Now expand the expression \eqref{eqn:DiagSplits} to conclude that
        \[  
    (\Delta_{X_b})_\ast  =(\text{polynomial\ in\ } \Gamma_{\rm right})_\ast\colon\ \  A_k(X_b)\to A_k(X_b) .
    \]
Since $ \Delta_{X_b}$ acts as the identity on $ A_k(X_b)$ this implies  indeed that   $A_k(X_b)$ is supported on $Y_b$, a variety of dimension $e$.
\end{proof}

\begin{remark} 
It is possible to be more precise: in the situation of Theorem \ref{main1}, we even have that
  \[ 
  \cdot L^{d-e}\colon\ \ A^{e-k}(X_b)\ \to\ A^{d-k}(X_b)
  \]
  is surjective in the range $k<\min\{d-e,c\}$, so the $k$-cycles of $X_b$ are supported on a dimension $e$ complete intersection. To obtain this, we remark that the $ \Gamma_{\rm right}$ in the above proof can be expressed in terms of $L^{d-e}$, just as in the proof of \cite{moi2}.
\end{remark}

Recall that for curves $A_0^{\rm AJ}=0$ and so, if $A_0(X_b)$ is supported on a curve, we have  $A_0^{\rm AJ}(X_b)=0$. We thus deduce that
for $c=1$, $e=1$  we get the following special case:

\begin{corollary}\label{cor}  Let $M$ be  a smooth  $(d+1)$-dimensional  projective variety 
for which $B(M)$ holds and  whose (Chow) motive  is finite-dimensional.
 Let $X_b$, $b\in B$ be the family of all smooth hypersurfaces in a very ample linear system and suppose that
  \[  
  H_d(X_b)^{\rm var}\not=0
  \]
  and
  \[ 
  H_k(X_b)=\wh{N}^1 H_k(X_b), \ \ k=2,\ldots,d
  \]
  for the general $b\in B$. Then
  \[ 
  A_0^{AJ}(X_b)=0\ 
  \]
  for all $b\in B$.
  \end{corollary}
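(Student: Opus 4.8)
The plan is to deduce Corollary~\ref{cor} directly from Theorem~\ref{main1} by making the right choice of the parameters $c$ and $e$. Since we are looking at hypersurfaces, the codimension of $X_b$ in $M$ is $r=1$, and $\dim M = d+1$, exactly the setting of the theorem. I would set $c=1$ and $e=1$. The hypothesis $H_k(X_b) = \wh{N}^1 H_k(X_b)$ for $k=2,\ldots,d$ gives precisely the condition ``$H_k(X_b)=\wh{N}^c H_k(X_b)$ for all $k\in\{e+1,\ldots,d\}$'' from Theorem~\ref{main1}, since $e+1 = 2$ and $c=1$. The assumption $H_d(X_b)^{\rm var}\ne 0$ is also carried over verbatim, and the hypotheses $B(M)$ and finite-dimensionality of $h(M)$ are shared. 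We also need $e<d$; this holds as long as $d\ge 2$ (and for $d=1$ the conclusion $A_0^{\rm AJ}(X_b)=0$ is just Abel's theorem, so that edge case is handled separately or simply excluded by the standing hypothesis that variable cohomology in degree $d$ is nonzero together with $\wh N^1$ conditions for $k$ up to $d$).

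With these choices, Theorem~\ref{main1} yields: for all $b\in B$ and all $k<\min\{d-e,c\} = \min\{d-1,1\} = 1$ (assuming $d\ge 2$), there is a subvariety $Y_b\subset X_b$ of dimension $e=1$ such that $A_k(Y_b)\to A_k(X_b)$ is surjective. The only relevant value is $k=0$: we obtain a curve $Y_b\subset X_b$ with $A_0(Y_b)\onto A_0(X_b)$.

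Now I would invoke the classical fact recalled just before the statement: for a curve $C$, Abel's theorem gives $A_0^{\rm AJ}(C)=0$, i.e. the Abel--Jacobi map $A_0^{\rm hom}(C)\to J(C)$ is injective. More precisely, since the surjection $A_0(Y_b)\to A_0(X_b)$ is induced by the proper pushforward along $Y_b\hookrightarrow X_b$ (or, after desingularizing $Y_b$, along a correspondence), it is compatible with the Abel--Jacobi maps. An element $\alpha\in A_0^{\rm AJ}(X_b)$ lifts to some $\beta\in A_0(Y_b)$; one checks $\beta$ can be taken homologically trivial (adjust by a multiple of a point), and then the Abel--Jacobi image of $\beta$ maps to the Abel--Jacobi image of $\alpha$, which is zero. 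Since $Y_b$ is a curve, $\beta\in A_0^{\rm AJ}(Y_b)=0$, hence $\alpha = 0$. This gives $A_0^{\rm AJ}(X_b)=0$ for all $b\in B$.

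The argument is essentially bookkeeping; the only point requiring a little care is the compatibility of the Abel--Jacobi maps under the correspondence realizing the surjectivity $A_0(Y_b)\to A_0(X_b)$ and the reduction to homologically trivial cycles, but this is standard (one may replace $Y_b$ by a resolution without affecting $A_0$, and functoriality of intermediate Jacobians under correspondences of curves is classical). So I expect no genuine obstacle here — Corollary~\ref{cor} is simply the instance $c=e=1$ of Theorem~\ref{main1}, combined with Abel's theorem, exactly as the paragraph preceding the statement already indicates.
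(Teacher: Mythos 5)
Your reduction is exactly the paper's: take $c=e=1$ in Theorem~\ref{main1} (with $r=1$, $\dim M=d+1$), so that for every $b\in B$ there is a curve $Y_b\subset X_b$ with $A_0(Y_b)\to A_0(X_b)$ surjective, and then conclude by the fact that a variety whose $A_0$ is supported on a curve has trivial $A_0^{\rm AJ}$ -- this is precisely the one--line argument preceding Corollary~\ref{cor} in the paper. The gap is in your justification of that last implication, which as written is a non sequitur. You lift $\alpha\in A_0^{\rm AJ}(X_b)$ to a degree--zero cycle $\beta$ on $Y_b$ and observe that the Abel--Jacobi class of $\beta$ maps to that of $\alpha$, which is zero; but this only says that the image of $\beta$ under $J(\wt Y_b)\to \operatorname{Alb}(X_b)$ vanishes, and this homomorphism is in general far from injective (its kernel is typically a positive--dimensional abelian subvariety, e.g.\ whenever the genus of $Y_b$ exceeds $q(X_b)$). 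So you cannot conclude $\beta\in A_0^{\rm AJ}(Y_b)$, Abel's theorem is being applied to a class not known to be Abel--Jacobi trivial, and indeed $\beta\neq 0$ is perfectly possible; the naive lifting gives no handle on $\alpha=i_*\beta$.

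The correct deduction runs through a correspondence going the other way, from $X_b$ to the curve. Either invoke the Bloch--Srinivas consequence: since the support statement holds over $\C$, a universal domain, one has $N\Delta_{X_b}=\Gamma_1+\Gamma_2$ in $A_d(X_b\times X_b)$ with $\Gamma_1$ supported on $D\times X_b$ for a divisor $D$ (hence acting as zero on $A_0(X_b)$ by Lemma~\ref{lem:Factoring}) and $\Gamma_2$ supported on $X_b\times Y_b$; or, staying inside the paper, use what the proof of Theorem~\ref{main1} actually produces, namely $(\Delta_{X_b}-\Gamma_{\rm left}-\Gamma_{\rm mid}-\Gamma_{\rm right})^{\comp N}=0$, so that $(\Delta_{X_b})_*$ acts on $A_0(X_b)$ as a polynomial without constant term in $(\Gamma_{\rm right})_*$, with $\Gamma_{\rm right}$ supported on $X_b\times Y_b$. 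In either case the relevant correspondence factors through a correspondence $\gamma$ from $X_b$ to a resolution $\wt Y_b$, and correspondences commute with the Abel--Jacobi (Albanese) maps in this direction: for $\alpha\in A_0^{\rm AJ}(X_b)$ the class $\gamma_*\alpha\in A_0(\wt Y_b)$ has degree zero and trivial Abel--Jacobi invariant, hence vanishes by Abel's theorem (with $\QQ$--coefficients), and therefore $\alpha=(\Delta_{X_b})_*\alpha=0$. With this replacement of your final step, your proof coincides with the paper's.
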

  
  \begin{remark}  
  (1) In view of Cor.~\ref{hat-tilde2}(1), for $n=2$ the condition on the coniveau becomes  $N^1H_2(X_b)=H_2(X_b)$, i.e. all cohomology is algebraic.
 For $n=3$ we should have in addition that $N^1H_3(X_b)=H_3(X_b)$ that is   $h^{3,0}(X_b)=0$
  as well as  the generalized Hodge conjecture for $H^3(X_b)$.\\
  (2) Note that in corollary \ref{cor}, there is no condition on $H_{d+1}(M)$, so $p_g(M)$ could be non-zero. In this case, nothing is known about the Chow groups of $M$, so it is remarkable that one can at least control the image
  \[ 
  \ima\Bigl( A_1(M)\ \to\ A_0(X_b)\Bigr)\ .
  \]
  \end{remark}

 We next come to our second main theorem.  It asserts  that a "short"  niveau filtration on the variable cohomology  already 
 has  strong implications for the Abel-Jacobi kernels.
 
\begin{theorem}
\label{main2}
Let $i: X\into  M$ be a complete intersection of dimension $d$. Suppose that
\begin{enumerate}
\item  $B(M)$ holds;
\item  The Chow motive of $M$  is finite dimensional; 
\item $H_d^{\rm var}(X)\ne 0$ and for some positive integer $c$ we have $H_d^{\rm var}(X)\subset{\hat N}^c H_d(X)$. 
\end{enumerate}
Then   for $k<c$ or for $k>d-c$ we have 
\[
  i^*: A_{k+r}^{\rm AJ}(M) \onto A_k^{\rm AJ}(X) ,\quad 
                         i_* :A_k^{\rm AJ}(X) \into  A_k^{\rm AJ}(M ).
\]
Moreover, in this range 
\[
 A_k^{\rm var} (X)= \ker(A_k(X) \mapright{i_*}  A_k(M))=0 ,
 \]
 If in addition 
 \begin{enumerate}
\item[(a)]   $H_k(M) = {N}^{[{k\over 2}]}H_k(M)$ for $k\le d$, then  $A_k^{\rm AJ}(X) = 0$  if $k<c$ or $k>d-c$;
\item[(b)]  $H_k(M) = {N}^{[{{k+1}\over 2}]}H_k(M)$ for $k\le d$, then  $A_k^{\rm  hom}(X) = 0$  if $k<c$ or $k>d-c$.
\end{enumerate}
\end{theorem}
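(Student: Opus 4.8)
The plan is to deduce Theorem \ref{main2} from Theorem \ref{main1} by exploiting the duality $\pi^{\rm var} + \pi^{\rm fix} = \Delta_X$ together with the nilpotence package (Proposition \ref{weaknilp}) and Voisin's spreading argument (Proposition \ref{key}, as used through Step 2 and Step 3 of the proof of Theorem \ref{main1}). First I would observe that the hypothesis $H_d^{\rm var}(X)\subset\wh{N}^cH_d(X)$ combined with Corollary \ref{cor:vanproj} gives $\pi^{\rm var}\sim_{\rm hom}\wt\pi^{\rm var}$, where $\wt\pi^{\rm var}$ factors through a smooth projective $Z$ of dimension $d-2c$ with shift $c$. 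Since the Lefschetz and Künneth projectors $\pi_k$ for $k\neq d$ are algebraic (as $B(X)$ holds, $X$ being a complete intersection in $M$ and $B$ being stable under hyperplane sections) and all but $\pi^{\rm var}$ in the refined decomposition factor through low-dimensional varieties, we get a homological decomposition $\Delta_X\sim_{\rm hom}\Delta_{\rm left}+\Delta_{\rm mid}+\Delta_{\rm right}$ exactly as in Step 1 of Theorem \ref{main1}, with $\Delta_{\rm mid}$ absorbing $\wt\pi^{\rm var}$. Then Steps 2--4 go through verbatim (spreading out over $B$, upgrading to rational equivalence via $M$, invoking weak nilpotence) to conclude that $A_k(X)$ is supported on a subvariety $Y\subset X$ of dimension $d-c$ for $k<c$, and dually (using $\Delta_{\rm right}={}^t\Delta_{\rm left}$) for $k>d-c$.

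Next I would translate this support statement into the stated surjectivity/injectivity. For $k<c$ the correspondence $\Gamma_{\rm left}$ supported on $Y\times X$ acts as the identity on $A_k(X)$ modulo the nilpotent error, and since $\Gamma_{\rm left}$ factors through $Y$ via $Y\hookrightarrow X \hookrightarrow M$, the action factors through $i_*\circ(\text{something from }A_k(Y))$; chasing this through one sees $i_*:A_k(X)\to A_k(M)$ is injective and $i^*:A_{k+r}(M)\to A_k(X)$ is surjective, and these respect the Abel--Jacobi filtration because the correspondences involved are algebraic and hence induce morphisms of Hodge structures on the relevant $J^*$. The vanishing $A_k^{\rm var}(X)=\ker(i_*)=0$ is then immediate from injectivity of $i_*$. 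I would be careful here to note that $\pi^{\rm fix}=i^*\Lambda_M^r i_*$ means the "fixed" part of any cycle class on $X$ comes from $M$, so the only potential obstruction to $i_*$ being injective on $A_k$ is variable, and that is what the support argument kills.

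Finally, for the two refinements: if $H_k(M)=N^{[k/2]}H_k(M)$ for $k\le d$, then by the Bloch--Srinivas type results (the coniveau being maximal forces $A_k^{\rm AJ}(M)=0$ in the relevant range — this is the standard consequence of a sufficiently refined diagonal decomposition on $M$, available since $h(M)$ is finite-dimensional and $B(M)$ holds), so the surjection $i^*:A_{k+r}^{\rm AJ}(M)\onto A_k^{\rm AJ}(X)$ forces $A_k^{\rm AJ}(X)=0$; similarly if $H_k(M)=N^{[(k+1)/2]}H_k(M)$ the stronger coniveau bound gives $A_k^{\rm hom}(M)=0$ and hence $A_k^{\rm hom}(X)=0$ in that range. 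The main obstacle I anticipate is bookkeeping the Abel--Jacobi (and more generally the $N^\bullet$-filtration) compatibility under the correspondences $i_*$, $i^*$, and the factoring correspondence $\Gamma_{\rm left}$: one must check that "supported on $Y\times X$ with $\dim Y=d-c$" translates correctly into the vanishing of the $k$-th graded piece for the niveau/coniveau filtration on Chow groups, and then feed the hypothesis on $H_k(M)$ through the surjection $i^*$ without losing track of twists by $r=\codim(X,M)$. The rest is a direct transcription of the proof of Theorem \ref{main1}.
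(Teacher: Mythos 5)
Your plan to reduce Theorem \ref{main2} to the three--piece decomposition of Theorem \ref{main1} breaks down at the very first step. Theorem \ref{main2} only assumes $H_d^{\rm var}(X)\subset\wh{N}^cH_d(X)$; there is \emph{no} hypothesis on $H_k(X)$ for $k\neq d$, nor on $H_d^{\rm fix}(X)$. So your claim that ``all but $\pi^{\rm var}$ in the refined decomposition factor through low-dimensional varieties'' is unjustified: the K\"unneth projectors $\pi_k$, $k\neq d$, are algebraic under $B(X)$, but nothing forces them to be supported on, or factor through, anything small, and consequently there is no $\Delta_{\rm left}$ supported on $Y\times X$ with $\dim Y=d-c$ and no conclusion that $A_k(X)$ is supported on such a $Y$ (indeed the theorem's conclusion is of a different nature: it compares $A_*(X)$ with $A_*(M)$ rather than bounding its niveau). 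The paper's proof uses instead the \emph{two}-term decomposition of Section \ref{sec:VarAndFixed}, $\Delta_X=\pi^{\rm fix}(X)+\pi^{\rm var}(X)$ with $\pi^{\rm fix}=i^*\comp\Lambda^r_M\comp i_*$, which tautologically factors through $M$. One spreads this out (Proposition \ref{spread2}), applies Proposition \ref{key} and the weak nilpotence Proposition \ref{weaknilp} to $\Delta_{\XX/B}-\Pi'-\Pi^{\rm var}$, and expands the resulting nilpotent relation: since $\pi^{\rm var}$ factors with shift $c$ (Corollary \ref{cor:vanproj}) and hence kills $A_k(X)$ for $k<c$ or $k>d-c$, one gets $\id=P(\psi)_*\comp\psi_*=\psi_*\comp P(\psi)_*$ on $A_k(X)$ in that range, where $\psi=\pi^{\rm fix}=i^*\Lambda^r_Mi_*$. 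Injectivity of $i_*$, surjectivity of $i^*$ on $A^{\rm AJ}_k$, and $A_k^{\rm var}(X)=\ker(i_*)=0$ all drop out of this identity precisely because $\psi$ is sandwiched between $i_*$ and $i^*$; your ``supported on $Y\times X$'' device does not yield any of these statements.

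Your treatment of clauses (a) and (b) has a second genuine gap. You want to deduce $A_{k+r}^{\rm AJ}(M)=0$ (resp.\ $A^{\rm hom}(M)=0$) from the coniveau hypotheses and then push through the surjection $i^*$. But the hypotheses only constrain $H_k(M)$ for $k\le d$, while $\dim M=d+r$: the cohomology of $M$ in degrees between $d$ and $d+r$ (for instance $p_g(M)$ when $r=1$) is completely unconstrained, so no Bloch--Srinivas-type argument gives vanishing of Chow groups of $M$ itself --- this is exactly the point of the remark following Corollary \ref{cor}, where it is stressed that nothing is known about the Chow groups of $M$ in such situations. The paper's proof avoids this entirely: since all of $H^{\rm fix}_*(X)$ is governed (via weak and hard Lefschetz) by $H_k(M)$ with $k\le d$, condition (a) (resp.\ (b)) lets one replace $\pi^{\rm fix}$, up to homological equivalence, by a correspondence factoring through a curve (resp.\ a point); such a correspondence acts trivially on $A_k^{\rm AJ}(X)$ (resp.\ $A_k^{\rm hom}(X)$) for all $k$, and then the same polynomial identity $\id=P(\psi)_*\comp\psi_*$ forces $A_k^{\rm AJ}(X)=0$ (resp.\ $A_k^{\rm hom}(X)=0$) in the stated range. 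So both the main clause and the refinements need the fixed/variable projector mechanism rather than a transcription of Theorem \ref{main1}.
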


\proof
Let $X$ be a smooth complete intersection. In Section~\ref{sec:VarAndFixed} we showed that there is  a decomposition 
\begin{eqnarray*}
\Delta_{X} &= & 
 \pi^{\rm fix}(X)+\pi^{\rm var}(X)
\end{eqnarray*}
which in cohomology induce  projection onto fixed and variable cohomology respectively.
By Proposition~\ref{spread2} there exists relative  codimension-$d$ cycles   $\Pi'$ and  $\Pi^{\rm var} $ on $\XX\times_B \XX$ such that $\Pi'$  comes from $M\times M\times B$ and
 and $\Pi^{\rm var}$ induces  $\pi^{\rm var}(X)$. Moreover,  the restriction of
$$
R = \Delta_{\XX/B}-\Pi'-\Pi_d^{\rm var}
$$
to the general fiber is homologically trivial.  By Proposition \ref{key} there exists a  codimension-$d$ cycle $\gamma$ on $M\times M$ such that
$$
R|_{X\times X}-\gamma|_{X\times X}
$$
is rationally equivalent to zero for $b\in B$ general.
In particular $\gamma|_{X\times X}$ is homologically trivial. Hence 
$\gamma|_{X\times X}$ is nilpotent by Proposition \ref{weaknilp}.
 Let $N$ be the index of nilpotency of $\gamma|_{X\times X}$. We obtain
$$
0 = \gamma^{\comp  N}  \mid_{X\times X} = (\Delta_{X}-\pi^{\rm fix}(X) - \pi^{\rm var}(X))^{\comp  N}.
$$
By  assumption (3)  and  Corollary~\ref{cor:vanproj}      the  correspondence  $\pi^{\rm var}(X)$  factors  through a correspondence of degree $-c$ 
over a variety of dimension $d-2c$  and so acts trivially on $A_k^{\rm AJ}(X)$ if $k<c$
or $k>d-c$.  Setting $\psi=\pi^{\rm fix}(X) $,
we find that  for some polynomial $P$ we have $P(\psi)_* \comp \psi_*=\psi_*\comp P(\psi)_*=\id$  on the Chow groups  $A_k(X)$ with $k$ in this range and the 
first assertion  follows.  For the second, observe that   $ \psi$    acts as zero on
$ A_k^{\rm var} (X)$.

The  assumption (a) in the last clause   implies that $\pi^{\rm fix}(X) $ factors through a curve and so this summand acts trivially on $A_k^{\rm AJ}(X)$ for \emph{all } $k$. So then the  above argument indeed gives   that  $A_k^{\rm AJ}(X) = 0$  if $k<c$ or $k>d-c$. In case (b), $\pi^{\rm fix}(X) $ factors through a point and we obtain
$A_k^{\rm hom}(X) = 0$  if $k<c$ or $k>d-c$. 
\endproof
 
\begin{corollary}  \label{cor:fdm} In the above situation, suppose that $c =  [{d \over 2}]$. Then the motive $h(X)$ is finite-dimensional. Moreover, if for $M$ we have $ A_k^{AJ}(M)=0$ for all $k$, then   also  $A_k^{\rm AJ}(X) = 0$  for all $k$.
 \end{corollary}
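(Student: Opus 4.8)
The plan is to apply Theorem~\ref{main2} with $c=[d/2]$ and then invoke Vial's criterion for finite-dimensionality. First I would observe that with $c=[d/2]$ the range ``$k<c$ or $k>d-c$'' covers \emph{all} values of $k$ outside the narrow band around $d/2$; more precisely, the only $k$ \emph{not} in this range are $k\in\{[d/2],\ldots,d-[d/2]\}$, which is a single value when $d$ is even and two consecutive values when $d$ is odd. The conclusion of Theorem~\ref{main2} (together with Corollary~\ref{cor:vanproj}) gives a homological decomposition $\Delta_X\sim_{\rm hom}\pi^{\rm fix}(X)+\pi^{\rm var}(X)$ in which $\pi^{\rm var}(X)$ factors with shift $c$ through a smooth projective variety $Z$ of dimension $d-2c\le 1$, i.e. through a curve (or a point if $d$ is even), while $\pi^{\rm fix}(X)$ factors through $M$ via $i^*\Lambda_M^r i_*$, hence is governed by $h(M)$.

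Next I would upgrade this to rational equivalence using finite-dimensionality of $h(M)$ exactly as in the proof of Theorem~\ref{main2}: spreading out over the family $\XX\to B$, applying Proposition~\ref{key} to produce a cycle $\gamma$ on $M\times M$ whose restriction $\gamma|_{X\times X}$ is homologically trivial, and then invoking Proposition~\ref{weaknilp} to get $\gamma|_{X\times X}^{\comp N}=0$ for some $N$. Expanding $(\Delta_X-\pi^{\rm fix}(X)-\pi^{\rm var}(X))^{\comp N}=0$ in $\Corr_0(X,X)$ shows that the motive $h(X)$ is, up to the nilpotent ideal, a summand of a direct sum of (a twist of) a submotive of $h(M)$ and the motive of a curve $Z$ (with Tate twists). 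Here I would cite Vial's theorem \cite{V}: a variety $X$ whose Chow motive, modulo the nilpotent ideal $\ker(A_d(X\times X)\to H_{2d}(X\times X))$, decomposes into pieces each of which is finite-dimensional, has finite-dimensional motive. Since $h(M)$ is finite-dimensional by hypothesis (2) and the motive of a curve is finite-dimensional (Kimura), each piece is finite-dimensional, so $h(X)$ is finite-dimensional. This is essentially the content of ``up to motives of curves and Tate twists $h(X)$ is a direct factor of $h(M)$'' mentioned after Theorem~\ref{main2}.

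For the second assertion, if $A_k^{\rm AJ}(M)=0$ for all $k$, then since $\pi^{\rm fix}(X)=i^*\Lambda_M^r i_*$ is built from $i_*$ and $i^*$, which respect the Abel--Jacobi filtration, the operator $\pi^{\rm fix}(X)_*$ on $A_\bullet^{\rm AJ}(X)$ factors through $A_\bullet^{\rm AJ}(M)=0$, hence vanishes on all Abel--Jacobi trivial cycles. Meanwhile $\pi^{\rm var}(X)$ factors with shift $c=[d/2]$ through $Z$ of dimension $\le 1$, so by Lemma~\ref{lem:Factoring} it acts trivially on $A_k^{\rm AJ}(X)$ for $k<c$ or $k>d-c$; and in the remaining band $Z$ has dimension $\le 1$ so $A_k^{\rm AJ}(Z)=0$ there too (a curve has $A_0^{\rm AJ}=0$), hence $\pi^{\rm var}(X)_*$ also kills $A_\bullet^{\rm AJ}(X)$. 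Applying $\id=(\text{polynomial in }\gamma)$-type expansion as above, $(\Delta_X)_*=0$ on $A_k^{\rm AJ}(X)$ for every $k$, i.e. $A_k^{\rm AJ}(X)=0$ for all $k$. The main obstacle I anticipate is bookkeeping the degree-$c$ shift together with the Tate twists so that the pieces one feeds into Vial's criterion are genuinely of the form one needs (submotive of $h(M)$, motive of a curve, Tate twists); the nilpotence arguments themselves are routine given Propositions~\ref{weaknilp} and~\ref{key}.
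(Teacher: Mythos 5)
Your route is genuinely different from the paper's, and it is essentially workable, but one step is justified incorrectly. The paper's own proof is two lines: Theorem~\ref{main2} with $c=[\frac{d}{2}]$ gives surjectivity of $i^*\colon A_{k+r}^{\rm AJ}(M)\onto A_k^{\rm AJ}(X)$ for $k\le[\frac{d-2}{2}]$, and this is fed into Vial's criterion, stated and proved (via Kahn--Sujatha birational motives) as Theorem~\ref{thm: findim} in Appendix~B, using that $h(M)$ is self-dual up to twist; that theorem yields at once that $h(X)$ is a direct factor of a sum of twists of $h(M)$, $h(M)^{\vee}$ and curve motives, hence finite-dimensional, and the second assertion follows immediately since the Abel--Jacobi trivial Chow groups of that ambient motive vanish when $A_\bullet^{\rm AJ}(M)=0$ (exactly as in the Bardelli example). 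You instead re-open the proof of Theorem~\ref{main2} and exploit the Chow-level identity it produces. Your treatment of the second assertion along these lines is fine: the pieces $\Pi'\vert_{X\times X}$ and the correction cycle $\gamma\vert_{X\times X}$ come from $M\times M$, i.e.\ are of the form $i^*\comp\delta\comp i_*$, hence annihilate $A_\bullet^{\rm AJ}(X)$ once $A_\bullet^{\rm AJ}(M)=0$, while the middle piece factors through a variety of dimension $d-2c\le 1$, whose Abel--Jacobi trivial Chow groups vanish; expanding the nilpotent difference then kills $A_k^{\rm AJ}(X)$ for every $k$.

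The flawed step is the appeal to a ``Vial theorem'' asserting that finite-dimensionality can be checked modulo the ideal $\ker\bigl(A_d(X\times X)\to H_{2d}(X\times X)\bigr)$. No such statement exists: that ideal is not known to be nilpotent for $X$ --- the paper only uses its nilpotence \emph{after} $h(X)$ is known to be finite-dimensional (proof of Proposition~\ref{prop:refinedCK}) --- so calling it ``the nilpotent ideal'' and lifting a homological decomposition through it is circular, and is precisely the kind of lifting that is open in general. Fortunately you do not need it: the identity you already have, $(\Delta_X-\Gamma_{\rm fix}-\Gamma_{\rm mid})^{\comp N}=0$ in $\Corr_0(X,X)$ (indeed already $\Delta_X=\Pi'\vert_{X\times X}+\Pi^{\rm var}\vert_{X\times X}+\gamma\vert_{X\times X}$ holds rationally), writes $\id_{h(X)}$ as a finite sum of morphisms, each factoring through a Tate twist of $h(M)$ or of $h(Z)$ with $\dim Z\le 1$. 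Assembling these factorizations gives $f\colon h(X)\to T$ and $g\colon T\to h(X)$ with $g\comp f=\id_{h(X)}$, where $T$ is a finite direct sum of such twists; hence $h(X)$ is a direct summand of the finite-dimensional motive $T$ and is itself finite-dimensional by Kimura's results. With that repair your argument closes; what the paper's route through Theorem~\ref{thm: findim} buys is that one quotes only the \emph{statement} of Theorem~\ref{main2} (no re-spreading, no renewed use of Propositions~\ref{key} and~\ref{weaknilp}) and obtains the sharper direct-factor description in one stroke.
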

\proof
The assumptions imply surjectivity of $i^*:   A_k^{AJ}(M,\id, r) \to A_k^{\rm AJ}(h(X),\id,0)$ in the range $k=0,\ldots, [{d-2 \over 2}]$. We then apply Vial's result \cite{V3}, stated 
in the Appendix as Theorem~\ref{thm: findim}. 
\endproof

\section{Variants with group actions} \label{sec:vars}

Let $M$ be a projective manifold of dimension $d+r$ and   let  $L_1,\dots, L_r$ be  ample line bundles  on $M$ 
and,   as before, set
\[
E:= L_1\oplus \cdots\oplus L_r.
\]
We assume that  a  finite group $G$ acts on    $M$ and  on the $L_j$ and that the linear systems $|L_j|^G$, $j=1,\dots,r$  are   base point free.
The complete intersection in $  M$ corresponding to  $ s= (s_1,\dots,s_r)\in  \PP(H^0(M, E))$  is denoted $X_s$.
  We consider smooth complete intersections coming from $G$-invariant hypersurfaces and set 
accordingly
\[
B:= \{     b  \in   \PP(H^0(M, E)^G)  \mid   X_b \text{ is smooth} \}. 
\]
This is Zariski open in $ \PP(H^0(M, E)^G) $.

The graph of the action of $g\in G$ on $M$ will be written $\Gamma_g\subset M\times M$.
As before, we let $\wt{M\times M}$ be the blow up of $M\times M$ in the diagonal and $M^{[2]}$ the Hilbert scheme  of length $2$ subschemes of $M$
with the natural quotient morphism 
\begin{equation*}
\mu: \wt{M\times M}\to M^{[2]}.
\end{equation*}
Consider the  "bad" locus  
\[
\aligned
B_{E,\mu} =  &\sett{y\in \wt{M\times M}} { \text{no } s \in   H^0(M, E)^G   \text{    separates the points}\\
& \hspace{12em} \text{      of the length-two scheme } \mu(y) }.
\endaligned
\] 
Note that the $G$-invariant sections of $E$ do not separate points in $G$-orbits. We demand  instead that they   separate  entire $G$-orbits; in fact we want something less stringent, as
expressed by   the following notion,
involving the proper transforms   $\wt{\Gamma_g}$ of $\Gamma_g$  in  $\wt{M\times M}$.
\begin{definition} Assume $(M,E)$ and $G$ as above. We say that $H^0(M,E)^G$ 
  \emph{almost separates} orbits if    
  the "bad" locus $B_{E,\mu}$   is contained in $\bigcup_{g\not=\id} \wt{\Gamma_g}\cup R_G$, where $R_G$ is a (possibly empty)  union of components of codimension $>\dim M=d+r$. 
\end{definition}
This demand ensures that  $I_2(E) \to \wt{M\times M}$  is  a  repeated blow up of a projective bundle so that its  cohomology can be controlled.
In  order to have an  analogue  of Proposition~\ref{key},  we demand that for $g\in G$ the endomorphisms  
\[
\gamma_g^{\rm var}= [\Gamma_{g,b}]^{\rm var}_* \in   \End H_d(X_b)^{\rm var}
\]
should be independent. This can be tested using  the following result.
\begin{lemma} Let $\rho: G \to \text{GL}(V)$ be a representation of a finite group on a finite dimensional $\QQ$-vector space $V$. Then the endomorphisms $\{ \rho_g, g\in G\}$ 
are independent in $\End V$  if $G$ is abelian and every irreducible representation occurs in $V$.
\end{lemma}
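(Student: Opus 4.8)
The plan is to restate the conclusion as the injectivity of the natural $\QQ$-algebra homomorphism
\[
\phi\colon \QQ[G]\;\longrightarrow\;\End_\QQ(V),\qquad g\mapsto \rho_g,
\]
because the elements of $G$ form a $\QQ$-basis of the group algebra $\QQ[G]$, so $\{\rho_g\}_{g\in G}$ is $\QQ$-linearly independent in $\End_\QQ(V)$ exactly when $\phi$ is injective (and a fortiori this also rules out $\C$-linear relations among the $\rho_g$, which is the form actually used in the application). Since injectivity is detected after the faithfully flat base change $-\otimes_\QQ\C$, it is enough to show that $\phi_\C\colon \C[G]\to\End_\C(V_\C)$ is injective, where $V_\C=V\otimes_\QQ\C$.

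Here I would use the abelian hypothesis in an essential way: every irreducible $\C$-representation of $G$ is one-dimensional and given by a character $\chi\in\wh{G}:=\Hom(G,\C^{\times})$, and $|\wh{G}|=|G|$. Decompose $V_\C=\bigoplus_{\chi\in\wh{G}}V_\chi$ into character eigenspaces, so that $\rho_g$ acts on $V_\chi$ as multiplication by the scalar $\chi(g)$. The hypothesis that every irreducible representation of $G$ occurs in $V$ is equivalent to $V_\chi\neq 0$ for every $\chi\in\wh{G}$: an irreducible $\QQ$-representation of an abelian group has Schur index $1$ and corresponds to a Galois orbit of characters, so it occurs in $V$ iff every character in that orbit occurs in $V_\C$, and since $V_\C$ is Galois-stable the multiplicities of conjugate characters agree. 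Thus the $\QQ$- and the $\C$-formulations of the hypothesis coincide.

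Finally, suppose $\sum_{g\in G}a_g\rho_g=0$ in $\End_\C(V_\C)$ with $a_g\in\C$. Restricting to a fixed $V_\chi$, which is nonzero by the previous step, gives $\sum_{g\in G}a_g\chi(g)=0$; letting $\chi$ range over $\wh{G}$ we obtain a homogeneous linear system whose coefficient matrix is the (square) character table $\bigl(\chi(g)\bigr)_{\chi\in\wh{G},\,g\in G}$ of the finite abelian group $G$, which is invertible by the orthogonality relations for characters. Hence $a_g=0$ for all $g$, so $\phi_\C$ and therefore $\phi$ is injective. I do not expect a genuine obstacle: the only point deserving a line of care is the translation of the hypothesis between $\QQ$ and $\C$, handled via Galois orbits of characters as above. (Alternatively one can avoid the abelian assumption entirely: $\QQ[G]$ is semisimple by Maschke, $\ker\phi$ is a two-sided ideal hence a product of some of the simple factors, and a simple factor lies in $\ker\phi$ precisely when the corresponding irreducible $\QQ$-representation does not occur in $V$; but the character-theoretic argument is the most transparent in the abelian case stated.)
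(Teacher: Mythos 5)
Your proof is correct and takes essentially the same route as the paper's: both reduce the claim to injectivity of the induced algebra homomorphism from the group algebra to $\End(V)$ after passing to $\C$, using the character decomposition available for an abelian group. You merely make explicit two points the paper leaves implicit, namely the translation of the hypothesis from $\QQ$-irreducibles to Galois orbits of $\C$-characters and the invertibility of the character table, which is a welcome but not essentially different elaboration.
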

\proof This is a consequence  of elementary representation theory. We may work over $\C$. In the abelian case the  group ring $\C[G]$ is isomorphic to the regular representation of $G$ and 
since the former has for its base the irreducible non-isomorphic characters, the elements $g, g\in G$ give  a basis for $\C[G]$. The representation $\rho$ induces
an algebra  homomorphism $\tilde\rho : \C[G]\to  \End V$ which is injective if every irreducible representation occurs in $V$. So the images $\tilde\rho_g$, $g\in G$ form an independent set.
\endproof

Let us next introduce some notation.
Suppose that $\chi: G\to \QQ $ is a $\QQ$-character defining an irreducible   $\QQ$-representation $V_\chi$,
i.e. $ \chi(g) =\Tr( g)|_{V_\chi}$ for all   $g\in G$. The corresponding projector in the group ring of $G$  is
\[
\pi_\chi= \frac 1 {|G|} \sum_{g\in G}  \chi(g) g \in \QQ[G]
\]
leading to 
\begin{equation}
\label{eqn:groupproj}
\Gamma_\chi:= \frac 1 {|G|}   \sum_{g\in G}  \chi(g) \Gamma_{g,b} \in \Corr_0(X_b,X_b)
\end{equation}
acting  on the Chow group of $M$ and on the homology groups of $M$ as well as the homology of the complete intersections  $X_b$.
The latter action  preserves  the decomposition into variable and fixed homology.
The $j$-th Chow group of the motive $(X, \Gamma_\chi)$ is by definition
\[
A_j(X, \Gamma_\chi)=  \im\left( \Gamma_\chi : A_j(X) \to A_j(X)\right) =   A_j(X)^\chi,
\]
where for any $G$-module $V$ we set
\[
V^\chi := \sett{ v\in V}{g (v) =\chi(g) v \text{ for all } g\in G} =  \sett{ v\in V}{ (\Gamma_\chi)_* v= v}.
\]
Thus $\Gamma_\chi$ act as the identity on $V^\chi$.  
\medskip

We are now ready to formulate a  variant of  Proposition~\ref{key}. Its validity   is shown in the course of the  proof of \cite[Theorem 3.3]{V1}.

\begin{proposition} \label{key2} Let  $(M,E)$, $G$  and $B\subset  \PP(H^0(M, E)^G)$ be as above.  Suppose  that 
\begin{enumerate}
\item  $H^0(M,E)^G$   almost separates  orbits;
\item   the endomorphisms  $\gamma_g^{\rm var}\in\End H_d(X_b)^{\rm var} $, $g\in G$ are linearly independent; 
\item  for general $b\in B$  one has $ H_d(X_b)^{\rm var}\not=0$.
\end{enumerate}
Then for   any    $\DD\in A^d(\XX\times_B \XX)^\chi$ with the property that   
    \[ 
     \DD\vert_{{X_b\times X_b}}=0\ \ \text{ in  }  H_{2d}({X_b\times X_b})^\chi,
     \]
   there exists  a codimension-$d$ cycle $\gamma$ on  $ M\times M$ such that
    \[   
    \DD\vert_{{X_b\times X_b}} -\gamma\vert_{X_b\times X_b} =0  \\\ \text{  in   } A_{d}({X_b\times X_b})^\chi
    \]  
    for all $b\in B$. 
\end{proposition}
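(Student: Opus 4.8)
The strategy is to reduce Proposition~\ref{key2} to the non-equivariant statement of Proposition~\ref{key} by running Voisin's blow-up argument while keeping track of the $G$-action throughout, and then projecting onto the $\chi$-isotypic piece at the very end. First I would set $\PP=\PP H^0(M,E)^G$ and introduce the incidence variety
\[
I_2(E):=\sett{(s,y)\in\PP\times\wt{M\times M}}{s|_{\mu(y)}=0},
\]
exactly as in the proof of Proposition~\ref{key}, except that now $s$ ranges only over $G$-invariant sections. The assumption that $H^0(M,E)^G$ \emph{almost separates} orbits is precisely what guarantees that $I_2(E)\to\wt{M\times M}$, away from the bad locus $B_{E,\mu}$, is a repeated blow-up of a projective bundle; since $B_{E,\mu}\subset\bigcup_{g\ne\id}\wt{\Gamma_g}\cup R_G$ with $R_G$ of codimension $>\dim M$, the cohomology of $I_2(E)$ (and of the relevant open part $\wt{\XX\times_B\XX}$) is controlled by classes pulled back from $\wt{M\times M}$, by tautological classes of the projective bundle, and — crucially — by classes supported on the proper transforms $\wt{\Gamma_g}$ of the graphs. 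This last feature is the new ingredient compared to Proposition~\ref{key}: the "bad" locus is not empty but is swept out by the graphs of the group elements, whose classes give exactly the correspondences $\Gamma_{g,b}$.

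Second, given $\DD\in A^d(\XX\times_B\XX)^\chi$ with $\DD|_{X_b\times X_b}=0$ in $H_{2d}(X_b\times X_b)^\chi$, I would extend $\DD$ (after pulling back along the blow-down $p\colon\wt{\XX\times_B\XX}\to\XX\times_B\XX$) to a codimension-$d$ cycle $R$ on $I_2(E)$ with $R|_{\wt{\XX\times_B\XX}}=p^*\DD$, exactly as in Proposition~\ref{key}. Because $\DD$ is $\chi$-isotypic one can and should take $R$ to be $\chi$-isotypic as well (apply the projector $\pi_\chi$ of $\QQ[G]$ on the $G$-equivariant cycle group of $I_2(E)$). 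Running Voisin's analysis of $(p_b)_*(R|_{\wt{X_b\times X_b}})$ — using the structure of $I_2(E)$ as a repeated blow-up of a projective bundle over $\wt{M\times M}$ — now yields an identity in $A_d(X_b\times X_b)$ of the shape
\[
(p_b)_*(R|_{\wt{X_b\times X_b}}) = k\,\Delta_{X_b\times X_b} + \sum_{g\ne\id} k_g\,\Gamma_{g,b} + \gamma|_{X_b\times X_b},
\]
where $\gamma$ is a codimension-$d$ cycle on $M\times M$ (coming from the $\wt{M\times M}$-part together with the tautological classes), and the terms $k\,\Delta$ and $\sum k_g\,\Gamma_{g,b}$ account for the contributions supported on the (proper transforms of the) diagonal and the graphs $\wt{\Gamma_g}$. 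The coefficients $k,k_g$ are integers (or rationals) independent of $b$.

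Third comes the step where hypotheses (2) and (3) of the proposition are used. Restricting the displayed identity to variable homology and using that the left-hand side is homologically trivial on $H_{2d}(X_b\times X_b)$ (it equals $\DD|_{X_b\times X_b}$ on the open part, which vanishes), while $\gamma|_{X_b\times X_b}$ acts only on fixed homology by construction, forces
\[
k\cdot\id + \sum_{g\ne\id} k_g\,\gamma_g^{\rm var} = 0 \quad\text{in}\quad\End H_d(X_b)^{\rm var}.
\]
Since $H_d(X_b)^{\rm var}\ne0$ for general $b$ and since the endomorphisms $\{\gamma_g^{\rm var}\}_{g\in G}$ (including $g=\id$) are linearly independent by assumption (2), we conclude $k=0$ and $k_g=0$ for all $g\ne\id$. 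Therefore $(p_b)_*(R|_{\wt{X_b\times X_b}})=\gamma|_{X_b\times X_b}$ in $A_d(X_b\times X_b)$, and pushing down through $p_b$ (which is a blow-up, so $(p_b)_*(p_b)^*=\id$ on $A_d(X_b\times X_b)$) gives $\DD|_{X_b\times X_b}=\gamma|_{X_b\times X_b}$ in $A_d(X_b\times X_b)$. Finally, applying the projector $\pi_\chi$ — using that $\DD$ is already $\chi$-isotypic and replacing $\gamma$ by $(\pi_\chi)_*\gamma\,(\pi_\chi)_*$ if necessary, which is still a codimension-$d$ cycle on $M\times M$ since $G$ acts on $M$ — yields the desired identity in $A_d(X_b\times X_b)^\chi$ for all $b\in B$ (the general-$b$ case extends to all $b$ by a specialization argument as in \cite[Thm. 1.7]{V0}).

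\textbf{Main obstacle.} The delicate point is the second step: controlling $A^d(I_2(E))$ precisely enough to write $(p_b)_*(R|_{\wt{X_b\times X_b}})$ as the diagonal plus graph terms plus a cycle from $M\times M$. This requires that, outside the graphs, $I_2(E)\to\wt{M\times M}$ really is an iterated blow-up of a projective bundle — which is where \emph{almost separating orbits} and the codimension bound on $R_G$ are indispensable — and that the excess contribution from the bad locus is exactly a combination of the $\wt{\Gamma_g}$. Once this geometric input is in place, the rest is the representation-theoretic linear-independence argument, which is routine. As the statement says, this is essentially established in the course of the proof of \cite[Theorem 3.3]{V1}, so I would carry out the argument by citing and adapting that analysis rather than redoing the blow-up computation from scratch.
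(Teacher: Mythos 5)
Your proposal matches the paper's intended argument: the paper gives no independent proof of Proposition~\ref{key2}, stating only that its validity is shown in the course of the proof of \cite[Theorem 3.3]{V1}, and your reconstruction — running Voisin's $I_2(E)$ blow-up analysis over $\PP H^0(M,E)^G$ using the almost-separation of orbits, picking up extra terms supported on the graphs $\Gamma_{g,b}$ from the bad locus, and then killing the coefficients of $\Delta$ and of the $\Gamma_{g,b}$ via the linear independence of the $\gamma_g^{\rm var}$ on the nonzero variable homology — is precisely that argument, in parallel with the paper's own sketch of Proposition~\ref{key}. The only substantive input you do not rederive (the structure of $A^d(I_2(E))$ in the equivariant setting) is the same point the paper delegates to \cite{V1}, so there is no gap relative to the paper's treatment.
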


Using this variant, the arguments we employed in  Section~\ref{sec:main} 
for $\Delta_X$ can thus  be applied to  $\Gamma_\chi$ provided we restrict  to $H_*(X_b)^\chi$.
Since  $\Gamma_\chi$   acts as the identity on $A_j(X)^\chi$,  the same  conclusions as before can be drawn for these Chow groups and we obtain  the following results.

\begin{theorem} \label{main1Bis} Let  $(M,E)$, $G$  and $B\subset\PP(H^0(M, E)^G)$ be as above.   Moreover, let $\chi$ be a    $\QQ$-character for $G$
and $\Gamma_\chi$ the associated projector \eqref{eqn:groupproj}.
Suppose that 
\begin{enumerate}
\item  $B(M)$ holds;
\item    $H^0(M,E)^G$   almost separates  orbits;
\item  the endomorphisms  $\gamma_g^{\rm var}\in\End H_d(X_b)^{\rm var} $, $g\in G$ are linearly independent; 
\item  the Chow motive $(M,\Gamma_\chi)$ is finite-dimensional.
\end{enumerate}
Assume, moreover,  that  for a general $b\in B$ one has 
$H_d(X_b)^{\rm var}\not=0$   and that
 \[ 
  H_k(X_b)^\chi \subset \wh{N}^c H_k(X_b)  \ \ \text{for\ all\ }k \in \{e+1,\ldots,d \}.
 \]
  Then for any $b\in B$ 
\[
 \text{\rm Niveau} \bigl(  (A_j^{}(X_b)) ^\chi \bigl) \le e-j\ \ \ \text{for\ all\ }j<\min\{d-e,c\},
 \]
i.e.,   there exists a subvariety $Z_b\subset X_b$ of dimension $d$ such that
$A_j(Z_b)\to A_j(X_b,\Gamma_\chi)$ is surjective if $j<\min\{d-e,c\}$.

\end{theorem}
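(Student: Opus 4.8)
The plan is to follow the blueprint of the proof of Theorem~\ref{main1}, everywhere replacing the full diagonal $\Delta_{X_b}$ by the projector $\Gamma_\chi$ and restricting all cohomological and Chow-theoretic statements to the $\chi$-isotypical component. Concretely, I would first observe that since $B(M)$ holds and $B$ is stable under hyperplane sections (Remark~\ref{Bholds}), each smooth complete intersection $X_b$ satisfies $B(X_b)$; hence by Propositions~\ref{KunnIsAlg} and \ref{factorisation} the K\"unneth projectors $\pi_k$ are algebraic and, for $k\in\{e+1,\ldots,d\}$, the projectors controlling $H_k(X_b)^\chi$ can be replaced up to homological equivalence by correspondences $\pi_k'$ that factor with shift $c$ through a smooth variety. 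Composing with $\Gamma_\chi$ (which commutes with the $\pi_k$ up to homological equivalence, being induced by the group action), one produces a homological decomposition
\[
\Gamma_\chi \sim_{\rm hom} \Gamma_{\rm left}^\chi + \Gamma_{\rm mid}^\chi + \Gamma_{\rm right}^\chi
\]
with $\Gamma_{\rm left}^\chi = \Gamma_\chi\comp\bigl(\sum_{k\le e}\pi_k\bigr)$, its transpose on the right, and $\Gamma_{\rm mid}^\chi = \Gamma_\chi\comp\bigl(\sum_{k=e+1}^{2d-e-1}\pi_k'\bigr)$ factoring with shift $c$.

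Next I would spread this fiberwise decomposition out over $B$, using the relative versions of Voisin's argument (Propositions~\ref{spread1}, \ref{spread2}); because $\Gamma_\chi$ is built from the graphs $\Gamma_{g,b}$ of the group action, the spread-out correspondences are automatically $\chi$-equivariant, so the difference
\[
\DD := \Gamma_{\XX/B}^\chi - \Pi_{\rm left}^\chi - \Pi_{\rm mid}^\chi - \Pi_{\rm right}^\chi
\]
lies in $A^d(\XX\times_B\XX)^\chi$ and restricts to something homologically trivial on the general fiber inside $H_{2d}(X_b\times X_b)^\chi$. Now the role of Proposition~\ref{key} is played by Proposition~\ref{key2}, whose three hypotheses are exactly assumptions (2), (3) and the non-triviality of $H_d(X_b)^{\rm var}$: it yields a codimension-$d$ cycle $\gamma$ on $M\times M$ with $\DD|_{X_b\times X_b}-\gamma|_{X_b\times X_b}$ rationally trivial and $\gamma|_{X_b\times X_b}$ homologically trivial. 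The finite-dimensionality hypothesis on the motive $(M,\Gamma_\chi)$ together with Proposition~\ref{weaknilp} (applied to the submotive $(M,\Gamma_\chi)$, whose relevant correspondence is still numerically trivial, hence nilpotent by Kimura) then forces $(\gamma|_{X_b\times X_b})^{\comp N}=0$ for some $N$, so that
\[
\bigl(\Gamma_\chi|_{X_b} - \Gamma_{\rm left}^\chi - \Gamma_{\rm mid}^\chi - \Gamma_{\rm right}^\chi\bigr)^{\comp N} = 0 \quad\text{in } \Corr_0(X_b,X_b).
\]

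Finally, specializing to general $b$ (the specialization reduction is as in \cite[Thm.~1.7]{V0}, \cite[Thm.~0.6]{V1}), general position puts $X_b$ in good position with respect to the spread-out subvarieties, so $\Gamma_{\rm left}^\chi|_{X_b\times X_b}$ is supported on $Y_b\times X_b$ with $\dim Y_b=e$ and $\Gamma_{\rm mid}^\chi|_{X_b\times X_b}$ factors with shift $c$. By Lemma~\ref{lem:Factoring}, $\Gamma_{\rm left}^\chi$ acts trivially on $A_j(X_b)$ for $j<d-e$ and $\Gamma_{\rm mid}^\chi$ acts trivially for $j<c$, while $\Gamma_{\rm right}^\chi$ is expressed through $L^{d-e}$; expanding the vanishing of the $N$-th power and using that $\Gamma_\chi$ acts as the identity on $A_j(X_b)^\chi$, one concludes that $A_j(X_b)^\chi$ is supported on $Y_b$ for $j<\min\{d-e,c\}$, which is the claimed bound on the niveau (with $Z_b$ a suitable $d$-dimensional subvariety containing $Y_b$, or more precisely $Y_b$ itself of dimension $e$). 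The main obstacle is not any one of these steps individually — each is a faithful $\chi$-equivariant transcription — but rather verifying that the equivariant spreading-out (Propositions~\ref{spread1}, \ref{spread2}) and the key Proposition~\ref{key2} genuinely respect the isotypical decomposition throughout, i.e.\ that every cycle produced can be taken in the $\chi$-component; this is where the hypotheses that $H^0(M,E)^G$ almost separates orbits and that the $\gamma_g^{\rm var}$ are linearly independent do the real work, exactly as in the proof of \cite[Theorem~3.3]{V1}.
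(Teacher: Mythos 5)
Your proposal is correct and follows exactly the route the paper itself takes: the authors only sketch this theorem, saying the arguments of Section~\ref{sec:main} for $\Delta_X$ go through verbatim with $\Gamma_\chi$ in place of the diagonal, Proposition~\ref{key2} in place of Proposition~\ref{key}, and finite-dimensionality of $(M,\Gamma_\chi)$ feeding the nilpotence step, while restricting throughout to the $\chi$-isotypical components on which $\Gamma_\chi$ acts as the identity. Your $\chi$-equivariant transcription, including the observation that the spread-out correspondences built from the graphs $\Gamma_{g,b}$ stay in the $\chi$-component, is precisely the intended argument.
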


\begin{theorem} \label{main2Bis} Notation as in the previous theorem.
Let $X\subset M$ be a $G$-invariant  complete intersection  of dimension $d$. Suppose that
\begin{enumerate}
\item  $B(M)$ holds;
\item   $H^0(M,E)^G$   almost separates  orbits;
\item   the endomorphisms  $\gamma_g^{\rm var}$, $g\in G$ are linearly independent in $\End(H_{d}(X)^{\rm var})$;
\item  the Chow motive $(M,\Gamma_\chi)$ is finite-dimensional;
\item $ 0 \not= H_n(X)^{\rm var}$ and for some positive integer $c$ we have $H_d (X)^{\rm var, \chi} \subset{\hat N}^c H^d(X)$.  
\end{enumerate}
Then   for $k<c$ or for $k>d-c$ we have 
\[
  i^*: A_{k+r}^{\rm AJ}(M)^\chi  \onto A_k^{\rm AJ}(X)^\chi ,\quad 
                         i_* :A_k^{\rm AJ}(X)^\chi \into  A_k^{\rm AJ}(M )^\chi.
\]
Moreover, in this range 
\[
 A_k^{\rm var} (X)^\chi = \ker(A_k(X)^\chi  \mapright{i_*}  A_k(M)^\chi)=0 ,
 \]
 If in addition   $H_k(M)^\chi = {N}^{[{k\over 2}]}H_k(M)^\chi $ for $k\le d$, then  $A_k^{\rm AJ}(X)^\chi  = 0$  if $k<c$ or $k>d-c$.
\end{theorem}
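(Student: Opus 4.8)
The plan is to mirror the proof of Theorem~\ref{main2}, but replacing the full diagonal $\Delta_X$ everywhere by the group projector $\Gamma_\chi$ and working systematically inside the $\chi$-isotypic pieces of (co)homology and Chow groups. First I would record the $\chi$-component of the splitting from Section~\ref{sec:VarAndFixed}: since $\Gamma_\chi$ commutes with $i_*$, $i^*$ and with $\Lambda_M^r$ (the group acts on $M$ and on the $L_j$, hence preserves the Lefschetz operator), applying $\Gamma_\chi$ to $\Delta_X=\pi^{\mathrm{fix}}(X)+\pi^{\mathrm{var}}(X)$ yields a decomposition of $\Gamma_\chi$ into $\Gamma_\chi\comp\pi^{\mathrm{fix}}(X)$ and $\Gamma_\chi\comp\pi^{\mathrm{var}}(X)$, which in cohomology project onto $H^{\mathrm{fix}}(X)^\chi$ and $H^{\mathrm{var}}_d(X)^\chi$. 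Next, spreading out as in Proposition~\ref{spread2} but taking the $\chi$-part everywhere, I obtain relative codimension-$d$ cycles $\Pi'$ (coming from $M\times M\times B$) and $\Pi^{\mathrm{var}}$ inducing $\Gamma_\chi\comp\pi^{\mathrm{var}}$, with the property that $R=\Gamma_{\chi,\XX/B}-\Pi'-\Pi^{\mathrm{var}}$ restricts to something homologically trivial in $H_{2d}(X_b\times X_b)^\chi$ on the general fiber.

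The key input is Proposition~\ref{key2} in place of Proposition~\ref{key}: its hypotheses are exactly assumptions (2) and (3) of the theorem together with $H_d(X_b)^{\mathrm{var}}\neq 0$, and it produces a codimension-$d$ cycle $\gamma$ on $M\times M$ with $R|_{X\times X}-\gamma|_{X\times X}\sim_{\mathrm{rat}}0$ in $A_d(X\times X)^\chi$ for general $b$. In particular $\gamma|_{X\times X}$ is homologically trivial; since the Chow motive $(M,\Gamma_\chi)$ is finite-dimensional by assumption (4), I would invoke a $\Gamma_\chi$-twisted version of Proposition~\ref{weaknilp} — applied to the correspondence $\Gamma_\chi\comp\gamma\comp\Gamma_\chi$ inside $\End(h(M,\Gamma_\chi))$ — to conclude that $(\gamma|_{X\times X})$ is nilpotent, say of index $N$, as a self-correspondence acting on $A_*(X)^\chi$. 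Expanding $0=(\gamma|_{X\times X})^{\comp N}=(\Gamma_\chi-\Gamma_\chi\pi^{\mathrm{fix}}(X)-\Gamma_\chi\pi^{\mathrm{var}}(X))^{\comp N}$ and using Corollary~\ref{cor:vanproj} (with assumption (5): $H_d(X)^{\mathrm{var},\chi}\subset\hat N^c H_d(X)$) to say that $\Gamma_\chi\pi^{\mathrm{var}}(X)$ factors through a variety of dimension $d-2c$ with shift $c$, hence kills $A_k^{\mathrm{AJ}}(X)$ for $k<c$ and $k>d-c$ by Lemma~\ref{lem:Factoring}, I get a polynomial identity $P(\psi)_*\comp\psi_*=\psi_*\comp P(\psi)_*=\mathrm{id}$ on $A_k(X)^\chi$ in that range, where $\psi=\Gamma_\chi\comp\pi^{\mathrm{fix}}(X)$. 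Since $\psi$ factors through $i^*$, this gives the claimed surjectivity of $i^*$ and injectivity of $i_*$ on the $\chi$-parts of the Abel--Jacobi groups, and since $\psi$ acts as zero on $A_k^{\mathrm{var}}(X)^\chi$ it forces that group to vanish in the stated range.

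For the final clause, the extra hypothesis $H_k(M)^\chi=N^{[k/2]}H_k(M)^\chi$ for $k\le d$ means that $\pi^{\mathrm{fix}}(X)^\chi=i^*\Lambda_M^r i_*$ restricted to the $\chi$-part of $H_d(X)$ factors through the $\chi$-part of $H_{d+2r}(M)$, which by the coniveau hypothesis is parametrized by a variety of dimension $\le [d/2]\le d-1$; after intersecting with hyperplanes and using hard Lefschetz one arranges that $\psi=\Gamma_\chi\comp\pi^{\mathrm{fix}}(X)$ factors through a curve, so it acts trivially on $A_k^{\mathrm{AJ}}(X)$ for all $k$ (Abel's theorem for the curve). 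Feeding this back into the polynomial identity forces $A_k^{\mathrm{AJ}}(X)^\chi=0$ for $k<c$ or $k>d-c$, exactly as in the non-equivariant argument.

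The main obstacle I anticipate is making precise the $\Gamma_\chi$-equivariant versions of the auxiliary results that were stated only for $\Delta_X$: concretely, (i) checking that the spreading-out Proposition~\ref{spread2} genuinely produces cycles whose fiberwise restrictions land in the $\chi$-isotypic subspace — this is where one needs $\Gamma_\chi$ to be defined already over the base $B$, i.e. that the graphs $\Gamma_{g}$ spread out to $\XX\times_B\XX$, which holds because $G$ acts on the whole family; (ii) the equivariant nilpotency statement, where one must be careful that $(M,\Gamma_\chi)$ finite-dimensional together with $\Gamma_\chi\comp\gamma|_X\comp\Gamma_\chi$ homologically (hence numerically) trivial really does yield nilpotence of the operator on $A_*(X)^\chi$ — the cleanest route is to run the proof of Proposition~\ref{weaknilp} verbatim with $i^*,i_*$ replaced by $i^*\comp\Gamma_\chi$ and $\Gamma_\chi\comp i_*$ and $\Gamma$ replaced by $\gamma$, noting $\Gamma_\chi$ is idempotent up to homological equivalence; and (iii) verifying that Corollary~\ref{cor:vanproj} applies to $\Gamma_\chi\pi^{\mathrm{var}}(X)$ rather than $\pi^{\mathrm{var}}(X)$, which requires observing that $\mathrm{Im}(\Gamma_\chi\pi^{\mathrm{var}}(X))=H_d^{\mathrm{var}}(X)^\chi\subset\hat N^c H_d(X)$ by hypothesis (5) and that $\hat N^\bullet$ is compatible with the correspondence $\Gamma_\chi$ (Proposition~\ref{prop:compatibility}, using $B(M)$ hence $B(X)$). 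None of these is deep, but each has to be stated carefully for the equivariant refinement to be rigorous.
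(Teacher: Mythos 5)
Your proposal is correct and follows essentially the same route as the paper, which itself only indicates that the proof of Theorem~\ref{main2} goes through verbatim with $\Delta_X$ replaced by $\Gamma_\chi$, Proposition~\ref{key} replaced by Proposition~\ref{key2}, and everything restricted to the $\chi$-isotypic parts, using that $\Gamma_\chi$ acts as the identity on $A_*(X)^\chi$. In fact you spell out the equivariant refinements (spreading out $\Gamma_\chi$, the $\Gamma_\chi$-twisted weak nilpotence via finite-dimensionality of $(M,\Gamma_\chi)$, and the adapted Corollary~\ref{cor:vanproj}) in more detail than the paper does.
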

 
 We also have the analogue of Corollary~\ref{cor:fdm}:
  \begin{corollary} \label{cor:fdmBis}  In the above situation, suppose that $c =  [{d \over 2}]$. Then the motive $h(X,\Gamma_\chi)$ is finite-dimensional. 
 \end{corollary}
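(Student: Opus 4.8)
The plan is to transcribe the proof of Corollary~\ref{cor:fdm} into the pseudo-abelian setting, replacing the Chow motives $h(M)$, $h(X)$ by the summands $(M,\Gamma_\chi)$, $(X,\Gamma_\chi)$ cut out by the idempotent $\Gamma_\chi$. (For $d\le1$ the statement is trivial, $X$ being a point or a curve, so assume $d\ge2$, which is anyway forced by $c=[d/2]$ being a positive integer.) The first point to record is that $i^*$ induces a morphism of Chow motives
\[
\psi\colon (M,\Gamma_\chi)(r)\ \longrightarrow\ (X,\Gamma_\chi),
\]
whose effect on Chow groups is the map $i^*\colon A_{k+r}(M)^\chi\to A_k(X)^\chi$ of Theorem~\ref{main2Bis}. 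Indeed, the inclusion $i\colon X\into M$ is $G$-equivariant, so the graph $\Gamma_i$ satisfies $\Gamma_i\comp\Gamma_{g\vert X}=\Gamma_{g\vert M}\comp\Gamma_i$ in $\Corr(X,M)$ for every $g\in G$; hence the correspondence representing $i^*$ commutes with each $\Gamma_g$, and therefore with $\Gamma_\chi$, which is what is needed to define $\psi$.

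Next I would apply Theorem~\ref{main2Bis} with $c=[d/2]$: its hypotheses (1)--(5) are exactly what ``the above situation'' grants, so it yields surjectivity of
\[
i^*\colon A_{k+r}^{\rm AJ}(M)^\chi\ \onto\ A_k^{\rm AJ}(X)^\chi
\]
for all $k<[d/2]$, equivalently for $k=0,\dots,[\tfrac{d-2}{2}]$ (using $[\tfrac{d-2}{2}]=[d/2]-1$). In motivic language, $\psi_*$ is surjective on $A_k^{\rm AJ}(-)$ in that range of degrees, while its source $(M,\Gamma_\chi)(r)$ is finite-dimensional by hypothesis~(4) of Theorem~\ref{main2Bis} (Tate twists preserving finite-dimensionality). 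Feeding this into Vial's finite-dimensionality criterion (Theorem~\ref{thm: findim} of the Appendix, from \cite{V3}) then gives that the target $(X,\Gamma_\chi)=h(X,\Gamma_\chi)$ is finite-dimensional.

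The one step that genuinely differs from Corollary~\ref{cor:fdm}, and the place where I would be careful, is the last one: $\psi$ surjects on Abel--Jacobi-trivial Chow groups only onto the isotypic piece $A_k^{\rm AJ}(X)^\chi=A_k^{\rm AJ}\big((X,\Gamma_\chi)\big)$, not onto all of $A_k^{\rm AJ}(X)$, so Vial's theorem must be invoked for the proper summand $(X,\Gamma_\chi)$ of $h(X)$ rather than for $h(X)$ itself. This is legitimate provided Theorem~\ref{thm: findim} is (or can be) read in the form: \emph{if $N$ is a finite-dimensional motive, $P$ a direct summand of the Chow motive of a smooth projective $d$-fold $X$, and $N\to P$ a morphism surjective on $A_k^{\rm AJ}(-)$ for all $k\le[\tfrac{d-2}{2}]$, then $P$ is finite-dimensional.} I expect this to be precisely what \cite{V3} proves: the degree bound there is governed by $d=\dim X$ and never uses that the projector defining $P$ is the full diagonal, while the motives of curves and Tate twists produced in Vial's argument are built from $X$ independently of that projector. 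So the only genuine task is to trace through \cite{V3} and check that nothing in it is special to $P=h(X)$; modulo that routine verification, the proof is a line-for-line copy of Corollary~\ref{cor:fdm}.
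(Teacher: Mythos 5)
Your proposal follows essentially the same route as the paper: Corollary~\ref{cor:fdmBis} is proved exactly like Corollary~\ref{cor:fdm}, by feeding the surjectivity $i^*\colon A_{k+r}^{\rm AJ}(M)^\chi\onto A_k^{\rm AJ}(X)^\chi$ for $k\le[\tfrac{d-2}{2}]$ from Theorem~\ref{main2Bis} into Vial's criterion (Theorem~\ref{thm: findim}), applied to the morphism $(M,\Gamma_\chi)(r)\to(X,\Gamma_\chi)$ induced by the $G$-equivariant inclusion. Your closing worry is unnecessary: Theorem~\ref{thm: findim} (and Lemmas~\ref{lemma1}--\ref{lemma2}, Corollary~\ref{cor:fd}) is already stated for an arbitrary target motive $N=(Y,q)$ rather than $h(Y)$ itself, and the paper applies it in exactly this way to the summand $(X,p)$ in the Bardelli example, so no re-inspection of \cite{V3} is required.
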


\section{Examples}
\label{sec:exmples}

\subsection{A threefold of general type with finite  dimensional motive} 
In \cite{finmts} one of the authors  investigated a quasi-smooth  threefold $X$  which is a complete intersection of three degree $6$ hypersurfaces in the weighted projective
space  $P = \PP(2^4, 3^3)$ and showed that $A_0(X)=\QQ$. 
Let us check that this example can also be treated within the present framework. The only technical obstacle
is that $P$  and $X$ have  (mild) singularities, but -- as in  loc. cit., close inspection of the proofs shows that this does not matter. 

The threefold $X$ is of general type and has Hodge numbers   $h^{1,0}(X)=h^{2,0}(X)=0$, $h^{1,1}=1$,
$h^{3,0}=0$, $h^{2,0}=6$. Moreover, the intermediate jacobian $J^2(X)$ is an abelian variety and there is a curve $C$ and a correspondence $\gamma\in \Corr_1(C,X)$
inducing a surjection $J(C) \onto J^2(X)$. Hence $H^3(X)= N^1 H^3(X)$. Since $H^2(X)= N^1H^2(X)$ and $H^1(X)=0$ we can apply  Cor~\ref{cor:fdm} to conclude that
$h(\hat X)$ is finite dimensional   where $\hat X$ is a toroidal resolution of $X$.  Moreover, the cycle class map is injective in \emph{all} degrees.

\subsection{Hypersurfaces of abelian threefolds}
 \label{sec:hypab3folds}

We let $A$ be an abelian variety of dimension three. Let $\iota=-1_A$ be the
standard involution.  Choose  an irreducible  principal polarization $L$ that is preserved by $\iota$. The following facts are well known (see e.g. \cite{LB}).
\begin{facts}  
\item{$\bullet$}  $L$ is ample and  sections     of $L^{\otimes 2}$  correspond  to   even theta functions (and hence  are invariant under the involution).
\item{$\bullet$}  $L^3=3! =6 $ and  $\dim H^0(L^{\otimes 2})= 8$.
\item{$\bullet$}  The linear system  $| L^{\otimes 2}|$ defines a $2$-to-$1$ morphism $\kappa: A\to \km A\subset \PP^7=\PP H^0(L^{\otimes 2})^*$, 
where $\km A$ is the Kummer threefold associated to $A$, an algebraic threefold, smooth outside the  images of the $2^6$
two-torsion points of $A$.  

\end{facts}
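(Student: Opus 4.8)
Since the three statements are classical (this is why they are attributed to \cite{LB}), the plan is simply to assemble them from the standard theory of theta functions on abelian varieties. First, a principal polarisation is by definition the class of an ample line bundle $L$ with $h^0(A,L)=1$, so ampleness is part of the hypothesis and the unique nonzero section cuts out the theta divisor $\Theta$. Because the polarisation is preserved by $\iota=-1_A$ one may fix the symmetric normalisation of $L$ having an even theta characteristic; then $H^0(A,L^{\otimes 2})$ has a basis of second-order theta functions $\theta_\sigma$ indexed by $\sigma\in\half\ZZ^3/\ZZ^3$, and the functional equation of $\theta$ under $z\mapsto -z$ (using $2\sigma\in\ZZ^3$) shows that each $\theta_\sigma$ is an even function. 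Hence $\iota$ acts as $+\id$ on $H^0(A,L^{\otimes 2})$, i.e. all these sections are $\iota$-invariant, which is precisely the assertion that they correspond to even theta functions.

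Second, the numerical data come from Riemann--Roch on abelian varieties: $\chi(L)=L^3/3!$, and since $L$ is ample, $h^0(A,L)=\chi(L)$; imposing $h^0(A,L)=1$ forces $L^3=3!=6$. Applying the same identity to $L^{\otimes 2}$, whose associated polarisation has type $(2,2,2)$, yields $\dim H^0(A,L^{\otimes 2})=\chi(L^{\otimes 2})=2^3=8$.

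Third, $L^{\otimes 2}$ is base-point free (Lefschetz's theorem), so $|L^{\otimes 2}|$ defines a morphism $A\to\PP H^0(A,L^{\otimes 2})^\ast=\PP^7$. By the first step this morphism is constant on $\iota$-orbits, hence factors through the quotient $q\colon A\to A/\langle\iota\rangle=:\km{A}$, which is generically $2$-to-$1$. The fixed locus of $\iota$ is the set $A[2]$ of $2^{6}=64$ two-torsion points; away from it $q$ is \'etale, while at each two-torsion point the local model is $\C^3/\{\pm1\}$, a quotient singularity. Thus $\km{A}$ is an algebraic threefold, smooth outside the $64$ images of the two-torsion points, and the induced map $\km{A}\to\PP^7$ is injective on the smooth locus.

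The one point requiring more than bookkeeping is that this induced map is actually a \emph{closed embedding} of $\km{A}$ into $\PP^7$, i.e. that $|2\Theta|$ separates points and tangent directions on the Kummer threefold; this is exactly where \emph{irreducibility} (indecomposability) of the principal polarisation is used, and at that point one invokes the sharp very-ampleness/projective-normality results for $|2\Theta|$ from \cite{LB} rather than mere base-point-freeness. Everything else is routine theta-function calculus.
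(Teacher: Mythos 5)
Your assembly of these facts is correct and matches the paper's treatment: the paper offers no proof at all, simply labelling them well known and citing \cite{LB}, and your sketch (Riemann--Roch plus vanishing for ample bundles, evenness of second-order theta functions, factorization through $A/\langle\iota\rangle$, and the embedding of the Kummer variety for an indecomposable principal polarization via \cite{LB}) is precisely the standard argument that citation points to. The only stylistic remark is that fixing an even theta characteristic is unnecessary -- for $L$ symmetric, all sections of $L^{\otimes 2}$ are even regardless of the parity of the characteristic -- but this does not affect correctness.
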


We let $X=\{\theta_0=0\} \subset A$ be a general divisor in $| L^{\otimes 2}|$. This  is  a  smooth surface  invariant under $\iota$ and $\kappa$ induces
an \'etale double cover of surfaces $X\to Y=X/(\iota|_X)\subset \km A$. 
The crucial properties of $A$ are as follows. We use the standard notation   for the character spaces  for  the action of $\ZZ/2\ZZ=\{\id,\iota\}$ 
on a vector space $V$:
\[
V^\pm  = \{ v\in V\mid \iota(v)= \pm v\}.
\]
\begin{proposition}   \label{prop:hypab3fold}
\begin{enumerate}
\item 
We have  $H_1(X)^+=0$;
\item the splitting
\[
H_2^{\rm var}(X)= H_2^{\rm var, +}(X)\oplus  H_2^{\rm var, -}(X)
\]
is non-trivial and $H_2^{\rm var, +}(X)= N^1H_2^{\rm var, +}(X)$, i.e.,   $H^{2,0}(X)^{\rm var, +}=0$.
\end{enumerate}
\end{proposition}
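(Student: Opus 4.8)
\textbf{Proof strategy for Proposition~\ref{prop:hypab3fold}.}
The plan is to exploit the Lefschetz hyperplane theorem for the embedding $X \hookrightarrow A$ together with a careful analysis of how the involution $\iota = -1_A$ acts on the cohomology of an abelian threefold. For part (1), the Lefschetz hyperplane theorem gives that $i^*\colon H^1(A) \to H^1(X)$ is an isomorphism (since $X$ is an ample divisor in the threefold $A$, and $\dim X = 2 > 1$). Now $\iota$ acts on $H^1(A) = \bigwedge^1 H^1(A)$ by $-1$: indeed on an abelian variety $H^1(A)$ is the cotangent-type space at the origin and $-1_A$ acts as multiplication by $-1$. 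Hence $H^1(A)^+ = 0$, and transporting this through the $\iota$-equivariant isomorphism $i^*$ gives $H^1(X)^+ = 0$; dualizing gives $H_1(X)^+ = 0$. This step is essentially formal once one records that $i^*$ is $\iota$-equivariant, which it is because $\iota$ preserves $X$.

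For part (2), I would again use Lefschetz: $i^*\colon H^2(A) \to H^2(X)$ is injective, and its image is exactly $H^2_{\mathrm{fix}}(X)$, while $H^2_{\mathrm{var}}(X)$ is the orthogonal complement. The involution respects this splitting (both $i^*$ and the cup-product pairing are $\iota$-equivariant), so $H^2_{\mathrm{var}}(X) = H^{2,\mathrm{var},+}(X) \oplus H^{2,\mathrm{var},-}(X)$ as claimed. To see the splitting is non-trivial, I would compute: $H^2(A) = \bigwedge^2 H^1(A)$ has dimension $\binom{6}{2} = 15$ and $\iota$ acts on it by $(-1)^2 = +1$, so $H^2(A) = H^2(A)^+$, i.e. $H^2_{\mathrm{fix}}(X) \subseteq H^2(X)^+$. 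Meanwhile $H^2(X)$ has a large $h^{2,0}$ (the Facts give enough numerics, or one computes via adjunction $K_X = (L^{\otimes 2})|_X$ and $h^{2,0}(X) = h^0(X, K_X)$), so $H^2_{\mathrm{var}}(X) \neq 0$; the point is then that $H^{2,\mathrm{var},-}(X) \neq 0$ as well, which one extracts from the $\iota$-action on $H^3(A) = \bigwedge^3 H^1(A)$ (on which $\iota$ acts by $-1$) via the Gysin/residue description of $H^2_{\mathrm{var}}(X)$, or more concretely: sections of $L^{\otimes 2}$ are $\iota$-invariant (even theta functions, per the Facts), and the residue map presents $H^{2,0}_{\mathrm{var}}(X)$ as a quotient of $H^0(A, L^{\otimes 2} \otimes K_A) = H^0(A, L^{\otimes 2})$ twisted by the action on $K_A$, on which $\iota$ acts by $(-1)^3 = -1$; tracking signs shows the $(2,0)$-part of the variable cohomology is entirely in the $(-)$-eigenspace, so $H^{2,\mathrm{var},+}(X)$ contains no holomorphic two-forms, i.e. $H^{2,0}(X)^{\mathrm{var},+} = 0$, which is precisely $H_2^{\mathrm{var},+}(X) = N^1 H_2^{\mathrm{var},+}(X)$.

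It then remains to observe that the $(+)$-part of the variable cohomology is itself non-zero, so that the splitting in (2) really is non-trivial in both factors: this follows by a parity/dimension count, comparing $\dim H^2_{\mathrm{var}}(X)$ (computed from the Euler characteristic of $X$, or from the known Hodge numbers of a divisor in $|L^{\otimes 2}|$ via standard formulas) with $\dim H^{2,\mathrm{var},-}(X) = 2 h^{2,0}_{\mathrm{var}}(X)$, the remainder being accounted for by the $(1,1)$-classes split between the two eigenspaces. The main obstacle, I expect, is not any single hard step but rather the bookkeeping of signs: one must be scrupulous about how $-1_A$ acts on the various pieces $\bigwedge^p H^1(A)$ and on $K_A$, and about the $\iota$-equivariance of the residue sequence computing $H^*_{\mathrm{var}}(X)$, since an error in a sign would swap the eigenspaces and invalidate the conclusion $H^{2,0}(X)^{\mathrm{var},+} = 0$. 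Once the sign conventions are pinned down, the proof is a short sequence of Lefschetz-theorem and Hodge-theory invocations.
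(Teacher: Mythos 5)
Your part (1) and your treatment of $H^{2,0}(X)^{\rm var,+}=0$ are fine and in fact coincide with the paper's argument: Lefschetz for $H^1$, and the Poincar\'e residue description of variable $2$-forms as residues of $\frac{\theta}{\theta_0}\,dz_1\wedge dz_2\wedge dz_3$, with $\theta,\theta_0$ even (hence $\iota$-invariant) and $\iota$ acting by $(-1)^3=-1$ on the holomorphic $3$-form, so that all of $H^{2,0}_{\rm var}(X)$ is anti-invariant.

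The gap is in the other half of (2), the non-triviality of $H_2^{\rm var,+}(X)$. Your ``parity/dimension count'' rests on the identity $\dim H^{2,\rm var,-}(X)=2h^{2,0}_{\rm var}(X)$, which is both unjustified and false: nothing you have established prevents $\iota$ from acting by $-1$ on part (or all) of $H^{1,1}_{\rm var}(X)$, and indeed it does act by $-1$ on a large chunk of it (in the paper's computation $b_2^{\rm var}(X)=43$ with Hodge numbers $(7,29,7)$, while $b_2^{\rm var,+}(X)=7$, so $b_2^{\rm var,-}(X)=36$, not $14$). To conclude $H^{1,1}_{\rm var,+}(X)\neq 0$ you need an actual computation of the trace of $\iota$ on $H^2(X)$, and the essential input you never use is that $\iota$ acts \emph{freely} on the general $X$ (it misses the $2$-torsion points). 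The paper exploits this via the quotient surface $Y=X/\iota$: $e(Y)=\frac 12 e(X)=24$ and $b_1(Y)=0$ give $b_2(Y)=22$, while the fixed part contributes $b_2^{\rm fix,+}(X)=b_2(A)=15$, whence $b_2^{\rm var,+}(X)=7>0$. Equivalently one can run the topological Lefschetz fixed point formula for the fixed-point-free $\iota$ to get the trace $-14$ on $H^2(X)$ and hence $b_2^{\rm var,+}(X)=7$. Without this step (or some substitute producing an invariant variable class), the claimed non-triviality of the splitting is not proved; with it, your argument aligns with the paper's.
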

Before giving the proof,   we  observe that Theorem~\ref{main2Bis} and Corollary~\ref{cor:fdmBis} imply:  
\begin{corollary} \label{cor:isfindimmot}
We have $A_0^{\rm var}(X)^+=0$ and the motive $h(X)^+= h(Y)$ is finite-dimensional  (of abelian type).
\end{corollary}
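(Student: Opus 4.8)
The plan is to read this off from Theorem~\ref{main2Bis} and Corollary~\ref{cor:fdmBis}, applied to the data $M=A$, $E=L^{\otimes 2}$ (so $r=1$ and $d=\dim X=2$), $G=\{\id,\iota\}\cong\ZZ/2\ZZ$, and $\chi$ the trivial character, so that $\Gamma_\chi=\tfrac12(\Delta_A+\Gamma_\iota)$ and $(X,\Gamma_\chi)=h(X)^+$. Since $X$ avoids the $2^6$ two-torsion points, $\iota$ acts freely on $X$ and $X\to Y=X/\langle\iota\rangle$ is étale with $Y$ smooth, so by the usual description of the motive of a free quotient $h(Y)=h(X)^{\langle\iota\rangle}=(X,\Gamma_\chi)$. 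Hypothesis~(1) of Theorem~\ref{main2Bis}, namely $B(A)$, holds because $A$ is abelian (Remark~\ref{Bholds}). For hypothesis~(3), Proposition~\ref{prop:hypab3fold}(2) says the splitting $H_2^{\rm var}(X)=H_2^{\rm var,+}(X)\oplus H_2^{\rm var,-}(X)$ is non-trivial, so both $\QQ$-irreducible characters of $\ZZ/2\ZZ$ occur in $H_2(X)^{\rm var}$ and the representation-theoretic lemma of Section~\ref{sec:vars} gives linear independence of the $\gamma_g^{\rm var}$; in particular $H_2(X)^{\rm var}\ne 0$, which is the first half of hypothesis~(5). For hypothesis~(4): $\Gamma_\chi$ is an idempotent, so $(A,\Gamma_\chi)$ is a direct summand of $h(A)$, and $A$ is dominated by a product of curves (via an Abel--Jacobi summation map from a curve section generating $A$), whence $h(A)$ — and therefore $(A,\Gamma_\chi)$ — is finite-dimensional and of abelian type (Remark~\ref{findinmots}).

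The one hypothesis that needs genuine care is~(2), the ``almost separates orbits'' condition, and I expect this to be the \emph{main obstacle}. Since by the Facts every section of $L^{\otimes 2}$ is an even theta function, $H^0(A,L^{\otimes 2})^G=H^0(A,L^{\otimes 2})$ and the associated morphism is the $2$-to-$1$ Kummer map $\kappa\colon A\to\km A$. This map fails to separate a pair of distinct points exactly on pairs $\{a,-a\}$, and fails to be an immersion exactly at the two-torsion points, where $d\kappa=0$ because $\kappa$ is $\iota$-invariant and $\iota=-1$ on the tangent space. Hence the bad locus $B_{E,\mu}\subset\wt{A\times A}$ is contained in $\wt{\Gamma_\iota}$ together with the union of the $\PP^2$-fibres of the exceptional divisor lying over the (finitely many) two-torsion points of the diagonal. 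That residual locus $R_G$ has dimension $2$, hence codimension $4>3=\dim A$, which is exactly what is required; the points to check carefully are that nothing else on the exceptional divisor (over non-two-torsion points, where $\kappa$ is an immersion) or away from it (over pairs $\{a,b\}$ with $b\ne\pm a$) is bad.

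For the second half of hypothesis~(5) we must convert the coniveau statement of Proposition~\ref{prop:hypab3fold}(2) into a $\wh N$-statement. That proposition gives $H^{2,0}(X)^{\rm var,+}=0$, so the sub-Hodge structure $H_2^{\rm var,+}(X)\subset H_2(X)$ is of Hodge--Tate type; by the generalized Hodge conjecture in dimension two (i.e. the Lefschetz $(1,1)$ theorem) it is therefore spanned by classes of curves, so $H_2^{\rm var,+}(X)\subset N^1H_2(X)$. For a surface one has $N^1H_2(X)=\wt N^1H_2(X)=\wh N^1H_2(X)$ unconditionally (Remark~\ref{is} and Corollary~\ref{hat-tilde2}), hence $H_2^{\rm var,+}(X)\subset\wh N^1H_2(X)$, and we may take $c=1=[d/2]$.

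With all hypotheses verified, Theorem~\ref{main2Bis} applies with $c=1$, $d=2$: the range ``$k<c$ or $k>d-c$'' reduces to $k=0$ (the case $k=2$ being vacuous since $A_2^{\rm AJ}(X)=0$), which gives $A_0^{\rm var}(X)^+=\ker\bigl(A_0(X)^+\to A_0(A)^+\bigr)=0$, the first assertion. Since $c=[d/2]$, Corollary~\ref{cor:fdmBis} gives that $h(X)^+=(X,\Gamma_\chi)=h(Y)$ is finite-dimensional. Finally, by Vial's structure theorem used in the proof of Corollary~\ref{cor:fdmBis} (cf. \cite{V3} and Corollary~\ref{cor:fdm}), $h(X)^+$ is, up to Tate twists and motives of curves, a direct summand of $h(A,\Gamma_\chi)=h(A)^+$; as $h(A)^+$ and motives of curves are of abelian type and this property is preserved by Tate twists, direct sums and direct summands, $h(Y)$ is of abelian type, completing the proof.
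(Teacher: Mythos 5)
Your proposal is correct and takes essentially the same route as the paper, which obtains this corollary directly by applying Theorem~\ref{main2Bis} and Corollary~\ref{cor:fdmBis} to the data of Proposition~\ref{prop:hypab3fold} with $G=\{\id,\iota\}$, $\chi$ trivial and $c=1=[d/2]$. The only difference is that you explicitly verify hypotheses the paper leaves implicit (the ``almost separates orbits'' condition via the geometry of the Kummer map, finite-dimensionality of $(A,\Gamma_\chi)$, and the abelian-type claim via the remark following Theorem~\ref{thm: findim}), and these checks are accurate.
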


We now give the
\proof[Proof of Proposition~\ref{prop:hypab3fold}]
(1) Since  $\iota$ acts as $-\id$ on one-forms, $b_1(Y)= b_1(X)^+= 0$.\\
(2) We consider cohomology instead of homology. Consider the Poincar\'e residue sequence
\[
0 \to \Omega^3_A  \to \Omega^3_A(X)  \mapright{\rm res}  \Omega^2_X \to 0.
\]
In cohomology this gives
\[
0 \to H^0(\Omega^3_A) \to H^0(\Omega^3_A(X)) \mapright{\rm res}  H^0(\Omega^2_X) \to H^1(\Omega^3_A) \to 0.
\]
Since $H^0(\Omega^3_A(X))= H^0(L^{\otimes 2})$ we deduce that 
\[
h^{0,2}_{\rm var} (X) = 7,\quad h^{0,2}_{\rm fix} (X)= 3.
\]
By the residue sequence, variable holomorphic $2$-forms are the Poincar\'e-residues along $X$ of meromorphic $3$-forms 
on $A$  with at most a simple pole along $X=\{\theta_0=0\}$ are given by expressions of the form
\[
\frac{\theta}{\theta_0} dz_1\wedge dz_2\wedge dz_3
\]
with $\theta$  a  theta-function  on $A$ corresponding to a section  of $L^{\otimes 2}$, and where  $z_1,z_2,z_3$  are holomorphic coordinates on $\C^3$. It follows that
such forms are \emph{anti-invariant} under $\iota$  and so  $h^{2,0}_{\rm var}(X)=h^{2,0}_{\rm var,-}(X)=7$

To complete the proof, we need to  show that $H^{1,1}_{\rm var, +} (X)=H^{1,1}_{\rm var}(Y)$  is non-trivial.   This is a consequence of the following calculation. \endproof
 
\begin{lemma}  The invariants  of $X$ and $Y$ are as follows.
\begin{center}
\begin{tabular}{|c|c|c|c|}
\hline
{ variety}  & $b_1$ & $b_2^{\rm var}=(h^{2,0}_{\rm var},h^{1,1} _{\rm var},h^{0,2} _{\rm var} )$  & $ b_2^{\rm fix}=(h^{2,0}_{\rm fix},h^{1,1} _{\rm fix},h^{0,2} _{\rm fix} ) $ \\
\hline
$X$  & $6$ & $43=(7,29,7)$ & $15=(3,9,3)$ \\
\hline
$Y $&    $0$ & $7=(0,7,0)$ & $15=(3,9,3)$ \\
\hline
\end{tabular}
\end{center}
\end{lemma}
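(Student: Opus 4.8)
The strategy is a standard Euler characteristic / Poincar\'e residue computation, organized so that the numbers for $X$ are obtained first and the numbers for $Y$ follow by taking $\iota$-invariants. First I would record the known topology of a smooth divisor $X\in|L^{\otimes 2}|$ on an abelian threefold $A$: by the Lefschetz hyperplane theorem $H^1(X)\cong H^1(A)$, so $b_1(X)=6$ and $H_1(X)^+=0$ by part (1) of Proposition~\ref{prop:hypab3fold}, whence $b_1(Y)=0$. For $b_2(X)$ one computes the topological Euler characteristic $e(X)$ via $c_2(T_X)$ from the adjunction sequence $0\to T_X\to T_A|_X\to N_{X/A}\to 0$ with $N_{X/A}=L^{\otimes 2}|_X$, using $T_A=\OO_A^{\oplus3}$ and $L^3=6$; this gives $e(X)$ and hence $b_2(X)=e(X)-2+2b_1(X)$. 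Subtracting the fixed part $b_2^{\mathrm{fix}}(X)=b_2(\km A)$ (which equals $15=(3,9,3)$, the $\iota$-invariant part of $H^2(A)$, since $H^2(A)$ is already $\iota$-invariant and the blow-up at the $2^6$ nodes does not change $H^2$ after resolving, or more simply $b_2(Y)=15$ because $H^2(Y)=H^2(A)^{\iota}$ plus nothing new) yields $b_2^{\mathrm{var}}(X)$.

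Next I would pin down the Hodge pieces. The computation in the proof of Proposition~\ref{prop:hypab3fold} already gives $h^{2,0}(X)=h^{2,0}_{\mathrm{var}}(X)=7$ (all variable holomorphic $2$-forms are residues $\tfrac{\theta}{\theta_0}\,dz_1\wedge dz_2\wedge dz_3$, hence $\iota$-anti-invariant) and $h^{2,0}_{\mathrm{fix}}(X)=h^{2,0}(A)=3$; by conjugation $h^{0,2}$ matches. Therefore $h^{1,1}_{\mathrm{var}}(X)=b_2^{\mathrm{var}}(X)-14$ and $h^{1,1}_{\mathrm{fix}}(X)=b_2^{\mathrm{fix}}(X)-6=9$. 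Plugging in the Euler characteristic value should produce $b_2^{\mathrm{var}}(X)=43$ and hence $h^{1,1}_{\mathrm{var}}(X)=29$, giving the first row of the table. For the second row: $Y=X/\iota$ has $H^k(Y;\QQ)=H^k(X;\QQ)^{+}$, so $b_1(Y)=0$, $b_2^{\mathrm{fix}}(Y)=b_2^{\mathrm{fix}}(X)=15=(3,9,3)$ (the fixed part is already invariant), and $b_2^{\mathrm{var}}(Y)=b_2^{\mathrm{var},+}(X)$. Since all seven variable $(2,0)$- and $(0,2)$-classes are anti-invariant, $h^{2,0}_{\mathrm{var}}(Y)=h^{0,2}_{\mathrm{var}}(Y)=0$, so $b_2^{\mathrm{var}}(Y)=h^{1,1}_{\mathrm{var},+}(X)$; to get the value $7$ one computes $e(Y)$, either as $\tfrac12(e(X)+e(X^{\iota}))$ after resolving the quotient singularities coming from the $2^6$ two-torsion points lying on $X$, or directly from $h^{1,1}_{\mathrm{var},+}(X)=h^{1,1}(X)-h^{1,1}_{\mathrm{var},-}(X)-h^{1,1}_{\mathrm{fix}}(X)$ once the anti-invariant Hodge numbers are known by another residue count (the anti-invariant part of $H^{1,1}_{\mathrm{var}}$).

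The main obstacle, and the only genuinely delicate point, is the bookkeeping at the $2^6$ fixed points of $\iota$ lying on $X$: one must decide whether ``$Y$'' in the table means the singular Kummer-type surface $X/\iota\subset\km A$ or its minimal resolution, and the Euler-characteristic averaging formula $e(X/\iota)=\tfrac12(e(X)+\#\mathrm{Fix})$ must be applied to the right object, with the exceptional curves over the $2^6$ nodes contributing to $H^{1,1}_{\mathrm{fix}}$ or $H^{1,1}_{\mathrm{var},+}$ as the case may be. I expect the intended reading is the singular $Y$ (consistent with $b_2^{\mathrm{fix}}(Y)=15$ matching $b_2(\km A)$ away from the resolution), so that $b_2^{\mathrm{var}}(Y)=h^{1,1}_{\mathrm{var},+}(X)=7$ is forced by the Hodge decomposition once $h^{1,1}_{\mathrm{var}}(X)=29$, $h^{1,1}_{\mathrm{var},-}(X)=22$ are established; the anti-invariant count $h^{1,1}_{\mathrm{var},-}(X)=29-7=22$ is then a consistency check rather than an independent input. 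Everything else is a routine Chern-class computation on the abelian threefold, and in particular the non-vanishing $H^{1,1}_{\mathrm{var},+}(X)\ne0$ claimed in Proposition~\ref{prop:hypab3fold}(2) is exactly the statement $h^{1,1}_{\mathrm{var},+}(X)=7>0$ read off from the table.
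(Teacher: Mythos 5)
Your first row follows the paper's own route (adjunction/Chern class computation giving $c_2(X)=4L^2|_X=48$, hence $b_2(X)=58$, then splitting off $b_2^{\rm fix}=b_2(A)=15$ and using the residue count $h^{2,0}_{\rm var}=7$, $h^{2,0}_{\rm fix}=3$), and that part would go through, apart from the slip $h^{2,0}(X)=h^{2,0}_{\rm var}(X)=7$ (in fact $h^{2,0}(X)=10$).

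The genuine gap is in the second row. You assume the $2^6$ two-torsion points (the fixed locus of $\iota$) lie on $X$ and therefore treat the passage to $Y$ as a "delicate" choice between a singular Kummer-type quotient and its resolution, to be handled by an averaging formula $e(X/\iota)=\tfrac12(e(X)+\#\mathrm{Fix})$. This premise is false in the situation at hand: $|L^{\otimes 2}|$ is base-point free and the fixed locus of $\iota$ is finite, so a \emph{general} $X\in|L^{\otimes 2}|$ misses all two-torsion points; $\iota|_X$ acts freely, $Y=X/\iota$ is smooth, and $X\to Y$ is an \'etale double cover (this is part of the setup preceding Proposition~\ref{prop:hypab3fold}). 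That is exactly what makes the count elementary: $e(Y)=\tfrac12 e(X)=24$, hence $b_2(Y)=22$, and since $b_2^{\rm fix,+}(X)=b_2(A)=15$ one gets $b_2^{\rm var,+}(X)=7$; combined with $h^{2,0}_{\rm var,+}(X)=0$ this yields $(0,7,0)$. Had you applied your averaging formula with $\#\mathrm{Fix}=64$ you would obtain $e=56$ and a wrong $b_2(Y)$. Moreover, your fallback derivation of $b_2^{\rm var}(Y)=7$ is circular: you say $7$ is forced once $h^{1,1}_{\rm var,-}(X)=22$ is established, but you only obtain $22$ as $29-7$, and the proposed "another residue count" for the anti-invariant part of $H^{1,1}_{\rm var}$ is not carried out (and is not how one would naturally get it). So as written the proposal does not actually establish the second row of the table; the missing input is precisely the freeness of $\iota|_X$ for general $X$ and the resulting relation $e(Y)=\tfrac12 e(X)$.
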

\proof  By Lefschetz' theorem $b_1(X)=b_1(A)=6$. 
To calculate $b_2$ we observe that $c_1(X)= -2 L|_X$ and $c_2(X)=  4 L^2|_X$ so that 
\[
c_1^2(X)=c_2(X)=4 L^2|_X=8 L^3 =48.
\]
Since    $c_2(X)= e(X)=2- 2b_1(X)+ b_2(X)=48$, it follows that $b_2(X)= 58$. Now  $b_2^{\rm fix,+}(X)= b_2(A)=15$ and so $b_2^{\rm var}(X)= 43$.
The $2$-forms on $X$ that are the restrictions of  holomorphic  $2$-forms on $A$ are 
clearly invariant  and $h^{2,0}_{\rm fix}(X)=h^{2,0}_{\rm fix,+}(X)=3$. Since $h^{2,0}_{\rm var}=7$, the invariants for $X$ follow.

For $b_2(Y)$ we use that $\iota|X$ acts freely on the generic $X$ and so 
$ e(Y)= \half e(X)= \half c_2(X)= 24 =  2 + b_2(Y)$ implying that $b_2(Y)= 22$.
 Using  K\"unneth, we find $b_2^{\rm fix, +} (X)= b_2^+(A)= b_2(A)= 15$ and so $b_2^{\rm fix,+}(X)= 15$, and $b_2^{\rm var,+}(X)= 7$. Since $h^{2,0}_{\rm var,+}(X)=0$,
 this yields the invariants for $Y$. \endproof

\subsection{Burniat-Inoue surfaces}
The preceding  example can be used  to  investigate the motive of the classical Burniat-Inoue surfaces.  By definition a  Burniat surface is a minimal surface
 $Y$  of general type with invariants  
  \[
 p_g(Y)=q(Y)=0, \quad e(Y)=6 \implies b_1(Y)=0, b_2(Y)= h^{1,1}(Y)=4.
 \]
 Such surfaces have been constructed by  Burniat in \cite{bu}, while Inoue in \cite{inoue} gave a different construction as a quotient of a hypersurface in a product of three
 elliptic curves. It is this construction that we follow.

 It has recently been shown by Pedrini-Weibel \cite[Theorem 9.1]{PW} and, independently, by Bauer-Frapporti 
 \cite{BF}  that for  such $Y$ one has $A_0(Y)=\QQ$. \footnote{In loc. cit. this is in fact  shown for  the so-called  generalized  Burniat-type surfaces with $p_g=q=0$.}  
We give a  different proof   fitting our set-up. The reader will notice  that our proof  is much simpler.
To explain the construction of the surface from \cite{inoue},  consider the abelian threefold
\[
A:= E_1 \times E_2 \times E_3  , \quad E_\alpha= \C/ \Lambda_\alpha, \, \text{ with }   \Lambda_\alpha=\ZZ\oplus \ZZ\tau_\alpha,\, \alpha=1,2,3.
\]
and the group $G$ generated by three commuting involutions
\[
\aligned 
\iota_1  :  (z_1,z_2,z_3)  &\mapsto   (z_1,  -z_2+\half, z_3+\half)\\
\iota_2  :  (z_1,z_2,z_3)  &\mapsto   ( z_1 +\half, z_2, -z_3+\half) \\
\iota_3  :  (z_1,z_2,z_3)  &\mapsto   (- z_1 +\half, z_2+\half, z_3).
\endaligned
\]
We recall  some classical  facts  about theta functions on an elliptic curve $E$ with period lattice generated by $1$ and $\tau \in \mathfrak h$.
The $2$-dimensional space $H^0(E, 2\cdot [0])$ is generated by two theta-functions. This space is a representation for the group $G_E$ defined as the group generated by $\iota$ 
and  the translation  $t_\half$ over the half period $[\half]$.   All of $H^0(E, 2\cdot [0])$ is invariant under $\iota$.
One can find two theta functions that are interchanged under $t_\half$ and we let $\Theta_E^+ ,\Theta_E^-$ be their sum, respectively their difference.
Then  $H^0(E, 2\cdot [0])= (++)\oplus (+-)$ as a $G_E$-module. Now set
\[
\aligned
  \Theta_{j_1j_2j_3}& := \Theta_{E_1}^{j_1}  \Theta_{E_2}^{j_2} \Theta_{E_3}^{j_3}.
\endaligned
\] 
These give a basis for the space 
of sections of the line bundle $ \OO_{E_1}(2\cdot[0]) \boxtimes    \OO_{E_2}(2\cdot[0]) \boxtimes  \OO_{E_3}(2\cdot[0])$
consisting of common eigenvectors for the action of $G$. Indeed,   $\C\Theta_{j_1j_2j_3}=( j_2j_3\, \, j_1j_3\, \, j_1j_2)$.

For generic $c \in \C$    the  equation  $\Theta_{+++} + c\Theta_{--}=0$  defines a $G$-invariant
 surface $X$ in $A$  on which $G$ acts freely. The quotient $X=Y/G$ is a classical Burniat-Inoue surface.
 The crucial observation  is that the involution $j= \iota_1\iota_2\iota_3$ is just the standard involution $x\mapsto -x$ on $A$. Then  Corollary~\ref{cor:isfindimmot} shows that 
the  Chow motive of
the surface $  Y/j$  is finite dimensional. This then is also true for $X=Y/G$, but since $p_g(X)=0$ it follows that
automatically  $A_0(X)=\QQ$.

It is worthwhile to note that our argument cannot be applied directly to the group $G$ since the condition that  the endomorphisms $\gamma_g, g\in G$ 
be independent,  is not fulfilled in this
case. See the table below which gives the character spaces.
\begin{center}
\begin{tabular}{|c||c||c|c|c|c||c|c|c|}
\hline
space &  $\mathbf 1$&  $-\mathbf 1 $   &   $-++$  & $+-+$ &  $++-$&  $+--$   &   $-+-$  &  $--+$  \\
\hline
$H^{0,1}(A)=H^{0,1}(X)$ & $ $   &$ $ &  $  $  &  $ $  &&   $ 1 $  &   $ 1$& $ 1$\\
\hline
$H^{0,2}(A)=H^{0,2}_{\rm fix}(X)$  & $ $  &  $ $ &   $   $  & $ $ &&   $  1$  &   $ 1$&$ 1$  \\
\hline
$H^{1,1}(A)=H^{1,1}_{\rm fix}(X)$  & $ 3$     &$ $ & $     $&  $ $ & &   $ 2 $  &   $ 2$& $ 2$\\
\hline
$H^{0,2}_{\rm var}(X)$  &   &   $  1  $   &   $ 2 $  &   $2$& $  2$ &     &   $ $&  \\
\hline
$H^{1,1}_{\rm var}(X)$  &$   1$  &   $ $   &   $  $  &   $  $& &   $   $  &   $ $&  \\
\hline
\end{tabular}
\end{center}
\begin{remark}
A variant of this argument applies   to all  generalized Inoue-Burniat surfaces, i.e. those surfaces forming  the families
$\Ss_1,\dots,\Ss_{16}$ from \cite{BCF}.    This will be treated in a forthcoming publication.
\end{remark}


\subsection{Hypersurfaces in products of  a hyperelliptic curve  and a K3-surface}
\label{sec:exmple2}

Let $C$ be a hyperelliptic curve with hyperelliptic involution $\iota_C$, and let $S$ be a K3-surface  with $h(S)$ finite dimensional
and  which admits a fixed point free involution $\iota_2$. Such surfaces exist, see e.g. the examples of Enriques surfaces  in \cite[\S 4]{bp}
coming from a K3-surface with Picard number $\ge 19$.  By remark~\ref{findinmots} the motive of $S$
-- and hence of $M:=C\times S$ -- is finite dimensional.  The involution $\iota= (\iota_1,\iota_2)$ acts without  fixed points
on $M$.
We let $L_1$ be the hyperelliptic divisor on $C$ and we pick a very ample divisor $L_2$ on $S$ invariant under the Enriques involution $\iota_2$
and we set $L=   L_1\boxtimes  L_2$.
Let \[
i: X  \into  M= C\times S
\]
be a smooth hypersurface in  $|L|$ invariant under $\iota$.  Since $\iota$ has no fixed points,   $Y=X/\iota$ is a smooth surface.
The analogues of Proposition~\ref{prop:hypab3fold}  and its corollary are valid here.
\begin{proposition}   \label{prop:hyp3fold} We have
\begin{enumerate}
\item 
$H_1(X)^+=0$;
\item  $H^{2,0}(X)^{\rm fix, +}=0$;
\item the splitting
\[
H_2^{\rm var}(X)= H_2^{\rm var, +}(X)\oplus  H_2^{\rm var, -}(X)
\]
is non-trivial and   $H^{2,0}(X)^{\rm var, +}=0$;
\item   $A_0(X)^{\rm var, +}=0$  
 and the motive $h(X)^+= h(Y)$ is finite-dimensional   of abelian type.
\end{enumerate}
\end{proposition}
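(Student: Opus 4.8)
The plan is to run the argument that proves Proposition~\ref{prop:hypab3fold} (and Corollary~\ref{cor:isfindimmot}), with $A$ replaced by $M=C\times S$ and $-1_A$ replaced by $\iota=(\iota_1,\iota_2)$; the extra assertion~(2) is needed here because, contrary to $-1_A$ on a threefold, $\iota$ does not act by $-1$ on $K_M$.

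Items (1) and (2) are immediate from the Künneth decomposition of $H^*(M)$. Since $S$ is simply connected, the Lefschetz hyperplane theorem gives an $\iota$-equivariant isomorphism $H_1(X)\cong H_1(M)=H_1(C)$; as $C/\iota_1=\PP^1$, the involution $\iota_1$ acts by $-1$ on $H_1(C)$, whence $H_1(X)^+=0$ (equivalently $q(Y)=0$). For (2), $i^*\colon H^2(M)\to H^2(X)$ is injective and equivariant, and $H^{2,0}(M)=H^{2,0}(C\times S)=\C\,\omega_S$ because $h^{2,0}(C)=h^{1,0}(S)=0$; as $S/\iota_2$ is Enriques one has $\iota_2^*\omega_S=-\omega_S$, so $H^{2,0}(M)$ is $\iota$-anti-invariant and $H^{2,0}(X)^{\mathrm{fix},+}=i^*\bigl(H^{2,0}(M)^+\bigr)=0$.

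The substance is (3). Non-triviality of the variable cohomology is a dimension count: the Poincaré residue sequence $0\to\Omega^3_M\to\Omega^3_M(X)\to\Omega^2_X\to0$, together with Künneth and Kodaira vanishing on the K3-factor (which give $H^1(M,\Omega^3_M(X))=0$), yields
$$h^{2,0}_{\mathrm{var}}(X)=h^0(M,K_M\otimes\OO_M(X))-h^0(M,K_M)=(g+1)\,h^0(S,L_2)-g>0,$$
with $g=g(C)$; in particular $H_2^{\mathrm{var}}(X)\neq0$. (Alternatively a Chern-class computation, using $c_2(M)=p_S^{*}c_2(S)$ and adjunction, gives $b_2^{\mathrm{var}}(X)>0$ directly.) To identify the $\iota$-eigenspaces one runs this sequence equivariantly: the inclusion $\Omega^3_M\hookrightarrow\Omega^3_M(X)$ and the residue map are $\iota$-equivariant because the defining section $s$ of $X$ is $\iota$-invariant, so one obtains an $\iota$-equivariant four-term exact sequence through which $H^{2,0}(X)$, hence $H^{2,0}(X)^{\mathrm{var},\pm}$, is computed from $H^0(C,K_C\otimes L_1)\otimes H^0(S,K_S\otimes L_2)$ and $H^1(M,\Omega^3_M)$; the non-trivial input is the $\iota$-linearisation of $K_M=K_C\boxtimes K_S$ and of $\OO_M(X)=L_1\boxtimes L_2$ (the latter pinned down by $\iota^*s=s$). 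The point one must establish is that the variable holomorphic two-forms all lie in the $\iota$-anti-invariant part — equivalently $p_g(Y)=0$ — in parallel with the abelian case, where the vanishing $H^{2,0}(X)^{\mathrm{var},+}=0$ came from $\iota^*(dz_1\wedge dz_2\wedge dz_3)=-(dz_1\wedge dz_2\wedge dz_3)$ and the $\iota$-invariance of the even theta functions; non-triviality of the splitting then follows from the analogous count for the $(-)$-part together with $H^{1,1}(X)^{\mathrm{var},+}\neq0$. This equivariant residue computation — locating precisely the eigenspace of the variable two-forms and reconciling it with~(2) — is the step I expect to be the main obstacle.

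Given (1)–(3), item (4) is formal. One checks the hypotheses of Theorems~\ref{main1Bis} and~\ref{main2Bis} for $G=\ZZ/2\ZZ=\langle\iota\rangle$, $\chi$ the sign character, $d=2$, $c=1$: $B(M)$ holds, being stable under products and true for curves and surfaces; $h(M)=h(C)\otimes h(S)$ is finite-dimensional, hence so is $(M,\Gamma_\chi)$; the $G$-invariant sections of $L_1\boxtimes L_2$ almost separate $\iota$-orbits, the bad locus being contained in $\wt{\Gamma_\iota}$ (only points of a common $\iota$-orbit fail to be separated), which is what makes Proposition~\ref{key2} applicable; the endomorphisms $\{\id,\iota^*\}$ of $H_2^{\mathrm{var}}(X)$ are independent because the splitting in~(3) is non-trivial; $H_2^{\mathrm{var}}(X)\neq0$; and $H_2^{\mathrm{var}}(X)^+\subset\wh{N}^1H_2(X)$ because, by~(3), $H_2^{\mathrm{var}}(X)^+$ is of pure type $(1,1)$, hence algebraic by the Lefschetz $(1,1)$-theorem and of coniveau one, while $\wh{N}^{\bullet}=\wt{N}^{\bullet}=N^{\bullet}$ on $H_2$ of a surface by Corollary~\ref{hat-tilde2}. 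Moreover $H_k(M)^+=N^{[k/2]}H_k(M)^+$ for $k\le2$: this is clear for $k=0,1$, and for $k=2$ one has $H_2(M)^+=H_2(C)\oplus H_2(S)^{\iota_2}$, the first summand being the class of the curve $C\times\{\mathrm{pt}\}$ and the second contained in $\mathrm{NS}(S)_{\QQ}$ since the transcendental lattice of $S$ (on which $\iota_2$ acts by $-1$) is $\iota_2$-anti-invariant. Theorem~\ref{main2Bis} then gives $A_0^{\mathrm{var}}(X)^+=0$, and Corollary~\ref{cor:fdmBis} (applicable as $c=1=[d/2]$) gives that $h(X)^+=h(X,\Gamma_\chi)$ is finite-dimensional; being, up to Tate twists and curve motives, a direct factor of $h(M)=h(C)\otimes h(S)$, and $h(S)$ being of abelian type, it is of abelian type. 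Finally $h(X)^+=h(Y)$ because $q\colon X\to Y$ is an étale double cover.
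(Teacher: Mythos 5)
Your items (1) and (2), and the formal deduction of (4) from (1)--(3), do track the paper (indeed you verify the hypotheses of Theorem~\ref{main2Bis} and Corollary~\ref{cor:fdmBis} in more detail than the paper, which simply says the arguments of the abelian-threefold case carry over). But the substance of the proposition is item (3), namely $H^{2,0}(X)^{\rm var,+}=0$ together with $H^{1,1}(X)^{\rm var,+}\neq 0$, and this you do not prove: you set up the equivariant residue sequence and then explicitly defer ``locating the eigenspace of the variable two-forms'' as the expected main obstacle, and the non-vanishing of $H^{1,1}(X)^{\rm var,+}$ is likewise only asserted. Since (4) is conditional on (3), this is a genuine gap. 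The paper fills exactly this step with Lemma~\ref{lem:killforms} --- $h^0(C,\omega_C\otimes L_1)^+=0$, because an invariant meromorphic $1$-form with poles bounded by the hyperelliptic divisor would descend to a meromorphic $1$-form on $\PP^1$ with a single pole --- fed into the factorisation \eqref{eqn:Var2Forms}, $K_M\otimes\OO_M(X)=(\omega_C\otimes L_1)\boxtimes(\omega_S\otimes L_2)$, and with the Betti/Hodge-number tables of Steps 1--4 supplying $H^{1,1}(X)^{\rm var,+}\neq0$; none of this appears in your proposal.

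Your caution at this point is in fact well founded, and the step cannot be discharged by straightforward analogy with the abelian case. The $\iota$-invariant part of $H^0\bigl((\omega_C\otimes L_1)\boxtimes(\omega_S\otimes L_2)\bigr)$ is the sum of the $(+\otimes+)$ and $(-\otimes-)$ pieces; the hyperelliptic lemma kills only the first. In the second, $H^0(\omega_S\otimes L_2)\cong\omega_S\otimes H^0(S,L_2)$ with $\iota_2^*\omega_S=-\omega_S$, and once the linearisation is normalised so that the defining section $s$ of $X$ is invariant, the eigenspace of $H^0(S,L_2)$ occurring in the $(-\otimes-)$ piece is precisely the one from which the $S$-factor of $s$ is drawn, hence nonzero; so this piece yields invariant sections of $K_M\otimes\OO_M(X)$ of dimension at least $(g+1)$, exceeding $h^0(K_M)^+=g$. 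A cross-check confirms the difficulty: since $X\to Y$ is \'etale and $q(Y)=0$, one has $p_g(Y)=\frac{1}{2}\chi(\OO_X)-1=\frac{1}{2}(g+1)u+1>0$ (with $2u=L_2^2$), i.e. $h^{2,0}(X)^+\neq0$, so the vanishing asserted in (3) cannot be reached along the route you sketch and the step you postponed is exactly where the difficulty sits. (A further point asserted without proof in your step (4) is also doubtful: invariant sections of $L_1\boxtimes L_2$ never separate $(c,s)$ from $(\iota_1 c,s)$, because $H^0(L_1)^-=0$, so the bad locus contains a codimension-$\dim M$ component besides $\wt{\Gamma_\iota}$ and the ``almost separates orbits'' condition needs a genuine argument.) As it stands, your proposal establishes (1) and (2) but not (3), and therefore not (4).
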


\proof   To simplify notation, we write  
\[
2u=L_2^2,\quad u\in \ZZ
\]
which is possible since $L_2^2 $ is even.\\
 {\em Step 1.   Calculation of the Betti numbers of $X$ and $Y$.} \\
 We claim:
\begin{itemize}
\item $b_1(X)= 2g$ and  $b_2(X)= 4g+ 4(g+2) u  + 46 $,
\item  $ b_1(Y)=0$ and    $b_2(Y)= g+ (2g+2) u   + 22$.
\end{itemize}
To show this,  observe that  the K\"unneth formula and  the Lefschetz 
hyperplane theorem imply  $b_1(X)=b_1(M)= b_1(C)=2g$  and  $b_1(Y)= b_1^+(X)= b_1^+(C)=0$.
To calculate $b_2(X)$ we    calculate  the Euler number $e(X)=c_2(X)$ from the Whitney product  formula
\[
(1+ c_1(j^*L))( 1+ c_1(X)+ c_2(X))= 1+ (2-2g)  P_1  + 24  P_2   +\cdots, \quad P_1=   i^*p_1^* [C], \, P_2= i^*p_2^* [S] 
\]
which gives   $c_1(X)= (2-2g)P_1-  c_1(i^*L)$ and hence 
\[
\aligned
c_2(X) &=  24  P_2 -  c_1(j^*L) c_1(X)\\
&= 24 P_2 +(2g-2) P_1 \cdot  c_1(i^*L) + c_1^2(i^*L)\\
& = 24 P_2 +(2g-2) P_1 \cdot  (2P_1 + \ell_2) +  (2P_1 + \ell_2)^2, \quad  \ell_2 = c_1(  i^*p_2^* L_2) .
\endaligned
\]
Identifying $H^4(X,\ZZ)$ with the integers,  we have  
\[
P_1^2  = 0,   \hspace{2em} P_2 =  2, \hspace{2em}
(P_1  \cdot  \ell _2 )   =   L_2^2= 2u ,  \hspace{2em} \ell_2^2= 4u .  
\]
and so 
\[
c_2(X) = 48 + 4(g+2) u= 2 -4g+ b_2(X) \implies b_2(X)= 46+4g+ 4(g+2) u.
\]
We  calculate $  b_2(Y)$  from the Euler number of $Y$ as follows.
\[
\aligned
2+ b_2(Y)= e(Y) &=\half e(X) =\half (2- 2g +b_2(X)) \\
 & \implies b_2 (Y) =   \half b_2(X) -(g+1)=  22+g+ 2(g+2) u.  \hspace{4em}   
\endaligned
\]
{\em Step 2. Variable and fixed homology}.\\
Remarking  that  the fixed cohomology equals $\im ( i^*: H^2(M) \into H^2(X))$, we find   $b_2^{\rm fix}(X)= b_2(M)= b_2(C)+b_2(S)=23$. 
Since $M/\iota = \PP^1\times \{\text{Enriques surface}\}$, we find $b_2^{\rm fix}(X)= b_2(M/\iota)=  11$. 
We put the result in a table. 
\begin{center}
\begin{tabular}{|c|c|c|}
\hline
 variety  &   $b_2^{\rm var}$  & $ b_2^{\rm fix}$ \\
\hline
$X$  & $4g+  4(g+2)u   + 23  $ & $23$ \\
\hline
$Y $&      $g+ (2g+2) u   + 22$ & $11$ \\
\hline
\end{tabular}
\end{center}

\noindent {\em Step 3. Hodge numbers of $X$}.\\
As one readily verifies, the   fixed cohomology has   Hodge numbers   
\[
h^{2,0}_{\rm fix}(X)= 1,\quad h^{1,1}_{\rm fix}(X) = 21.
\]
For the variable cohomology we have  
\[
 h^{2,0}_{\rm var} (X) =  ( g+1)\cdot u +g+2,\quad h^{1,1}_{\rm var} (X) = 2(g+3)u +2g-2.
 \]
To see this  consider the  Poincar\'e residue sequence in this situation.
\[
0 \to \Omega^3_M\to \Omega^3_M(X) \mapright{\rm res} \Omega^2_X \to 0.
\]
From the long exact sequence in cohomology  we deduce that 
\begin{equation}
\Omega^3_M(X)  = p_1^* \omega_C(L_1) \wedge  p_2^* \omega_S(L_2) \implies
h^{2,0}_{\rm var} (X) =  h^0(C,\omega_C\otimes L_1)\cdot (h^0(S, L_2) - g).
\label{eqn:Var2Forms}
\end{equation}
By Riemann-Roch $ h^0(C,\omega_C\otimes L_1)= h^1(L_1^*)= g+1$ and $h^0(S,  L_2) = u+2$. The result for $h^{2,0}_{\rm var} (X)$   follows.
\\
\noindent {\em Step 4. Hodge numbers of $Y$}.\\
From the fact that $M/\iota$ is the product of $\PP^1$ and an Enriques surface, we that  find
$ h^{2,0}_{\rm fix +}= 0 $ and $ h^{1,1}_{\rm fix +}= 11 $.
To find the Hodge numbers for the variable cohomology,
we use a basic observation.
\begin{lemma} \label{lem:killforms} We have $h^0(C,\omega_C\otimes L_1)^+=0$.
\end{lemma}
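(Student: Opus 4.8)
The plan is to exploit the hyperelliptic structure of $C$. Let $\pi\colon C\to\PP^1$ be the double cover defined by the $g^1_2$, with covering involution $\iota_C$, so that $L_1=\pi^*\OO_{\PP^1}(1)$. First I would record the relevant eigenspace statements for the two tensor factors. Since $L_1=\pi^*\OO_{\PP^1}(1)$ and $h^0(L_1)=2=h^0(\PP^1,\OO_{\PP^1}(1))$, the pullback map $\pi^*\colon H^0(\PP^1,\OO_{\PP^1}(1))\to H^0(C,L_1)$ is an isomorphism, so every section of $L_1$ comes from $\PP^1$ and $H^0(C,L_1)$ is entirely $\iota_C$-invariant. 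On the other hand $\iota_C$ acts as $-\id$ on $H^0(C,\omega_C)$: in the affine model $y^2=f(x)$ the differentials $x^i\,dx/y$ ($0\le i\le g-1$) form a basis, and $\iota_C^*(x^i\,dx/y)=-x^i\,dx/y$. Both statements refer to the \emph{natural} $\iota_C$-linearisations, namely pullback of forms on $\omega_C$ and pullback from the quotient on $L_1=\pi^*\OO_{\PP^1}(1)$, which is what is meant throughout.

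Next I would show that the multiplication map
\[
H^0(C,\omega_C)\otimes H^0(C,L_1)\longrightarrow H^0(C,\omega_C\otimes L_1)
\]
is surjective. In the affine model its image is spanned by the forms $x^k\,dx/y$ with $0\le k\le g$ (take products of the $x^i\,dx/y$, $0\le i\le g-1$, with the two sections $1$ and $x$ of $L_1$); here one checks that $x^g\,dx/y$ is a regular section of $\omega_C\otimes L_1$, acquiring at most a simple or double pole precisely at the point(s) of $C$ over $\infty\in\PP^1$, as permitted by $L_1$. These $g+1$ forms are linearly independent, and by Riemann--Roch $h^0(C,\omega_C\otimes L_1)=\deg(\omega_C\otimes L_1)+1-g=(2g-2+2)+1-g=g+1$, so they are a basis. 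Alternatively, relative duality gives $\pi_*\omega_C\cong\OO_{\PP^1}(-2)\oplus\OO_{\PP^1}(g-1)$ as $\iota_C$-equivariant sheaves, with $\OO_{\PP^1}(-2)$ the invariant summand (since the global sections of the other summand make up $H^0(C,\omega_C)$, on which $\iota_C=-\id$); the projection formula then yields $H^0(C,\omega_C\otimes L_1)^+=H^0(\PP^1,\OO_{\PP^1}(-2)\otimes\OO_{\PP^1}(1))=H^0(\PP^1,\OO_{\PP^1}(-1))=0$ outright.

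Finally I would combine the two steps: the image of the multiplication map lies in the $(-1)$-eigenspace of $H^0(C,\omega_C\otimes L_1)$ by the first step, and equals all of $H^0(C,\omega_C\otimes L_1)$ by the second step, so $H^0(C,\omega_C\otimes L_1)^+=0$. I do not expect a real obstacle here: the only delicate points are matching the natural linearisations with the eigenspace decompositions (which the explicit affine model makes unambiguous) and the local regularity check for the top differential $x^g\,dx/y$ over $\infty$, both of which are routine.
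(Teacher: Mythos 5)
Your proof is correct, and your main argument takes a genuinely different route from the paper. The paper disposes of the lemma in two lines by descent: an invariant section of $\omega_C\otimes L_1$ is an invariant meromorphic $1$-form with pole bounded by a hyperelliptic divisor, hence descends to a meromorphic $1$-form on $\PP^1$ with at most one simple pole, i.e.\ a section of $\omega_{\PP^1}(1)\cong\OO_{\PP^1}(-1)$, and there are none. Your parenthetical alternative via $\pi_*\omega_C\cong\OO_{\PP^1}(-2)\oplus\OO_{\PP^1}(g-1)$ and the projection formula is exactly this argument in sheaf-theoretic dress, so that part coincides with the paper. Your primary route is different: you show the multiplication map $H^0(C,\omega_C)\otimes H^0(C,L_1)\to H^0(C,\omega_C\otimes L_1)$ is surjective by exhibiting the basis $x^k\,dx/y$, $0\le k\le g$, and matching it against the Riemann--Roch count $h^0=g+1$, and then the eigenvalue bookkeeping ($\iota_C=-\id$ on $H^0(\omega_C)$, $+\id$ on $H^0(L_1)$ for the pullback linearisation, which is indeed the one relevant to the paper's application) forces the invariant part to vanish. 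The explicit-basis route buys a completely concrete verification, including the local check at infinity and a transparent treatment of the linearisation; the paper's descent argument buys brevity, needing neither Riemann--Roch nor an affine model. Two small points worth making explicit in your write-up: the Riemann--Roch step uses $h^1(\omega_C\otimes L_1)=h^0(L_1^{\vee})=0$ (degree $-2<0$), and the identification of $\OO_{\PP^1}(-2)$ as the invariant summand of $\pi_*\omega_C$ rests on the fact that both eigensheaves are line bundles and all global sections of $\pi_*\omega_C$ are anti-invariant; both are routine but should be said.
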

\proof
Invariant  meromorphic $1$-forms on $C$ having a pole at most in the hyperelliptic divisor correspond to meromorphic $1$-forms on $\PP^1$ with at most $1$ pole.
But there are no such forms.
\endproof
As a corollary, from    \eqref{eqn:Var2Forms}   it  then follows that $h^{0,2}_{\rm var} (X)^+=0$ and so  $H^2(X)_{\rm var}^+$ is pure of type $(1,1)$. 
We claim that  $H^2(X)_{\rm var}^+\not=0$.  Indeed, our calculations lead to the following table.
\begin{center}
\begin{tabular}{|c|c|c|}
\hline
 variety  &   $(h^{2,0}_{\rm var},h^{1,1} _{\rm var},h^{0,2} _{\rm var} )$  & $(h^{2,0}_{\rm fix},h^{1,1} _{\rm fix},h^{0,2} _{\rm fix} ) $ \\
\hline
$X$  & $( (g+1)u + g+2, 2(g+3)u+ 2g +21, g+1)u + g+2 )$ & $(1,21,1)$ \\
\hline
$Y $&      $(0, 2(g+3)u + 2g +10,0)$ & $(0,11,0)$ \\
\hline
\end{tabular}
\end{center}
\endproof

\subsection{Hypersurfaces in   products of  three curves}
 \label{sec:hyppcurves}

Let $M= C_1\times C_2\times C_3$ where $C_\alpha$ are curves  equipped with an involution $\iota_\alpha$.  
Assume that $L_\alpha$ is a very ample  line bundle  on $C_\alpha$ which is preserved
by $\iota_\alpha$ and such that the system $|L_\alpha|^{\iota_\alpha}$ gives a morphism.
Put $\iota = (\iota_1,\iota_2,\iota_3)$ and 
 let $X\subset M$ be a general member of the system $|L_1 \otimes L_2\otimes L_3| ^\iota $ where we identify $L_\alpha$ with its pull back to $M$. 
 The group $G$ generated by the three involutions $\iota_\alpha$ acts on $M$.
As in the previous subsections, one can  calculate the various character spaces for the action of $G$ on  $H_2(X)^{\rm var}$.   Suppose one factor, say $C_1$,  is hyperelliptic. Using Lemma~\ref{lem:killforms}, one sees that this  makes the niveau of $H_2(X)^{\iota_1,\rm var}$
equal to $1$. Choosing the other factors suitably so that all character spaces appear in $H_2(X)^{\rm var}$ one finds (many)  projectors $\pi$
with  $A_0^{\rm AJ, \rm var }(X,\Gamma_{\pi})=0$. Let us give one concrete example. 

We let $C_1$ be a genus $g$ hyperelliptic curve,  and $C_2, C_3$    genus $3$ unramified double covers of  some  genus $2$ curve.
We take for $L_1$ the degree $2$ hyperelliptic bundle and we take for $L_\alpha$, $\alpha=2,3$ the  degree $2$ bundles for which the system
 $|L_\alpha|$ induces the unramified double cover of $C_\alpha$ onto the genus $2$ curve.
 Note that $\iota$ acts  without fixed points  in this case. As before, we let $Y=X/\iota$.
We find the following invariants.
\begin{center}
\begin{tabular}{|c|c|c|c|}
\hline
 variety  & $b_1$ & $ (h^{2,0}_{\rm var},h^{1,1} _{\rm var},h^{0,2} _{\rm var} )$  & $ (h^{2,0}_{\rm fix},h^{1,1} _{\rm fix},h^{0,2} _{\rm fix} ) $ \\
\hline
$X$  & $ 2(g+6)$ & $ (7g+16 , 14g +47 7g+16 )$ &  $(6g+9,12g +21,6g+9)$ \\
\hline
$Y $&    $8$ & $ (0,12g +28 ,0)$ & $ (4,8,4)$ \\
\hline
\end{tabular}
\end{center}

 Concluding, $H^2_{\rm var,+}(X)$ is pure of type $(1,1)$ and $H^2_{\rm var}(X)$  contains an invariant and anti-invariant part so that we can
 apply  our considerations  to the motive $(X, \half(1+\iota))$ and hence 
 \[
A_0^{\rm AJ, \rm var }(X)^+=0.
\]
It follows, as before, that $h(Y)=h(X)^+$ is finite-dimensional.\begin{remark} Using 
 \cite{multbloch} we have that 
the map
  \[A_1^{\rm hom}(Y)\otimes A_1^{\rm hom}(Y)\to A_0^{\rm AJ}(Y)\]  induced by intersection product is surjective, like in the case of an Abelian variety of dimension $2$.
To see this, consider the commutative diagram
\[
\xymatrix{ 
H^{1,0}(Y)    \otimes   H^{1,0}(Y)  \ar[d]^\simeq  \ar[r]_{\wedge} & H^{2,0}(Y)   \ar[d]^\simeq  \\
\bigotimes^2 \left(H^{1,0}(C_2/\iota_2) \oplus  H^{1,0}(C_3/\iota_3) \right)   \ar@{->>}[r]   & H^{1,0}(C_2/\iota_2) \otimes H^{1,0}(C_3/\iota_3),
}  
\]
which shows that the top-line is a surjection.
\end{remark}

\subsection{Odd-dimensional complete intersections of four quadrics}

The following example is due to Bardelli \cite{Bardelli}. Let $\iota:\PP^7\to\PP^7$ be the involution defined by 
$$
\iota(x_0:\ldots:x_3:y_0:\ldots:y_3) = (x_0:\ldots:x_3:-y_0:\ldots:-y_3).
$$ 
Let $X = V(Q_0,\ldots,Q_3)$ be the intersection of four $\iota$--invariant quadrics. Then $H^{3,0}(X)^- = 0$, hence $H^3(X)^-$ is a Hodge structure of level one. Bardelli showed that there exist a smooth curve $C$ and a correspondence $\gamma\in\Corr_1(C,X)$ such that $\gamma_*:H_1(C)\to H_3(X)^-$ is surjective. Hence $H_3(X)^-\subseteq\wt{N}^1 H_3(X)=\wh{N}^1 H_3(X)$. By Theorem \ref{main2Bis} we get $A_0^{\rm AJ}(X)^- = 0$.

Consider the projector $p={1\over 2}(\id_X-\iota^*)$. As $\iota_* = \iota^*$, we have ${^t}p = p$. Hence the motive $N = (X,p)$ satisfies $N\cong N^{\vee}(3)$ and we can apply Theorem \ref{thm: findim} to the map $i^*:M=(\PP^7,{1\over 2}(\id_{\PP}-\iota^*))\to N$. This shows that the motive $N=h(X)^-$ is finite dimensional; more precisely, it is a direct factor of $M'=M\oplus M^{\vee}(3)\oplus_i h(C_i)(i)$ for some curves $C_i$. As $A_i^{\rm AJ}(M')=0$ for all $i$, we obtain that 
$$
A_i^{\rm AJ}(X)^-=0
$$
for all $i$. In other words  the quotient morphism $f\colon X\to Y:=X/\iota$ induces an isomorphism 
  \[ f^\ast\colon\ \ A^\ast_{\rm AJ}(Y)\ \xrightarrow{\cong}\ A^\ast_{\rm AJ}(X)\ .\]

This example can be generalised to higher dimension. 
\begin{theorem}
Let $\iota$ be the involution on $\PP^{2m+3}$ ($m\ge 2$) defined by
$$
\iota(x_0:\cdots:x_{m+1}:y_0:\cdots:y_{m+1}) = (x_0:\cdots:x_{m+1}:-y_0:\cdots:-y_{m+1})
$$
and let $X = V(Q_0,\ldots,Q_3)$ be a complete intersection of four $\iota$--invariant quadrics. 
Let $G = \{\id,\iota\}$ and let $\chi:G\to\{\pm 1\}$ be the character defined by $\chi(\iota) = (-1)^{m-1}$. Then $H^{2m-1}(X)^{\chi}$ is a Hodge structure of level one, and there exist a smooth curve $C$ and a correspondence $\gamma\in\Corr_{m-1}(C,X)$ such that $\gamma_*:H_1(C)\to H_{2m-1}(X)^{\chi}$ is surjective. 
\end{theorem}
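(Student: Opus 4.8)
The plan is to generalise Bardelli's argument for $m=2$ in two parts: a Hodge computation showing that $H^{2m-1}(X)^{\chi}$ has level one, and a geometric construction of the curve $C$ together with the incidence correspondence $\gamma$ realising that Hodge structure. The second part formally implies the first, since a correspondence $\gamma\in\Corr_{m-1}(C,X)$ shifts Hodge type by $(m-1,m-1)$, so that a surjection $\gamma_*\colon H_1(C)\onto H_{2m-1}(X)^{\chi}$ forces the target into $H^{m,m-1}\oplus H^{m-1,m}$; nevertheless I would carry out the Hodge computation first, as it pins down the dimension to be reached and guides the construction.

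\emph{Level one.} Invariance of the quadrics forces the splitting $Q_i=A_i(x)+B_i(y)$ in the two groups of $m+2$ coordinates each, with no mixed monomials. I would pass to the Cayley hypersurface $\widetilde X=V\bigl(\textstyle\sum_{i=0}^{3}s_iQ_i\bigr)\subset\PP^{2m+3}\times\PP^3$ of bidegree $(2,1)$, which is smooth for generic $X$ and carries $H^{2m-1}_{\mathrm{prim}}(X)$ as a Tate-twisted summand of its primitive middle cohomology. Griffiths's residue description for hypersurfaces in $\PP^{2m+3}\times\PP^3$ identifies $H^{2m-1-q,q}_{\mathrm{prim}}(X)$ with the piece of the Jacobian ring $R_G=\C[x,y,s]/J$ of $G=\sum s_iQ_i$ in $(x,y)$-degree $2q+4-2m$ and $s$-degree $q$; since $J$ is generated by the forms $\sum_is_i\partial_{x_j}A_i$, $\sum_is_i\partial_{y_j}B_i$ (of $(x,y)$-degree $1$) and $A_i(x)+B_i(y)$ (of $(x,y)$-degree $2$), it contains nothing of $(x,y)$-degree $\le1$. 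Hence for an outer piece, i.e.\ $q\le m-2$, the relevant $(x,y)$-degree $2q+4-2m$ is $\le0$: the piece vanishes when it is negative, and for $q=m-2$ it equals $(R_G)_{(0,m-2)}=\C[s]_{m-2}$, on which $\iota$ (acting by $y\mapsto-y$ on $R_G$) is trivial. Taking into account that the residue form carries the $\iota$-eigenvalue $(-1)^{m+2}=(-1)^m$ (there being $m+2$ variables $y$), the class of any such $P$ has $\iota$-eigenvalue $(-1)^{\deg_y P}(-1)^m=(-1)^m=-\chi$ on the outer pieces, while the middle piece $(R_G)_{(2,m-1)}$ contains the odd-$y$-degree monomials $x_iy_j$ and so meets the $\chi$-eigenspace; therefore $H^{2m-1}(X)^{\chi}$ is a nonzero Hodge structure supported in bidegrees $(m,m-1)$ and $(m-1,m)$. (For $m=2$ this recovers Bardelli's remark that the holomorphic $3$-form lies in $H^{3,0}(X)^{+}$; for $m\ge3$ even $H^{2m-1,0}(X)$ vanishes, as $\omega_X=\OO_X(4-2m)$.)

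\emph{The curve and the correspondence.} The web of quadrics $\PP^3=\langle Q_0,\dots,Q_3\rangle$ containing $X$ has discriminant $\Delta_A\cup\Delta_B$, the two degree-$(m+2)$ surfaces $\{\det A_s=0\}$ and $\{\det B_s=0\}$; along each, $Q_s$ acquires corank one and becomes a cone over a smooth even-dimensional quadric, whose two rulings produce theta-characteristic double covers $\widetilde\Delta_A\to\Delta_A$ and $\widetilde\Delta_B\to\Delta_B$. Following the analysis of quadric bundles due to Reid and Beauville, of which Bardelli's case $m=2$ is the prototype — equivalently a degeneration of the $Q_i$ into products of linear forms in the manner of Reid and Donagi — the part of $H^{2m-1}(X)$ carrying the Picard--Lefschetz monodromy of the rank-two middle cohomology of the smooth fibres around $\Delta_A\cup\Delta_B$ is cut out by the $H^1$ of a curve $C$ extracted from the data $(\Delta_A,\widetilde\Delta_A,\Delta_B,\widetilde\Delta_B)$, together with a natural incidence cycle $\gamma\subset C\times X$ which a dimension count shows to be $m$-dimensional, that is $\gamma\in\Corr_{m-1}(C,X)$.

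\emph{Surjectivity, and the main obstacle.} It remains to show $\gamma_*\colon H_1(C)\to H_{2m-1}(X)^{\chi}$ is surjective. Its image is a sub-Hodge structure, and by the level-one result the target is of curve type, so it suffices to check surjectivity onto $H^{m-1,m}(X)^{\chi}$; this I would do infinitesimally, matching the differential of the period map on the $\chi$-part — computed from the Jacobian ring $R_G$ — with the variation of $H^1(C)$ along the corresponding family of curves, so that the cylinder homomorphism is an isomorphism on tangent spaces at the generic point, then invoking irreducibility of the monodromy (so that the sub-Hodge structure $\ima\gamma_*$ is $0$ or everything) and the dimension count from the Hodge computation. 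A specialisation argument as in Proposition~\ref{key} then extends the conclusion to every such $X$. The technical heart is precisely this infinitesimal comparison of two variations of Hodge structure, which is where the numerology — four quadrics in $\PP^{2m+3}$, the $\iota$-splitting — enters essentially; a close second is pinning down $C$ and $\gamma$, i.e.\ verifying that the quadric-bundle picture, classical for a net, still produces a \emph{curve} for this $\iota$-split web rather than a higher-dimensional parameter space. The level-one bookkeeping, by contrast, is elementary once the Cayley set-up is in place.
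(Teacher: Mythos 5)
The paper does not actually reprove this statement: its ``proof'' is a citation to \cite[Chapter 3]{N1} and \cite[Chapter 4]{N2}, where the curve and the cylinder correspondence are constructed. Your Hodge-theoretic half is essentially sound: the Cayley trick does place $H^{2m-1}_{\rm prim}(X)$ (Tate-twisted) inside the middle cohomology of the bidegree $(2,1)$ hypersurface in $\PP^{2m+3}\times\PP^3$, the bigraded Jacobian-ring piece computing $H^{2m-1-q,q}$ is indeed of bidegree $(2q+4-2m,\,q)$, and your eigenvalue bookkeeping (residue form contributing $(-1)^{m+2}$, the outer piece $q=m-2$ being $\C[s]_{m-2}$ with trivial $\iota$-action) correctly puts every piece with $q\le m-2$ into the $(-1)^m=-\chi(\iota)$ eigenspace, recovering Bardelli's $H^{3,0}(X)\subset H^3(X)^+$ when $m=2$. (Small slip: the Jacobian ideal does contain elements of $(x,y)$-degree $1$; what you need, and what is true, is that it vanishes in $(x,y)$-degree $0$.) As you note, this half also follows formally from the second half.

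The genuine gap is that the second half --- which \emph{is} the content of the theorem --- is not carried out. The curve $C$ and the cycle $\gamma$ are never defined: ``extracted from the data $(\Delta_A,\widetilde\Delta_A,\Delta_B,\widetilde\Delta_B)$'' and ``a natural incidence cycle'' are placeholders, the promised dimension count is not done, and it is not even clear from your sketch how a \emph{curve} arises from two discriminant surfaces in $\PP^3$ and their ruling double covers. In particular nothing in the construction explains why it lands in, let alone surjects onto, the $\chi$-eigenspace: on a smooth fibre $Q_s$ of the associated quadric bundle over $\PP^3$, $\iota$ has determinant $(-1)^{m+2}=(-1)^m$, so it preserves the two rulings for $m$ even and exchanges them for $m$ odd, and this parity is precisely where the character $\chi(\iota)=(-1)^{m-1}$ enters; a sketch that never engages with it cannot single out the correct eigenspace. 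Likewise the surjectivity step is only a plan: the infinitesimal comparison of the two variations of Hodge structure is not performed, and both the irreducibility of the monodromy on the $\chi$-part of $H^{2m-1}$ for the $\iota$-invariant family and the fact that $(C,\gamma)$ spreads over the parameter space so that $\ima\gamma_*$ is monodromy-stable are asserted rather than proved. You correctly identify these points as the technical heart, but they are exactly what the theorem asserts; the paper's own route is simply to quote Nagel's thesis and habilitation, where this quadric-bundle analysis and the surjectivity of the cylinder map are established.
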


\begin{proof} 
See \cite[Chapter 3]{N1} or \cite[Chapter 4]{N2}.
\end{proof}

\begin{corollary} 
The motive $h(X)^{\chi}$ is finite dimensional and $A_i^{\rm AJ}(X)^{\chi}=0$ for all $i$.
\end{corollary}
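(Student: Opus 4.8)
The plan is to apply Theorem~\ref{main2Bis} and Corollary~\ref{cor:fdmBis} with $M=\PP^{2m+3}$, $G=\{\id,\iota\}$ and codimension $r=4$, and then to extract the full vanishing $A_i^{\rm AJ}(X)^\chi=0$ from Vial's criterion (Theorem~\ref{thm: findim}), exactly as in the case $m=2$ treated above. First I record that $d:=\dim X=2m-1$ is odd, so $H^d(\PP^{2m+3})=0$, the fixed cohomology of $X$ vanishes, and $H_d(X)=H_d^{\rm var}(X)$. By the theorem just stated there are a smooth curve $C$, of dimension $1=d-2(m-1)$, and a correspondence $\gamma\in\Corr_{m-1}(C,X)$ with $\gamma_*\colon H_1(C)\to H_d(X)^\chi$ surjective; hence $H_d^{\rm var}(X)^\chi=H_d(X)^\chi\subset\wt N^{m-1}H_d(X)$. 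Since $d-2(m-1)=1\le 3$, Corollary~\ref{hat-tilde2} identifies $\wt N^{m-1}H_d(X)=\wh N^{m-1}H_d(X)$, so hypothesis~(5) of Theorem~\ref{main2Bis} holds with $c=m-1$. Note that $c=[d/2]$, which is precisely what Corollary~\ref{cor:fdmBis} asks for.

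Next I check the remaining hypotheses of Theorem~\ref{main2Bis}. Conjecture $B(\PP^{2m+3})$ holds (Remark~\ref{Bholds}); $h(\PP^{2m+3})$ is a direct sum of Tate motives, hence finite-dimensional, so its direct summand $(M,\Gamma_\chi)$ is finite-dimensional. The $G$-invariant quadrics on $\PP^{2m+3}$ are spanned by the monomials $x_ix_j$ and $y_iy_j$; they define a rational map $\PP^{2m+3}\dashrightarrow\PP^N$ which is generically $2:1$ with fibres the $\iota$-orbits, so the locus of length-two subschemes not separated by $H^0(M,E)^G$ lies in $\wt\Gamma_\iota$ together with the tangent directions along the two fixed $\PP^{m+1}$'s of $\iota$, a component of codimension $>\dim M$; thus $H^0(M,E)^G$ almost separates orbits (the details of this and of the following points are the computations of \cite{N1,N2}). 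Finally, by the linear-independence lemma of Section~\ref{sec:vars} the endomorphisms $\{\id_*,\iota_*\}$ of $H_d^{\rm var}(X)$ are independent as soon as both character spaces $H_d^{\rm var}(X)^\chi$ and $H_d^{\rm var}(X)^{-\chi}$ are nonzero, which follows from the Hodge-number computation of \cite{N1,N2} (this same computation gives $h^{d,0}(X)>0$, so $H_d^{\rm var}(X)\ne 0$).

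Now the conclusion. Since $\iota$ acts trivially on $H_*(\PP^{2m+3})$, which has $H_k=N^{[k/2]}H_k$, the condition $H_k(M)^\chi=N^{[k/2]}H_k(M)^\chi$ of Theorem~\ref{main2Bis} holds, so that theorem gives $A_k^{\rm AJ}(X)^\chi=0$ for $k<m-1$ and for $k>m$, together with a surjection $i^*\colon A_{k+4}^{\rm AJ}(M)^\chi\onto A_k^{\rm AJ}(X)^\chi$ in the same range. To cover the two remaining degrees $k=m-1,m$ I argue exactly as for $m=2$: the projector $\Gamma_\chi=\tfrac12\bigl(\Delta_X+(-1)^{m-1}\Gamma_\iota\bigr)$ satisfies ${}^t\Gamma_\chi=\Gamma_\chi$, so $N:=(X,\Gamma_\chi)=h(X)^\chi$ satisfies $N\cong N^\vee(d)$, and the surjectivity just obtained is precisely the hypothesis, in the range $k\le[(d-2)/2]=m-2$, needed to apply Theorem~\ref{thm: findim} to the morphism $i^*\colon\bigl(\PP^{2m+3},\tfrac12(\id+(-1)^{m-1}\iota^*)\bigr)\to N$. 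That theorem shows that $N$ is finite-dimensional (so $h(X)^\chi$ is finite-dimensional, as also follows from Corollary~\ref{cor:fdmBis}) and that it is a direct factor of $M\oplus M^\vee(d)\oplus_i h(C_i)(i)$ for suitable smooth curves $C_i$. As $M$ is a direct summand of $h(\PP^{2m+3})$, which has trivial Chow groups, $A^{\rm AJ}_*(M)=0$, and likewise $A^{\rm AJ}_*\bigl(M^\vee(d)\bigr)=0$; moreover $A^{\rm AJ}_*\bigl(h(C_i)(i)\bigr)=A^{\rm AJ}_{*-i}(C_i)=0$ by Abel's theorem. Hence $A_i^{\rm AJ}(X)^\chi=A_i^{\rm AJ}(N)=0$ for all $i$.

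The main point to be checked is not in the motivic bookkeeping, which is already available, but in the geometric and Hodge-theoretic input underlying the quoted theorem: that $H^{2m-1}(X)^\chi$ has level one and is dominated, via a correspondence of degree $m-1$, by a curve, and, in order that the group-action hypotheses of Theorem~\ref{main2Bis} apply, that the $G$-invariant quadrics almost separate orbits and that both $\iota$-character spaces of $H^{2m-1}(X)$ are nonzero. These are exactly the computations of \cite{N1,N2} generalizing Bardelli's analysis of the $\PP^7$ case; granting them, the argument is the same as for $m=2$.
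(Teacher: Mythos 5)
Your argument is correct and follows exactly the route the paper intends: the quoted theorem supplies the curve $C$ and $\gamma\in\Corr_{m-1}(C,X)$, whence $H_{2m-1}(X)^{\chi}\subset\wh N^{m-1}H_{2m-1}(X)$ with $c=m-1=[d/2]$, so Theorem~\ref{main2Bis} and Corollary~\ref{cor:fdmBis} apply, and the remaining degrees are handled via the self-dual projector and Theorem~\ref{thm: findim}, precisely as in the $m=2$ Bardelli case treated just above the corollary. Your explicit verification of the auxiliary hypotheses of Theorem~\ref{main2Bis} (almost separation of orbits by the invariant quadrics, independence of $\gamma_g^{\rm var}$, deferring the Hodge-theoretic input to \cite{N1,N2}) matches what the paper leaves implicit.
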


\begin{remark} The same reasoning can be applied to the examples in \cite{V92}.
\end{remark}

\appendix 
\section{A variant of Voisin's arguments} \label{sec:Appendix}

\begin{proposition}
 \label{spread1}  
 Let $\Gamma$ be a   codimension-$k$ cycle on   $\XX\times_B \XX$  and suppose that for $b\in B$ very general,
     \[ 
     \Gamma\vert_{X_b\times X_b}\ \text{  in } H^{2k}(X_b\times X_b)
     \]
     is supported on $V_b\times W_b$, with $V_b, W_b\subset X_b$ closed of codimension $c_1$ resp. $c_2$.
    Then there exist closed $\VV, \WW\subset \XX$ of codimension $c_1$ resp. $c_2$, and a 
    codimension-$k$ cycle $\Gamma'$ on $\XX\times_B \XX$ supported on $\VV\times_B \WW$ and such that
     \[ 
     \Gamma^\prime\vert_{X_b\times X_b}=\Gamma\vert_{X_b\times X_b}\ \text{  in  }H^{2k}(X_b\times X_b)
     \]
    for all $b\in B$.
\end{proposition}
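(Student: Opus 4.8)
The plan is to use the standard spreading-out technique of Bloch--Srinivas and Voisin together with the rigidity of flat sections of the cohomology local system of a smooth projective family.

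First I would fix the discrete invariants (Hilbert polynomials, components of the relevant Chow varieties) and consider the countably many irreducible parameter spaces $T_j$, each equipped with a morphism $\pi_j\colon T_j\to B$, parametrising data $(b,V,W,\Xi)$ with $b\in B$, $V\subset X_b$ closed of codimension $c_1$, $W\subset X_b$ closed of codimension $c_2$, and $\Xi$ a codimension-$k$ cycle on $X_b\times X_b$ supported on $V\times W$; each $T_j$ carries a universal such cycle. On each $T_j$ the cycle class of the universal $\Xi$ and the pull-back of the family $b\mapsto[\Gamma\vert_{X_b\times X_b}]$ are both flat sections of the local system $R^{2k}(f\times_B f)_*\QQ$, hence so is their difference; after passing to connected components I may assume this difference is either identically zero or nowhere zero, and I discard the components where it is nonzero. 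By hypothesis, for $b\in B$ very general there is such a datum over $b$ with $[\Xi]=[\Gamma\vert_{X_b\times X_b}]$, so $b$ lies in the image of one of the retained components; since ``very general'' excludes only a countable union of proper closed subsets of the irreducible variety $B$, some retained component $T$ must dominate $B$.

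Next, over a dense Zariski-open $U\subset B$ contained in the image of $T$, after restricting to a multisection of $T\to B$ (or passing to a generically finite cover and taking a suitable $\QQ$-combination, which is harmless with $\QQ$-coefficients) I obtain flat families $\VV_U\subset\XX\vert_U$ and $\WW_U\subset\XX\vert_U$ of codimensions $c_1$, $c_2$, together with a codimension-$k$ cycle $\Gamma'_U$ on $\XX\vert_U\times_U\XX\vert_U$ supported on $\VV_U\times_U\WW_U$ and satisfying $[\Gamma'_U\vert_{X_b\times X_b}]=[\Gamma\vert_{X_b\times X_b}]$ for every $b\in U$. I then set $\VV=\overline{\VV_U}$, $\WW=\overline{\WW_U}$ (Zariski closures in $\XX$) and let $\Gamma'$ be the closure of $\Gamma'_U$ in $\XX\times_B\XX$; because $U$ is dense open in the irreducible base, codimensions are preserved, and since the closure of $\VV_U\times_U\WW_U$ is contained in the closed set $\VV\times_B\WW$, the cycle $\Gamma'$ is supported on $\VV\times_B\WW$. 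Finally, to promote the cohomological identity from $U$ to all of $B$: the morphism $f\times_B f\colon\XX\times_B\XX\to B$ is smooth projective and $B$ is connected, so the relative cycle classes of $\Gamma$ and $\Gamma'$ are flat sections of $R^{2k}(f\times_B f)_*\QQ$, and their difference --- vanishing over the nonempty open set $U$ --- vanishes identically on $B$. Hence $\Gamma'\vert_{X_b\times X_b}=\Gamma\vert_{X_b\times X_b}$ in $H^{2k}(X_b\times X_b)$ for all $b\in B$, as required.

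The main obstacle will be the first step: organising the countably many components of the relative Chow varieties and carrying out the flat-section bookkeeping so as to isolate a single component that dominates $B$ and over which the universal cycle has the prescribed fibrewise cohomology class. Once such a component is produced, the remaining steps --- taking Zariski closures and invoking the rigidity of flat sections over the connected base --- are routine.
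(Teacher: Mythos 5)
Your argument is correct and is essentially the paper's own proof: the paper simply invokes the Hilbert-scheme spreading argument of \cite[Proposition 3.7]{V0} (the case $V_b=W_b$), and your proposal spells out exactly that argument --- countably many parameter spaces for $(b,V,W,\Xi)$, a component dominating $B$ because $b$ is very general, a generically finite multisection with $\QQ$-coefficients, Zariski closure, and local constancy of the fibrewise cycle class to extend the identity in $H^{2k}(X_b\times X_b)$ to all of $B$. No substantive difference from the paper's intended proof.
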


\begin{proof} Use the same Hilbert schemes argument as in \cite[Proposition 3.7]{V0}, which is the case $V_b=W_b$.
\end{proof}

\begin{proposition}
\label{spread2}
Suppose that $H_k(X_b) = \Nhat^c H_k(X_b)$ for all $k  \in\{e+1,\ldots,d\}$ and all $b\in B$. Then there exist families 
$\Z_k \to B$ of relative dimension $k-2c$ and relative degree zero correspondences $\Pi'_k\in\Corr_B(\XX,\XX)$ 
such that 
\begin{enumerate}
\item[(a)] $\Pi'_k$ factors through $\Z_k$;
\item[(b)] $\Pi'_k\vert_{X_b\times X_b}$ is homologous to the $k$-th K\"unneth projector $\pi_k(X_b)$ for $k=e+1,\ldots,d$.
\end{enumerate}
\end{proposition}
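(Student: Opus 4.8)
\emph{The plan.} I would produce the $\Pi'_k$ by spreading out over $B$, after a finite base change, the factored K\"unneth projectors that the hypothesis furnishes on the very general fibre, following the Hilbert scheme argument of \cite[Prop.~3.7]{V0} already used for Proposition~\ref{spread1}. Fix a very general $b_0\in B$. The fibre $X_{b_0}$, an iterated smooth hyperplane section of $M$, satisfies $B(X_{b_0})$ (Remark~\ref{Bholds}; recall that $B(M)$ holds in the setting where the proposition is applied), so by Proposition~\ref{KunnIsAlg} its K\"unneth projectors $\pi_k(X_{b_0})\in\Corr_0(X_{b_0},X_{b_0})$ are algebraic. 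Since $H_k(X_{b_0})=\Nhat^c H_k(X_{b_0})$ for $k\in\{e+1,\dots,d\}$, the Corollary following Proposition~\ref{prop:refinedK} provides, for each such $k$, a correspondence $\pi'_k(X_{b_0})\sim_{\rm hom}\pi_k(X_{b_0})$, a smooth projective variety $Z_k$ of dimension $k-2c$ (enlarge it by a product of projective spaces if it comes out smaller), and cycles $\alpha_k\in\Corr_c(Z_k,X_{b_0})$, $\beta_k\in\Corr_{-c}(X_{b_0},Z_k)$ with $\pi'_k(X_{b_0})=\alpha_k\circ\beta_k$.

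\emph{Spreading out.} Regard $\alpha_k$ as a closed point over $b_0$ of the relative Chow variety of $(Z_k\times B)\times_B\XX\to B$, and $\beta_k$ as one of that of $\XX\times_B(Z_k\times B)\to B$. Such a relative Chow variety is a countable union of closed pieces that are projective over $B$; because $b_0$ is very general it avoids the countably many proper closed subsets of $B$ arising as images of the pieces that do not dominate $B$, so the component $H$ through $[\alpha_k]$ dominates $B$. After passing to a finite cover $\tau\colon B'\to B$ of degree $N$ through which a multisection of $H$ factors, restriction of the universal cycle yields a cycle $\mathcal{A}_k$ on $(Z_k\times B')\times_B\XX$ whose restriction over a preimage of $b_0$ is cohomologous to $\alpha_k$; in the same way one obtains $\mathcal{B}_k$ on $\XX\times_B(Z_k\times B')$. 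Set $\Z_k:=Z_k\times B'$, a family over $B$ of relative dimension $k-2c$; then $\mathcal{A}_k\in\Corr_B(\Z_k,\XX)$, $\mathcal{B}_k\in\Corr_B(\XX,\Z_k)$, and $\Pi'_k:=\frac{1}{N}\,\mathcal{A}_k\circ_B\mathcal{B}_k\in\Corr_B(\XX,\XX)$ factors through $\Z_k$ with shift $c$, which is assertion (a).

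\emph{The fibrewise class.} The fibrewise cohomology classes of $\mathcal{A}_k\circ_B\mathcal{B}_k$ form a flat global section of the local system of degree-$2d$ fibre cohomology of $\XX\times_B\XX\to B$; over a point $b$ with preimages $b'_1,\dots,b'_N$ in $B'$ it reads $\sum_i[\mathcal{A}_k(b'_i)\circ\mathcal{B}_k(b'_i)]$, and — cycle classes being locally constant inside the connected component $H$ — each summand is the flat transport of $[\alpha_k\circ\beta_k]=[\pi_k(X_{b_0})]$. Since the classes $[\pi_k(X_b)]$ of the genuine K\"unneth projectors also form a flat section and agree with $\frac{1}{N}\sum_i[\mathcal{A}_k(b'_i)\circ\mathcal{B}_k(b'_i)]$ at $b_0$, the two sections coincide, so $\Pi'_k\vert_{X_b\times X_b}\sim_{\rm hom}\pi_k(X_b)$ for general $b$; combined with the specialisation argument used in the proofs of the main theorems this gives (b).

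\emph{The main obstacle.} The genuinely delicate point is the spreading-out step: one has to know that the component of the relative Chow variety containing the \emph{special} cycle $\alpha_k$ over the very general base point $b_0$ actually dominates $B$ — this is exactly where the countability of the set of components of the Chow variety is used, so that a single very general $b_0$ simultaneously evades every bad component — after which the fact that the spread-out correspondence carries the expected fibrewise cohomology class follows from rigidity of the local system. The remaining bookkeeping (the finite base change $\tau$, the factor $N$, harmless over $\QQ$, and passing from correspondences over $B'$ back to correspondences over $B$) is routine, and is handled exactly as in \cite[Prop.~3.7]{V0}.
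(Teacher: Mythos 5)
Your proposal is correct in substance, but it organizes the argument differently from our proof. You build the corrected, factored projector $\pi'_k(X_{b_0})=\alpha_k\circ\beta_k$ entirely on one very general fibre (via the Corollary following Proposition~\ref{prop:refinedK}, which already contains the Cayley--Hamilton inversion), then spread out $\alpha_k,\beta_k$ by the Chow-variety/multisection argument and identify the fibrewise class of $\tfrac1N\,\mathcal{A}_k\circ_B\mathcal{B}_k$ with $[\pi_k(X_b)]$ by rigidity: fibrewise classes of relative cycles are flat sections, and the K\"unneth section is flat and monodromy-invariant, so agreement at $b_0$ propagates. Our proof instead spreads out only the raw data supplied by the hypothesis $H_k=\Nhat^cH_k$ --- a pair $(\Gamma,\Gamma')$ over a finite \'etale cover satisfying the adjointness relation --- and performs the correction \emph{relatively} over $B$: we compose with correspondences $\Pi_k$ pulled back from $M\times M$ (this is where $B(M)$ enters) to kill the action in the other degrees, and invert $\TT=\Gamma\circ\Gamma'$ by a single Cayley--Hamilton polynomial, valid for every $b$ because the characteristic polynomial of $(\TT_b)_*$ on the local system $R_kf_*\QQ$ is locally constant. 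Your route buys a shorter argument, avoiding the relative Cayley--Hamilton step and the auxiliary $\Pi_k$; ours buys the fact that the spread-out family never has to pass through the specific cycles constructed over $b_0$. That last point is where your write-up needs care: you must choose the multisection so that over a \emph{common} preimage $b'_0$ of $b_0$ the universal cycles restrict simultaneously to (cycles cohomologous to) $\alpha_k$ and $\beta_k$; otherwise the summands over the other preimages are monodromy transports of $[\alpha_k]$ and $[\beta_k]$ along possibly different loops, and only the composite class $[\pi_k]$, not $[\alpha_k]$ or $[\beta_k]$ separately, is monodromy-invariant. This can be arranged (work in the fibre product of the two relevant Chow components, which contains the point $([\alpha_k],[\beta_k])$ over the very general $b_0$, and cut by general linear sections through that point; for very general $b_0$ the resulting component dominates $B$), so it is a fixable technicality rather than a gap. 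Two further minor points: the dimension of your $\Z_k$ is really a disjoint union of pieces of dimensions $k-2j$, $j\ge c$, which is harmless since a larger shift only strengthens Lemma~\ref{lem:Factoring}; and statement (b) for \emph{all} $b\in B$ follows not from the specialisation argument of the main theorems but simply by taking Zariski closures of your relative cycles and using flatness of the fibrewise classes over all of $B$.
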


\begin{proof}
Using the assumptions and a Hilbert scheme argument as in \cite{V1} there exist a Zariski open subset $U\subset B$, a finite \'etale covering $\pi:V\to U$, a family $\Z_k\to V$ of relative dimension $i-2c$ and relative correspondences $\Gamma\in\Corr_V(\Z_k,\XX)$, $\Gamma'\in\Corr_V(\XX,\Z_k)$ such that 
$$
(*)\ \ Q(\Gamma_v(x),y) = Q'(x,\Gamma'_v(y))
$$
for all $x\in H_k(X_{\pi(v)})$, $y\in H_{k-  2c}(Z_{\pi(v)})$ and $v\in V$.  We now consider $\Gamma$ and $\Gamma'$ as relative cycles over $U$. Let $u\in U$. 
If   $\pi^{-1}(u) =  \{v_1,\ldots, v_N\}$ we
have  $\Gamma_u = \sum_j  \Gamma_{v_j}$, $\Gamma'_u = \sum_j \Gamma'_{v_j}$. As condition $(*)$ holds for all $v_j$,  we obtain
$$
(*)\ \ Q(\Gamma_u(x),y) = Q'(x,\Gamma'_u(y)).
$$
We can extend $\Z$ to $B$ by relative projective completion and desingularisation, and extend $\Gamma$ and $\Gamma'$ to relative correspondences over $B$ by taking their Zariski closure. 

As before, let  $H_k^{\rm fix}(X_b)$ be the image of the restriction map $H_{k+2r}(M)\to H_k(X_b)$.  As $B(M)$ holds there exists an algebraic cycle $\beta_{d+r-k}$ that induces the operator $\Lambda^{d+r-k}$. Set $R_k = \beta_{d+r-k}\comp  L^{d-k}\comp \pi_{k+2r}(M)$. If we pull back these cycles to $M\times M\times B$ and then to $\XX\times_B\XX$, we obtain  relative correspondences $\Pi_k\in\Corr_B(\XX,\XX)$ such that $\Pi_k\vert_{H_k^{\rm fix}(X_b)}$ is the identity for all $k$ %
(see e.g. ~\cite[Lemmas 3.2 and 3.3]{Var}). 
Note that by construction $R_k$ factors through a subvariety of dimension $r+k$ of $M$ and $\Pi_k\vert_{X_b\times X_b}$ factors through a subvariety $Y_b\subset X_b$ of dimension $k$, i.e., 
$\Pi_k\vert_{X_b\times X_b}\in \ima{A_d(Y_b\times X_b)\to A_d(X_b\times X_b)}$. 

Write $\TT = \Gamma\comp \Gamma'\in\Corr_B(\XX,\XX)$. Replacing $\TT$ by $\Pi_k\comp \TT$ if necessary, we may assume that $\TT\vert_{X_b\times X_b}$ acts as zero on $H_j(X_b)$ for all $j\ne k$. By construction $(\TT_b)_*:H_k(X_b)\to H_k(X_b)$ is an isomorphism, hence it has an algebraic inverse by the Cayley-Hamilton theorem as we saw in the proof of Proposition \ref{prop:refinedK}. We want to perform a relative version of this construction. To this end, note that since $f:\XX\to B$ is a smooth morphism, the sheaf $R_kf_*\QQ$ is locally constant. Hence there exists an open covering $\{U_{\alpha}\}$ of $B$ and isomorphisms $f_{\alpha}$ from $R_kf_*\QQ\vert_{U_{\alpha}}$ to the constant sheaf with fiber $H_k(X_0)$ ($0\in U_{\alpha}$ a base point). As $\TT$ is a relative correspondence defined over $B$, the maps 
$(\TT\vert{U_{\alpha}})_*:R_kf_*\QQ\vert_{U_{\alpha}}\to R_kf_*\QQ\vert_{U_{\alpha}}$ induce automorphisms 
$$T_{\alpha}: H_k(X_0)\to H_k(X_0)
$$ 
that commute with the transition functions $f_{\alpha\beta} = f_{\alpha}\comp  f_{\beta}^{-1}$:
$$
T_ {\alpha} = f_{\alpha\beta}\comp  T_{\beta}\comp  f_{\alpha\beta}^{-1}.
$$
Hence 
the characteristic polynomial of $T_{\alpha}$ does not depend on $\alpha$. This implies that there exists a polynomial $P(\lambda)$ such that 
$$
P(\TT_b)_* = (\TT_b)_*^{-1}
$$
for all $b\in B$. Define $\UU = P(\TT)\in\Corr_B(\XX,\XX)$ and set $\Pi'_k = \UU\comp \TT$. 
\end{proof}

\begin{corollary}
\label{spread CK}
There exists relative correspondences $\Pi_{\rm left}$, $\Pi_{\rm mid}$ and $\Pi_{\rm right}$ and  families $\YY\to B$ of relative dimension $d$, $\Z\to B$ of relative dimension $d-2c$ such that 
\begin{enumerate}
\item $\Pi_{\rm left}$ is supported on $\YY\times_B \XX$ and $\Pi_{\rm right}$ is supported on $\XX\times\YY$;
\item $\Pi_{\rm mid}$ factors through $\Z$;
\item
The restriction of
$$
\Delta_{\XX/B} - \Pi_{\rm left}-\Pi_{\rm mid}-\Pi_{\rm right}
$$
to $X_b\times X_b$ is homologous to zero for all $b\in B$.
\end{enumerate}
\end{corollary}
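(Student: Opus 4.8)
The plan is to build the three relative correspondences by decomposing the diagonal \emph{fiberwise}, exactly as in Step~1 of the proof of Theorem~\ref{main1}, and then spreading each piece out over $B$ by means of Propositions~\ref{spread1} and~\ref{spread2}. Since $B(M)$ holds and conjecture $B$ is stable under hyperplane sections (Remark~\ref{Bholds}), each fiber $X_b$ satisfies $B(X_b)$, so by Proposition~\ref{KunnIsAlg} the K\"unneth projectors $\pi_k(X_b)\in\Corr_0(X_b,X_b)$ are algebraic and $\Delta_{X_b}\sim_{\rm hom}\sum_{k=0}^{2d}\pi_k(X_b)$. As in the proofs of Theorem~\ref{main1} and Proposition~\ref{spread2}, for $k\le e$ the sum $\Delta_{\rm left}:=\sum_{k\le e}\pi_k(X_b)$ is homologous to a cycle supported on $Y_b\times X_b$ for some subvariety $Y_b\subseteq X_b$ of dimension $e$, and dually $\Delta_{\rm right}:={}^{t}\Delta_{\rm left}=\sum_{k\ge 2d-e}\pi_k(X_b)$ is homologous to a cycle supported on $X_b\times Y_b$. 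For $e+1\le k\le d$ the hypothesis $H_k(X_b)=\Nhat^{c}H_k(X_b)$ together with Corollary~\ref{factorisation} gives $\pi_k(X_b)\sim_{\rm hom}\pi'_k(X_b)$ with $\pi'_k$ factoring with shift $c$ through a smooth variety of dimension $k-2c\le d-2c$; for $d<k\le 2d-e-1$ one sets $\pi'_k:={}^{t}\pi'_{2d-k}$, which factors through the same variety (of dimension $(2d-k)-2c<d-2c$). Thus $\Delta_{X_b}\sim_{\rm hom}\Delta_{\rm left}+\Delta_{\rm mid}+\Delta_{\rm right}$ with $\Delta_{\rm mid}:=\sum_{e+1\le k\le 2d-e-1}\pi'_k(X_b)$.

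Next I would spread this decomposition out over $B$. For the outer pieces, apply Proposition~\ref{spread1} to $\Delta_{\rm left}$ with $V_b=Y_b$ of codimension $d-e$ and $W_b=X_b$ of codimension $0$: this produces a closed subfamily $\YY\subseteq\XX$ of relative dimension $e$ and a relative codimension-$d$ cycle $\Pi_{\rm left}$ supported on $\YY\times_B\XX$ with $\Pi_{\rm left}\vert_{X_b\times X_b}\sim_{\rm hom}\Delta_{\rm left}$ for \emph{all} $b\in B$; set $\Pi_{\rm right}:={}^{t}\Pi_{\rm left}$, supported on $\XX\times_B\YY$. For the middle piece, Proposition~\ref{spread2} applied to each $k\in\{e+1,\dots,d\}$, together with its transpose for $k\in\{d+1,\dots,2d-e-1\}$, yields families $\Z_k\to B$ of relative dimension $\le d-2c$ and relative correspondences $\Pi'_k$ factoring through $\Z_k$ with $\Pi'_k\vert_{X_b\times X_b}\sim_{\rm hom}\pi_k(X_b)$ for all $b\in B$; replacing the $\Z_k$ by their disjoint union and, where needed, by products with projective spaces to bring the relative dimension up to exactly $d-2c$, we obtain a single $\Z\to B$ of relative dimension $d-2c$ through which $\Pi_{\rm mid}:=\sum_{e+1\le k\le 2d-e-1}\Pi'_k$ factors. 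For every $b\in B$ one then has
\[
\bigl(\Delta_{\XX/B}-\Pi_{\rm left}-\Pi_{\rm mid}-\Pi_{\rm right}\bigr)\big|_{X_b\times X_b}\ \sim_{\rm hom}\ \Delta_{X_b}-\textstyle\sum_{k=0}^{2d}\pi_k(X_b)\ \sim_{\rm hom}\ 0,
\]
which is assertion~(3); assertions~(1) and~(2) hold by the stated supports and factorisations.

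The substance of the argument is not this bookkeeping but the \emph{uniformity} of the spreading-out — that the relative cycles restrict to the prescribed homology classes on every fiber $b\in B$, not merely on a very general one — and this is exactly what Propositions~\ref{spread1} and~\ref{spread2} provide, so I expect no new difficulty beyond invoking them carefully. The delicate point inside those propositions (and hence the real obstacle) is that the construction passes to a finite \'etale cover $V\to U$ of a Zariski-open $U\subseteq B$, averages the relative correspondences over the fibers of $V\to U$ to descend them to $U$, and then extends by Zariski closure to all of $B$; checking that the extended cycles still induce the correct operations over the boundary $B\setminus U$ relies on $R^{k}f_{\ast}\QQ$ being a local system and on the characteristic polynomial of the relevant relative endomorphism being constant over $B$, so that a Cayley--Hamilton inverse can be chosen once and for all. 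A secondary, purely organisational obstacle is assembling the varieties $\Z_k$ and $Y_b$ of varying dimension into the single families $\Z\to B$ and $\YY\to B$ of the dimensions demanded by the statement; this is the routine disjoint-union-and-padding step indicated above, and involves no new idea.
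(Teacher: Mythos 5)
Your overall architecture coincides with the paper's: there the corollary is proved by setting $\Pi_{\rm left}=\sum_{k=0}^{e}\Pi_k$, $\Pi_{\rm mid}=\Pi'_d+\sum_{k=e+1}^{d-1}\bigl(\Pi'_k+{}^t{\Pi'_k}\bigr)$ and $\Pi_{\rm right}={}^t\Pi_{\rm left}$, with the relative correspondences $\Pi_k$ and $\Pi'_k$ taken from the proof of Proposition~\ref{spread2}, and with Proposition~\ref{spread1} invoked only to arrange the support condition. Your treatment of the middle piece (via the statement of Proposition~\ref{spread2}, transposes for degrees $d+1,\dots,2d-e-1$) and your verification of condition (3) are the same as the paper's.

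There is, however, one genuine gap in how you produce $\Pi_{\rm left}$. Proposition~\ref{spread1} takes as input a \emph{relative} cycle $\Gamma$ on $\XX\times_B\XX$ whose fibrewise restrictions are homologically supported on $V_b\times W_b$; it only modifies the support of a cycle you already have over $B$. It cannot be applied directly to the fibrewise-defined class $\Delta_{\rm left}=\sum_{k\le e}\pi_k(X_b)$, since the K\"unneth projectors are chosen fibre by fibre and are not a priori restrictions of a single cycle on $\XX\times_B\XX$; and the statement of Proposition~\ref{spread2} does not supply such a cycle either, as it only covers $k\in\{e+1,\dots,d\}$. The missing ingredient, which is exactly what the paper uses, is that for $k\le e<d$ all of $H_k(X_b)$ is fixed, so the projectors in these degrees are induced by relative correspondences $\Pi_k$ obtained by pulling back to $\XX\times_B\XX$ the algebraic cycles $R_k=\beta_{d+r-k}\comp L^{d-k}\comp\pi_{k+2r}(M)$ on $M\times M$ (this is where $B(M)$ enters; see the proof of Proposition~\ref{spread2}). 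These are genuinely relative, restrict on \emph{every} fibre to a class homologous to $\sum_{k\le e}\pi_k(X_b)$ and supported on $Y_b\times X_b$ with $Y_b$ of dimension $\le e$, and only then does Proposition~\ref{spread1} produce the cycle supported on $\YY\times_B\XX$, with $\Pi_{\rm right}={}^t\Pi_{\rm left}$. With that substitution your argument agrees with the paper's; the remaining issues you flag (assembling the $\Z_k$ into one family and padding dimensions) are indeed mere bookkeeping.
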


\begin{proof}
Define $\Pi_{\rm left} = \sum_{k=0}^e \Pi_k$, $\Pi_{\rm mid} = \Pi'_d + \sum_{k=e+1}^{d-1} (\Pi'_k + {^t{\Pi'_k}})$ and $\Pi_{\rm right} = {^t{\Pi_{\ell}}}$. For the support condition on $\Pi_{\ell}$ and $\Pi_r$ use Proposition \ref{spread1}.
\end{proof}

\section{On a result of Vial}

In this appendix we give a quick proof of a result of Vial \cite{V3} using the work of Kahn--Sujatha \cite{KS} on birational motives. We work with the category of covariant motives $\Mot_{\rm rat}(k)$. The Lefschetz object in this category is 
$\Lef = ({\rm Spec}(k),\id,1)$. The category $\Mot^0_{\rm rat}(k)$ of birational motives is the pseudo--abelian completion of the quotient ${\Mot_{\rm rat}(k)/\LL}$, where 
$\LL$ is the ideal of morphisms that factor through an object of the form $M\otimes\Lef$ with $M\in\Mot_{\rm rat}(k)^{\rm eff}$. We denote the image of a motive $M$ under the functor 
$$
\Mot_{\rm rat}(k)\to\Mot^0_{\rm rat}(k)
$$
by $M^0$. Kahn--Sujatha prove that 
$$
\Hom_{\Mot^0}(h(X)^0,h(Y)^0)\cong A_0(Y_{k(X)})\otimes\QQ.
$$
More generally we have \cite{Vial10}
$$
\Hom_{\Mot^0}(h(X)^0,M^0)\cong A_0(M_{k(X)})\otimes\QQ.
$$
We shall also use the category $\Mot_{\rm num}(k)$ of numerical motives, which is abelian and semisimple \cite{J1}. The image of $M\in\Mot_{\rm rat}(k)$ under the functor $\Mot_{\rm rat}(k)\to\Mot_{\rm num}(k)$ is denoted $\overline{M}$.

\begin{lemma}
\label{lemma: left inverse}
Let $f:M\to N$ be a morphism in $\Mot_{\rm rat}(k)$ such that $M$ is finite dimensional. If $\overline{f}:\overline{M}\to \overline{N}$ admits a left inverse then $f$ admits a left inverse. 
\end{lemma}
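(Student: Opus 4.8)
The plan is to pick an arbitrary lift of the numerical left inverse and to correct it by a unit of $\End(M)$ produced by Kimura's nilpotence theorem. First, since the projection functor $\Mot_{\rm rat}(k)\to\Mot_{\rm num}(k)$ is full (morphisms of numerical motives are correspondences modulo numerical equivalence), I can choose $g_0\colon N\to M$ in $\Mot_{\rm rat}(k)$ with $\overline{g_0}=\overline g$, where $\overline g\comp\overline f=\id_{\overline M}$ denotes the given left inverse. Setting $\eta:=g_0\comp f-\id_M\in\End(M)$, we then have $\overline\eta=\overline g\comp\overline f-\id_{\overline M}=0$, that is, $\eta$ is numerically trivial.

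Next I would invoke finite dimensionality of $M$: by Kimura's nilpotence theorem (Theorem~\ref{nilp}; equivalently, the kernel of $\End(M)\to\End(\overline M)$ is a nil ideal when $M$ is finite dimensional), the endomorphism $\eta$ is nilpotent, say $\eta^{\comp m}=0$. Hence $g_0\comp f=\id_M+\eta$ is invertible in the ring $\End(M)$, with two-sided inverse $u:=\sum_{j=0}^{m-1}(-\eta)^{\comp j}$ (a polynomial in $\eta$, hence commuting with $\id_M+\eta$). Finally, put $g:=u\comp g_0\colon N\to M$; then $g\comp f=u\comp(g_0\comp f)=u\comp(\id_M+\eta)=\id_M$, so $g$ is the desired left inverse of $f$.

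The argument is short and essentially formal; the one point that needs a little care is that Kimura's nilpotence must be applied not to the identity motive of a variety but to an arbitrary finite-dimensional summand $M=(X,p,n)$. An endomorphism of $M$ is induced by a correspondence of the shape $p\comp\Gamma\comp p$ on $X$, and numerical triviality of the endomorphism of $M$ forces $p\comp\Gamma\comp p$ to be numerically trivial as a self-correspondence of $X$; Theorem~\ref{nilp} then gives its nilpotence, hence the nilpotence of $\eta$ in $\End(M)$. I expect this small bookkeeping with the projector $p$ (and the twist $n$, which is harmless) to be the only mild obstacle, the rest being purely formal manipulation.
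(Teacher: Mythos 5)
Your argument is correct and is essentially the paper's own proof: both lift the numerical left inverse to $\Mot_{\rm rat}(k)$, observe that $\eta=g_0\comp f-\id_M$ is numerically trivial and hence nilpotent by finite dimensionality, and then invert $\id_M+\eta$ by a polynomial in $\eta$ (the paper expands $(g\comp f-\id_M)^{\comp N}=0$, you write the finite geometric series). One small caveat on your bookkeeping paragraph: reducing the nilpotence of $\eta$ to Theorem~\ref{nilp} applied to the ambient variety $X$ of $M=(X,p,n)$ would require $h(X)$ itself to be finite dimensional, which is not assumed; instead one should invoke Kimura's nilpotence theorem in its form for numerically trivial endomorphisms of an arbitrary finite-dimensional motive, which is what the paper's proof uses implicitly.
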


\begin{proof}
If $\overline{g}\comp\overline{f}=\id_{\overline{M}}$ then $g\comp f-\id_{M}$ is nilpotent. Writing out the expression $(g\comp f - \id_{M})^N=0$ we obtain a left inverse for $f$. 
\end{proof}

\begin{lemma}
\label{lemma1}
Let $f:M\to N$ be a morphism in $\Mot_{\rm rat}(k)$. If $M$ is finite dimensional, there exists a decomposition $N\cong N_1\oplus N_2$ such that 
\begin{enumerate}
\item $N_1$ is isomorphic to a direct factor $M_1$ of $M$ (hence finite dimensional);
\item $\overline{N}_1\cong\ima{\overline{f}}$.
\item The composition $M\to N\to N_2$ is numerically trivial. 
\end{enumerate}
\end{lemma}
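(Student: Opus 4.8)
The plan is to split $\overline{f}$ in the semisimple abelian category $\Mot_{\rm num}(k)$ and then transport the splitting back to $\Mot_{\rm rat}(k)$ through a finite-dimensional direct factor of $M$, where Kimura's nilpotence theorem (Theorem~\ref{nilp}) lets one lift idempotents and invert numerically-invertible endomorphisms. The delicate point is that $N$ is not assumed finite dimensional, so one cannot lift an idempotent directly in $\End(N)$; instead the idempotent cutting out $N_1$ will be manufactured by transport through $M$.

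\textbf{Numerical picture.} Since $\Mot_{\rm num}(k)$ is abelian and semisimple, write $\overline{M}=\overline{M}_1\oplus\overline{M}_0$ with $\overline{M}_0=\ker(\overline{f})$, so that $\overline{f}$ induces an isomorphism $\overline{M}_1\cong\ima(\overline{f})$. Let $\overline{p}_1\in\End(\overline{M})$ be the projector onto $\overline{M}_1$ and let $\overline{h}\in\Hom(\overline{N},\overline{M})$ be the composite of the projection $\overline{N}\onto\ima(\overline{f})$, the inverse of that isomorphism, and the inclusion $\overline{M}_1\into\overline{M}$, so that $\overline{h}\comp\overline{f}\comp\overline{p}_1=\overline{p}_1$. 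As $M$ is finite dimensional, the kernel of $\End(M)\to\End(\overline{M})$ is a nilpotent ideal by Theorem~\ref{nilp}, hence idempotents lift: pick an idempotent $p_1\in\End(M)$ above $\overline{p}_1$ and set $M_1=(M,p_1)$ with structural maps $\iota_1:M_1\into M$ and $\rho_1:M\onto M_1$. Then $M_1$ is a direct factor of $M$, so it too is finite dimensional.

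\textbf{Splitting $f\comp\iota_1$.} Since the functor $\Mot_{\rm rat}(k)\to\Mot_{\rm num}(k)$ is full, lift $\overline{h}$ to some $h:N\to M$ and put $\tilde h=\rho_1\comp h:N\to M_1$. A short computation in $\Mot_{\rm num}(k)$ (using $\overline{f}\comp\overline{\iota_1}=\overline{f}\comp\overline{p}_1\comp\overline{\iota_1}$ together with $\overline{h}\comp\overline{f}\comp\overline{p}_1=\overline{p}_1$) shows that $\tilde h\comp f\comp\iota_1\in\End(M_1)$ maps to $\id_{\overline{M}_1}$. Thus $\tilde h\comp f\comp\iota_1-\id_{M_1}$ is numerically trivial, hence nilpotent by Theorem~\ref{nilp} (as $M_1$ is finite dimensional), so $\tilde h\comp f\comp\iota_1$ is invertible in $\End(M_1)$; this is precisely the mechanism of Lemma~\ref{lemma: left inverse}. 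Let $u$ be its inverse and set $a=f\comp\iota_1:M_1\to N$ and $r=u\comp\tilde h:N\to M_1$, so that $r\comp a=\id_{M_1}$.

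\textbf{Conclusion.} Now $e:=a\comp r\in\End(N)$ satisfies $e\comp e=a\comp(r\comp a)\comp r=a\comp r=e$, so it is a genuine idempotent in $\End(N)$; put $N_1=(N,e)$ and $N_2=(N,\id_N-e)$, giving $N\cong N_1\oplus N_2$. The maps $a$ and $r$ restrict to mutually inverse isomorphisms between $M_1$ and $N_1$, which gives (1). Numerically, $\overline{e}=\overline{a}\comp\overline{r}$ has image $\ima(\overline{a})=\ima(\overline{f}\comp\overline{\iota_1})=\ima(\overline{f})$ (since $\overline{r}$ is a split epimorphism and $\overline{f}$ kills $\overline{M}_0$), so $\overline{N}_1\cong\ima(\overline{f})$, which is (2). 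Finally, the composite $M\xrightarrow{f}N\onto N_2$ is represented by $(\id_N-e)\comp f=f-e\comp f$, which becomes $\overline{f}-\overline{e}\comp\overline{f}$ in $\Mot_{\rm num}(k)$; since $\overline{e}$ is an idempotent whose image equals $\ima(\overline{f})$, it acts as the identity on $\ima(\overline{f})$, so $\overline{e}\comp\overline{f}=\overline{f}$ and this composite is numerically trivial, which is (3). The only genuine obstacle is the construction of $e$ without finite dimensionality of $N$; everything else is a routine diagram chase split between $\Mot_{\rm rat}(k)$ and its semisimple quotient $\Mot_{\rm num}(k)$.
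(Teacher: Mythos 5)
Your proposal is correct and follows essentially the same route as the paper: split $\overline{f}$ in the semisimple category $\Mot_{\rm num}(k)$, lift the complement of $\ker\overline{f}$ to a direct factor $M_1$ of $M$ using finite dimensionality, produce a left inverse of $f\vert_{M_1}$ by the nilpotence mechanism (your explicit construction of $u$ is exactly the proof of Lemma~\ref{lemma: left inverse}), and cut $N$ by the resulting idempotent $a\comp r$. The only cosmetic difference is that you inline the left-inverse lemma and phrase the lifting via idempotents in $\End(M)$ (where Theorem~\ref{nilp} gives a nil kernel, which suffices), rather than citing the lemma and the summand-lifting step as the paper does.
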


\begin{proof}
In $\Mot_{\rm num}(k)$ we have decompositions $\overline{M}\cong\overline{M}_1\oplus\overline{M}_2$ and $\overline{N}\cong\overline{N}_1\oplus\overline{N}_2$ such that 
$\overline{M}_1\to\overline{N}_1$ is an isomorphism and the remaining maps 
$\overline{M}_i\to\overline{N}_j$ are zero. Since $M$ is finite dimensional, the direct summand $\overline{M}_1$ lifts to a direct summand $M_1$ of $M$. Put $\alpha = f\vert_{M_1}:M_1\to N$. As $\overline{\alpha}$ is a monomorphism it admits a left inverse. By Lemma \ref{lemma: left inverse} there exists $\beta:N\to M_1$ such that $\beta\comp\alpha = \id_{M_1}$. Define $\pi=\alpha\comp\beta\in\End(N_1)$. Then $\pi$ is a projector and we have $N=N_1\oplus N_2$ with $N_1 = (N,\pi)$ and $N_2 = (N,\id-\pi)$. Then $M_1\cong N_1$ and by construction $\overline{N}_1\cong\ima{\overline{f}}$ and $\overline{M}\to\overline{N}_2$ is the zero map.   
\end{proof}

\begin{lemma}
\label{lemma2}
Let $M = (X,p,m)$ and $N=(Y,q)$, and let $f:M\to N$ be a morphism in $\Mot_{\rm rat}(k)$ such that
\begin{enumerate}
\item $M$ is finite dimensional;
\item $A_0(M_{\Omega})\to A_0(N_{\Omega})$ is surjective, with $\Omega\supset k$ a universal domain;
\item $f$ is numerically trivial.
\end{enumerate}
Then $N^0 = 0$ in $\Mot_{\rm rat}^0(k)$.
\end{lemma}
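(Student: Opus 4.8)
The plan is to exploit the description of morphisms in $\Mot^0_{\rm rat}(k)$ recalled above in order to show successively that $f^0\colon M^0\to N^0$ is a \emph{split} epimorphism, that $N^0$ is therefore a direct summand of the finite‑dimensional motive $M^0$, and that $\id_{N^0}$ is nilpotent because it factors through the numerically trivial morphism $f$; being idempotent it must then vanish.

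First I would prove that $f^0$ is split epi. By the formula $\Hom_{\Mot^0}(h(W)^0,P^0)\cong A_0(P_{k(W)})$, valid for every motive $P$, the morphism $f^0$ induces for each smooth projective $W/k$ the map $A_0(M_{k(W)})\to A_0(N_{k(W)})$ attached to $f$, and I claim this is surjective for all $W$. Hypothesis (2) gives it over the universal domain $\Omega$, and a standard spreading‑out argument — lift a given zero‑cycle over $\Omega$, realise the lift over a finitely generated extension $L$ of $k(W)$, spread it out over a model of $L$, and descend using a transfer (legitimate since we work with $\QQ$‑coefficients) — propagates the surjectivity to every $k(W)$. Since the $h(W)^0$ generate $\Mot^0_{\rm rat}(k)$ and $N^0=(h(Y)^0,q^0)$ is a direct summand of $h(Y)^0$, this forces $f^0$ to be epi; moreover, taking $W=Y$ and lifting the structural projection $\pi\colon h(Y)^0\onto N^0$ through $f^0_*\colon A_0(M_{k(Y)})\onto A_0(N_{k(Y)})$ produces $s\colon h(Y)^0\to M^0$ with $f^0\comp s=\pi$, whence $j:=s\comp\iota\colon N^0\to M^0$ (with $\iota\colon N^0\into h(Y)^0$ the structural inclusion) satisfies $f^0\comp j=\pi\comp\iota=\id_{N^0}$.

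Next I would record finite dimensionality of $N^0$. The functor $\Mot_{\rm rat}(k)\to\Mot^0_{\rm rat}(k)$ is a $\QQ$‑linear tensor functor with rigid target, hence preserves Kimura finite dimensionality; and the Lefschetz object $\Lef$ dies in $\Mot^0_{\rm rat}(k)$, so either $m>0$ and $M^0=0$, in which case $N^0=0$ already by the previous step, or $m=0$ and $M^0$ is a direct summand of the finite‑dimensional motive $h(X)^0$, hence finite dimensional. As $f^0$ is split epi, $N^0$ is a direct summand of $M^0$ and so is finite dimensional.

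Finally, since $f$ is numerically trivial it lies in the trace radical of $\Mot_{\rm rat}(k)$ (the ideal of numerically trivial correspondences); tensor functors preserve traces, so $f^0$ lies in the trace radical of $\Mot^0_{\rm rat}(k)$, which is an ideal. Hence $\id_{N^0}=f^0\comp j$ lies in the trace radical of $\End_{\Mot^0}(N^0)$, and for a finite‑dimensional object of a rigid $\QQ$‑linear pseudo‑abelian tensor category this radical is a nil ideal — the monoidal form of the lemma underlying Kimura's nilpotence theorem. Therefore $\id_{N^0}$ is nilpotent, and being idempotent it is $0$, i.e. $N^0=0$. I expect the main obstacle to be the descent in the second paragraph, namely transporting surjectivity of $A_0(-)$ from $\Omega$ down to every function field $k(W)$: this is where the geometry (spreading out, together with the use of $\QQ$‑coefficients) genuinely enters, whereas the remaining steps are formal once this and the Kahn–Sujatha calculus of morphisms in $\Mot^0_{\rm rat}(k)$ are in hand.
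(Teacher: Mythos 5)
Your first half is sound and coincides with the paper's own argument: one descends the $A_0$-surjectivity of hypothesis (2) from $\Omega$ to $k(Y)$ (only $W=Y$ is needed, not all $W$), and uses the identification $\Hom_{\Mot^0}(h(Y)^0,P^0)\cong A_0(P_{k(Y)})$ to lift the structural projection through $f^0$, so that $f^0$ is a split epimorphism.

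The genuine gap is your final nilpotence step, which you run entirely inside $\Mot^0_{\rm rat}(k)$. That category is not rigid: duality in $\Mot_{\rm rat}(k)$ is implemented by Tate twists, i.e.\ by powers of $\Lef$, which is exactly what the ideal $\LL$ kills, so birational motives have no duals, hence no traces, no ``trace radical'', no intrinsic notion of numerically trivial morphism, and no monoidal Kimura-type nilpotence lemma available for $N^0$. (Even the monoidality of the projection functor on non-effective objects, which you need in order to ``push traces forward'' from $M=(X,p,m)$, is problematic: $\LL$ is a tensor ideal of the effective category only.) Thus the assertion that $\id_{N^0}=f^0\comp j$ is nilpotent is unsupported, and that is the whole content of the lemma. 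The repair is to go back upstairs, which is what the paper does: since morphisms of birational motives out of $h(Y)^0$ are induced by honest correspondences, write your section as $g^0$ for a morphism $g=p\comp\gamma\comp q:N\to M$ in $\Mot_{\rm rat}(k)$ with $\gamma\in\Corr_{-m}(Y,X)$; then $\pi=g\comp f\in\End(M)$ is numerically trivial because $f$ is, hence nilpotent by Kimura's theorem applied to the finite-dimensional motive $M$ in the rigid category $\Mot_{\rm rat}(k)$; its image $\pi^0=g^0\comp f^0$ is therefore a nilpotent idempotent in $\End(M^0)$, so $\pi^0=0$, whence $f^0=f^0\comp\pi^0=0$ and $\id_{N^0}=f^0\comp g^0=0$, i.e.\ $N^0=0$. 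With this correction your intermediate claims (finite-dimensionality of $M^0$ and of its summand $N^0$) become unnecessary.
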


\begin{proof}
The second assumption implies that
\begin{eqnarray*}
A_0(M_{k(Y)}) & \to & A_0(N_{k(Y)}) \\ 
\Vert & & \Vert \\
\Hom(h(Y)^0,M) & \to & \Hom(h(Y)^0,N)
\end{eqnarray*}
is surjective, hence there exists $\varphi\in\Hom(h(Y)^0,M^0)$ such that $f^0\comp\varphi = q^0 = \id_{N^0}$. In particular, $f^0:M^0\to N^0$ is an epimorphism. Write $\varphi=p^0\comp\psi$ with $\psi:h(Y)^0\to h(X)^0$. There exists $\gamma\in\Corr_{-n}(Y,X)$ such that $\gamma^0 = \psi$. Put $g=p\comp\gamma\comp q:N\to M$ and $\pi=g\comp f\in\End(M)$. As $\overline{f}=0$, $\overline{\pi}=0$. Hence $\pi$ is nilpotent since $M$ is finite dimensional. By construction $f^0\comp g^0 = \id_{N^0}$, so $\pi^0$ is a projector and by nilpotence we get $\pi^0=0$. This implies that $f^0 = f^0\comp\pi^0 =0$, hence $N^0=0$ since $f^0:M^0\to N^0$ is an epimorphism.   
\end{proof}

\begin{remark} Suppose $k=\C$.
It suffices to assume that $A_0^{\rm AJ}(M_{})\to A_0^{\rm AJ}(N_{})$ is surjective. Indeed, there exists a curve $C$ such that $J(C)\to{\rm Alb}(N)$ is surjective. We then replace $M$ by $M'=M\oplus h(C)$ and apply the Lemma to $M'$.  
\end{remark}

\begin{corollary}
\label{cor:fd}
Let $f:M=(X,p,m)\to N=(Y,q)$ be a morphism in $\Mot_{\rm rat}(k)$ such that 
\begin{enumerate}
\item $M$ is finite dimensional;
\item $A_i(M_{\Omega})\to A_i(N_{\Omega})$ is surjective for all $i\le\ell-1$.
\end{enumerate}
Then $N\cong N_1\oplus N_2$ with $N_1$ finite dimensional and $N_2\cong (Z,\rho,\ell)$ with $\dim Z = d-\ell$.
\end{corollary}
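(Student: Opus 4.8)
The plan is to argue by induction on $\ell$, with Lemmas~\ref{lemma1} and~\ref{lemma2} as the engine. For $\ell=0$ there is nothing to do: take $N_1=0$ and $N_2=N=(Y,q)=(Y,q,0)$, so $Z=Y$ has dimension $d=\dim Y$. Now let $\ell\ge 1$ and assume the statement for $\ell-1$. Hypothesis (2) in particular gives surjectivity of $A_i(M_\Omega)\to A_i(N_\Omega)$ for all $i\le\ell-2$, so the inductive hypothesis produces $N\cong N_1'\oplus N_2'$ with $N_1'$ finite dimensional and $N_2'\cong(Z',\rho',\ell-1)$ for some smooth projective $Z'$ with $\dim Z'=d-\ell+1$. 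It then remains to peel one more Lefschetz twist off of $N_2'$.

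Let $g\colon M\xrightarrow{f}N\twoheadrightarrow N_2'$ be $f$ followed by the projection onto $N_2'$. Since $A_i(M_\Omega)\to A_i(N_\Omega)=A_i(N'_{1,\Omega})\oplus A_i(N'_{2,\Omega})$ is onto for $i\le\ell-1$, so is $A_i(M_\Omega)\to A_i(N'_{2,\Omega})$. Untwisting by $\Lef^{\otimes(\ell-1)}$, set $M^\flat:=M\otimes\Lef^{\otimes-(\ell-1)}$, which is still finite dimensional, and regard $g$ as a morphism $g'\colon M^\flat\to(Z',\rho')$; under the twist Chow-degrees shift by $\ell-1$, so the surjectivity on $A_{\ell-1}$ becomes surjectivity of $A_0(M^\flat_\Omega)\to A_0((Z',\rho')_\Omega)$ (the bottom Chow group of $N_2'$ lives in degree $\ell-1$). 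Apply Lemma~\ref{lemma1} to $g'$: there is a splitting $(Z',\rho')\cong P_1\oplus P_2$ with $P_1$ finite dimensional and with $M^\flat\to(Z',\rho')\to P_2$ numerically trivial. Writing $P_2=(Z',\rho_2)$ we still have $A_0(M^\flat_\Omega)\to A_0(P_{2,\Omega})$ surjective, so Lemma~\ref{lemma2} applies to $M^\flat\to P_2$ and yields $P_2^0=0$ in $\Mot^0_{\rm rat}(k)$. Hence $\id_{P_2}\in\LL$, so $P_2$ is a direct summand of $h(V)\otimes\Lef$ for some smooth projective $V$, i.e. $P_2\cong(Z,\rho,1)$ in $\Mot_{\rm rat}(k)$. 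Twisting back up, $N_2'\cong P_1(\ell-1)\oplus(Z,\rho,\ell)$, and putting $N_1:=N_1'\oplus P_1(\ell-1)$ (finite dimensional) and $N_2:=(Z,\rho,\ell)$ completes the induction, modulo the dimension of $Z$.

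The hard part is precisely this last point: one needs $\dim Z\le d-\ell$, after which the exact value $d-\ell$ is reached by replacing $(Z,\rho)$ with $(Z\sqcup\PP^{d-\ell},\ \rho\oplus 0)$. The information at hand is that $P_2$ is simultaneously a direct summand of $h(Z')$ with $\dim Z'=d-\ell+1$ \emph{and} Lefschetz-divisible ($P_2^0=0$); morally $P_2^0=0$ forces $P_2$ to be supported in codimension $\ge1$, hence a summand of $h(W)\otimes\Lef$ for $W\subset Z'$ a smooth hyperplane section with $\dim W=d-\ell$. Making this precise — via the Kahn--Sujatha identification $\Hom_{\Mot^0}(h(X)^0,M^0)\cong A_0(M_{k(X)})$ together with a Bertini argument realizing the birational motive $P_2(-1)$ on a hyperplane section of $Z'$ — is the technical heart of the proof. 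A secondary point requiring care is the bookkeeping of Tate twists against Chow-group degrees, so that the degree shift in the surjectivity hypothesis is matched correctly at each step of the induction.
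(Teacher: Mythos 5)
Your overall route is the paper's: induct on $\ell$, use Lemma~\ref{lemma1} to split off a piece isomorphic to a direct factor of (a twist of) $M$, apply Lemma~\ref{lemma2} to the numerically trivial complement to get vanishing of its birational motive, and conclude that this complement is divisible by $\Lef$. The only structural difference is cosmetic: the paper does the case $\ell=1$ on $N$ itself and then inducts on the twisted remainder via $A_i(R(k))=A_{i-k}(R)$, while you invoke the inductive hypothesis first and then peel one more twist; your bookkeeping of Tate twists against Chow degrees (and the padding by $(Z\sqcup\PP^{d-\ell},\rho\oplus 0)$ to force equality of dimensions) is fine, as is the deduction $P_2^0=0\Rightarrow P_2\cong(Z,\rho,1)$ for \emph{some} smooth projective $Z$.

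The point you leave open --- the bound $\dim Z\le d-\ell$, i.e.\ that the Lefschetz-divisible summand of $h(Z')$ with $\dim Z'=d-\ell+1$ can be realized as $(Z,\rho,1)$ with $\dim Z=d-\ell$ --- is precisely the step the paper dispatches in one sentence (``this implies that $N_2\cong R(1)$ with $R=(Z,\rho)$ and $\dim Z=d-1$''), treating as known the following fact: if $P$ is a direct summand of $h(Y)$ with $\dim Y=e$ and $P^0=0$ (equivalently $A_0(P_{k(Y)})=0$ by the Kahn--Sujatha identification), then $P\cong R(1)$ with $R$ a direct summand of the motive of a smooth projective variety of dimension $e-1$. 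This is a Bloch--Srinivas-type support statement: the vanishing of $A_0$ at the generic point lets one represent the projector defining $P$ by a cycle supported on $D\times Y$ with $D\subset Y$ a divisor, and after desingularizing $D$ and correcting idempotents (using nilpotence of homologically trivial endomorphisms here, or the usual idempotent-lifting tricks) one realizes $R$ on $\widetilde D$; statements of exactly this kind are in Vial's papers cited in the bibliography (\cite{V2}, \cite{V3}, \cite{Vial10}). Your proposed repair via Bertini does not work as stated: a \emph{generic} hyperplane section $W\subset Z'$ has no reason to support the projector of $P_2$, so $P_2$ need not be a summand of $h(W)(1)$; the divisor must be the one produced by the decomposition argument, not a general linear section. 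With that lemma either cited or proved along the Bloch--Srinivas lines, your argument closes and coincides with the paper's proof.
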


\begin{proof}
By Lemma \ref{lemma1} $N\cong N_1\oplus N_2$ with $M\to N_2$ numerically trivial, hence $N_2^0 =0$ by Lemma \ref{lemma2}. This implies that $N_2\cong R(1)$ with $R = (Z,\rho)$ and $\dim Z=d-1$. This finishes the proof if $\ell=1$. The general case follows by induction on $\ell$ using the formula $A_i(R(k)) = A_{i-k}(R)$.
\end{proof}

\begin{remark} Assume $k=\C$.
\begin{enumerate}
\item As noted before, it suffices to assume that
$$
A_i^{\rm AJ}(M_{\Omega})\to A_i^{\rm AJ}(N_{\Omega})
$$ is surjective for all $i\le\ell-1$ (here $\Omega=\C$ considered as universal domain).
\item
If the motive $M$ is self--dual up to twist, i.e., $M\cong M^{\vee}(d)$, the statement of the Corollary can be improved. Write $N = N_1\oplus R(\ell)$ as before, and consider the map
$M\cong M^{\vee}(d)\to R(\ell)^{\vee}(d) = R^{\vee}(d-\ell) = (Z,{^t \rho})=R'$. By assumption $A_i(M_{\Omega})\to A_i(R'_{\Omega})$ is surjective for all $i\le\ell-1$, hence $R'\cong R_1'\oplus R_2'$ such that $R'_1$ is finite dimensional and $R'_2 = (Z',\rho',2\ell-d)$ with $\dim(Z')=\dim Z-\ell = d-2\ell$. 
\end{enumerate}
\end{remark}

Summarizing, we get the following result.
\begin{theorem}[Vial]
\label{thm: findim} 
Let $f:M=(X,p,m)\to N=(Y,q)$ be a morphism in $\Mot_{\rm rat}(\C)$ such that $M$ is finite dimensional. 
\begin{enumerate}
\item
If
$A_i^{\rm AJ}(M_{})\to A_i^{\rm AJ}(N_{})$ is surjective for all $i\le d-1$ then $N$ is isomorphic to a direct factor of $M\oplus\oplus_{i=1}^{d} h(C_i)(i)$ where $C_i$ is a smooth curve for all $i$. 
\item
If
$M\cong M^{\vee}(d)$ and
$A_i^{\rm AJ}(M_{})\to A_i^{\rm AJ}(N_{})$ is surjective for all $i\le{{d-2}\over 2}$ then $N$ is isomorphic to a direct factor of $M\oplus M^{\vee}(d)\oplus\oplus_i h(C_i)(i)$ with $C_i$ smooth curves.
\end{enumerate}
Hence $N$ is finite dimensional in both cases.
\end{theorem}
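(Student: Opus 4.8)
The plan is to read the theorem off from Corollary~\ref{cor:fd} together with the two remarks following it; the substance is already contained in Lemmas~\ref{lemma: left inverse}--\ref{lemma2} and in Corollary~\ref{cor:fd} itself, so what remains is to package the output of the descending induction and to note that the pieces it produces are finite dimensional. Concretely, Part~(1) should come from applying Corollary~\ref{cor:fd} with $\ell=d$ in its Abel--Jacobi form, and Part~(2) from the same argument combined with the self-dual improvement recorded in the second remark after the Corollary.

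\emph{Part (1).} I would apply Corollary~\ref{cor:fd} to $f$ with $\ell=d$, using the first remark after it: this permits replacing surjectivity of $A_i(M_{})\to A_i(N_{})$ by that of $A_i^{\rm AJ}(M_{})\to A_i^{\rm AJ}(N_{})$, at the cost of enlarging the source motive at each stage of the induction by the motive of a suitable smooth curve $C_i$ whose Jacobian dominates the relevant Albanese, as in the remark after Lemma~\ref{lemma2}. The output is $N\cong N_1\oplus N_2$ with $N_1$ a direct factor of $M\oplus\bigoplus_{i=1}^{d}h(C_i)(i)$ (the twists $1,\dots,d$ recording the successive stages of the descent) and $N_2\cong(Z,\rho,d)$ with $\dim Z=d-d=0$. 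Up to isomorphism such an $N_2$ is a direct sum of copies of $\Lef^{\otimes d}$, and $\Lef^{\otimes d}$ is a direct factor of $h(C)(d)$ for any smooth curve $C$ (being the $d$-th twist of the unit summand of $h(C)$). After absorbing $N_2$ into a curve summand at level $d$ and relabelling, $N$ is a direct factor of $M\oplus\bigoplus_{i=1}^{d}h(C_i)(i)$.

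\emph{Part (2).} Here I would first run the same argument, but only as far as $\ell$ with $\ell-1=[{{d-2}\over 2}]$, which the hypothesis allows, obtaining $N\cong N_1\oplus R(\ell)$ with $N_1$ finite dimensional (a direct factor of $M$ together with twisted curve motives) and $R=(Z,\rho)$, $\dim Z=d-\ell$. Then I would invoke the self-duality $M\cong M^{\vee}(d)$, as in the second remark after Corollary~\ref{cor:fd}: composing $f$ with the projection to $R(\ell)$ and then dualizing and twisting produces a morphism $M\cong M^{\vee}(d)\to R^{\vee}(d-\ell)=:R'$ whose maps on Abel--Jacobi groups are still surjective in the required range, so Corollary~\ref{cor:fd} applies again and gives $R'\cong R_1'\oplus R_2'$ with $R_1'$ finite dimensional and $R_2'=(Z',\rho',2\ell-d)$, $\dim Z'=d-2\ell\le 1$. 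Hence $R_2'$ is a direct factor of a twisted point motive when $d$ is even, and of a twisted curve motive $h(C)(2\ell-d)$ when $d$ is odd; dualizing back, $R(\ell)$, and hence $N$, is assembled from $M$, $M^{\vee}(d)$ and twisted curve (or Tate) motives, so $N$ is a direct factor of $M\oplus M^{\vee}(d)\oplus\bigoplus_i h(C_i)(i)$.

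In both cases $N$ is then finite dimensional: $M$ is finite dimensional by hypothesis, $M^{\vee}(d)$ is a dual and twist of a finite-dimensional motive, $\Lef$ and curve motives are finite dimensional, and a direct factor of a finite-dimensional motive is finite dimensional. I expect no genuine obstacle, since the only non-formal inputs --- nilpotence of a numerically trivial endomorphism of a finite-dimensional motive, and the Kahn--Sujatha computation of morphism groups in the category of birational motives --- are already in place; the one point that needs care is the bookkeeping of the twists on the curve motives as one descends the induction in Corollary~\ref{cor:fd}, together with the parity split in Part~(2) (for $d$ odd the residual $Z'$ is a curve rather than a point, so one gets a shifted curve summand instead of a Tate summand).
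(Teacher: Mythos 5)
Your proposal is correct and follows essentially the same route as the paper, whose proof of Theorem~\ref{thm: findim} is precisely ``use Corollary~\ref{cor:fd} and the preceding Remark'' (the Abel--Jacobi reduction via curve motives for Part~(1), and the self-duality improvement $M\cong M^{\vee}(d)$ for Part~(2)); your write-up just makes the twist bookkeeping and the parity of $d-2\ell$ explicit.
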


\begin{proof}
Use Corollary \ref{cor:fd} and the previous Remark.
\end{proof}

\begin{remark}
The proof of Theorem \ref{thm: findim} gives a bit more: if the motive $M$ is ``of abelian type" (i.e., belongs to the subcategory of $\Mot_{\rm rat}(\C)$ generated by the motives of abelian varieties over $k$) then $N$ is of abelian type.
\end{remark}

\vskip 1cm


\begin{thebibliography}{dlPG99}

\bibitem{An} Y. Andr\'e, Motifs de dimension finie (d'apr\`es S.-I. Kimura, P. O'Sullivan,...), S\'eminaire Bourbaki 2003/2004, {\it Ast\'erisque} {\bf 299},  Exp. No. 929, viii, 115--145.

\bibitem{Ay} J. Ayoub, Motives and algebraic cycles: a selection of conjectures and open questions, preprint available from {\tt user.math.uzh.ch/ayoub/}.

\bibitem{Bardelli} F. Bardelli, On Grothendieck's generalized Hodge conjecture for a family of threefolds with trivial canonical bundle. {\it J. Reine Angew. Math.} {\bf 422} (1991), 165–200. 

\bibitem{bp} W. Barth, C. Peters,  Automorphisms of Enriques surfaces,  {\it Invent. Math.} {\bf 73} (1983), 383--411.

\bibitem{BCF} I. Bauer, F. Catanese and D. Frapporti, Generalized Burniat type surfaces and Bagnera--deFranchis varieties, {\tt arXiv:1409.1285v2}.

\bibitem{BF} I. Bauer and D. Frapporti,
Bloch's conjecture for generalized Burniat type surfaces with $p_g=0$.  
{\it Rend. Circ. Mat. Palermo (2}) {\bf 64} (2015),  27--42.






\bibitem{BO} S. Bloch and A. Ogus, Gersten's conjecture and the homology of schemes, {\it Ann. Sci. Ecole Norm. Sup}. {\bf 4} (1974), 181--202.

\bibitem{BS} S. Bloch and V. Srinivas, Remarks on correspondences and algebraic cycles, {\it American Journal of Mathematics}  {\bf 10} (1983), 1235--1253.



\bibitem{bu} P. Burniat: Sur les surfaces de genre $P_{12}>1$. {\it Ann. Mat. Pura Appl.} {\bf 71} (1966) 1--24.    

\bibitem{CM} 
M. de Cataldo and L. Migliorini, The Chow groups and the motive of the Hilbert scheme of points on a
surface, {\it Journal of Algebra} {\bf 251} (2002), 824--848.

\bibitem{Ch} F. Charles, Remarks on the Lefschetz standard conjecture and hyperk\"ahler varieties. {\it Comment. Math. Helv.} {\bf 88} (2013),  449-468. 

\bibitem{ChM} F. Charles and E. Markman, The standard conjectures for holomorphic symplectic varieties deformation equivalent to Hilbert schemes of $K3$ surfaces, 
{\it Comp. Math.} {\bf 149}  (2013), 481--494.











\bibitem{Lie Fu} Lie Fu, On the coniveau of certain sub-Hodge structures. 
{\it Math. Res. Lett.} {\bf 19} (2012) 1097--1116. 


\bibitem{GP} V. Guletski\u{\i} and C. Pedrini, The Chow motive of the Godeaux surface, in:
{\it Algebraic Geometry, a volume in memory of Paolo Francia (M.C. Beltrametti},
F. Catanese, C. Ciliberto, A. Lanteri and C. Pedrini, editors),
Walter de Gruyter, Berlin New York, (2002).

\bibitem{inoue} M. Inoue,  
Some new surfaces of general type.
{\it Tokyo J. Math.} {\bf 17} (1994), 295--319.

\bibitem{Iv} F. Ivorra, Finite dimensional motives and applications (following S.-I. Kimura, P. O'Sullivan and others), 
available from {\tt https://perso.univ-rennes1.fr/florian.ivorra/}
        
                




\bibitem{J1} U. Jannsen, 
Motives, numerical equivalence, and semi-simplicity, Invent. Math. 107(3) (1992), 447--452, 

\bibitem{J2} U. Jannsen, Motivic sheaves and filtrations on Chow groups, in: {\it Motives} (U. Jannsen et alii, eds.), Proceedings of Symposia in Pure Mathematics Vol. 55 (1994), Part 1,  

\bibitem{J4} U. Jannsen, On finite-dimensional motives and Murre's conjecture, in: {\it Algebraic cycles and motives} (J. Nagel and C. Peters, eds.), Cambridge University Press, Cambridge (2007).


\bibitem{KS} B. Kahn and R. Sujatha, Birational motives I: Pure birational motives. {\it Ann. K-Theory} {\bf 1} (2016), 379--€"440. 

\bibitem{Kim} S. Kimura, Chow groups are finite dimensional, in some sense,
{\it Math. Ann.}  {\bf 331} (2005), 173--201.

\bibitem{K0} S. Kleiman, Algebraic cycles and the Weil conjectures, in: {\it Dix expos\'es sur la cohomologie des sch\'emas}, 359--386, North-Holland Amsterdam, (1968). 

\bibitem{K} S. Kleiman, The standard conjectures, in: {\it Motives} (U. Jannsen et alii, eds.), {\it Proceedings of Symposia in Pure Mathematics Part 1}. {\bf 55}, 
Amer. Math. Soc., Providence (1994).

\bibitem{LB}  A. Lange and C. Birkenhake, {\em Complex abelian varieties}, Springer-Verlag Berlin Heidelberg New York (1994).
1992.





\bibitem{small} R. Laterveer, Algebraic varieties with small Chow groups, {\it J. Math. Kyoto Univ.}  {\bf 38}   (1998), 673--694.


\bibitem{Var} R. Laterveer, Variations on a theorem of Voisin, submitted.

\bibitem{multbloch} R. Laterveer, On a multiplicative version of Bloch's conjecture, Beitr\"age zur Algebra und Geometrie {\bf 57} (4) (2016), 723--734.

\bibitem{moi2} R. Laterveer, A brief note concerning hard Lefschetz for Chow groups, {\it Canadian Math. Bulletin} {\bf 59} (2016), 144--158.

\bibitem{fanocubic} R. Laterveer, A remark on the motive of the Fano variety of lines of a cubic, Ann. Math. Qu\'ebec 41 no. 1 (2017), 141---154.

\bibitem{Lat} R. Laterveer, Algebraic cycles on Fano varieties of some cubics, Results in Mathematics {\bf 72} no. 1 (2017), 595--616.


\bibitem{Lewis}  J. Lewis, The cylinder correspondence for hypersurfaces of degree $n$ in $\mathbf{P}^n$. {\it Amer. J. Math.} {\bf 110} (1988), no. 1, 77--114.


\bibitem{Mur} J. Murre, On a conjectural filtration on the Chow groups of an algebraic variety, parts I and II, {\it Indag. Math.}  {\bf 4} (1993), 177--201.

\bibitem{MNP} J. Murre, J. Nagel and C. Peters, {\it Lectures on the theory of pure motives},  {\it University Lecture Series} {\bf 61}, Amer. Math. Soc., Providence (2013).

\bibitem{N1} J. Nagel, The  Abel--Jacobi map for complete intersections, PhD thesis, Univeristy of Leiden (1997), http://nagel49.perso.math.cnrs.fr/thesis.pdf

\bibitem{N2} J. Nagel, Cohomology of quadric bundles, Habilitation thesis, Universit\'e Lille 1 (2006), http://nagel49.perso.math.cnrs.fr/habilitation.pdf



\bibitem{connect} Paranjape, K., Cohomological and cycle-theoretic connectivity.
A{\it nn. of Math. (2)}  {\bf 139} (1994),  641--660.


\bibitem{PW} C. Pedrini and C. Weibel, Some surfaces of general type for which Bloch's conjecture holds, to appear in: {\it Period Domains, Algebraic Cycles, and Arithmetic}, Cambridge Univ. Press, (2015).

\bibitem{P} C. Pedrini, On the finite dimensionality of a $K3$ surface, {\it Manuscripta Mathematica} {\bf 138} (2012), 59--72.




\bibitem{finmts}  C. Peters, Bloch-type conjectures and an example of a three-fold of general type, {\it Communications in Contemporary Mathematics} {\bf 12}  (2010) 587--605.




\bibitem{Tan} S. Tankeev, On the standard conjecture of Lefschetz type for complex projective threefolds. II, {\it Izvestiya Math}. {\bf 75}  (2011), 1047--1062.

\bibitem{Vial10} C. Vial, Pure motives with representable Chow groups, {\it Comptes Rendus de l'Acad\'emie des Sciences} {\bf 348} (2010), 1191--1195.

\bibitem{V} C. Vial, Algebraic cycles and fibrations, {\it Documenta Math.}  {\bf 18} (2013), 1521--1553.

\bibitem{V2} C. Vial, Projectors on the intermediate algebraic Jacobians, {\it New York J. Math}. {\bf 19} (2013), 793--822.

\bibitem{V3} C. Vial,  
Remarks on motives of abelian type. 
{\it Tohoku Math. J.} {\bf 69} (2017), no. 2, 195--€"220. 

\bibitem{V4} C. Vial, Niveau and coniveau filtrations on cohomology groups and Chow groups, {\it Proceedings of the LMS} {\bf 106}  (2013), 410--444,

\bibitem{V5} C. Vial, Chow-K\"unneth decomposition for $3$- and $4$-folds fibred by varieties with trivial Chow group of zero-cycles, {\it J. Alg. Geom.}  {\bf 24}
 (2015), 51--80.


\bibitem{V10} C. Voisin, Remarks on filtrations on Chow groups and the Bloch conjecture,
{\it Annali di matematica pura ed applicata} {\bf 183} (2004), 421--438.


\bibitem{V92}  C. Voisin, Sur les z\'ero-cycles de certaines hypersurfaces munies d'un automorphisme. {\it Ann. Scuola Norm. Sup. Pisa Cl. Sci.} {\bf 19} (1992), 473–492.

\bibitem{V0} C. Voisin, The generalized Hodge and Bloch conjectures are equivalent for general complete intersections, {\it Ann. Sci. \'Ecole Norm. Sup.}  {\bf 46} 
(2013), 449--475,

\bibitem{V1} C. Voisin, The generalized Hodge and Bloch conjectures are equivalent for general complete intersections, II, {\it J. Math. Sci. Univ. Tokyo}  {\bf 22} (2015), 491--517.

\bibitem{V8} C. Voisin, Bloch's conjecture for Catanese and Barlow surfaces, {\it J. Differential Geometry}  {\bf 97} (2014), 149--175.


\bibitem{Weil} A.  Weil, {\it Introduction \`a  l'\'etude des vari\'et\'es k\"ahl\'eriennes}. Publications de l'Institut de Math\'ematique de l'Universit\'e de Nancago, VI. Actualit\'es Sci. Ind. {\bf 1267}Hermann, Paris (1958) .

\bibitem{Xu} Z. Xu, Algebraic cycles on a generalized Kummer variety, {\tt arXiv:1506.04297v1}.

\end{thebibliography}
\end{document}